
\documentclass[11pt, a4paper]{amsart}

\usepackage{amsfonts,amsmath,amssymb,amscd}

\usepackage[all]{xy}
\usepackage{graphicx}
\usepackage{stmaryrd}

\newtheorem{theorem}{Theorem}[section]
\newtheorem{lemma}[theorem]{Lemma}
\newtheorem{prop}[theorem]{Proposition}
\newtheorem{corollary}[theorem]{Corollary}

\theoremstyle{definition}
\newtheorem{definition}[theorem]{Definition}
\newtheorem{rem}[theorem]{Remark}

\newtheorem{example}[theorem]{Example}

\usepackage{mathrsfs}
\usepackage{color}
\newcommand{\tc}[1]{\textcolor{black}{#1}}

\newcommand\pf{\begin{proof}}
\newcommand\epf{\end{proof}}
\newcommand\eq{\begin{equation}}
\newcommand\eeq{\end{equation}}
\newcommand\tu{\textup}

\newcommand{\Db}{D^{\mathrm{b}}}
\newcommand{\M}{\mathtt{SMod}}
\newcommand{\MAK}{{}_D\mathtt{Smod}_{(A|K)}}
\renewcommand{\a}{\mathfrak{a}}
\renewcommand{\b}{\mathfrak{b}}
\renewcommand{\u}{\boldsymbol{u}}
\renewcommand{\v}{\boldsymbol{v}}
\newcommand{\g}{\mathfrak{g}}
\newcommand{\I}{\mathfrak{I}}

\newcommand{\ot}{\otimes}
\newcommand{\op}{\operatorname}

\newcommand{\F}{\mathsf{F}}
\newcommand{\G}{\mathsf{G}}

\newcommand{\Quot}{\mathsf{Quot}}
\newcommand{\X}{\mathsf{X}}
\newcommand{\Y}{\mathsf{Y}}

\newcommand{\cA}{\mathcal{A}}
\newcommand{\cB}{\mathcal{B}}
\newcommand{\cI}{\mathcal{I}}
\newcommand{\cJ}{\mathcal{J}}
\newcommand{\cO}{\mathcal{O}}

\newcommand{\cS}{\mathcal{S}}
\newcommand{\cT}{\mathcal{T}}

\renewcommand{\L}{\overline{L}}
\newcommand{\K}{\overline{K}}
\newcommand{\D}{\underline{D}}
\newcommand{\otk}{\otimes_k}
\newcommand{\red}{_{\mathrm{red}}}
\newcommand{\Delb}{\Delta\! ^{\mathrm{b}}}
\newcommand{\eb}{\varepsilon^{\mathrm{b}}}
\newcommand{\Sb}{\mathcal{S}^{\mathrm{b}}}
\newcommand{\Sbi}{\mathcal{S}^{\mathrm{b}-}}

\setcounter{tocdepth}{3}

\let\oldtocsection=\tocsection

\let\oldtocsubsection=\tocsubsection

\renewcommand{\tocsection}[2]{\hspace{0em}\oldtocsection{#1}{#2}}

\renewcommand{\tocsubsection}[2]{\hspace{1em}\oldtocsubsection{#1}{#2}}

\numberwithin{equation}{section}

\title[Supersymmetric Picard-Vessiot Theory, I]
{Supersymmetric Picard-Vessiot Theory, I:\\ Basic Theory}

\author[A.~Masuoka]{Akira Masuoka}
\address{Akira Masuoka,
Department of Mathematics, 
University of Tsukuba, 
Ibaraki 305-8571, Japan}
\email{akira@math.tsukuba.ac.jp}

\begin{document}

\begin{abstract}
M.~Takeuchi (1989) proposed a Hopf-algebraic approach to Picard-Vessiot (or PV) theory, 
giving a new definition of PV extensions by which such extensions become
more smoothly connected, through Hopf-Galois
extensions, to the associated affine group schemes. 
This paper extends PV theory to the supersymmetric (or SUSY) context, following Takeuchi's approach.
The notion of SUSY fields is defined. Differential fields in the existing PV theory 
are replaced by $D$-SUSY fields, where $D$ is an acting super-cocommutative Hopf superalgebra. 
Hopf-Galis extensions here play a more and more crucial role. 
\end{abstract}

\maketitle

\tableofcontents

\noindent
{\sc Key Words:}
Picard-Vessiot theory,
Supersymmetry,
Hopf superalgebra, 
Hopf-Galois extension,
Affine supergroup scheme,
Superscheme
\medskip

\noindent
{\sc Mathematics Subject Classification (2020):}
12H05, 
16T05, 
14L15, 
14M30  


\section{Introduction}\label{secINT}

\subsection{Background}\label{INTbackground}
Picard-Vessiot (or PV, for short) theory, 
having originated around 1900,
is a Galois theory for
linear differential equations (see standard
textbooks, such as Kolchin \cite{Kolchin}, Kaplansky
\cite{Kaplansky}, or van der Put and Singer \cite{PutSinger}),
which has remarkable applications, for examples, in  
showing criteria of irreducibility of generalized hypergeometric
differential equations \cite{Beukers} or
obstructions of integrability of Hamiltonian systems
\cite{MoralesRuiz}.

As a standard, modern setup, 
the theory discusses differential modules over a
differential field.
A {\em differential field} $K$ is a field, here supposed to be of
characteristic zero, which is equipped with a set, say $\varDelta$, of 
differential operators, and a {\em differential $K$-module} $V$ is a $K$-vector
space, here supposed to be of finite dimension,  on which each element
$\partial$ of $\varDelta$ acts additively so that $\partial(xv)=(\partial x)v=x(\partial v)$ for $x \in K$ and $v \in V$. 
Let $L|K$ be an extension 
(or namely, an inclusion $L\supset K$) of differential fields.
We say that $L$ is a {\em (minimal) splitting differential field}
for $V$, if the base extension $L\otimes_K V$ is isomorphic
to the direct sum $L\oplus \dots\oplus L$ of some copies of the trivial differential $L$-module $L$ 
(and if it is minimal with respect to the property). 
Roughly (though slightly) speaking, such an extension
$L|K$ satisfying the minimality condition is called a {\em PV extension}, and the automorphism group $\G(L|K)$ is seen (far from 
obviously!) to have {\em uniquely} a structure of a linear algebraic group; it is called the {\em differential Galois group} of $L|K$. A Galois correspondence 
is shown between the closed subgroups of $\G(L|K)$ and the intermediate 
differential fields of $L|K$. Moreover, as an issue of solvability of 
the differential module in question, 
it is proved that 
the identity component of $\G(L|K)$ is solvable if and only if 
$L|K$ is of Liouville-type. 

A Hopf-algebraic approach to the theory was proposed by M. Takeuchi
\cite{T}
in 1989, which has two features: (1)~Differential fields and
modules are generalized by $C$-ferential fields and modules,
with $\varDelta$ replaced by a cocommutative coalgebra $C$ given a
specific grouplike element $1_C$ which is supposed to always
act as an identity. The 
conventional, characteristic-zero assumption
is thereby removed; (2)~The notion of PV extensions is newly defined so that
they look independent of $C$-ferential modules, and naturally give rise
to linear algebraic groups, or more precisely, affine group schemes.
Separately, it is proved that an extension $L|K$ of $C$-ferrential 
fields, which we here suppose to be finitely generated as a field extension, is PV if and only if $L$ is 
a minimal splitting $C$-ferential field of some $C$-ferential
$K$-module. 
To recall from \cite[Definition 2.3]{T} Takeuchi's new definition, let $L|K$ be
an extension of $C$-ferential fields with the same field
$L^C=K^C$ of $C$-invariants (cf.\eqref{INTeq3}), which we denote by 
$k$. Then $L|K$
is called a {\em PV extension}, if there exists ({\em uniquely}, as is proved by \cite[Lemma 2.5]{T})
an intermediate $C$-ferential algebra $A$ of $L|K$ such that (i)~the algebra  
\eq\label{INTeq0}
H:=(A\otimes_K A)^C
\eeq
of the $C$-invariants in $A\otimes_KA$ generates the left (or right) $A$-module $A\otimes_KA$, and (ii)~$L$ coincides with
the quotient field of $A$. 

Then $H$ and $A$ are proved (see \cite[Lemma 2.4]{T}, where some of the words found in the following are not used) to uniquely 
turn into
a commutative Hopf algebra over $k$ and an $H$-comodule algebra, respectively, so that
$A|K$ is an {\em $H$-Galois extension}. This is translated into
the language of schemes (from functorial view-point) as follows: 
$\G:=\operatorname{Sp}_k(H)$ is an affine $k$-group scheme, and
$\operatorname{Sp}_k(A)$ is a {\em $\G$-torsor} over 
$\operatorname{Sp}_k(K)$. This $\G$ is naturally 
regarded as the automorphism $k$-group functor of the 
$C$-ferential algebra $A$ (not $L$) over $K$, and 
under some appropriate assumptions, the group $\G(k)$ of $k$-points
coincides with what should be called the {\em $C$-ferential 
Galous group}
of $L|K$ in accordance with the standard, differential situation 
overviewed before,
that is, the linear algebraic group of $C$-ferential 
automorphisms of $L|K$. 

In view of Takeuchi's new definition above, the author thinks 
that his motivation
was in smoothing a connection of PV extensions with
linear algebraic groups, which was naturally replaced with affine group
schemes in his context. Such a smooth connection was achieved through
Hopf-Galois extensions or torsors, 
though these words were not used in \cite{T}. 

Takeuchi's theory was later extended by
Amano and the author \cite{AM} (see also \cite{AMT}, \cite{A}) 
so that  
difference algebras and modules as well can be treated on the same stage.
The present paper extends his theory in another way,
namely,  
to the supersymmmetric (or SUSY) context, in which everything built 
on the obvious 
symmetry $V\otimes W\overset{\simeq}{\longrightarrow}W\otimes V$,
$v\otimes w\mapsto w\otimes v$ of vector spaces is
replaced with the more general, super analogue built
on the supersymmetry found in \eqref{INTeq1} below.

\subsection{The setup}\label{INTframework}
Throughout in what follows, we work over a fixed field $R$ of characteristic zero. This assumption on the characteristic is needed
to ensure smoothness of objects at several places. 
A \emph{super-vector space} (over $R$) is a synonym of a vector space $V=V_0\oplus V_1$
graded by the group $\mathbb{Z}/(2)=\{ 0,1\}$ of order 2. Those super-vector spaces 
$V, W,\dots$ form a symmetric monoidal category with respect to the tensor product 
$\ot\, (=\ot_R)$ over $R$, the unit object $R$ and the supersymmetry
\begin{equation}\label{INTeq1}
c_{V,W}:V\ot W \overset{\simeq}{\longrightarrow} W \ot V,\quad
c_{V,W}(v \ot w)=(-1)^{|v| |w|}w\ot v. 
\end{equation}
Here and in what follows, $|v|\, (\in \{0,1\})$ denotes the degree of $v$, and the element $v$ is supposed, by convention, 
to be homogeneous when we mention its degree. The elements $0$, $1$ 
in $\mathbb{Z}/(2)$ are referred to as \emph{even} and \emph{odd}, respectively, and a super-vector space $V\, (=V_0\oplus V_1)$ is said to be \emph{purely even}
(resp., \emph{purely odd}) if $V=V_0$ (resp., $V=V_1$). 
Basically, objects, such as algebra, Hopf algebra or Lie algebra, defined in the 
category are called with ``super" attached, so as superalgebra, Hopf superalgebra or Lie superalgebra. 
Superalgebras are supposed to be \emph{super-commutative}, $ab=(-1)^{|a||b|}ba$, unless otherwise stated. Hopf superalgebras 
are as well. An important exception is the Hopf superalgebra $D$ which plays the role of the set of differential operators
in the standard PV theory. 

Indeed, $D$ denotes throughout a fixed super-cocommutative Hopf superalgebra which may not be super-commutative.
\tc{The primitive elements of $D$ naturally forms a Lie superalgebra, which we denote by $\g_D$.}
In this Introduction we assume, as we will 
do in most parts of the paper,
that $D$ is {\em pointed}, and the odd component $(\g_D)_1$ of $\g_D$
is finite-dimensional. 
The first assumption means that $D$ is the semi-direct product
\eq\label{INTeq2}
D=U(\g_D)\rtimes \Gamma_D
\eeq
of $U(\g_D)$ by $\Gamma_D$, where $U(\g_D)$ denotes the universal envelope of \tc{the Lie superalgebra $\g_D$}, and $\Gamma_D$ denotes
the group consisting of all grouplike \tc{(necessarily, even)} elements of $D$. 
The left $D$-supermodules form a 
symmetric monoidal category, 
\[
{}_D\M=({}_D\M, \ot, R), 
\]
and algebras in this category are called
$D$-\emph{superalgebras}. Given such an algebra $K$, $K$-modules in 
${}_D\M$ correspond to 
differential or $C$-ferential modules in the ordinary, non-super
context. 

Recall that the existing PV theory discusses extensions of 
differential or $C$-ferential \emph{fields}. 
So, there arises the following.

\medskip

\noindent
{\bf Question.}\
 \emph{What is an appropriate notion in the super context which generalizes fields?}

\medskip

Our answer is the notion of \emph{SUSY fields}. Before introducing it from Definition \ref{SUFdef1},
we remark that given a superalgebra $K$, the super-ideal $I_K:=(K_1)$ generated by the odd component $K_1$ of $K$
gives the largest purely even quotient superalgebra $\overline{K}:=K/I_K$ of $K$,
and, moreover, the graded algebra 
\[
\operatorname{gr}K:=\bigoplus_{n=0}^{\infty}I_K^n/I_K^{n+1}\, (=\overline{K}\oplus I_K/I_K^2\oplus...),
\]
which is in fact a superalgebra by mod 2 grading. It is known (see Proposition \ref{SUFprop1}) that
a Noetherian superalgebra $K$ is smooth (over $R$) if and only if $\overline{K}$ is smooth, $I_K/I_K^2$ is (finitely generated)
projective as a $\overline{K}$-module and there is a {\em non-canonical} isomorphism $\wedge_{\overline{K}}(I_K/I_K^2)\simeq K$,
which can be chosen so that it turns into a {\em canonical} isomorphism with $\operatorname{gr}$ applied. 
Almost all superalgebras that will appear in this paper are Noetherian and smooth. 
Important in our discussion is that isomorphisms such as above are ``pre-canonical"; this means that
they are non-canonical, but become canonical after $\operatorname{gr}$. 

A {\em SUSY field} is defined to be a Noetherian smooth algebra $K$ such that $\overline{K}$ is a field,
so that necessarily, it is non-canonically isomorphic to a naturally associated, finite-dimensional exterior algebra over the field $\overline{K}$. 
As a typical example, given a locally Noetherian and smooth, integral superscheme $\X$, the stalk $\cO_{\X,\eta}$ of the structure sheaf 
$\cO_{\X}$ at the generic point $\eta$ is a SUSY field, which we call {\em the function SUSY field} of $\X$; see Definition \ref{SSCdef1}. 
A SUSY field is called a {\em $D$-SUSY field}, if it is a $D$-superalgebra at the same time. We will often assume that a $D$-SUSY field in question
is {\em $D$-simple}, or namely, it include no non-trivial $D$-stable super-ideal. The $D$-simplicity ensures that
the set 
\eq\label{INTeq3}
K^D:=\{\, x \in K\mid dx =\varepsilon(d)x\ \, \text{for all}\ \, d \in D\, \},
\eeq
of all {\em $D$-invariants} in $K$, 
where $\varepsilon$ denotes the counit of $D$, 
is a field. 
Suppose that $K$ is a $D$-simple $D$-SUSY field. The role of differential modules 
over a differential field in the standard PV theory is here played
by those $K$-modules in ${}_D\M$ 
which are {\em $K$-finite free} (see Definition \ref{SASdef1}), or namely, which have $K$-free bases consisting of finitely many homogeneous elements.
Giving a $K$-free finite $K$-module in ${}_D\M$ with \tc{$m$ even and $n$ odd} $K$-free basis elements is equivalent to giving an equation
\eq\label{INTeq3a}
d\boldsymbol{y}=F(d)\boldsymbol{y},\quad d\in D,
\eeq
where 
\tc{$\boldsymbol{y}={}^t\big( y_1,\dots,y_{m+n}\big)$
is the column vector with entries
even variables $y_1,\dots, y_m$ and odd variables $y_{m+1},\dots,y_{m+n}$},\  
and $F(d)$ is a matrix in $\mathsf{M}_{m|n}(K)$ (see Example \ref{SASex1}) satisfying \eqref{UEXeq2}. 
As far as the author understands, every variational equation that is linearized from any super-differential
equation discussed by \cite{MS} (see also \cite{GW}) is necessarily of the form \eqref{INTeq3a} 
in the case where $D=U(\g_D)$, and $(\g_D)_0$ and $(\g_D)_1$ are both one-dimensional.

\subsection{Main results}\label{INTmainresults}
Our main results are quite parallel to those of \cite{T},
though many of the proofs become more difficult. 
The results are proved in Sections \ref{GAL}--\ref{secUEX}, and
the preceding Sections \ref{secPRE}--\ref{secSPV} are devoted to 
preliminaries.

In this subsection we fix a $D$-simple $D$-SUSY field
$K$, and let $k:=K^D$ denote the field of $D$-invariants; this $k$, 
of course, includes the base field $R$. 

Suppose that $L|K$ is an extension of $D$-simple $D$-SUSY fields.
There is naturally associated 
an extension $\L|\K$ of $\D$-fields, where $\D$ denotes 
the cocommutative
Hopf algebra $U(\g_0)\rtimes \Gamma_D$ (cf. \eqref{INTeq2}), and
a {\em $\D$-field} is a $\D$-module algebra which is a field. 
By Definition \ref{DEFdef1}, $L|K$ is called a 
{\em SUSY PV extension},
if (1)~$L^{D}=k\, (=K^D)$, (2)~$\L|\K$ is finitely generated as
a field extension,
and (3)~there exists (necessarily, uniquely)
an intermediate $D$-superalgebra $A$ of $L|K$ such that (i)~$H:=(A\otimes_KA)^D$ (cf. \eqref{INTeq1}) generates the left (or right) $A$-supermodule $A\otimes_KA$, and (ii)~$L$ coincides with the localization 
$\Quot(A)$ of $A$ by the multiplicative subset of $A_0$ consisting of all $A$-regular elements. 
In this case, analogously as in \cite{T}, $H$ and $A$ uniquely turn into a finitely
generated Hopf superalgebra over $k$, and an $H$-supercomodule 
superalgebra, respectively, so that $A|K$ is an $H$-Galois extension,
or in other words, $\operatorname{Sp}_k(A)\to \operatorname{Sp}_k(K)$ 
is a super $\operatorname{Sp}_k(H)$-torsor; this is much more difficult
to prove than the corresponding one in \cite{T}, due to the fact that extensions
of SUSY fields are not so simple; see Example \ref{SUFex0}).
Indeed, almost the
whole of Section \ref{secSPV} is devoted to the proof, in which some results 
(Proposition \ref{HGEprop1}, Theorem \ref{HGEthm1}) reproduced from the recent article \cite{MOT} 
play a crucial role. 
We let
\[
\G(L|K)=\operatorname{Sp}_k(H)
\]
denote the affine algebraic $k$-supergroup scheme represented by $H$,
and call it {\em the Galois supergroup} of $L|K$.

The first main result is the following.
\medskip

\noindent
{\bf Galois correspondence}~(Theorems \ref{GALthm1} and \ref{GALthm2}).\  \emph{Given a SUSY PV extension $L|K$, there is a one-to-one correspondence
between the closed sub-supergroup schemes of $\G(L|K)$ and
the intermediate $D$-SUSY fields of $L|K$, which restricts to 
a one-to-one correspondence between the closed normal sub-supergroup
schemes of $\G(L|K)$ and the intermediate $D$-SUSY fields $M$ of $L|K$
such that $M|K$ is a SUSY PV extension.}

\medskip

In the situation above, notice that $\G(L|K)$ acts on the affine superscheme $\X:=\operatorname{Sp}_k(A)$, where $A$ is such as in the definition of SUSY PV extensions.
Given a closed sub-supergroup scheme $\F$ of $\G(L|K)$, the quotient fppf sheaf $\X\tilde{/}\F$ of $\X$
by the restricted action by $\F$ is proved to be a Noetherian and smooth, integral superscheme. The intermediate $D$-SUSY field 
of $L|K$ corresponding to $\F$ is the function SUSY field of $\X\tilde{/}\F$. 

Let $V$ be a $K$-module in ${}_D\M$, which is $K$-finite free so that
$V \simeq K^{m|n}$ as $K$-supermodules for uniquely determined 
non-negative integers $m$, $n$. 
Here, $K^{m|n}$ denotes the direct sum
of $m$-copies of $K$ and of $n$-copies of the degree shift $K[1]$ of 
$K$.
An extension $L|K$ of $D$-simple $D$-SUSY
fields is called a {\em (minimal) splitting $D$-SUSY field} for $V$ (see Definition
\ref{SPDdef1}), if 
$L\ot_KV\simeq L^{m|n}$ as $L$-modules in ${}_D\M$ (and if $L$ is
minimal with respect to the property, roughly speaking).

The following is the second main result, which 
would justify our definition of SUSY PV extensions, showing
that they are precisely the objects which behave as expected from the traditional definition 
 (see Section \ref{INTbackground}) of PV extensions.

\medskip

\noindent
{\bf Characterization}~(Theorem \ref{CHAthm1}).\ 
\emph{Let $L|K$ be an extension of $D$-simple $D$-SUSY fields such that
$L^D=K^D$. Then $L|K$ is a SUSY PV extension if and only if there exists
a $K$-finite free $K$-module $V$ in ${}_D\M$ such that $L$ is a
minimal splitting $D$-SUSY field for $V$.}

\medskip

Suppose that $L|K$ is a SUSY PV extension, whence we have
$L^D=k\, (=K^D)$. 
The finite-dimensional
$\G(L|K)$-supermodules form a $k$-linear abelian, rigid symmetric
monoidal category, which we denote by ${}_{\G(L|K)}\mathtt{Smod}$.
The so-called Hopf-module theorem (Theorem \ref{HGEthm0}), 
a fundamental theorem in Hopf algebra theory, 
formulated in the super context proves
a Tannaka-type theorem (Theorem \ref{TNKthm1}) which states 
that ${}_{\G(L|K)}\mathtt{Smod}$ is equivalent, as 
a monoidal category of the described sort, to
a certain monoidal full subcategory 
of the category ${}_K({}_D\M)$ of $K$-modules in ${}_D\M$. 
By the Characterization above, $L$ is a minimal splitting $D$-SUSY field for some $V$. We prove that the last monoidal full subcategory 
of ${}_K({}_D\M)$ contains $V$, and is generated by $V$, whence
it may be denoted by $\langle\langle V\rangle\rangle_{\otimes}$. 
Thus we have the familiar presentation (cf. \cite{Del})
\[
\langle\langle V\rangle\rangle_{\otimes}\approx {}_{\G(L|K)}\mathtt{Smod}
\]
of the Tannaka-type theorem in PV theory; see Corollary \ref{TNKcor1}. 

The following is the last main result, which is,
among the results presented here, 
probably most important with regard to
actual applications.
\medskip

\noindent
{\bf Unique existence}~(Theorem \ref{UEXthm1}).\ 
\emph{Assume that $D$ is irreducible, or explicitly, $D=U(\g_D)$; see
\eqref{INTeq2}. Let $K$ be a $D$-simple $D$-SUSY field such that the field $K^D$ of $D$-invariants in $K$ is algebraically
closed. Then, given a $K$-finite free $K$-module $V$ in ${}_D\M$, there exists a $D$-simple minimal-splitting
$D$-SUSY field $L$ for $V$ such that $L^D=K^D$. Such an $L$ is unique up to $K$-algebra isomorphism in ${}_D\M$. }

\medskip

Going back to the situation of the Characterization, suppose that $L$ is a minimal splitting $D$-SUSY field for $V$. 
One sees that $\widetilde{V}:=V/K_1V$ naturally turns into a 
(non-super) object 
discussed by the existing PV theory, or to be more precise, 
a $\K$-module in the category ${}_{\D}\mathtt{Mod}$ of $\D$-modules.
It is proved
(see Proposition \ref{ASSprop1}) that the $\D$-field $\L$ associated with $L$ is a {\em minimal splitting
$\D$-field} for $\widetilde{V}$ in the sense of the existing theory. 
The continuation \cite{MasuokaII} of the present paper will discuss,
among others, 
the solvability of $V$ in relation with 
that of $\widetilde{V}$. 


\section{Basics on super algebra/algebraic geometry}\label{secPRE}
Recall that we work over a fixed field $R$ of characteristic zero, and 
superalgebras are supposed to be
super-commutative unless otherwise sated. 
The unadorned $\ot$ presents the tensor product over $R$. 

\subsection{Superalgebras and supermodules}\label{SAS}
Let $V$, $W$ be super-vector spaces (over $R$). 
We let $\op{Hom}(V,W)$ denote the vector space of all $R$-linear morphisms
$V \to W$  that may not preserve the parity.
In fact, this is a super-vector space with respect to the parity
\eq\label{SASeq0}
(\op{Hom}(V,W))_i=\op{Hom}(V_0,W_{0+i})\oplus \op{Hom}(V_1,W_{1+i}),\quad i=0,1.
\eeq

We let $V[1]$ denote the degree shift of $V$, while we let $V[0]=V$. Thus we have
\[
V[i]_j=V_{i+j},\quad i,j \in \{0,1\}.
\]

The \emph{dimension} $\dim V$ of $V$ is the pair of the dimensions of the homogeneous components
$V_i$, where $i=0,1$, and is presented so as
\eq\label{SASeq00-}
\dim V=\dim V_0 \mid \dim V_1.
\eeq
By $\dim V \le \dim W$, we mean $\dim V_i\le \dim W_i$ for $i=0,1$, both. 

Let $K$ be a superalgebra. Left and right $K$-supermodules are identified. 
To be more precise, the left and the right $K$-supermodule
structures on a fixed super-vector space $V$ are in one-to-one correspondence through the relation
\[
xv=(-1)^{|x||v|} vx,\quad x \in K,\ v \in V.
\]
Given a pair of such corresponding structures, $V$ turns into a $(K,K)$-super-bimodule, or namely, we have
$(xv)y=x(vy)$, where $x,y\in K$, $v \in V$. 
Let 
${}_K\M$ (resp., $\M_K)$ denote the abelian category of left (resp., right) $K$-supermodules. The categories are identified so that
\begin{equation}\label{SASeq00}
{}_K\M=\M_K,
\end{equation}
as was just seen. We will not specify the side, just saying $K$-supermodules, except when we need to make it clear. 

Notice that ${}_K\M$ forms a monoidal category
\begin{equation}\label{SASeq0a}
({}_K\M,\otimes_K,K).
\end{equation}
Here, the tensor product $V\ot_KW$ of two objects is defined as the co-equalizer 
\eq\label{SASeq0b}
V\ot K \ot W
\rightrightarrows
V\ot W \to V\ot_K W 
\eeq
of the right $K$-action on $V$ and the left $K$-action on $W$; we suppose that $K$ acts on $V\ot_KW$ 
on the left through the tensor factor $V$, or equally, on the right through the tensor factor $W$. 
Moreover, the monoidal category is symmetric with respect to the supersymmetry $V\ot_KW \overset{\simeq}{\longrightarrow}
W\ot_KV$, or to be more precise, the isomorphism induced from the supersymmetry $c_{V,W}$; see \eqref{INTeq1}.

\begin{lemma}[\tu{\cite[Lemma 5.1 (1)]{M}}]\label{SASlem1}
A $K$-supermodule $V$, regarded as an ordinary left or right module over the algebra $K$, is (faithfully) flat if and only if the 
tensor-product functor $V\ot_K:{}_K\M \to{}_K\M$ is (faithfully) exact. 
\end{lemma}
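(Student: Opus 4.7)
The plan rests on the observation that the forgetful functor $F:{}_K\M\to{}_K\mathrm{Mod}$ from supermodules to ordinary $K$-modules is exact and reflects exactness, and that for supermodules $V,W$ the underlying $R$-vector space of $V\otimes_KW$ is the same whether taken in ${}_K\M$ or in ordinary modules, since the defining relations in \eqref{SASeq0b} coincide. Both exactness and nonvanishing of tensored sequences can therefore be tested on underlying spaces. This immediately handles the ``only if'' direction: ordinary (faithful) flatness of $V$ means tensoring preserves exactness (and nonvanishing), and for a sequence coming from ${}_K\M$ the induced grading on the tensored sequence realises it back in ${}_K\M$.

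For the ``if'' direction, the plan is to introduce a doubling functor transporting every ordinary $K$-module into a supermodule. Given an ordinary $K$-module $M$, define $\widetilde M$ with underlying space $M\oplus M$, grading $\widetilde M_0=M\oplus 0$ and $\widetilde M_1=0\oplus M$, and action of $k=k_0+k_1$ (with $k_i\in K_i$) by the matrix
\[
\begin{pmatrix} k_0 & k_1 \\ k_1 & k_0 \end{pmatrix}.
\]
Even elements preserve and odd elements swap the grading, and associativity follows from $K_iK_j\subseteq K_{i+j}$; the resulting functor ${}_K\mathrm{Mod}\to{}_K\M$ is exact. The swap map $\sigma:(a,b)\mapsto(b,a)$ is a $K$-linear involution of the ordinary $K$-module $\widetilde M$, and since $\mathrm{char}\,R=0$, its $\pm 1$-eigenspace decomposition yields a natural isomorphism $\widetilde M\cong M\oplus M^{\tau}$ of ordinary $K$-modules, where $M^{\tau}$ denotes $M$ with $K$-action twisted by the parity automorphism $\tau:k_0+k_1\mapsto k_0-k_1$.

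With this in hand, given an ordinary short exact sequence $0\to X\to Y\to Z\to 0$, the plan is to apply doubling to produce a supermodule short exact sequence, tensor with $V$ (using supermodule exactness), and read off the direct summand $0\to V\otimes_KX\to V\otimes_KY\to V\otimes_KZ\to 0$ via naturality of the decomposition $\widetilde M\cong M\oplus M^{\tau}$, yielding ordinary flatness of $V$. For the faithful flatness variant, the parity-change map $v\mapsto(-1)^{|v|}v$ is an isomorphism $V\cong V^{\tau}$ of ordinary right $K$-modules, which gives $V\otimes_K\widetilde M\cong(V\otimes_KM)^{\oplus 2}$ as abelian groups; any nonzero ordinary $N$ then has nonzero $\widetilde N\in{}_K\M$, so supermodule faithful flatness forces $V\otimes_KN\neq 0$. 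The only point requiring real care is the bookkeeping of signs and the verification that the eigenspace decomposition is natural in $M$, so that tensor products split along the summand $M\hookrightarrow\widetilde M$; this is routine in characteristic zero, and I do not anticipate a serious obstacle.
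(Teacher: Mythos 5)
Your argument is correct; note that the paper itself gives no proof of this lemma but simply cites \cite[Lemma 5.1 (1)]{M}, so the comparison is with that reference rather than with anything in the present text. Your doubling functor $M\mapsto\widetilde M$ is, up to natural isomorphism, the induced module $(K\rtimes\mathbb{Z}/(2))\otimes_KM$ along the ring extension $K\subset K\rtimes\mathbb{Z}/(2)$, under the identification of ${}_K\M$ with ordinary modules over the smash product $K\rtimes\mathbb{Z}/(2)$; and your eigenspace splitting of the swap involution $\sigma$ (which is where $\op{char}R=0$, or really just $2\in R^\times$, enters) is precisely the splitting furnished by the separability idempotent $\tfrac12(1\otimes1+\sigma\otimes\sigma)$ of that extension --- the same separability that the paper invokes in Remark \ref{SUFrem2} for part (2) of the cited lemma. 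So in substance you have reconstructed the standard induction--restriction argument, just written out concretely with $2\times2$ matrices instead of through the smash product; what your version buys is that it is entirely self-contained and makes visible exactly where the twist $M^{\tau}$ and the parity isomorphism $V\simeq V^{\tau}$ (hence the equivalence of left and right flatness) come from, while the smash-product formulation buys uniformity with the injectivity statement in part (2) and with the later use of $\Db=D\rtimes\mathbb{Z}/(2)$ in Section \ref{BSH}. The only points needing care --- naturality of the eigenspace decomposition in $M$, and the identification of the super tensor product with the ordinary tensor product over the associated right module structure on $V$ --- are exactly the ones you flag, and they check out.
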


If the equivalent conditions above are satisfied, we say simply that $V$ is (\emph{faithfully}) \emph{flat}. 

We say that $V$ is \emph{free}, if it has a $K$-free basis consisting of homogeneous elements. 
As a remark there can exist a $K$-supermodule which is free, regarded as an ordinary module, 
but which does not have any free basis consisting 
of homogeneous elements. For example, in case $K$ is the direct product $A\times B$ of two non-zero
superalgebras, $V=A[0]\oplus B[1]$
is such an example. 

Let $m, n$ be non-negative integers. 
We let
\[ K^{m|n}=K[0]^m\oplus K[1]^n \]
denote the $K$-supermodule which is the direct sum of the $m$-copies of $K[0]\, (=K)$ and $n$-copies of
the degree shift $K[1]$.

\begin{definition}\label{SASdef1}
A $K$-supermodule $V$ is said to be \emph{$K$-finite free of rank} $m|n$, if it is isomorphic to $K^{m|n}$. 
If $K$ is a (purely even) field, the rank coincides with the dimension \eqref{SASeq00-}. 
\end{definition}

Assume $m>0$ or $n>0$. The ring
$\op{Hom}_K(K^{m|n},K^{m|n})$
consisting of all $K$-linear endomorphisms of $K^{m|n}$ that may not preserve the parity 
turns into a non-supercommutative superalgebra with respect to the parity inherited from $\op{Hom}(K^{m|n},K^{m|n})$; 
see \tc{\eqref{SASeq0}}.
Suppose that $K^{m|n}$ consists of column vectors on which $K$ acts on the right.
Then $\op{Hom}_K(K^{m|n},K^{m|n})$ is identified with $\mathsf{M}_{m|n}(K)$ given by the following.

\begin{example}\label{SASex1}
$\mathsf{M}_{m|n}(K)$ denotes the \tc{vector space} of \tc{those} $(m+n)\times (m+n)$-matrices which 
\tc{are in the direct sum}
\eq\label{SASeq1}
\begin{pmatrix} (K_0)_{m,m}&(K_1)_{m,n}\\ (K_1)_{n,m}&(K_0)_{n,n}\end{pmatrix}\tc{\oplus}
\begin{pmatrix} (K_1)_{m,m}&(K_0)_{m,n}\\ (K_0)_{n,m}&(K_1)_{n,n}\end{pmatrix}. 
\eeq
\tc{Here,} $(K_i)_{r,s}$ denotes the \tc{vector space} of the $r\times s$-matrices with entries in $K_i$, $i=0, 1$, and
the $2\times 2$-matrices \tc{in \eqref{SASeq1}} denote the \tc{vector spaces} of those matrices which \tc{consist of four} sub-matrices
belonging to \tc{the} $(K_i)_{r,s}$ \tc{indicated} at their positions. One sees that $\mathsf{M}_{m|n}(K)$ is a non-supercommutative
superalgebra with respect to the ordinary operations on matrices. The even (resp., odd) component
is the first (resp., second) \tc{vector space in \eqref{SASeq1}}. 
\tc{Those homogeneous} elements will be called \emph{even} \tc{or} \emph{odd matrices}. 
\end{example}

\subsection{SUSY fields}\label{SUF}

Let $K$ be a superalgebra. 

We define
\begin{equation}\label{SUFeq0}
I_K:=(K_1),\quad \K:=K/I_K\, (=K_0/K_1^2).
\end{equation}
Thus, $I_K$ is the super-ideal of $K$ generated by the odd component $K_1$ of $K$, and
$\K$ is the largest purely even quotient of $K$. 
The $\mathbb{N}$-graded algebra 
\begin{equation}\label{SUFeq0a}
\op{gr}K=\bigoplus_{n=0}^\infty I_K^n/I_K^{n+1}
\end{equation}
associated with the $I_K$-adic filtration has homogeneous components such that
\[
(\op{gr}K)(n)=I_K^n/I_K^{n+1}=\begin{cases} K_0/K_1^2, & n=0; \\ K_1^n/K_1^{n+2}, &n>0\end{cases}
\]
(see \cite[p.360]{MZ1}), which are purely even (resp., purely odd) if $n$ is even (resp., odd). 
Hence $\op{gr} K$ turns into a superalgebra over $\K\, (=(\op{gr}K)(0))$ 
by the $\op{mod}2$ grading, which is indeed super-commutative. 

The subset $\sqrt{0}$ of $K$ consisting of all nilpotent elements forms a super-ideal, called the \emph{nil radical} of $K$. We let
\eq\label{SUFeq0b}
K_{\op{red}}=K/\sqrt{0}
\eeq
denote the quotient superalgebra, which is purely even. In fact, we see $\sqrt{0}\supset I_A$,
so that
\[
K_{\op{red}}=\K_{\op{red}}.
\]

We say that $K$ is \emph{Noetherian}, if the super-ideals of $K$ satisfy
the ACC, or equivalently, if the even component $K_0$ is a Noetherian algebra, and 
the odd component $K_1$, regarded as a $K_0$-module,
is finitely generated; see \cite[Section 2.4]{MT} for other equivalent conditions. 
Note that if $K$ is Noetherian, then the super-ideal $I_K$, being finitely generated, is nilpotent. 

We say that $K$ \emph{smooth} (over $R$), if given a surjective
superalgebra morphism $A \to B$ with nilpotent kernel, every superalgebra morphism 
$K\to B$ factors through $A$.
The generalized notion of \emph{smooth algebras over a superalgebra} (not necessarily, a field)
is defined in the obvious manner.

The following is part of \cite[Theorem A.2]{MZ2}.

\begin{prop}\label{SUFprop1}
Assume that $K$ is Noetherian. Then $K$ is smooth if and only if
\begin{itemize}
\item[(i)] $\overline{K}$ is smooth,
\item[(ii)] $I_K/I_K^2$, naturally regarded as a purely odd (finitely generated) $\overline{K}$-super-module, 
is projective as an $\overline{K}$-module, and 
\item[(iii)] there is an isomorphism 
\eq\label{SUFeq1}
\wedge_{\overline{K}}(I_K/I_K^2)\overset{\simeq}{\longrightarrow} K
\eeq
of superalgebras.
\end{itemize}
\end{prop}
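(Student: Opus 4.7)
The plan is to prove the forward direction by successively extracting structural data from the lifting property of $K$, and the converse by noting that any superalgebra of the form $\wedge_{\overline{K}}(V)$, with $\overline{K}$ smooth and $V$ projective, automatically inherits smoothness. I first observe that under the Noetherian hypothesis $I_K$ is nilpotent: $K_1$ is finitely generated over $K_0$, say by $n$ odd elements, each of which squares to zero in characteristic zero, so any product in $I_K$ of length exceeding $n$ vanishes after super-commutative rearrangement. Using this, property (i) falls out of a clean test: given a surjection $A' \twoheadrightarrow B'$ of purely even algebras with nilpotent kernel and a morphism $\overline{K} \to B'$, I compose with $K \to \overline{K}$, invoke smoothness of $K$ to obtain a lift $K \to A'$, and observe that $K_1$ must map to $A'_1 = 0$, so the lift descends to $\overline{K} \to A'$. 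Superalgebra smoothness of the purely even $\overline{K}$ coincides with classical smoothness by considering purely even test targets, so this suffices.

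Next I construct a superalgebra section $s : \overline{K} \to K$ of the canonical projection: since $\overline{K}$ is smooth and $K \twoheadrightarrow \overline{K}$ is a surjection with nilpotent kernel, the identity of $\overline{K}$ lifts to the desired $s$. This turns $K$ into an $\overline{K}$-superalgebra and makes $V := I_K/I_K^2$, which is automatically purely odd since the even component $K_1^2$ of $I_K$ is already absorbed into $I_K^2$, into a purely odd $\overline{K}$-supermodule. Property (ii) then drops out of the conormal exact sequence
\[
I_K/I_K^2 \longrightarrow \Omega_{K/R} \otimes_K \overline{K} \longrightarrow \Omega_{\overline{K}/R} \longrightarrow 0,
\]
which is left-exact by smoothness of $\overline{K}$; smoothness of $K$ forces $\Omega_{K/R}$, hence $\Omega_{K/R} \otimes_K \overline{K}$, to be projective, and the section $s$ splits the sequence, exhibiting $V$ as a direct summand of a projective $\overline{K}$-supermodule.

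For (iii), projectivity of $V$ lifts the surjection $I_K \twoheadrightarrow V$ to a $\overline{K}$-supermodule section $\sigma : V \to I_K \subset K_1$. Because $\sigma(V) \subset K_1$ and $K$ is super-commutative, the images pairwise anticommute and square to zero; the universal property of the exterior algebra (which, for a purely odd module, is the free super-commutative $\overline{K}$-algebra) therefore extends $\sigma$ to a $\overline{K}$-superalgebra morphism
\[
\tilde\sigma : \wedge_{\overline{K}} V \longrightarrow K.
\]
Both sides carry natural filtrations — the exterior-degree filtration on the source and the $I_K$-adic filtration on the target — and $\tilde\sigma$ is a filtered morphism. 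Since $I_K$ is nilpotent, showing that $\tilde\sigma$ is an isomorphism reduces to showing that the induced map on associated graded is. This induced map is surjective by construction and is the identity in degrees $0$ and $1$. The hard part will be injectivity in degrees $n \ge 2$: this is the "no unexpected relations" content of smoothness. I expect to handle it by reducing, locally if $V$ is not free, to the case $V \simeq \overline{K}^{0|n}$, and then using the lifting property to show that $K$ is isomorphic to the free super-commutative $\overline{K}$-algebra on $n$ odd generators, namely $\overline{K} \otimes \wedge(\xi_1,\dots,\xi_n)$, which forces a degree-by-degree bijection.

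For the converse, assume (i)--(iii), so that $K \simeq \wedge_{\overline{K}}(V)$ with $\overline{K}$ smooth and $V$ projective. The exterior algebra on a purely odd projective module is smooth over $\overline{K}$: it is a retract of $\wedge_{\overline{K}}(\overline{K}^{0|n}) \simeq \overline{K} \otimes \wedge(\xi_1,\dots,\xi_n)$, and the latter is smooth because a lift against a nilpotent surjection is obtained by choosing arbitrary odd lifts of the images of the $\xi_i$ (the relation $\xi_i^2 = 0$ is automatic). Transitivity of formal smoothness applied to $R \to \overline{K} \to K$ then yields smoothness of $K$, closing the equivalence.
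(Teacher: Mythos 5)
The paper does not prove this proposition at all: it is quoted verbatim from \cite[Theorem A.2]{MZ2}, so there is no in-paper argument to compare against, and your attempt has to stand on its own. Most of it does. The converse direction is sound ($\wedge_{\overline{K}}(V)$ is a retract of $\overline{K}\otimes\wedge(\xi_1,\dots,\xi_n)$, which is smooth over $\overline{K}$ because odd lifts against a nilpotent surjection can be chosen freely in characteristic $\neq 2$; then apply transitivity of formal smoothness). The derivation of (i), the construction of the section $s:\overline{K}\to K$, and the derivation of (ii) from the split conormal sequence are the standard arguments, though you lean on super-analogues of classical facts (formal smoothness implies $\Omega_{K/R}$ is projective; left-exactness of the conormal sequence) that you assert rather than check.

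The genuine gap is exactly where you write ``the hard part will be'' and ``I expect to handle it by'': the injectivity of $\tilde\sigma:\wedge_{\overline{K}}(I_K/I_K^2)\to K$ in exterior degrees $\geq 2$. This is the whole content of (iii) --- the sections $s$ and $\sigma$, the extension $\tilde\sigma$, and its surjectivity are routine --- and your sketch (``use the lifting property to show that $K$ is isomorphic to the free super-commutative $\overline{K}$-algebra on $n$ odd generators'') merely restates the goal in the free case without saying how smoothness excludes extra relations; the proposed localization also silently assumes that smoothness of $K$ passes to localizations. One way to close the gap without localizing at all: $\ker\tilde\sigma$ is contained in $\wedge^{\geq 2}_{\overline{K}}V$, hence nilpotent, and $\tilde\sigma$ is surjective, so smoothness of $K$ lifts $\mathrm{id}_K$ through $\tilde\sigma$ to an $\overline{K}$-algebra section $\tau:K\to\wedge_{\overline{K}}V$. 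The composite $e:=\tau\circ\tilde\sigma$ is then an idempotent $\overline{K}$-algebra endomorphism of $\wedge_{\overline{K}}V$ with $e(v)-v\in\wedge^{\geq 2}_{\overline{K}}V$ for every $v\in V$, so $e$ induces the identity on the associated graded of the finite exterior-degree filtration; hence $e$ is bijective, hence $e=\mathrm{id}$, hence $\ker\tilde\sigma=\ker e=0$. Without some such argument the forward half of (iii) remains unproved.
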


\begin{rem}\label{SUFrem1}
Here are two remarks.
\begin{itemize}
\item[(1)] Recall the assumption $\op{char} R=0$. 
Then one sees that in case $\K$ is a field, Condition (i), as well as (ii), is necessarily satisfied. 
On the other hand it follows from \cite[Theorem A.2]{MZ2} that a Notherian superalgebra 
$K$ is smooth if and only if 
it is \emph{regular} \cite[Definition A.1]{MZ2} in the sense of T. Schmitt
that (i)~$\K$ is regular, (ii)~the same as in the preceding proposition, and (iii)~the canonical $\K$-superalgebra
morphism 
\begin{equation}\label{SUFeq1a}
\wedge_{\K}(I_K/I_K^2) \to \op{gr}K
\end{equation}
which arises from the embedding 
$I_K/I_K^2=(\op{gr}K)(1)\hookrightarrow
\op{gr}K$ (see \eqref{SUFeq0a}) is an isomorphism. 
\item[(2)]
Suppose that $K$ is Noetherian and smooth. By Condition (i) of the preceding proposition, the projection $K\to \K$ splits as a superalgebra morphism. Choose arbitrarily a section,
and regard $K$ as a $\K$-superalgebra through it. 
As is seen from the proof of \cite[Theorem A.2]{MZ2}
(or from \cite[Remark 2.2 (2)]{MOT}),  
we can choose an isomorphism such as in \eqref{SUFeq1}, so that it is an isomorphism over $\K$, 
and the associated graded-algebra isomorphism 
$\wedge_{\K}(I_K/I_K^2) \overset{\simeq}{\longrightarrow} \op{gr}K$ is the canonical one given in \eqref{SUFeq1a}. 
This holds without \tc{the assumption} $\op{char}R=0$. 
\end{itemize}
\end{rem}

\begin{definition}\label{SUFdef1}
A \emph{SUSY field} is a Noetherian superalgebra $K$ such that $\K$ is a field.
\end{definition}

\begin{prop}\label{SUFprop2}
Every SUSY field $K$ has the following properties. 
\begin{itemize}
\item[(1)]
$K$ is self-injective in the sense that $K$ is injective as a $K$-supermodule.
\item[(2)]
A $K$-supermodule $V$ is free if and only if it is flat.
\end{itemize}
\end{prop}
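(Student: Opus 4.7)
\medskip
\noindent\textbf{Proof proposal.}\
By Proposition \ref{SUFprop1}, smoothness of $K$ together with the fact that $\K$ is a field (call it $F$) yields a (non-canonical) isomorphism $K \simeq \wedge_F W$ of $F$-superalgebras, where $W := I_K/I_K^2$ is a finite-dimensional, purely odd $F$-vector space, say of dimension $n$. Thus $K$ is a finite-dimensional, local $F$-superalgebra with unique maximal super-ideal $I_K$, and $I_K$ is nilpotent: its generators are odd, so by super-commutativity any product of more than $n$ of them vanishes, giving $I_K^{n+1}=0$. This nilpotence will serve as the super-Nakayama engine in both parts.

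My plan for (1) is to exhibit $K$ as a super-Frobenius $F$-algebra. Fix an $F$-basis $\theta_1,\dots,\theta_n$ of $W$ and define the Berezin-type trace $\tau:K\to F$ as the composition $K\twoheadrightarrow \wedge^n W \xrightarrow{\sim} F$ sending $\theta_1\cdots\theta_n\mapsto 1$. A direct check on the monomial basis $\{\theta_{i_1}\cdots\theta_{i_k}\}$ shows that the $F$-bilinear form $(x,y)\mapsto\tau(xy)$ on $K$ is non-degenerate, producing a $K$-supermodule isomorphism $K\simeq K^*[n\ \op{mod}\ 2]$, where $K^*:=\op{Hom}_F(K,F)$ carries the natural left $K$-action $(a\cdot f)(x)=f(xa)$. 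The standard adjunction
\[
\op{Hom}_K(V,K^*)\simeq \op{Hom}_F(V,F),
\]
natural in $V\in {}_K\M$, combined with the exactness of $\op{Hom}_F(-,F)$ on super-$F$-vector spaces (any short exact sequence over the field $F$ splits), shows that $K^*$ is injective in ${}_K\M$. Since the degree shift $[1]$ is a self-equivalence of ${}_K\M$, $K$ itself is injective.

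For (2), free $\Rightarrow$ flat follows from Lemma \ref{SASlem1}. Conversely, suppose $V$ is a flat $K$-supermodule. Set $\overline{V}:=V/I_KV\simeq V\ot_K F$, a super-$F$-vector space, and pick a homogeneous $F$-basis $\{\overline{v}_\alpha\}_{\alpha\in\Lambda}$ of $\overline{V}$ together with homogeneous lifts $v_\alpha\in V$. The $K$-supermodule map
\[
\phi: L:=\bigoplus_{\alpha\in\Lambda} K[|\overline{v}_\alpha|]\longrightarrow V,\qquad e_\alpha\mapsto v_\alpha,
\]
becomes an isomorphism after applying $-\ot_K F$, so $\op{coker}(\phi)=I_K\op{coker}(\phi)$; nilpotence of $I_K$ forces $\phi$ surjective. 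Applying $-\ot_K F$ to $0\to \ker\phi\to L\to V\to 0$ preserves exactness by flatness of $V$, whence $\ker\phi=I_K\ker\phi$; nilpotence once more gives $\ker\phi=0$. Hence $\phi$ is an isomorphism, and $V$ is $K$-free.

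The one genuinely delicate step is verifying the Frobenius isomorphism $K\simeq K^*[n\ \op{mod}\ 2]$ at the supermodule level, since $\tau$ is homogeneous of parity $n\ \op{mod}\ 2$ and careful sign bookkeeping is required in translating the non-degenerate pairing $\tau(xy)$ into a $K$-linear (not merely $F$-linear) isomorphism; the computation is nonetheless elementary once an ordered monomial basis of $\wedge_F W$ is fixed. Note that the argument for (2) never invokes finite generation of $V$: the nilpotence of $I_K$ makes super-Nakayama applicable to arbitrary $K$-supermodules irrespective of the cardinality of $\Lambda$.
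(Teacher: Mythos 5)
Your proof is correct, and for part (2) it is word-for-word the paper's argument: lift a homogeneous basis of $V/I_KV$, kill the cokernel by nilpotence of $I_K$, and use $\op{Tor}_1^K(\K,V)=0$ to kill the kernel. For part (1) the overall architecture also matches the paper — reduce to $K\simeq\wedge_{\K}(W)$, show $K^*=\op{Hom}_{\K}(K,\K)$ is injective in ${}_K\M$ by the hom-tensor adjunction and exactness of duality over the field $\K$, and then identify $K^*$ with $K$ or $K[1]$ — but you realize the last step differently. The paper endows $\wedge_{\K}(W)$ with its Hopf-superalgebra structure and invokes the argument that finite-dimensional Hopf algebras are Frobenius, locating the (even or odd) integral of $K^*$; you instead write down the Berezin-type trace $\tau:K\twoheadrightarrow\wedge^n W\simeq \K$ and check non-degeneracy of $(x,y)\mapsto\tau(xy)$ on the monomial basis. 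Your route is more elementary and self-contained (no Hopf-algebra input), and correctly records the parity shift $[n\bmod 2]$, which agrees with the parity of the integral in the paper's version; the paper's route generalizes more readily to situations where no explicit exterior-algebra presentation is at hand. The only point deserving care — that the $F$-linear isomorphism induced by the pairing is genuinely $K$-superlinear with the correct signs — is one you flag yourself, and it does go through.
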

\pf
(1)
Set $\tc{W}:=I_K/I_K^2$; this is a finite-dimensional, purely odd super-vector space over 
the field $\K$. By Proposition \ref{SUFprop1} one may suppose
$K=\wedge_{\K}(\tc{W})$, whence $K$ has a Hopf-superalgebra structure over $\K$, such that every element of \tc{$W$} is odd primitive. 
Since the $K$-supermodule $K$ is projective (see \cite[Lemma 5.1 (2)]{M}), the argument of duality shows that
the $\K$-linear dual $K^*$ of $K$, naturally regarded as a $K$-supermodule, is injective.
An analogous argument of proving that every finite-dimensional Hopf algebra is Frobenius shows that
$K^*$ is isomorphic to $K$ (resp., to $K[1]$), in case the unique (up to scalar multiplication)
integral of $K^*$ is even (resp., odd). This shows the desired self-injectivity. 

(2)
To prove the non-trivial ``if'', assume that $V$ is flat. 
By the assumption, $\K$ is a field. 
By choosing homogeneous elements of $V$ which form a $\K$-basis modulo $I_K$, 
one obtains a $K$-supermodule morphism
$f : U \to V$ from a free $K$-supermodule $U$, such that it turns into an isomorphim with $\K\ot_K$ applied.
We wish to show that $f$ is an isomorphism.
Since $\K\ot_K\op{Coker}(f)=0$ and $I_K$ is nilpotent, it follows that $\op{Coker}(f)=0$. The 
exact sequence $\op{Tor}_1^K(\K,V)\to \K\ot_K\op{Ker}(f) \to \K\ot_KU\to \K\ot_KV\to 0$, combined
with the fact $\op{Tor}_1^K(\K,V)=0$ ensured by the flatness assumption, 
shows that $\K\ot_K \op{Ker}(f)=0$, whence $\op{Ker}(f)=0$ again by the nilpotency of $I_K$.
\epf 

\begin{rem}\label{SUFrem2}
We see from the proof above the following.
\begin{itemize}
\item[(1)]
In the situation of Part 1 above, $K$ is injective as an ordinary $K$-module on either side. 
\tc{Now, one} sees by the same way of proving \cite[Lemma 5.1 (2)]{M} (which uses the fact $K\rtimes \mathbb{Z}/(2)\supset K$
is a separable ring extension) that a $K$-supermodule \tc{$V$ is necessarily injective in ${}_K\M$, 
provided it is injective 
as an ordinary left or right $K$-module; indeed, $V$ is then seen to be a direct summand of its injective hull in ${}_K\M$}.
As a simple application it follows that 
a direct sum of (possibly, infinitely many) copies of $K$
is injective in ${}_K\M$, \tc{since it is known to be injective as an ordinary $K$-module}. 
\item[(2)]
Part 2 holds, only assuming that $\K$ is a field and $I_K$ is nilpotent. 
\end{itemize}
\end{rem}

An \emph{extension} $L|K$ of objects (e.g., fields, SUSY fields) means an inclusion
$L\supset K$ of those objects. 

Let $L|K$ be an extension of SUSY fields. Then we have an extension $\L|\K$ of fields. 

\begin{lemma}\label{SUFlem1}
The following are equivalent:
\begin{itemize}
\item[(a)]
The $\L$-linear extension $\L\ot_{\K} I_K/I_K^2\to I_L/I_L^2$ of the canonical $\K$-linear morphism
$I_K/I_K^2\to I_L/I_L^2$ is injective;
\item[(b)] The graded-algebra morphism $\op{gr}K \to \op{gr} L$ associated with the inclusion $K \to L$
is injective; 
\item[(c)] An isomorphism $\wedge_{\K}(I_K/I_K^2)\overset{\simeq}{\longrightarrow}K$ such as in \eqref{SUFeq1}
and an analogous one $\wedge_{\L}(I_L/I_L^2)\overset{\simeq}{\longrightarrow}L$ for $L$ can be chosen so that
the diagram 
\[
\begin{xy}
(0,7)   *++{\wedge_{\K}(I_K/I_K^2)}  ="1",
(26,7)  *++{K} ="2",
(0,-8)  *++{\wedge_{\L}(I_L/I_L^2)} ="3",
(26,-8)  *++{L} ="4",
{"1" \SelectTips{cm}{} \ar @{->}^{\hspace{6mm}\simeq} "2"},
{"1" \SelectTips{cm}{} \ar @{->} "3"},
{"2" \SelectTips{cm}{} \ar @{_(->} "4"},
{"3" \SelectTips{cm}{} \ar @{->}^{\hspace{6mm}\simeq} "4"}
\end{xy}
\]
commutes, where the vertical arrow on the LHS indicates the graded-algebra morphism over $\K$ which
is induced from the canonical $\K$-linear morphism $I_K/I_K^2\to I_L/I_L^2$. 
\end{itemize}
\end{lemma}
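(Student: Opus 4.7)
The plan is to close the cycle $(a)\Leftrightarrow(b)$, $(c)\Rightarrow(b)$, $(a)\Rightarrow(c)$. Write $V:=I_K/I_K^2$, $W:=I_L/I_L^2$ and let $\phi:V\to W$ be the canonical $\K$-linear map. Via the canonical graded isomorphisms $\wedge_{\K}V\simeq \op{gr}K$ and $\wedge_{\L}W\simeq \op{gr}L$ of Remark \ref{SUFrem1}(1), the graded-algebra arrow in (b) is identified with the $\K$-algebra map $\wedge(\phi):\wedge_{\K}V\to \wedge_{\L}W$. I factor it as
\[
\wedge_{\K}V\ \hookrightarrow\ \wedge_{\L}(\L\otimes_{\K}V)\ \xrightarrow{\ \wedge_{\L}(\L\otimes\phi)\ }\ \wedge_{\L}W,
\]
where the first arrow is base change along the faithfully flat extension $\K\subset\L$ and hence injective. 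If $\L\otimes\phi$ is injective, the second arrow is $\wedge_{\L}$ of an inclusion of $\L$-vector spaces, which is a split mono and hence itself an inclusion; conversely, if $\L\otimes\phi$ has nonzero kernel, the $\L$-dependence in $W$ of the images of any $\K$-basis of $V$ forces the top exterior power (one-dimensional over $\K$) to map to zero. This proves $(a)\Leftrightarrow(b)$, and it also yields $(c)\Rightarrow(b)$ at once: in the commutative square of (c) the horizontal isomorphisms together with the injection $K\hookrightarrow L$ force the left vertical $\wedge(\phi)$ to be injective.

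For $(a)\Rightarrow(c)$, I would pick a $\K$-basis $x_1,\dots,x_m$ of $V$; by (a), its image in $W$ is $\L$-linearly independent and extends to an $\L$-basis $x_1,\dots,x_m,y_1,\dots,y_n$ of $W$. Lift the $x_i$ to $\tilde x_i\in I_K\subset I_L$ and the $y_j$ to $\tilde y_j\in I_L$. By Remark \ref{SUFrem1}(2) fix a superalgebra section $\sigma_K:\K\to K$, and then, viewing $L$ as a $\K$-algebra via $\sigma_K$ and the inclusion $K\hookrightarrow L$, apply formal smoothness of the field extension $\K\subset\L$ (automatic in characteristic zero) to lift the identity of $\L$ along the projection $L\twoheadrightarrow\L$, whose kernel $I_L$ is nilpotent, to a $\K$-algebra section $\sigma_L:\L\to L$ satisfying $\sigma_L\vert_{\K}=(K\hookrightarrow L)\circ\sigma_K$. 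Define $\Phi_K:\wedge_{\K}V\to K$ and $\Phi_L:\wedge_{\L}W\to L$ as the $\K$- resp.\ $\L$-algebra morphisms (using $\sigma_K$, $\sigma_L$) sending each basis element to its chosen lift. These are well-defined because odd elements square to zero in characteristic $\ne 2$; each induces the canonical graded isomorphism of Remark \ref{SUFrem1}(1) on $\op{gr}$, so a standard filtration argument (both filtrations have finite length and matched associated gradeds) forces $\Phi_K$ and $\Phi_L$ to be isomorphisms. Commutativity of the square of (c) is then immediate: the two compositions agree on the generators $x_i$ and, thanks to the compatibility of the sections, on the subfield $\K$.

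The one substantial obstacle is this compatible extension $\sigma_K\subset\sigma_L$ of sections; this is where the characteristic-zero hypothesis enters crucially, via formal smoothness of the field extension $\K\subset\L$. Everything else is a transparent transport of the canonical graded picture of Remark \ref{SUFrem1}(1) to the non-graded setting, done in parallel for $K$ and $L$.
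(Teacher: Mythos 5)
Your proof is correct and follows essentially the same route as the paper: the same compatible sections $\K\to K$, $\L\to L$ obtained from smoothness of $\L|\K$ in characteristic zero for (a)$\Rightarrow$(c), the same wedge-of-dependent-vectors argument for (b)$\Rightarrow$(a), and the same application of $\op{gr}$ to the square for (c)$\Rightarrow$(b). The only (harmless) difference is that you additionally prove (a)$\Rightarrow$(b) directly via the factorization through $\wedge_{\L}(\L\otimes_{\K}I_K/I_K^2)$, whereas the paper obtains it by passing through (c).
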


\begin{definition}\label{SUFdef1a}
An extension $L|K$ of SUSY fields is said to be \emph{admissible}, if it satisfies the
equivalent conditions above.
\end{definition}

\begin{proof}[Proof of Lemma \ref{SUFlem1}]
(a) $\Rightarrow$ (c).\ 
Choose arbitrarily a section $\K \to K$ of 
the projection $K\to \K$, and regard $K$ as a $\K$-superalgebra through it.
Since $\L$ is smooth over $\K$ by the assumption $\op{char} R=0$, a section $\L\to L$ of $L \to \L$
can be chosen so that it extends the above-chosen section $\K\to K$. 
Hence $L$, regarded as an $\L$-superalgebra through the section, includes $K$ as a $\K$-sub-superalgebra. 

Choose odd elements $x_1,\dots,x_r$ in $I_K$, such that they modulo $I_K^2$ form a $\K$-basis
of $I_K/I_K^2$. Assume (a). Then the elements can extend to odd elements $x_1,\dots,x_r,\dots,x_s$ in $I_L$,
so that they modulo $I_L^2$ form an $\L$-basis of $I_L/I_L^2$. We see that the $\K$-superalgebra morphism
\[
\wedge_{\K}(I_K/I_K^2) \to K\ \, \text{defined by}\ \, x_i~\tu{mod}~I_K^2\mapsto x_i,\ 1\le i\le r
\]
and the $\L$-superalgebra morphism 
\[
\wedge_{\L}(I_L/I_L^2) \to L\ \, \text{defined by}\ \, x_i~\tu{mod}~I_L^2\mapsto x_i,\ 1\le i\le s
\]
are isomorphisms required by (c). Here one should notice that these morphisms are isomorphic since
the associated graded-algebra morphisms are so; \tc{see Remark \ref{SUFrem1} (1)}.

(c) $\Rightarrow$ (b).\ This follows from the commutative diagram in (c) with $\op{gr}$ applied. Notice that
the vertical morphism on the LHS is injective, and remains unchanged with $\op{gr}$ applied.

(b) $\Rightarrow$ (a).\ Contrary to (a), let us have $\K$-linearly independent elements $u_1,\dots,u_r$ in $I_K/I_K^2$,
whose images in $I_L/I_L^2$, say $u_1',\dots,u_r'$, are $\L$-linearly dependent. Then the element
$u_1\wedge \dots \wedge u_r$ in $\op{gr}K$ is non-zero, while $u_1'\wedge \dots\wedge u_r'=0$ in $\op{gr}L$.
Thus, (b) does not hold. 
\end{proof}

\begin{rem}\label{SUFrem3}
Suppose that $L|K$ is admissible extension of SUSY fields. 
Then $L$ is free or equivalently, flat over $K$ (see Proposition \ref{SUFprop2} (2)), as is seen from the last proof.
Indeed, 
we can regard $K$ and $L$ as superalgebras over $\K$ and over $\L$, respectively, such that $K$ is a $\K$-sub-superalgebra
of $L$. Moreover, the $\L$-linear extension $\L\ot_{\K}K\to L$ of the inclusion $K\hookrightarrow L$ can be identified 
with the graded-algebra morphism
\[
\L\ot_{\K}\wedge_{\K}(I_K/I_K^2)=\wedge_{\L}(\L\ot_{\K}(I_K/I_K^2))\to \wedge_{\L}(I_L/I_L^2)
\]
which arises from the canonical injection $\L\ot_{\K}(I_K/I_K^2)\to I_L/I_L^2$ given in Condition (a).
This shows the freeness. 
\end{rem}

\begin{example}\label{SUFex0}
Supoose that \tc{$W$} is a purely odd super-vector space (over $R$) of dimension $d\ge 3$, and let 
\[ L:=\wedge(\tc{W})\, \big( \! \! =\bigoplus_{n=0}^d\wedge^n(\tc{W})\big) ,\quad K:=\wedge^0(\tc{W})\oplus \wedge^d(\tc{W}). \]
Then $L|K$ is an extension of SUSY fields. This is not admissible since the associated graded-algebra morphism
$\op{gr} K\to \op{gr}L$ is the natural projection $K \to R=\wedge^0(\tc{W})\, (\subset L)$, which is not injective. 
Moreover, 
$L$ is not free/flat over $K$, since the quotient $K$-supermodule $L/K\, (=\bigoplus_{n=1}^{d-1}\wedge^n(\tc{W}))$,
\tc{which is isomorphic to a direct summand of $L$ in ${}_K\M$ by Proposition \ref{SUFprop2} (1)}, 
is annihilated by $I_K\, (=\wedge^d(\tc{W}))$. 
\end{example}

\begin{definition}\label{SUFdef2}
Given a superalgebra $A$, let 
\eq\label{SUFeq2}
\Quot(A)=S^{-1}A
\eeq
denote the localization of $A$ by the multiplicative subset
$S$ consisting of those elements in $A_0$ which are $A$-regular, or namely,
which annihilate no non-zero elements in $A$. 
\end{definition}

One sees that in case $A$ is purely even, $\Quot(A)$ is the total quotient ring of $A$. 

The following is easy to see.

\begin{lemma}\label{SUFlem2}
$\Quot(A)$ is a superalgrbra including $A$ as a sub-superalgebra. Moreover, it is the smallest
among those superalgebras $T$ including $A$ as a sub-superalgebra, in which every $T$-regular even element is invertible.
\end{lemma}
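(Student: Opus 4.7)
The proof naturally splits into three verifications. First, I would check that $\Quot(A)=S^{-1}A$ is a superalgebra and that $A$ embeds into it. Because $S\subseteq A_0$, the $\mathbb{Z}/(2)$-grading of $A$ extends to the localization via $|a/s|:=|a|$ (this is well-defined on fractions since the auxiliary elements used in the equivalence relation are even), and super-commutativity is inherited from $A$. The canonical map $A\to\Quot(A)$, $a\mapsto a/1$, is injective because $a/1=0$ means $sa=0$ for some $s\in S$, and $A$-regularity of $s$ then forces $a=0$. Hence $A$ sits in $\Quot(A)$ as a sub-superalgebra.

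Second, I would verify that every $\Quot(A)$-regular even element of $\Quot(A)$ is invertible. Writing such an element as $a/s$ and using that $s$ is already a unit in $\Quot(A)$, one reduces to the case $\alpha=a\in A_0$. If $b\in A$ satisfies $ab=0$ in $A$, the same equation holds in $\Quot(A)$, so the $\Quot(A)$-regularity of $a$ gives $b=0$ in $\Quot(A)$, whence $b=0$ in $A$ via the embedding just established. Thus $a$ is $A$-regular, so $a\in S$ and $a$ is invertible in $\Quot(A)$.

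For the minimality, let $T$ be a superalgebra with $A\hookrightarrow T$ as sub-superalgebra and the property that every $T$-regular even element is invertible in $T$. The strategy is to invoke the universal property of $S^{-1}A$: it suffices to show that each $s\in S$ becomes a unit in $T$, and by the hypothesis on $T$ this in turn reduces to $s$ being $T$-regular. In the natural situation where $T$ is realised as a sub-superalgebra of $\Quot(A)$ (which is the context the lemma will be used in below), this is automatic, since $st=0$ in $T$ is an equality in $\Quot(A)$ in which $s$ is a unit, forcing $t=0$. The induced map $\Quot(A)\to T$ extending $A\hookrightarrow T$ is then automatically injective, because it restricts to the embedding of $A$ and $\Quot(A)$ is generated over $A$ by the inverses $s^{-1}$.

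The one delicate point, and hence the main obstacle, is precisely the $T$-regularity step: an arbitrary superalgebra $T\supset A$ need not preserve $A$-regularity (a construction such as a fibre product can introduce new zero-divisors among the images of $S$), so the statement is implicitly quantifying over those $T$ in which the $A$-regular elements remain $T$-regular. Under this natural embedding hypothesis the proof above goes through cleanly, and it is the only place where any care is required.
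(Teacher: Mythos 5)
The paper gives no proof of this lemma (it is introduced with ``The following is easy to see''), so there is nothing to compare your argument against; your three verifications are the natural ones, and the first two are correct as written. Your diagnosis of the minimality step is also correct, and it is worth confirming that the difficulty you flag is real: read literally, with ``smallest'' meaning that $\Quot(A)$ embeds over $A$ into every such $T$, the claim fails. For example, take $A=k[t]$ (purely even) and $B=k[t,x]/(tx,x^2)$; then $T:=\Quot(B)$ contains $A$ as a sub-superalgebra and, by the part of the lemma you already proved, every $T$-regular even element of $T$ is invertible, yet $t$ is a zero divisor in $T$ (since $tx=0$ and $x\neq 0$ there), so $t$ cannot become invertible and $\Quot(A)=k(t)$ admits no embedding into $T$ over $A$. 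The statement is thus only valid for those $T$ in which the $A$-regular even elements remain $T$-regular --- equivalently, for $T$ in which $S$ becomes invertible --- and under that reading your appeal to the universal property of $S^{-1}A$, together with the injectivity argument $\phi(a/s)=0\Rightarrow\phi(a)=0\Rightarrow a=0$, completes the proof. Reassuringly, the places where the paper actually invokes the minimality (e.g.\ the chain $\Quot(A)\subset\Quot(AM)\subset L=\Quot(A)$ in the proof of Proposition \ref{GALprop3}) are precisely situations where all the rings in question sit inside a common localization of the smallest one, so the required preservation of regularity is automatic there, exactly as you observe.
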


\begin{example}\label{SUFex1}
Suppose that $A$ is a Noetherian smooth superalgebra, and regard it as an $\overline{A}$-superalgebra
through an arbitrarily chosen section of the projection $A \to \overline{A}$. Then we have an isomorphism 
$A\simeq \wedge_{\overline{A}}(P)$ of $\overline{A}$-superalgebras, where we have set $P:=I_A/I_A^2$; 
see Proposition \ref{SUFprop1} and Remark \ref{SUFrem1} (2). 
Suppose that $\overline{A}$ is an integral domain, and 
let $F=\Quot(\overline{A})$ denote the quotient field of $\overline{A}$. 
In view of the fact that $P$ is finitely generated projective over $\overline{A}$, 
we see that the isomorphism above extends to an isomorphism 
\[
\Quot(A)=A\ot_{\overline{A}}F\simeq\wedge_{F}(P\ot_{\overline{A}}F)
\]
of $F$-superalgebras. Therefore, $\Quot(A)$ is a SUSY field. 
\end{example}

\subsection{Hopf-Galois extensions}\label{HGE}
With the applications in following sections in mind, 
we let $k$ be a field including $R$, and work over $k$
through to the end of Section \ref{SSC}. 

Let $H=(H,\Delta,\varepsilon)$ be a Hopf superalgebra over $k$; such a superalgebra as well is assumed to be 
super-commutative unless otherwise stated. 
An $H$-supercomodule, without a specific side referred to, 
will mean a \emph{right} $H$-supercomodule. The category of those supercomodules is denoted by 
\[
\M^H=(\M^H,\ot_k, k).
\]
which is, in fact, monoidal, as here presented,
and is symmetric with respect to the supersymmetry. 

One sees that
\begin{equation*}\label{HGEeq1}
\overline{H}=H/(H_1)
\end{equation*}
is a purely even quotient Hopf superalgebra of $H$. 
Let $H^+=\op{Ker}(\varepsilon : H \to k)$ denote the augmentation super-ideal of $H$, and define
\begin{equation}\label{HGEeq1a}
\mathfrak{w}_H:=(H^+/(H^+)^2)_1\, (=H_1/(H^+\cap H_0)H_1),
\end{equation}
the odd component of the super-vector space $H^+/(H^+)^2$ over $k$; this is alternatively denoted by $W^H$ in \cite{M}.
We let $\wedge_k(\mathfrak{w}_H)$ denote the exterior algebra on $\mathfrak{w}_H$, which is naturally a superalgebra over $k$. 
Theorem 4.5 of \cite{M} tells that there is an algebra isomorphism 
\begin{equation}\label{HGEeq2}
H \overset{\simeq}{\longrightarrow} \overline{H}\otk \wedge_k(\mathfrak{w}_H)
\end{equation}
in the category ${}^{\overline{H}}\M$ of \emph{left} $\overline{H}$-supercomodules, which, composed with
the natural projection $\overline{H}\otk \wedge_k(\mathfrak{w}_H)\to \overline{H}\otk k$, coincides with
the projection $H \to \overline{H}$. 

Let $A$ be a non-zero $H$-supercomodule superalgebra, that is, a non-zero algebra in $\M^H$. 
By convention we will write the structure morphism 
so as
\begin{equation}\label{HGEeq1b}
\theta_{\! A} : A \to A\otk H.
\end{equation}
We let
\begin{equation}\label{HGEeq1c}
A^{\op{co}H}=\{\, a \in A: \theta_{\! A}(a)=a\ot 1\, \}
\end{equation}
denote the sub-superalgebra of $A$ which consists of the \emph{$H$-coinvariants}, that is, 
those elements $a$ which satisfy the equation above. 

We say that $A$ is \emph{$H$-smooth} over a subalgebra $K$ in $\M^H$, 
if it is smooth as a $K$-algebra in the category, or more explicitly, if 
given a surjective $K$-algebra morphism $S \to T$ in $\M^H$ with nilpotent kernel,
every $K$-algebra morphism $A \to T$ in the category factors through $S$. 
In what follows the notion will appear in the restricted situation when $K=A^{\op{co}H}$. 

Let $H$, $A$ be as above.

\begin{definition}\label{HGEdef1}
We say that $A$ is an \emph{$H$-Galois extension} over a sub-superalgebra $K$, or more simply, 
that $A|K$ is an \emph{Hopf-Galois extension}, if 
\begin{itemize}
\item[(HG1)] $K=A^{\op{co}H}$, 
\item[(HG2)] the morphism
\begin{equation}\label{HGEeq3}
\Theta_A:A\ot_KA\to A\ot_k H,\quad \Theta_A(a\ot a')=a\, \theta_{\! A}(a')
\end{equation}
is bijective, and
\item[(HG3)] $A$ is faithfully flat over $K$; see the paragraph following Lemma \ref{SASlem1}.
\end{itemize}
\end{definition}

\begin{rem}\label{HGErem1}
Here are two remarks concerning the definition above.
\begin{itemize}
\item[(1)]
Assume (HG1). Then $\theta_{\! A} : A \to A\otk H$ is then left $K$-superlinear. Hence, as its left $A$-superlinear extension,
$\Theta_A$ is well defined. Note that it is an algebra morphism
in $\M^H$, where $H$ is supposed to coact on \tc{the source $A\ot_KA$ through the right tensor factor $A\, (=K\ot_KA)$}.
In what follows we will use the symbol $\Theta_A$ to denote the morphism associated with $\theta_{\! A}$ as above. 
\item[(2)]
Assume in turn that 
\[
\Xi_A : A \otk A\to A\otk H,\quad \Xi_A(a\ot a')=
a\, \theta_{\! A}(a')
\]
happens to  
induce an isomorphism $A\ot_K A \overset{\simeq}{\longrightarrow} A\otk H$, where $K$ is a sub-superalgebra of $A$. 
Then we see through the isomorphism that
\[
A^{\op{co} H}=\{\, a \in A : 1\ot_K a=a\ot_K 1\ \, \text{in}\ \, A\ot_K A\, \}.
\]
Therefore, either if (HG3) is satisfied or if $K$ is self-injective (and so, $K$ is a direct summand of $A$ in ${}_K\M$),
then (HG1), as well as (HG2), is satisfied. 
\end{itemize}
\end{rem}

Let $H$ be a Hopf algebra over $k$, as above. 
Given an algebra $A$ in $\M^H$, let
\[
{}_A(\M^H)=({}_A(\M^H),\ot_A,A)
\]
denote the category of $A$-modules in $\M^H$. This is, in fact, a monoidal category, as here presented, which is, moreover,
symmetric 
with respect to the supersymmetry; the circumstance is the same as for ${}_K\M$ in \eqref{SASeq0a}.

The following is the so-called Hopf-module theorem (see \cite[Theorem 8.5.6]{Mon}, for example) formulated in the super context. 

\begin{theorem}[\tu{Hopf-module theorem}]\label{HGEthm0}
Suppose that $A|K$ is an $H$-Galois extension. 
\begin{itemize}
\item[(1)]
Given an object ${}_K\M$, the base extension $V\ot_K A$ turns into an object in ${}_A(\M^H)$, for which $H$ is supposed to
coact on the tensor factor $A$ in $V\ot_KA$. 
\item[(2)]
Given an object $M=(M,\theta_M)$ in ${}_A(\M^H)$, 
\[
M^{\op{co}H}:=\{\, m\in M : \theta_M(m)=m\ot 1\ \, \tu{in}\ \, M\ot_k H\, \}
\]
is naturally an object in ${}_K\M$.
\item[(3)] 
The assignments $V\mapsto V\ot_KA$ and $M \mapsto M^{\op{co}H}$ obtained above give rise to
symmetric monoidal equivalences 
\eq\label{HGEeq3a}
({}_K\M,\ot_K,K) \approx ({}_A(\M^H),\ot_A,A)
\eeq
which are mutual quasi-inverses, with respect to the obvious monoidal structures.
\end{itemize}
\end{theorem}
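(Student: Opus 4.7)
The plan is to adapt the classical proof of the Hopf module theorem (see \cite[Theorem 8.5.6]{Mon}) to the super setting. Since all arguments proceed within the symmetric monoidal category $\M^H$ via universal constructions, the supersymmetry $c_{V,W}$ absorbs the sign issues uniformly, and the classical manipulations go through essentially unchanged.

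First I would verify that both functors are well-defined. For (1), equip $V\ot_KA$ with the right $A$-action on the second factor and the $H$-coaction $\mathrm{id}_V\ot\theta_{\! A}$; these are compatible because $\theta_{\! A}$ is itself an algebra morphism in $\M^H$. For (2), given $(M,\theta_M)\in {}_A(\M^H)$, the subspace $M^{\op{co}H}$ is stable under the $K$-action, since $K=A^{\op{co}H}$ and the compatibility of the $A$-action with the coaction on $M$ yields, for $x\in K$ and $m\in M^{\op{co}H}$, that $\theta_M(xm)=(x\ot1)\,\theta_M(m)=xm\ot1$. The unit and counit of the candidate adjunction are then $\eta_V:V\to(V\ot_KA)^{\op{co}H}$, $v\mapsto v\ot1$, and $\mu_M:M^{\op{co}H}\ot_KA\to M$, $m\ot a\mapsto ma$.

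The main step is to show $\eta_V$ and $\mu_M$ are isomorphisms, which I would do by descent along the faithfully flat extension $A|K$ (condition (HG3)). For $\eta_V$, applying $A\ot_K-$ and using flatness to commute $A\ot_K-$ with the equalizer defining coinvariants, one realizes $A\ot_K(V\ot_KA)^{\op{co}H}$ as an equalizer in $\M^H$. The Galois isomorphism $\Theta_A$ from (HG2) then identifies $A\ot_KA\simeq A\ot_kH$ on the appropriate tensor factors, converting this equalizer into $V\ot_KA\ot_kH^{\op{co}H}=V\ot_KA$; under the supersymmetry this matches $A\ot_K\eta_V$, so faithful flatness gives that $\eta_V$ is an iso. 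The argument for $\mu_M$ is analogous: applying $-\ot_KA$ and reshaping via $\Theta_A$ yields the trivialization $M\ot_KA\simeq M^{\op{co}H}\ot_kH$ (the standard fact that an $A$-module in $\M^H$ becomes $H$-trivial after base change along the Galois extension), which exhibits $\mu_M\ot_K\mathrm{id}_A$ as an identification of free $A$-modules; faithful flatness concludes.

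The monoidal compatibility is formal: the canonical map $(V\ot_KA)\ot_A(V'\ot_KA)\to(V\ot_KV')\ot_KA$ gives the required isomorphism, and the symmetric structures on both sides are inherited from the supersymmetry of $\M^H$, so the monoidal equivalence is automatically symmetric. I expect the trickiest step to be the careful bookkeeping in the descent argument for $\eta_V$, where the Galois isomorphism, the coaction, and the supersymmetry must all be coordinated to identify the equalizer correctly; however, since every construction is a morphism in a symmetric monoidal category, the sign conventions ultimately take care of themselves.
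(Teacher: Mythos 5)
Your proposal is correct; it is the standard faithfully-flat-descent proof of the Hopf-module (strong structure) theorem, which is exactly what the paper invokes here — the paper gives no proof of Theorem \ref{HGEthm0}, merely citing the classical statement in Montgomery and asserting that it carries over to the super context. Your sketch supplies the argument behind that citation, with the right key points (flatness commuting with the coinvariants equalizer, the Galois isomorphism $\Theta_A$ trivializing the coaction after base change, and faithful flatness to descend), and the remark that all constructions are morphisms in the symmetric monoidal category of super-vector spaces is the correct justification that no new sign issues arise.
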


We reproduce two results from \cite{MOT} in modified form suited to our applications. They originally assume 
that $H$ is smooth over $k$, which is now satisfied since
$\op{char} k=0$ by assumption. 

\begin{prop}[\tu{\cite[Proposition 5.1, Theorem 3.12]{MOT}}]\label{HGEprop1}
Assume that $H$ is finitely generated. 
If $A|K$ is an $H$-Galois extension, then $A$ is smooth and 
$H$-smooth over $K$.
\end{prop}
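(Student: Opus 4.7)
The two assertions will both follow from the Galois isomorphism $\Theta_A : A\ot_K A \overset{\simeq}{\longrightarrow} A\otk H$ together with the faithful flatness (HG3), once one observes that $H$ itself is smooth over $k$. For the latter, by \eqref{HGEeq2} we have an algebra isomorphism $H\cong \bH \otk \wedge_k(\mathfrak{w}_H)$, where $\bH$ is a finitely generated ordinary commutative Hopf algebra over a field of characteristic zero, hence smooth by Cartier's theorem, and $\wedge_k(\mathfrak{w}_H)$ is a finite-dimensional exterior algebra because $H$ is finitely generated. The criterion of Proposition \ref{SUFprop1} then yields smoothness of $H$ over $k$.

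For the ordinary smoothness of $A$ over $K$, view $A\ot_K A$ as an $A$-superalgebra via its left tensor factor. The Galois isomorphism $\Theta_A$ is then an isomorphism of $A$-superalgebras from $A\ot_K A$ onto $A\otk H$, and the latter is smooth over $A$ by base change of $H|k$. Since $A|K$ is faithfully flat by (HG3), faithfully flat descent of smoothness in the superalgebra setting, verified by checking that the three conditions of Proposition \ref{SUFprop1} descend along $K\to A$, yields that $A$ is smooth over $K$.

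For $H$-smoothness, take a test diagram $\pi : S\twoheadrightarrow T$ of $K$-superalgebras in $\M^H$ with nilpotent kernel $N$, together with an $H$-colinear $K$-superalgebra morphism $f : A\to T$. Ordinary smoothness, just proved, produces a $K$-superalgebra lift $\phi_0 : A\to S$, which need not be $H$-colinear. The plan is to equivariantize it: the failure $\delta := \theta_S\circ \phi_0 - (\phi_0\ot \op{id}_H)\circ\theta_A : A\to S\otk H$ takes values in $N\otk H$, since $\phi_0$ lifts the $H$-colinear $f$; because $N$ is nilpotent, $\phi_0$ can be corrected multiplicatively by a unit in $1+N\otk H$ built from $\delta$ via the antipode of $H$ to produce an $H$-colinear lift. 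Equivalently, the base-changed lifting problem lies in ${}_A(\M^H)$ and, under the Hopf-module equivalence of Theorem \ref{HGEthm0}, transports to an ordinary lifting problem in ${}_K\M$ that is solved by the first part of the argument, after which one descends back along the faithfully flat inclusion $K\hookrightarrow A$. The main obstacle is precisely this equivariantization step: ordinary smoothness alone does not yield $H$-colinear lifts, and the correction relies essentially on the Hopf-Galois structure (via either the antipode-based twist or the Hopf-module equivalence).
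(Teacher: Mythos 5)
There is a genuine gap here, and before pointing to it I should note that the paper itself offers no proof of this proposition: it is imported verbatim (in slightly modified form) from \cite{MOT}, where the argument runs through the structure theorem reproduced here as Theorem \ref{HGEthm1} --- one first establishes the cotensor decomposition $A\simeq (K\ot_{\overline{K}}\overline{A})\,\square_{\overline{H}}H \simeq K\ot_{\overline{K}}\overline{A}\otk\wedge_k(\mathfrak{w}_H)$, reducing everything to the purely even Hopf--Galois extension $\overline{A}|\overline{K}$ plus an explicit finite exterior factor, and both smoothness and $H$-smoothness are read off from that explicit presentation. Your opening step (smoothness of $H$ over $k$ via \eqref{HGEeq2}, Cartier, and Proposition \ref{SUFprop1}) is fine and is exactly the observation the paper makes when it says the smoothness hypothesis of \cite{MOT} is automatic in characteristic zero. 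Your descent step for ordinary smoothness is plausible in outline, but the tool you invoke does not apply: Proposition \ref{SUFprop1} is a criterion for smoothness of a \emph{Noetherian} superalgebra \emph{over the base field}, whereas here you need to descend smoothness \emph{over the superalgebra} $K$ (on which Proposition \ref{HGEprop1} places no Noetherian or smoothness hypothesis) along the faithfully flat map $K\to A$. A super analogue of fppf descent of relative smoothness is a genuine theorem that would have to be proved, not a routine check of the three conditions of Proposition \ref{SUFprop1}.

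The more serious gap is the equivariantization in the $H$-smoothness part. The claim that the defect $\delta$ of a non-colinear lift $\phi_0$ can be killed ``multiplicatively by a unit in $1+N\otk H$ built from $\delta$ via the antipode'' is asserted, not proved, and it is not true for formal reasons: the obstruction to upgrading an ordinary lift to an $H$-colinear one is cohomological, and for a non-linearly-reductive $H$ (e.g.\ $\overline{H}=\cO_k(\mathbb{G}_a)$) such obstructions do not vanish automatically --- the Hopf--Galois hypothesis must enter constructively, and your argument never actually uses it at this point. The fallback via Theorem \ref{HGEthm0} also does not work as stated: the Hopf-module equivalence identifies ${}_K\M$ with ${}_A(\M^H)$, i.e.\ with \emph{$A$-modules} in $\M^H$, whereas the test objects $S\twoheadrightarrow T$ in the $H$-smoothness lifting problem are merely $K$-algebras in $\M^H$ and carry no $A$-module structure, so the lifting problem cannot be transported through that equivalence; and even after base change along $K\to A$ one would still have to descend the existence of an $H$-colinear lift back down to $K$, which is again a nontrivial (cocycle) argument. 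This is precisely the point at which \cite{MOT} instead passes through the decomposition \eqref{HGEeq4}--\eqref{HGEeq5}, so the key idea of the actual proof is missing from your proposal.
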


\begin{theorem}[\tu{\cite[Theorem 3.12]{MOT}}]\label{HGEthm1}
Assume that $H$ is finitely generated. 
Let $A$ be an $H$-Galois extension over a Noetherian smooth
$k$-superalgebra $K$. 
\begin{itemize}
\item[(1)] 
$\overline{A}$ is naturally regarded as a purely even 
algebra in $\M^{\overline{H}}$, and includes $\overline{K}$ 
as a subalgebra. Moreover, $\overline{A}|\overline{K}$ is an
$\overline{H}$-Galois extension.
\item[(2)]
We have a $K$-algebra isomorphism
\begin{equation}\label{HGEeq4}
A \simeq (K\ot_{\overline{K}}\overline{A})\square_{\overline{H}}H
\end{equation}
in $\M^H$, where $K$ is regarded as a 
$\overline{K}$-superalgebra 
through an arbitrarily chosen section of the projection 
$K \to \overline{K}$. Moreover, the isomorphism above, combined with \eqref{HGEeq2}, gives an isomorphism
\begin{equation}\label{HGEeq5}
A \simeq K\ot_{\overline{K}}\overline{A}\ot_k \wedge_k(\mathfrak{w}_H)
\end{equation}
of $K$-superalgebras. 
\end{itemize}
\end{theorem}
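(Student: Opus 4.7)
My plan is to prove Part (2) first, via a smoothness-lifting argument along the surjection $\pi:H\twoheadrightarrow\overline{H}$, and then to derive the full Part (1) from the resulting decomposition. The entry point is that the super-ideal $(A_1)\subset A$ is always $H$-costable, since $\theta_A(A_1)\subset A_1\otk H_0+A_0\otk H_1$; hence $\overline{A}=A/(A_1)$ inherits an $H$-coaction that, being purely even, factors through $\overline{H}$, so that $\overline{K}\to\overline{A}$ is at least a well-defined map of algebras in $\M^{\overline{H}}$.

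The main engine is then the following. By \eqref{HGEeq2} the subspace $\mathfrak{w}_H$ is finite-dimensional (since $H$ is finitely generated), hence $\wedge_k(\mathfrak{w}_H)$ is finite-dimensional and $(H_1)$ is nilpotent in $H$; transporting this through the Galois isomorphism $\Theta_A$ forces $(A_1)$ to be nilpotent in $A$. By Proposition \ref{HGEprop1}, $A$ is $H$-smooth over $K$, so the surjection $A\twoheadrightarrow\overline{A}$ of $K$-algebras in $\M^H$ (with $\overline{A}$ a $K$-algebra via $K\twoheadrightarrow\overline{K}$, and an $H$-comodule through $\pi$) admits a section $s:\overline{A}\to A$ in the category of $K$-algebras in $\M^H$. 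Choose moreover a section $\overline{K}\to K$ of $K\to\overline{K}$ (available by smoothness of $K$), and extend $s$ $K$-linearly to a $K$-algebra morphism $K\ot_{\overline{K}}\overline{A}\to A$ in $\M^H$. Its image lies in $A\square_{\overline{H}}H\subset A$, so one obtains an $H$-comodule $K$-algebra morphism
\[
\Phi:(K\ot_{\overline{K}}\overline{A})\square_{\overline{H}}H\longrightarrow A.
\]
Bijectivity of $\Phi$ is verified by tracking $\Theta_A$ through the decomposition \eqref{HGEeq2}, which yields \eqref{HGEeq4}; applying \eqref{HGEeq2} on the right-hand cotensor factor then evaluates the cotensor product explicitly and produces \eqref{HGEeq5}.

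From the decomposition \eqref{HGEeq5} the remaining assertions of Part (1) become formal. The inclusion $\overline{K}\hookrightarrow\overline{A}$ is injective by taking the largest purely even quotient of \eqref{HGEeq5}, which returns $\overline{A}$ with its natural $\overline{K}$-structure; faithful flatness of $\overline{A}$ over $\overline{K}$ follows from \eqref{HGEeq5} combined with faithful flatness of $A$ over $K$ and of $K$ over $\overline{K}$ (the latter by Proposition \ref{SUFprop1}, since $K\simeq \wedge_{\overline{K}}(I_K/I_K^2)$ with $I_K/I_K^2$ projective over $\overline{K}$); the identification $\overline{K}=\overline{A}^{\op{co}\overline{H}}$ is read off factor by factor from \eqref{HGEeq5}; and bijectivity of the candidate Galois map $\overline{A}\ot_{\overline{K}}\overline{A}\to\overline{A}\otk\overline{H}$ follows by reducing $\Theta_A$ modulo $(A_1)$ on each tensor factor, using that $\Theta_A$ identifies the odd-generated ideal $((A\ot_K A)_1)$ with $((A\otk H)_1)=(A_1)\otk H+A\otk (H_1)$.

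The main obstacle is the construction of $\Phi$ and the verification of its bijectivity: one must lift $\overline{A}\to\overline{A}$ to a section $s:\overline{A}\to A$ compatibly with both the $K$-algebra and the $H$-comodule structures, and then show that its $K$-linear extension actually lands in the cotensor subspace and induces an isomorphism there. This is precisely where $H$-smoothness of $A$ over $K$ (rather than plain smoothness over $R$) is essential, and where the chosen splittings of $K\to\overline{K}$ and of \eqref{HGEeq2} must be kept coherent; once these choices are in place, the remaining verifications amount to a diagram chase with $\Theta_A$ and the explicit form of \eqref{HGEeq2}.
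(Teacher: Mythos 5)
First, a remark on what you are being compared against: the paper gives no proof of this statement. Theorem \ref{HGEthm1} is imported, in adapted form, from \cite[Theorem 3.12]{MOT}, so your attempt can only be judged against the logic of that reference and against the way the present paper runs the same argument elsewhere (Lemma \ref{INDlem4}, Proposition \ref{INDprop3}). Your overall architecture --- reduce modulo the odd part, split the reduction map by a smoothness argument, and reassemble $A$ via the cotensor product and \eqref{HGEeq2} --- is indeed the right one.

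The internal logic, however, has a genuine gap at the central step, namely the construction of the section $s:\overline{A}\to A$. You invoke the $H$-smoothness of $A$ over $K$ from Proposition \ref{HGEprop1}, but by the definition in Section \ref{HGE} that property lifts morphisms \emph{out of} $A$ along nilpotent surjections $S\to T$; it says nothing about splitting a nilpotent surjection \emph{onto} a quotient of $A$. To section $A\twoheadrightarrow\overline{A}$ one must lift $\mathrm{id}_{\overline{A}}$ along that surjection, which requires smoothness of $\overline{A}$ (over $\overline{K}$, in $\M^{\overline{H}}$) --- and the only available source for that is Proposition \ref{HGEprop1} applied to the $\overline{H}$-Galois extension $\overline{A}\,|\,\overline{K}$, i.e.\ Part (1). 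This is exactly how the paper argues in the analogous Lemma \ref{INDlem4}(2), where the splitting of $B\to P$ is deduced from the $J$-smoothness of the \emph{target} $P$. Your proposed order, proving (2) first and deriving (1) from it, is therefore circular: (1) must come first. Two further points. Your opening claim that $(A_1)$ is $H$-costable is false: $\theta_{\! A}(A_1)\subset A_1\otk H_0+A_0\otk H_1$ is not contained in $(A_1)\otk H$, so the coaction only descends to $\overline{A}\to\overline{A}\otk\overline{H}$; hence $\overline{A}$ lives in $\M^{\overline{H}}$, not $\M^{H}$, the section can only be sought in $\M^{\overline{H}}$, and the cotensor product $\square_{\overline{H}}H$ in \eqref{HGEeq4} is precisely the device that converts such $\overline{H}$-data back into $\M^{H}$. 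Finally, deducing nilpotency of $(A_1)$ by ``transporting'' the nilpotency of $(H_1)$ through $\Theta_A$ is also circular, since the ideal of $A\otk H$ generated by the odd part is $(A_1)\otk H+A\otk(H_1)$, whose nilpotency presupposes that of $(A_1)$; one should instead use faithfully flat descent of finite generation along $K\to A$ (via $A\ot_KA\simeq A\otk H$) to see that $A$ is Noetherian, whence $I_A$ is nilpotent.
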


\begin{rem}\label{HGErem2}
The symbol $\square_{\overline{H}}$ which appears in \eqref{HGEeq4} is the co-tensor product; see \cite[Definition 2.2.9]{HS}, for example. 
In the present situation, 
given an algebra $T=(T,\vartheta_T)$ in $\M^{\overline{H}}$, $T\, \square_{\overline{H}}H$
denotes the subalgebra of $T\ot_k H$ in $\M^H$ which is defined by the equalizer diagram
\[
\begin{xy}
(0,0.9)   *++{T\, \square_{\overline{H}}H}  ="1",
(20,1)  *++{T\ot_k H} ="2",
(50,1)  *++{T\ot_k \overline{H}\ot_k H,} ="3",
(27,0)  *++{} ="4",
(27,2)  *++{} ="5",
(38,0)  *++{} ="6",
(38,2)  *++{} ="7",
{"1" \SelectTips{cm}{} \ar @{->} "2"},
{"4" \SelectTips{cm}{} \ar @{->}_{\op{id}\ot \delta_H} "6"},
{"5" \SelectTips{cm}{} \ar @{->}^{\vartheta_T\ot\op{id}} "7"}
\end{xy}
\]
where $\delta_H$ denotes the composite $H\overset{\Delta}{\longrightarrow}H\ot_kH\to \overline{H}\ot_kH$
of the coproduct of $H$ with the natural projection; cf. \eqref{SASeq0b}. 
Since $\vartheta_T :T\to T\ot_k \overline{H}$ restricts to
an isomorphism $T\simeq T\, \square_{\overline{H}}\overline{H}$, we obtain 
$T\ot_k\wedge_k(\mathfrak{w}_H)\simeq T\, \square_{\overline{H}}H$ from \eqref{HGEeq2}. Thus, \eqref{HGEeq5} follows
from \eqref{HGEeq4}.

Here is a remark on \eqref{HGEeq4}: since $K$ is $\overline{K}$-flat (or $H$ is left $\overline{H}$-coflat), we have the canonical isomorphism
\[
(K\ot_{\overline{K}}\overline{A})\square_{\overline{H}}H\simeq K\ot_{\overline{K}}(\overline{A}\, \square_{\overline{H}}H).
\]
Therefore, the both sides may be presented simply by $K\ot_{\overline{K}}\overline{A}\, \square_{\overline{H}}H$. 
\end{rem}


\subsection{Affine supergroup schemes}\label{ASG} 
An \emph{affine supergroup scheme} $\G$ (over $k$) is by definition a representable group-valued functor defined
on the category $\mathtt{SAlg}_k$ of superalgebras over $k$. It is, therefore, uniquely represented by a Hopf superalgebra,
say $H$; the relation between $\G$ and $H$ is presented so as 
\[
\G=\op{Sp}_k(H),\quad H=\mathcal{O}_k(\G).
\]
Note that $H^+/(H^+)^2$ is regarded as the cotangent superspace of $\G$ at the identity, and 
the $\mathfrak{w}_H$ defined by \eqref{HGEeq1a} is its odd component. 
A \emph{$\G$-supermodule} may be understood to be the same as an $H$-supercomodule, so that
the category ${}_{\G}\M$ of $\G$-supermodules is defined by
\eq\label{ASGeq1}
{}_{\G}\M =\M^H.
\eeq

We call $\G$ an \emph{affine algebraic supergroup scheme} if $H$ is finitely generated.
Here is a typical example of such supergroup schemes, which we will use in Sections 
\ref{CHA} and \ref{secUEX}.

\begin{example}\label{ASGex1}
Let $m, n$ be non-negative integers, at least one of which is positive.
For every superalgebra $K$ over $k$, $\mathsf{GL}_{m|n}(K)$ denotes the set of all even matrices
in $\mathsf{M}_{m|n}(K)$ (see Example \ref{SASex1})
that are invertible with respect to the product of matrices. Obviously,
this forms a group, and gives rise, with $K$ varying, to a group-valued functor, $\mathsf{GL}_{m|n}$. 
This is, in fact, an affine algebraic supergroup scheme with
\begin{equation}\label{ASGeq2}
\mathcal{O}_k(\mathsf{GL}_{m|n})=k[t_{ij}: 1\le i,j\le m+n]_{\det_0}.  
\end{equation}
This is the polynomial superalgebra localized at $\det_0$, where the indeterminates $t_{ij}$ are such that $\big(t_{ij}\big)_{i,j}$ is 
the generic even matrix, and $\det_0$ denotes the even polynomial defined as the product 
\begin{equation}\label{ASGeq3}
\op{det}_0=\op{det}_0\big(t_{ij}\big)_{i,j}:=\det\big(t_{ij}\big)_{1\le i,j\le m}\, \det\big(t_{ij}\big)_{m< i,j\le m+n}
\end{equation}
of the determinants of the two matrices with even entries in diagonal position. 
\end{example}


\subsection{Superschemes}\label{SSC}
\tc{As a notion more primitive than} affine supergroup schemes,  
an \emph{affine superscheme}  (over $k$) is defined to be a representable set-valued functor on 
the category $\mathtt{SAlg}_k$ of $k$-superalgebras. 
From such functorial view-point, \emph{superschemes} and \emph{fppf sheaves} are defined as natural generalizations
of the ordinary notions of schemes and of fppf sheaves, respectively, defined in the non-super context; they are functors on 
$\mathtt{SAlg}_k$ possessing some appropriate properties, and their categories are in the relation
\[
\begin{pmatrix} \text{affine superschemes} \end{pmatrix} \subset \begin{pmatrix} \text{superschemes}
\end{pmatrix} \subset  \begin{pmatrix}\text{fppf sheaves}\end{pmatrix}.
\]

Given a functor $\mathsf{X}$ on $\mathtt{SAlg}_k$, we obtain by restriction a functor,  
denoted by $\mathsf{X}_{\op{ev}}$, which is defined on the category $\mathtt{Alg}_k$
of (commutative) $k$-algebras, where $\mathtt{Alg}_k$ 
is identified with the full subcategory of $\mathtt{SAlg}_k$
consisting of all purely even superalgebras. 
If $\mathsf{X}$ is an (affine) superscheme (resp., fppf sheaf), then $\mathsf{X}_{\op{ev}}$ is an
affine scheme (resp., fppf sheaf). In particular, if $\mathsf{X}$ is represented by $A$, then 
$\mathsf{X}_{\op{ev}}$ is represented $\overline{A}$. Note that if $\mathsf{G}$ is an affine supergroupscheme,
then $\mathsf{G}_{\op{ev}}$ is an affine group scheme. 

From a geometric view-point, a \emph{superscheme} (over $k$)
is alternatively defined to be a super-ringed space,
that is, a topological space equipped with a sheaf of $k$-superalgebras, which is locally isomorphic to a
\emph{geometric affine superscheme} such as defined below. 
For this geometric view-point see 
\cite[Section 3.3]{CCF}, \cite[Chapter 4, Section 1]{Manin} or \cite[Section 4]{MZ1}.

Given a superalgebra $A$, a \emph{prime} of $A$ is a super-ideal $\mathfrak{p}$ of $A$ such that
$A/\mathfrak{p}$ is a (purely even) integral domain. The condition is equivalent to saying 
(i)~$\mathfrak{p}\supset A_1$, and (ii)~$\mathfrak{p}/I_A$ is a prime
of $\overline{A}\, (=A/I_A)$, or equivalently, (i)~$\mathfrak{p}\supset A_1$, 
and (ii)$'$~$\mathfrak{p}_0$ is a prime of $A_0$. 
Thus, the set $\op{Spec}A$ of 
all primes of $A$ is naturally identified with the prime spectrum of $\op{Spec}\overline{A}$ of $\overline{A}$,
and also with that $\op{Spec}A_0$ of $A_0$. These \tc{two prime spectrums} are identified with each other 
as topological spaces.
Transfer their topologies into
$\op{Spec}A$ through the natural identifications.
Then $\op{Spec}A$ 
turns into a super-ringed space, equipped with
a unique sheaf $\cO_{\op{Spec}\! A}$
of $k$-superalgebras
such that the stalk $\cO_{\op{Spec}\! A,\mathfrak{p}}$ at $\mathfrak{p}$
is the localization $A\ot_{A_0}(A_0)_{\mathfrak{p}_0}$ of $A$ by the multiplicative subset 
$A_0\setminus \mathfrak{p}_0$ of $A_0$. 
A \emph{geometric affine superscheme} is a super-ringed
space of the form $(\op{Spec}A, \cO_{\op{Spec}A})$ such as above. 

In Section \ref{GAL} we will use the word ``superschemes'' in the geometric sense as just defined, 
as well as in the functorial sense; 
see Proposition \ref{GALprop2} and its subsequence. This is justified since the notions 
defined in two ways are equivalent, or more precisely, since there is a natural category-equivalence
between the functorial superschemes and the geometric ones, in which affine objects $\op{Sp}_k(A)$ and
$\op{Spec}A$ correspond to each other; see \cite[Theorem 5.14]{MZ1}.

In the rest of this subsection we let $\mathsf{X}$ be a non-empty superscheme in the geometric sense.

The associated superscheme $\mathsf{X}_{\op{ev}}$, viewed geometrically,
is characterized by the properties: \tc{(1)}~$\mathsf{X}_{\op{ev}}$ has the same underlying topological space
as $\mathsf{X}$, and 
\tc{(2)}~$\mathsf{X}_{\op{ev}}|_U=\op{Spec}\overline{\cO_{\mathsf{X}}(U)}$ for every affine open set $U$
of $\mathsf{X}$. 

We say that $\mathsf{X}$ is 
\begin{itemize}
\item[--]\emph{integral} or \emph{irreducible}, if the associated scheme $\mathsf{X}_{\op{ev}}$ is such;
\item[--]\emph{locally Noetherian} (resp., \emph{smooth} over $k$), if the stalks $\cO_{\mathsf{X},x}$, 
where $x\in \mathsf{X}$, 
are all Noetherian (resp., smooth over $k$);
\item[--]\emph{Noetherian}, if $\mathsf{X}$ is locally Noetherian and quasi-compact. 
\end{itemize}

Assume that $\mathsf{X}$ is integral. Then it contains a
unique generic point, say, $\eta$. We define
\[
k(\mathsf{X}):=\cO_{\mathsf{X},\eta}.
\]
We will discuss $k(\mathsf{X})$ only under the assumptions (i)--(iii) below. Notice that $\mathsf{X}$
is integral under the assumptions.

\begin{lemma}\label{SSClem1}
Assume that $\mathsf{X}$ is (i)~irreducible, (ii)~locally Noetherian, and (iii)~smooth over $k$. 
\begin{itemize}
\item[(1)] $k(\mathsf{X})$ is a SUSY field such that $\overline{k(\mathsf{X})}$ coincides with
the function field of the integral scheme $\mathsf{X}_{\op{ev}}$. 
\item[(2)] For every non-empty affine open set $U$
of $\mathsf{X}$, 
we have $\Quot(\cO_{\mathsf{X}}(U))=k(\mathsf{X})$.
\end{itemize}
\end{lemma}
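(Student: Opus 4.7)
The plan is to reduce both parts of the lemma to a single computation in any non-empty affine open of $\mathsf{X}$ and then combine the structure theorem (Proposition \ref{SUFprop1} together with Remark \ref{SUFrem1}(2)) with Example \ref{SUFex1}. Pick any non-empty affine open subset $U=\op{Spec}(A)$ of $\mathsf{X}$, with $A=\cO_{\mathsf{X}}(U)$. Since $\mathsf{X}$ is irreducible, the generic point $\eta$ lies in $U$, so $k(\mathsf{X})=\cO_{U,\eta}$. The hypotheses force $A$ to be Noetherian and smooth over $k$. Moreover, $\mathsf{X}_{\op{ev}}$ is irreducible and shares its underlying topological space with $\mathsf{X}$; being smooth over a field of characteristic zero it is reduced, hence integral. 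Consequently $\overline{A}$ is an integral domain, and we set $F:=\Quot(\overline{A})$. Since $\eta$ corresponds to the unique minimal prime of $A$, namely $I_A$ (the lift of $(0)\subset\overline{A}$), the definition of the structure sheaf gives
\[
k(\mathsf{X}) \,=\, A\ot_{A_0}(A_0)_{(I_A)_0} \,=\, S^{-1}A, \qquad S:=A_0\setminus (I_A)_0.
\]

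The main step, and the one I expect to be the principal obstacle, is to show that this localization coincides with $\Quot(A)$, i.e., that $S$ equals the set $S'$ of $A$-regular even elements. The inclusion $S'\subseteq S$ is easy: because $A$ is Noetherian, $I_A$ is nilpotent, so any element of $(I_A)_0$ is nilpotent and therefore not $A$-regular. For $S\subseteq S'$, I invoke Proposition \ref{SUFprop1} and Remark \ref{SUFrem1}(2): after fixing a section of $A\to\overline{A}$, there is an isomorphism $A\cong \wedge_{\overline{A}}(P)$ with $P:=I_A/I_A^2$ finite projective over $\overline{A}$, so $A$ is free over the domain $\overline{A}$ and the localization map $A\hookrightarrow T:=A\ot_{\overline{A}}F$ is injective. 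By Example \ref{SUFex1}, $T\cong \wedge_F(P\ot_{\overline{A}}F)$ is a SUSY field with $\overline{T}=F$ and $I_T$ nilpotent, so any element of $T$ with nonzero image in $F$ is a unit. Any $s\in S$ satisfies this, and hence is $A$-regular by the injectivity just noted.

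With $S=S'$ in hand, $k(\mathsf{X})=\Quot(A)\cong \wedge_F(P\ot_{\overline{A}}F)$ is a SUSY field by Example \ref{SUFex1}, with $\overline{k(\mathsf{X})}=F$ equal to the function field of the integral scheme $\mathsf{X}_{\op{ev}}|_U$, which is the function field of $\mathsf{X}_{\op{ev}}$ itself. This proves (1); because the entire argument applies to an arbitrary non-empty affine open $U$, the equality $\Quot(\cO_{\mathsf{X}}(U))=k(\mathsf{X})$ of (2) follows at once.
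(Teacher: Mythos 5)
Your proof is correct and follows essentially the same route as the paper: identify the generic point with the prime $I_A$, use Proposition \ref{SUFprop1} and Remark \ref{SUFrem1}(2) to write $A\simeq\wedge_{\overline{A}}(P)$, and invoke Example \ref{SUFex1} to identify the localization at $I_A$ with $\Quot(A)\simeq\wedge_F(P\ot_{\overline{A}}F)$. (Your extra verification that the two multiplicative sets coincide is just a more explicit version of the paper's one-line remark; note only that $\wedge_{\overline{A}}(P)$ is projective rather than free over $\overline{A}$ in general, which still gives the injectivity you need.)
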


\begin{definition}\label{SSCdef1}
Under the assumptions (i)-(iii) above, 
we call $k(\mathsf{X})$ \emph{the function SUSY field} of $\mathsf{X}$. 
\end{definition}

\pf[Proof of Lemma \ref{SSClem1}]
Let $U$ be a non-empty affine open set of $\mathsf{X}$, and set $A:=\cO_{\mathsf{X}}(U)$. 
Then by (ii) and (iii), $A$ is Noetherian smooth superalgebra over $k$, so that $P:=I_A/I_A^2$
is a finitely generated projective $\overline{A}$-module, and we have a $k$-superalgebra isomorphism
\eq\label{SSCeq1}
\wedge_{\overline{A}}(P)\simeq A
\eeq
such as in \eqref{SUFeq1}, which may be supposed to be
an isomorphism of superalgebras over $\overline{A}$; see Remark \ref{SUFrem1} (2). 
With (i) added, $U$ contains the generic point $\eta$, and it corresponds to the prime $I_A$ of $A$,
which is indeed a prime since $\overline{A}\, (=\cO_{\mathsf{X}_{\op{ev}}}(U))$ is an integral domain.
Let $F=\Quot(\overline{A})$ be the quotient field of $\overline{A}$, so that $F$ equals the function field
of the integral scheme $\mathsf{X}_{\op{ev}}$, or in notation, 
\eq\label{SSCeq2}
F=k(\mathsf{X}_{\op{ev}}).
\eeq
The isomorphism \eqref{SSCeq1}
extends, by localization at $I_A$, to 
\eq\label{SSCeq3}
\wedge_{F}(P\ot_{\overline{A}}F)\simeq k(\mathsf{X}),
\eeq
since every element of $A_0\setminus I_A$ coincides with some non-zero element of $\overline{A}$
modulo nilpotents. Since $\wedge_{F}(P\ot_{\overline{A}}F)=\Quot(A)$, as was seen in Example \ref{SUFex1}, 
Part 2 of the lemma follows. Part 1 follows from \eqref{SSCeq2} and \eqref{SSCeq3}.
\epf

\section{$D$-superalgebras}\label{secDSA}

Throughout in what follows we let $D=(D, \Delta,\varepsilon)$ denote a super-cocommutative Hopf superalgebra 
(over the base field $R$),
which may not be super-commutative. We present the coproduct $\Delta : D\to D\ot D$ so as
\[
\Delta(d)=d_{(1)}\ot d_{(2)},\quad d\in D.
\]
A $D$-supermodule will mean a left $D$-supermodule. The category $({}_D\M, \ot, R)$ of $D$-supermodules
forms a monoidal category, which is symmetric with respect to the supersymmetry. 

We let
\eq\label{DSAeq0a}
\g_D=\{\, x\in D : \Delta(x)=x\ot 1+ 1\ot x \, \} 
\eeq
denote the set of all primitive elements in $D$, which forms a Lie superalgebra (over $R$) with respect to the
super-commutator $[x,y]:=xy-(-1)^{|x||y|} yx$, where $x,y\in \g_D$. 
The even component $(\g_D)_0$ of $\g_D$ is a Lie algebra. 

We let
\eq\label{DSAeq0b}
\Gamma_D=\{\, \gamma \in D : \Delta(\gamma)=\gamma\ot \gamma,\ \varepsilon(\gamma)=1\, \}
\eeq
denote the set of all grouplike elements in $D$. 
Those elements are necessarily even, and $\Gamma_D$ forms a group with respect to the product of $D$.
This group acts by conjugation on the Lie superalgebra $\g_D$; the action extends onto
the universal envelope $U(\g_D)$ of $\g_D$. 
By the assumption $\op{char}R=0$, the associated semi-direct (or smash) product 
$U(\g_D)\rtimes \Gamma_D$, as well as $U(\g_D)$, 
is included in $D$ as a Hopf sub-superalgebra. 

According to the situation we will pose to $D$ some of the following assumptions with explicit citation. 
\begin{itemize}
\item[(D1)] $\dim (\g_D)_1<\infty$, that is, the odd component $(\g_D)_1$ of $\g_D$ is finite-dimensional.
\item[(D2)] $D$ is pointed, or explicitly, $D=U(\g_D)\rtimes \Gamma_D$.
\item[(D3)] $D$ is irreducible, or explicitly, $D=U(\g_D)$.
\end{itemize}
Obviously, (D3) implies (D2). 

Define
\begin{equation}\label{DASeq0c}
\D:=\Delta^{-1}(D_0\ot D_0),
\end{equation}
the pull-back of $D_0\ot D_0$ (in $D\ot D$) along the coproduct of $D$; see \cite[p.291]{M}. 
This is the largest purely even sub-supercoalgebra of $D$, and is indeed a Hopf sub-superalgebra.
By \cite[Theorem 3.6]{M} there exists a non-canonical coalgebra-isomorphism 
\eq\label{DASeq0c0}
\D\ot \wedge((\g_D)_1) \overset{\simeq}{\longrightarrow} D
\eeq
in ${}_{\D}\M$ which extends the inclusions $(\D\ot R=)\, \D \hookrightarrow D\hookleftarrow
(\g_D)_1\, (=R\ot (\g_D)_1)$. The condition
(D2) (resp., (D3)) is equivalent to saying that $\D$ is pointed (resp., irreducible), 
or explicitly, $\D =U((\g_D)_0)\rtimes \Gamma_D$\ (resp., $\D =U((\g_D)_0)$).

\subsection{$D$-simplicity}\label{DSA}
Let $D$ be in general. Given an object $V$ in ${}_D\M$, we let 
\[
V^D=\{\, a \in A : dv=\varepsilon(d)v\ \text{for all}\ d \in D \, \}
\]
denote the sub-super-vector space of $V$ which consists of all 
$D$-invariants. If $A$ is an algebra in ${}_D\M$, then $A^D$
is a sub-superalgebra of $A$. 

\begin{definition}\label{DSAdef1}
An algebra in ${}_D\M$ is called \emph{$D$-superalgebra}, which is assumed to be super-commutative by convention.
It is often called a $D$-supermodule superalgebra in the literature. A $D$-superalgebra is called a \emph{$D$-SUSY field}, if it
is a SUSY field, forgetting the $D$-action. 
\end{definition}

Let $A$ be a $D$-superalgebra.
Notice that the requirement that the identity $R \to A$, $1\mapsto 1$ and the product $A\ot A\to A$,
$a\ot a'\mapsto aa'$ should preserve the action by $d\in D$ is explicitly presented by
\eq\label{DSAeq0c1}
d1=\varepsilon(d)1,\quad d(aa')=(-1)^{|a||d_{(2)}|}(d_{(1)}a)(d_{(2)}a').
\eeq
The left $A$-modules in ${}_D\M$ form a symmetric monoidal category
\eq\label{DSAeq0d}
{}_A({}_D\M)= ({}_A({}_D\M),\ot_A,A)
\eeq
\tc{with respect to the supersymmetry; cf.~\eqref{SASeq0a}.}
This category is naturally identified with 
the one $({}_D\M)_A$ of right $A$-modules in ${}_D\M$. 
An $A$-module will be supposed to be left unless otherwise
specified. Given an $A$-module $V$ in ${}_D\M$, we have the
natural $A$-module morphism 
\begin{equation}\label{DSAeq1}
\mu=\mu_V : A\ot_{A^D} V^D\to V, \quad \mu(a \ot v)=av
\end{equation}
in ${}_D\M$; this is an $A$-algebra morphism in ${}_D\M$ if
$V$ is an $A$-algebra in the category. The subscript $V$ will be
omitted from the notation $\mu_V$, in case the relevant $A$-module 
is complicated. 

\begin{definition}\label{DSAdef2}
A $D$-superalgebra $A$ is said to be \emph{$D$-simple},
if it is non-zero and includes no non-trivial $D$-stable 
super-ideal. 
\end{definition}

\begin{prop}\label{DSAprop1}
Suppose that $A$ is a $D$-simple $D$-superalgebra. Then we have 
\begin{itemize}
\item[(1)] $A^D$ is a field, and is included in $A_0$, in particular.
\item[(2)] For every $A$-module $V$ in ${}_D\M$, $\mu_V : A\ot_{A^D} V^D\to V$ is injective. 
\end{itemize}
\end{prop}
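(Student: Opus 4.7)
The plan for (1) is to handle odd $D$-invariants and invertibility separately. If $x$ is an odd element of $A^D$, then $x^2 = 0$ by super-commutativity (using $\op{char} R = 0$), while the super-ideal $xA$ is $D$-stable: since $d_{(1)} x = \varepsilon(d_{(1)}) x$, the super-Leibniz rule \eqref{DSAeq0c1} collapses to $d(xa) = (-1)^{|d|} x (da)$, so $D \cdot (xA) \subseteq xA$. By $D$-simplicity, $x \neq 0$ would force $xA = A$, so $1 = xa$ for some $a \in A$, whence $x = x(xa) = x^2 a = 0$, a contradiction. Thus $A^D \subseteq A_0$. For any nonzero $x \in A^D$, the same argument gives $xA = A$, so $x$ is invertible in $A$; applying $d \in D$ to $xx^{-1} = 1$ (with trivial sign since $|x| = 0$) yields $x \cdot dx^{-1} = \varepsilon(d)$, hence $dx^{-1} = \varepsilon(d) x^{-1}$, so $x^{-1} \in A^D$. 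Thus $A^D$ is a field.

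For (2), write $k := A^D$ and begin with the key observation $(A \otimes_k V^D)^D = V^D$. Indeed, for $v \in V^D$ the equality $d_{(2)} v = \varepsilon(d_{(2)}) v$ forces $d_{(2)}$ to be even in surviving terms, so the $D$-action on $A \otimes_k V^D$ simplifies to $d(a \otimes v) = (da) \otimes v$, and $D$-invariance amounts to the $A$-coefficients lying in $A^D = k$. Now set $K := \ker \mu_V$ and assume $K \neq 0$ for contradiction. Pick a nonzero homogeneous $\omega = \sum_{i=1}^n a_i \otimes v_i \in K$ with $v_1, \ldots, v_n \in V^D$ homogeneous and $k$-linearly independent, with $n$ minimal across all such data (so $a_i \neq 0$). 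Define
\[
J := \Bigl\{\, a \in A : \exists\, a_2, \ldots, a_n \in A\ \text{with}\ a \otimes v_1 + \sum_{i \geq 2} a_i \otimes v_i \in K \,\Bigr\}.
\]
Then $J$ is an $A$-submodule (since $K$ is $A$-stable), is $D$-stable (by the simplified action above), and is graded (the homogeneous components of any witnessing element remain in $K$, using that $K$ is graded and the $v_i$ are homogeneous). So $J$ is a nonzero $D$-stable super-ideal, and $D$-simplicity forces $J = A$.

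Hence some $\omega' := 1 \otimes v_1 + \sum_{i \geq 2} b_i \otimes v_i$ lies in $K$. For each $d \in D$ the element $d\omega' - \varepsilon(d)\omega' \in K$ has first coefficient $d \cdot 1 - \varepsilon(d) \cdot 1 = 0$ by \eqref{DSAeq0c1}, so it has length at most $n - 1$; decomposing it into homogeneous parts then contradicts minimality of $n$ unless it vanishes. Thus $db_i = \varepsilon(d) b_i$ for $i \geq 2$, so each $b_i \in k$. But then $\omega' \in k \otimes_k V^D \cong V^D$, whence $0 = \mu_V(\omega') = v_1 + \sum b_i v_i$ contradicts $k$-linear independence of $v_1, \ldots, v_n$. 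The main obstacle is the verification that $J$ is graded, which requires careful parity bookkeeping in decomposing a possibly inhomogeneous witness into its homogeneous components, using that $\omega$ and all the $v_i$ are homogeneous.
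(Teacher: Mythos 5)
Your proof is correct, but it proceeds by a genuinely different route from the paper's. For (1), the paper regards $A$ as a simple object of the abelian category ${}_A({}_D\M)$, rules out odd invariants via the would-be isomorphism $A\simeq A[1]$, and then invokes Schur's Lemma to identify $A^D$ with the endomorphism ring; you instead observe directly that for $x\in A^D$ the principal super-ideal $xA$ is $D$-stable (the Leibniz rule \eqref{DSAeq0c1} collapses because $\varepsilon$ kills odd components), so $D$-simplicity forces $xA=A$, which kills odd invariants via $x^2=0$ and gives invertibility of even ones by hand. For (2), the paper runs an induction on $n$ using a pull-back diagram and the simplicity of the shifts $A[\delta_i]$, whereas you use the classical ``minimal-length relation'' argument from differential algebra: the super-ideal $J$ of leading coefficients of minimal-length elements of $\op{Ker}(\mu_V)$ is $D$-stable (here the simplification $d(a\ot v)=(da)\ot v$ for $v\in V^D$ is the key computation), so $J=A$, and applying $d-\varepsilon(d)$ to a relation with leading coefficient $1$ shortens it, forcing all coefficients into $A^D=k$ and contradicting $k$-linear independence. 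Your version is more elementary and self-contained (no categorical machinery, no Schur), at the cost of the parity bookkeeping you flag when checking that $J$ and $\op{Ker}(\mu_V)$ are graded; the paper's version is shorter given the categorical setup it reuses elsewhere. All the delicate points in your argument (gradedness of the kernel, nonvanishing of $a_1$ by minimality, the case $n=1$, and recovering $b_i\in k$ from $db_i=\varepsilon(d)b_i$) do check out.
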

\pf[Proof \tu{(cf. the proof of \cite[Proposition~12.5]{AMT})}]
Notice that ${}_A({}_D\M)$ is an abelian category, in which 
a morphism is monic if and only if it is (set-theoretically) injective. 
The $D$-simplicity assumption is equivalent to saying that
$A$ is a simple object in that category. 

(1)\ First, we see $A^D\subset A_0$. Indeed, if $A^D$ contained a non-zero odd element $x$, then $a \mapsto ax$
would give an isomorphism $A\overset{\simeq}{\longrightarrow} A[1]$ in ${}_A({}_D\M)$. Then the $1$ in $A$ \tc{must} be annihilated by
$x$ since the corresponding $x$ in $A[1]$ is; this is absurd.
Therefore, the endomorphism ring ${}_A({}_D\M)(A,A)$ of $A$ is naturally identified with 
$A^D\cap A_0=A^D$, which must be a field by Schur's Lemma. 

Let us denote the field by $k=A^D$. 

(2)\ We prove by induction on $n >0$ that $k$-linearly independent, 
homogeneous elements $v_1,\dots, v_n$ of $V^D$, whose degree we suppose are $\delta_i=|v_i|\, (\in \{0,1\})$, are
$A$-linearly independent in $V$.  
The conclusion is equivalent to saying that the morphism 
\[
F_n:=(f_1,\dots,f_n): \bigoplus_{i=1}^n A[\delta_i] \to V
\]
in ${}_A({}_D\M)$ is injective, where $f_i : A[\delta_i] \to V$ is defined by $f_i(a)=av_i$. 
When $n=1$, this is true since $A[\delta_1]$ is a simple object. 
Suppose $n>1$. We have the pull-back diagram
\[
\begin{xy}
(0,0)   *++{\op{Ker}(F_n)}  ="1",
(30,0)  *++{\bigoplus_{i=1}^{n-1} A[\delta_i]} ="2",
(0,-16)  *++{A[\delta_n]} ="3",
(30,-16) *++{V.}="4",
{"1" \SelectTips{cm}{} \ar @{->}^{q} "2"},
{"3" \SelectTips{cm}{} \ar @{->}^{-f_n} "4"},
{"1" \SelectTips{cm}{} \ar @{->}_{p} "3"},
{"2" \SelectTips{cm}{} \ar @{->}^{F_{n-1}} "4"}
\end{xy}
\]
By the induction hypothesis we may suppose that $F_{n-1}$ is injective. This implies that $p:\op{Ker}(F_n)\to A[\delta_n]$ is injective.
Contrary to the desired result that $F_n$ is injective, suppose $\op{Ker}(F_n)\ne 0$. Then $p$ must be isomorphic since
$A[\delta_n]$ is a simple object.
The composite $q\circ p^{-1}: A[\delta_n]\to \bigoplus_{i=1}^{n-1}A[\delta_i]$ is of the form ${}^t(c_1,\dots,c_{n-1})$, where
$c_i$ is a scalar in $k$ if $\delta_i=\delta_n$, and is zero if $\delta_i\ne \delta_n$, as is seen from the argument in the preceding paragraph. 
Since $-f_n=F_{n-1}\circ q\circ p^{-1}$, it follows that $-f_n$ is a $k$-linear combination, 
\tc{$f_n=\sum_{i=1}^{n-1} c_if_i$,} 
of $f_1,\dots,f_{n-1}$ in the vector
space $\op{Hom}_{A,D}(A,V)$ of all $A$- and $D$-linear morphisms $A\to V$. 
This contradicts the $k$-linear independency of $v_1,\dots,v_n$.
\epf

Let $A$ be a superalgebra. Recall from \eqref{SASeq0} that $\op{Hom}(D,A)$ is a super-vector space (over $R$). 
This is seen to be a $D$-superalgebra with respect to the convolution
product $*$ defined by 
\begin{equation}\label{DSAeq00}
f*g(d)=(-1)^{|f||d_{(2)}|}f(d_{(1)})g(d_{(2)}),
\end{equation}
and the $D$-action defined by 
\[
df(d')=f(d'd),
\]
where $d,d'\in D$ and $f,g\in\op{Hom}(D,A)$. 
If $A$ is a $D$-superalgebra, then 
there is associated the superalgebra morphism
\eq\label{DSAeq1a}
\rho=\rho_A : A \to \op{Hom}(D, A),\ \rho(a)(d)=da.
\eeq
This is, in fact, an algebra morphism in ${}_D\M$, whence  $\op{Hom}(D, A)$ turns into an $A$-algebra
in ${}_D\M$ through $\rho_A$. 

\begin{rem}\label{DSArem1}
There is another choice of defining the convolution product so as
\eq\label{DSAeq1b}
f*'g(d)=(-1)^{|d_{(1)}||g|}f(d_{(1)})g(d_{(2)}).
\eeq
But, $\rho$ turns to be a superalgebra morphism 
due to our choice of definition. 
This is based on the fact that $D$ acts on $A$ on the left, satisfying \eqref{DSAeq0c1}. 
If $D$ acted on the right, we should have defined so as \eqref{DSAeq1b}. 
\end{rem}

\begin{prop}\label{DSAprop1a}
Let $A$ be a $D$-superalgebra. Then
\eq\label{DSAeq2}
\mu : A\ot_{A^D} A \to \op{Hom}(D,A),\quad \mu(a\ot a')(d)=(da)a'
\eeq
defines an algebra morphism in ${}_D\M$, \tc{where $D$ is supposed to act on the source $A \ot_{A^D}A$ through the
left tensor factor}. If $A$ is $D$-simple, then $A^D$ is a field, and
this $\mu$ is injective. 
\end{prop}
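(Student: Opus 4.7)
The plan is to address the three conclusions separately. For the structural part---that $\mu$ is well-defined, $D$-linear, unital, and multiplicative---I will unwind definitions and compute directly; for injectivity under $D$-simplicity, I will reduce to Proposition~\ref{DSAprop1}(2) by recognizing $\mu$ as an instance of the canonical morphism $\mu_V$ of \eqref{DSAeq1} for a specific choice of $V$.

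First I would verify the balancing condition of $\mu$ over $A^D$: for $c\in A^D$, the $D$-module algebra axiom \eqref{DSAeq0c1} together with the identity $dc=\varepsilon(d)c$ collapses $\mu(ac\ot a')$ to $\mu(a\ot ca')$, all Koszul signs disappearing because $\varepsilon$ vanishes on the odd part of $D$. The $D$-linearity of $\mu$, where $D$ acts on the source through the left tensor factor alone, is immediate from associativity of the $D$-action: $\mu(d(a\ot a'))(d')=((d'd)a)a'=(d\mu(a\ot a'))(d')$. Preservation of the convolution unit is trivial since $\mu(1\ot 1)(d)=\varepsilon(d)$, which is the unit element of $\op{Hom}(D,A)$. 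Multiplicativity, $\mu((a\ot a')(b\ot b'))=\mu(a\ot a')*\mu(b\ot b')$, is the main obstacle: after expanding both sides via \eqref{DSAeq0c1} and \eqref{DSAeq00}, the Koszul sign $(-1)^{|a'||b|}$ coming from the super-exchange inside $A\ot_{A^D}A$ must be reconciled with the sign $(-1)^{|a||d_{(2)}|}$ produced by the super-Leibniz rule applied to $d(ab)$; a careful sign count yields the desired equality.

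For injectivity, Proposition~\ref{DSAprop1}(1) already guarantees that $A^D$ is a field whenever $A$ is $D$-simple. I would then take $V:=\op{Hom}(D,A)$, regarded as an $A$-algebra in ${}_D\M$ via $\rho_A$ (see \eqref{DSAeq1a}). An element $f\in V$ is $D$-invariant iff $f(d'd)=\varepsilon(d)f(d')$ for all $d,d'\in D$, and setting $d'=1$ forces $f(d)=\varepsilon(d)f(1)$. Thus evaluation at $1$ yields a degree-preserving bijection $V^D\overset{\simeq}{\longrightarrow}A$, whose inverse sends $a'\in A$ to the map $d\mapsto \varepsilon(d)a'$; one checks this to be an isomorphism of $A^D$-supermodules (again the would-be Koszul signs vanish because $\varepsilon$ is supported on $D_0$). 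Under this identification, the canonical morphism $\mu_V:A\ot_{A^D}V^D\to V$ of \eqref{DSAeq1} coincides with our $\mu$, so Proposition~\ref{DSAprop1}(2) delivers the injectivity.
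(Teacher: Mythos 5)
Your proof is correct and follows essentially the same route as the paper: the key step in both is to regard $\op{Hom}(D,A)$ as an $A$-algebra in ${}_D\M$ via $\rho_A$, identify its $D$-invariants with $A$ (evaluation at $1$, i.e.\ $\op{Hom}(D/D^+,A)=A$), recognize the given $\mu$ as the canonical $\mu_V$ of \eqref{DSAeq1}, and invoke Proposition~\ref{DSAprop1}. Your separate hands-on verification that $\mu$ is multiplicative is fine (the sign count does close) but redundant, since once $\mu$ is identified with $\mu_V$ for the $A$-algebra $V=\op{Hom}(D,A)$, the algebra-morphism property is already supplied by the remark following \eqref{DSAeq1}.
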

\pf
Regard $\op{Hom}(D,A)$ as an $A$-algebra in ${}_D\M$, as above.
Since one sees that 
\[
\op{Hom}(D,A)^D=\op{Hom}(D/D^+,A)=A,
\]
where $D^+=\op{Ker}(\varepsilon)$ denotes the augmentation super-ideal of $D$, it follows
that the $\mu_V$ for $V=\op{Hom}(D,A)$ is precisely the one given above. This justifies
the notation, and derives the present proposition from Proposition \ref{DSAprop1}. 
\epf

Let $A$ be a $D$-superalgebra. Recall from \eqref{DASeq0c} the purely even Hopf sub-superalgebra $\D$ of $D$. 
With the restricted $\D$-action $A$ is a $\D$-superalgebra, in which $I_A$ is $\D$-stable. 
Therefore, $\overline{A}$ turns into a $\D$-algebra, that is, a purely even $\D$-superalgebra.
The projection $A \to \overline{A}$ restricts to the superalgebra morphism
\begin{equation*}
A^D \to \overline{A}^{\D}.
\end{equation*}

\begin{prop}\label{DSAprop2}
Assume
\begin{itemize}
\item[(D1)] $\dim (\g_D)_1<\infty$.
\end{itemize}
Let $A$ be a $D$-superalgebra such that
\begin{itemize}
\item[(i)] 
$I_A$ is nilpotent (this is the case if $A$ is Noetherian), 
\item[(ii)]
$A$ is $D$-simple, and 
\item[(iii)]
$\overline{A}$
is a field. 
\end{itemize}
Then the morphism $A^D \to \overline{A}^{\D}$ above is an isomorphism of fields. 
\end{prop}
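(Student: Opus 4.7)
\textbf{Injectivity.} Since $\bar A$ is a field it is trivially $\D$-simple, so Proposition \ref{DSAprop1}(1) applied to $\bar A$ yields that $\bar k:=\bar A^\D$ is a field; likewise $k:=A^D$ is a field by Proposition \ref{DSAprop1}(1) applied to $A$. The canonical map $k\to\bar k$ sends $1$ to $1$, hence is a nonzero morphism of fields, and therefore injective.

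\textbf{Surjectivity.} By (D1), the odd part $\mathfrak n:=(\g_D)_1$ is finite dimensional; fix a basis $\xi_1,\dots,\xi_n$. The coalgebra isomorphism $\D\otimes\wedge(\mathfrak n)\overset\sim\to D$ of \eqref{DASeq0c0} gives the criterion
\[
a\in A^D\iff a\in A^\D\ \text{and}\ \xi_i(a)=0\ \text{for each }i,
\]
since for $d=d_0\xi_I\in D$ with $d_0\in\D$ and $I\neq\emptyset$ one has $\varepsilon(d)=0$ and $d(a)=d_0(\xi_I(a))$, so the $D$-invariance condition for all such $d$ (in particular $d_0=1$) forces $\xi_I(a)=0$, while $I=\emptyset$ is $\D$-invariance.

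Given $\bar a\in\bar k$, I would construct $a\in A^D$ lifting $\bar a$ by an inductive argument along the filtration $A\supset I_A\supset I_A^2\supset\cdots\supset I_A^N=0$, which terminates by the nilpotency hypothesis (i). Each $I_A^m$ is $\D$-stable (since $\D$, being purely even, preserves the super-grading), whereas each odd primitive $\xi\in\mathfrak n$ satisfies $\xi(I_A^m)\subset I_A^{m-1}$ by the super-Leibniz rule. Starting from a lift $a_0\in A_0$ of $\bar a$, one modifies $a_m$ at stage $m$ by an element of $I_A^{m+1}$ to improve the invariance defects modulo higher powers of $I_A$. The essential inputs are: (a) $\bar A^\D=\bar k$ is a field, which controls the $\D$-invariant theory of each filtration quotient $I_A^m/I_A^{m+1}$; (b) by Proposition \ref{DSAprop1a}, $D$-simplicity of $A$ injects $A\otimes_k A$ into $\op{Hom}(D,A)$, providing the rigidity to identify and kill the obstructions; and (c) the identities $\xi_i^2\in(\g_D)_0\subset\D$ and $[\xi_i,\xi_j]\in(\g_D)_0\subset\D$ impose a Koszul-type structure on the $\mathfrak n$-action on the filtration quotients.

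The hard part will be the coupling between the $\D$-invariance defect and the $\mathfrak n$-annihilation defect: because $\mathfrak n$ shifts the filtration by $-1$, adding an element of $I_A^{m+1}$ improves the $\D$-defect by one step but a priori may worsen the $\mathfrak n$-defect. Controlling this coupling, by choosing the modification inside a Koszul-type resolution of each filtration quotient that simultaneously kills both defects (using that $\bar k$ is a field to guarantee vanishing of the relevant obstruction classes), is the technical heart of the argument; nilpotency of $I_A$ then terminates the induction in finitely many steps.
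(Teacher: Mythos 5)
There is a genuine gap: the surjectivity half of your argument is a plan, not a proof. You yourself identify the ``technical heart'' --- the coupling between the $\D$-invariance defect and the $\mathfrak n$-annihilation defect along the $I_A$-adic filtration --- and then do not resolve it; ``choosing the modification inside a Koszul-type resolution \dots\ to guarantee vanishing of the relevant obstruction classes'' is a hope, not an argument, and nothing you have written explains why those obstruction classes vanish. A symptom of the gap is that your proposal never invokes $\op{char}R=0$, whereas the paper's proof uses it essentially (twice, to get reducedness of $\op{Hom}(\D,\overline{A})$ and of $\overline{A}\ot_{A^D}\overline{A}$); without some such input the statement is not to be expected. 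A second, smaller problem: your criterion
\[
a\in A^D\iff a\in A^{\D}\ \text{and}\ \xi_i(a)=0\ \text{for all }i
\]
in the direction ``$\Leftarrow$'' requires $D$ to be generated \emph{as an algebra} by $\D$ and $(\g_D)_1$. The isomorphism \eqref{DASeq0c0} is only a coalgebra isomorphism, and the proposition assumes only (D1), not (D2); so this reduction is not justified in the stated generality.

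For comparison, the paper's proof avoids any lifting altogether. It places the two injections $\mu\colon A\ot_{A^D}A\rightarrowtail\op{Hom}(D,A)$ and $\mu\colon\overline{A}\ot_{\overline{A}^{\D}}\overline{A}\rightarrowtail\op{Hom}(\D,\overline{A})$ (both from Proposition \ref{DSAprop1a}) into a commutative square whose right vertical arrow $\op{Hom}(D,A)\to\op{Hom}(\D,\overline{A})$ is surjective with nilpotent kernel --- this is where (D1) and the nilpotency of $I_A$ enter, via the bound \eqref{DSAeq2a}. Since $\op{Hom}(\D,\overline{A})$ is reduced in characteristic zero, one gets $(A\ot_{A^D}A)_{\op{red}}=\overline{A}\ot_{\overline{A}^{\D}}\overline{A}$; computing the same nilradical via the projection $A\to\overline{A}$ and the reducedness of $\overline{A}\ot_{A^D}\overline{A}$ gives $(A\ot_{A^D}A)_{\op{red}}=\overline{A}\ot_{A^D}\overline{A}$, and comparing the two identifications forces $A^D=\overline{A}^{\D}$. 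If you want to salvage your approach, you would need to make the obstruction calculus precise and exhibit where characteristic zero kills the obstructions; the paper's route is considerably shorter.
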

\pf
By (ii) and (iii), $A^D$ and $\overline{A}^{\D}$ are fields. We have the commutative diagram
\[
\begin{xy}
(0,0)   *++{A\ot_{A^D}A}  ="1",
(30,0)  *++{\op{Hom}(D,A)} ="2",
(0,-16)  *++{\overline{A}\ot_{\overline{A}^{\D}}\overline{A}} ="3",
(30,-16) *++{\op{Hom}(\D,\overline{A}),}="4",
{"1" \SelectTips{cm}{} \ar @{>->}^{\hspace{-2mm} \mu}"2"},
{"3" \SelectTips{cm}{} \ar @{>->}^{\hspace{-2mm} \mu}"4"},
{"1" \SelectTips{cm}{} \ar @{->>}"3"},
{"2" \SelectTips{cm}{} \ar @{->>}"4"}
\end{xy}
\]
in which the horizontal morphisms $\mu$ are injective by (ii) and (iii), as is ensured by 
Proposition \ref{DSAprop1a}.
The vertical morphism on the RHS, which arises from the inclusion $\D \to D$ and the projection $A \to \overline{A}$,
is surjective. Let us write simply $\g$ for $\g_D$, and identify the super-coalgebra
$D$ with $\D \ot \wedge(\g_1)$ through an isomorphism such as in \eqref{DASeq0c0}. 
Then the kernel of the vertical morphism on the RHS is
\begin{equation}\label{DASeq20}
\op{Hom}(\D\ot (\wedge(\g_1)/R1), A)+ \op{Hom}(D,I_A). 
\end{equation}
We claim that this is a nilpotent super-ideal, whence in particular, it consists of nilpotent elements. 
To see this, it suffices to show that the two Hom sets in \eqref{DASeq20} are nilpotent super-ideals. 
Indeed, $\op{Hom}(D,I_A)$ is such by (i). 
In view of (D1), suppose $d=\dim \g_1\, (<\infty)$. Since $\wedge(\g_1)$ then vanishes
through the composite  
\[
\wedge(\g_1)\overset{\Delta_d}{\longrightarrow}(\wedge(\g_1))^{\ot(d+1)}\to (\wedge(\g_1)/R1)^{\ot(d+1)}
\]
of the $d$-th iterated coproduct $\Delta_d$ with the natural projection, it follows 
that 
\eq\label{DSAeq2a}
(\op{Hom}(\D\ot(\wedge(\g_1)/R1), A))^{d+1}=0. 
\eeq

By the assumption $\op{char}R=0$, $\D\ot \overline{A}$ is a smooth coalgebra over the field $\overline{A}$, so that
its dual $\overline{A}$-algebra $\op{Hom}(\D,\overline{A})$ is reduced. This, combined with the claim above, shows 
\[
(\op{Hom}(D,A))_{\op{red}}=\op{Hom}(\D,\overline{A}).
\]
In view of the commutative diagram above we see
\eq\label{DSAeq3}
(A\ot_{A^D}A)_{\op{red}}=\overline{A}\ot_{\overline{A}^{\D}}\overline{A}.
\eeq
On the other hand, the projection $A\to \overline{A}$ induces the superalgebra morphism
\[
A\ot_{A^D}A\to \overline{A}\ot_{A^D}\overline{A},
\]
whose kernel consists of nilpotents, as is easily seen. Since $\overline{A}\ot_{A^D}\overline{A}$
is reduced by the assumption $\op{char}R=0$, again, it follows that
\eq\label{DSAeq4}
(A\ot_{A^D}A)_{\op{red}}=\overline{A}\ot_{A^D}\overline{A}.
\eeq
We see from \eqref{DSAeq3} and \eqref{DSAeq4} that $\overline{A}\ot_{A^D}\overline{A}=\overline{A}\ot_{\overline{A}^{\D}}\overline{A}$,
which shows the desired result. 
\epf

An analogous argument using a commutative diagram shows the following.

\begin{prop}\label{DSAprop3}
Assume (D1), as in the preceding proposition. 
Let $A$ be a $D$-simple $D$-superalgebra.
Then we have the following.
\begin{itemize}
\item[(1)]
The nil radical $\sqrt{0}$ of $A$ is the largest $\D$-stable super-ideal of $A$, whence in particular, $A_{\op{red}}$
is a quotient $\D$-algebra of $A$. 
\item[(2)]
Assume in addition, 
\begin{itemize}
\item[(D3)] \tc{$D$ is irreducible, or explicitly, $D=U(\g_D)$.}
\end{itemize}
Then $A_{\op{red}}$ is an integral domain. 
\end{itemize}
\end{prop}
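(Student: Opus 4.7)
The plan is to mirror the commutative-diagram argument in the proof of Proposition \ref{DSAprop2}, augmented by an analysis of $\D$-stable super-ideals via the $D$-core. Note at the outset that $\D$, being purely even, automatically preserves $I_A$, so $\overline{A}$ is a $\D$-algebra; the substantive content of Part~(1) is to promote this to a $\D$-algebra structure on $A_{\op{red}}$.

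For the $\D$-stability of $\sqrt{0}$, I would reduce (using Cartier--Gabriel--Kostant and $\op{char}R=0$, possibly after scalar extension) to the cases of a grouplike element and a primitive element of $\D$. A grouplike acts by a superalgebra automorphism of $A$ and so preserves $\sqrt{0}$. For a primitive $\delta\in(\g_D)_0$ acting as an even derivation and $a\in\sqrt{0}$ with $a^m=0$, the iterated Leibniz expansion of $0=\delta^m(a^m)$ isolates the unique term $m!\,\delta(a)^m$ containing no undifferentiated factor of $a$; every other term has a factor of $a$ and so lies in $(a)\subset\sqrt{0}$. Since $\op{char}R=0$, one deduces $\delta(a)^m\in\sqrt{0}$, whence $\delta(a)$ is nilpotent. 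Thus $\D\cdot\sqrt{0}\subset\sqrt{0}$, and $A_{\op{red}}$ inherits a quotient $\D$-algebra structure.

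For the maximality claim, let $\mathfrak{a}\subsetneq A$ be a proper $\D$-stable super-ideal. Its $D$-core $\mathfrak{a}^{\circ}:=\{b\in A\mid Db\subset\mathfrak{a}\}$ is a $D$-stable super-ideal strictly inside $A$, hence equals $0$ by $D$-simplicity. Homogeneous odd elements of $\mathfrak{a}$ are automatically nilpotent, and for an even $a\in\mathfrak{a}$ I would set $d:=\dim(\g_D)_1$ (finite by (D1)) and expand $\xi_I(a^N)$ via iterated super-Leibniz for each basis element $\xi_I\in\wedge((\g_D)_1)$. The expansion is a sum over decompositions $I=J_1\sqcup\cdots\sqcup J_N$ of products $(\xi_{J_1}a)\cdots(\xi_{J_N}a)$; each such term has at least $N-|I|\geq N-d$ undifferentiated factors of $a$, so for $N\geq d+1$ every term lies in the ideal generated by $a$, hence in $\mathfrak{a}$. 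Combining with the coalgebra-isomorphism \eqref{DASeq0c0}, which writes $D$ as a free left $\D$-module on $\wedge((\g_D)_1)$, and the $\D$-stability of $\mathfrak{a}$, one gets $D\cdot a^{d+1}\subset\mathfrak{a}$, so $a^{d+1}\in\mathfrak{a}^{\circ}=0$ and $a$ is nilpotent.

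For Part~(2), by (1) the quotient $A_{\op{red}}$ is $\D$-simple: any proper $\D$-stable ideal of $A_{\op{red}}$ lifts to a proper $\D$-stable super-ideal of $A$ containing $\sqrt{0}$, which by (1) must coincide with $\sqrt{0}$. Under (D3), $\D=U((\g_D)_0)$, so the statement reduces to the classical fact that a $U(\g)$-simple reduced commutative algebra in characteristic zero is an integral domain. Seidenberg's theorem gives that each derivation in $(\g_D)_0$ maps every minimal prime of $A_{\op{red}}$ into itself, so any individual minimal prime is a $\D$-stable ideal; $\D$-simplicity forces that minimal prime to be $(0)$, and reducedness then identifies $A_{\op{red}}$ with the corresponding residue domain. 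The main obstacles I anticipate are handling $\D$ when it is not a priori pointed, where no explicit CGK decomposition is available, and verifying Seidenberg's theorem in the non-Noetherian super setting; the remainder follows the template of Proposition \ref{DSAprop2}.
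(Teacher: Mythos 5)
Your argument is essentially correct, but it takes a genuinely different route from the paper's. The paper proves both parts with a single commutative diagram: it chooses a maximal $\D$-stable super-ideal $\mathfrak{n}$ and a maximal super-ideal $\mathfrak{m}\supset\mathfrak{n}$, embeds $A\hookrightarrow\op{Hom}(D,A/\mathfrak{m})$ and $A/\mathfrak{n}\hookrightarrow\op{Hom}(\D,A/\mathfrak{m})$ via the morphisms of \eqref{DSAeq1a} (the injectivity is exactly your $D$-core observation, applied to $\mathfrak{m}$ rather than to $\mathfrak{a}$), and then notes that the restriction map $\op{Hom}(D,A/\mathfrak{m})\to\op{Hom}(\D,A/\mathfrak{m})$ is surjective with nilpotent kernel and reduced target; this forces $\mathfrak{n}=\sqrt{0}$ in one stroke, and for Part (2) it exhibits $A_{\op{red}}=A/\mathfrak{n}$ as a subring of the integral domain $\op{Hom}(U((\g_D)_0),A/\mathfrak{m})$. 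The nilpotency of that kernel rests on the same $(\dim(\g_D)_1+1)$-fold vanishing of $\wedge((\g_D)_1)/R1$ that drives your Leibniz expansion, so the two proofs share their combinatorial core; what the paper's packaging buys is that the $\D$-stability of $\sqrt{0}$ falls out automatically, so it never meets either of your two anticipated obstacles. Your version is more hands-on and makes it more visible where (D1) and $\op{char}R=0$ enter.

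Both obstacles you flag can in fact be closed, so they are not genuine gaps. For the pointedness of $\D$: base-change to $\overline{R}$. Since $A\to A\ot\overline{R}$ is injective, $\sqrt{0}(A)=A\cap\sqrt{0}(A\ot\overline{R})$, and stability of the latter under the now-pointed $\D\ot\overline{R}$ (generated by grouplikes and primitives) yields $\D\cdot\sqrt{0}(A)\subset A\cap\sqrt{0}(A\ot\overline{R})=\sqrt{0}(A)$. For Seidenberg without Noetherianity: for a minimal prime $\mathfrak{p}$ of a \emph{reduced} commutative ring the localization at $\mathfrak{p}$ is a field, so each $a\in\mathfrak{p}$ satisfies $sa=0$ for some $s\notin\mathfrak{p}$; then $s\delta(a)=-a\delta(s)\in\mathfrak{p}$ forces $\delta(a)\in\mathfrak{p}$ for any derivation $\delta$, with no Noetherian or characteristic hypothesis. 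One point you should make explicit in the Leibniz expansion: the image of $\wedge((\g_D)_1)$ under \eqref{DASeq0c0} is a subcoalgebra of $D$ whose elements need not be literal products of odd primitives (the map is only a coalgebra morphism); the shuffle-type coproduct you use is nevertheless valid because \eqref{DASeq0c0} preserves coproducts, and that is all the iterated Leibniz rule requires, after which the left $\D$-module freeness and the $\D$-stability of $\mathfrak{a}$ finish the computation exactly as you describe.
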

\pf
(1)
Choose arbitrarily a maximal \tc{$\D$-stable} super-ideal $\mathfrak{n}$ of $A$, and a maximal super-ideal $\mathfrak{m}$
of $A$ which includes $\mathfrak{n}$. \tc{Our aim} is to show $\mathfrak{n}=\sqrt{0}$. We have the commutative diagram 
\[
\begin{xy}
(0,0)   *++{A}  ="1",
(30,0)  *++{\op{Hom}(D,A/\mathfrak{m})} ="2",
(0,-16)  *++{A/\mathfrak{n}} ="3",
(30,-16) *++{\op{Hom}(\D,A/\mathfrak{m}),}="4",
{"1" \SelectTips{cm}{} \ar @{>->}"2"},
{"3" \SelectTips{cm}{} \ar @{>->}"4"},
{"1" \SelectTips{cm}{} \ar @{->>}"3"},
{"2" \SelectTips{cm}{} \ar @{->>}"4"}
\end{xy}
\]
in which the horizontal arrows indicate the composites of $\rho_A$ and $\rho_{A/\mathfrak{n}}$
(see \eqref{DSAeq1a}) with the morphisms arising from the projection $A\to A/\mathfrak{m}$; they are both injective
since $A$ is $D$-simple, and $A/\mathfrak{n}$ is $\D$-simple. Note that $A/\mathfrak{m}$ is
a field. The vertical morphisms on the RHS, which arises from the inclusion $\D \to D$,
is a surjection with nilpotent kernel by (D1), and the target $\op{Hom}(\D,A/\mathfrak{m})$ is reduced,
as is seen just as in the last proof. Therefore, we have
\[
(\op{Hom}(D,A/\mathfrak{m}))_{\op{red}}=\op{Hom}(\D,A/\mathfrak{m}),
\]
which, combined with the diagram above, implies $\mathfrak{n}=\sqrt{0}$.

(2) The additional assumption implies $\D=U(\g_0)$, so that $\op{Hom}(\D,A/\mathfrak{m})$ 
is an integral domain. As its subalgebra $A_{\op{red}}\, (=A/\mathfrak{n})$ is an integral domain. 
\epf


\subsection{Extension of $D$-actions}\label{EXD}
Let $A$ be a non-zero $D$-superalgebra.
We discuss the possibility of extending the $D$-action onto a localization $S^{-1}A$ of $A$, where 
$S$ is a multiplicative subset of $A_0$. 
Regard $S^{-1}A$ as an $A$-superalgebra with respect to the natural morphism $\iota : A \to S^{-1}A$, $\iota(a)=a/1$.

\begin{prop}\label{EXDprop1}
Assume
\begin{itemize}
\item[(D2)]
$D$ is pointed, or explicitly, $D=U(\g_D)\rtimes \Gamma_D$.
\end{itemize}Then we have the following. 
\begin{itemize}
\item[(1)] 
$S^{-1}A$ uniquely turns into an $A$-algebra in ${}_D\M$.
\item[(2)]
Let $B$ be an $A$-algebra in ${}_D\M$ through $f : A \to B$. Assume that for every $s\in S$, $f(s)$ is invertible
in $B_0$, so that $f$ uniquely extends to an $A$-superalgebra morphism, say, $\widehat{f} :  S^{-1}A\to B$. Then 
$\widehat{f}$ is $D$-superlinear. 
\end{itemize}
\end{prop}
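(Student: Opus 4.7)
By hypothesis (D2), every element of $D$ is a sum of products of elements of $\g_D$ and $\Gamma_D$, so any $D$-action on a $D$-superalgebra is determined by the actions of primitives in $\g_D$ (as super-derivations) and of grouplikes in $\Gamma_D$ (as algebra automorphisms). My plan is to prove (1) by first establishing uniqueness from forced formulas and then existence by explicit construction; (2) will follow from the uniqueness in (1).

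For uniqueness, assume $S^{-1}A$ carries an $A$-algebra structure in ${}_D\M$ extending that of $A$. For any grouplike $\gamma\in\Gamma_D$ and $s\in S$, applying $\gamma$ to $1=s\cdot(1/s)$ forces $\gamma(s)\gamma(1/s)=1$, so $\gamma(s)$ is a unit in $S^{-1}A$ and one must have $\gamma(a/s)=\gamma(a)/\gamma(s)$. For any homogeneous primitive $x\in\g_D$, applying the super-Leibniz rule to $a=(a/s)\cdot s$ forces
\[
x(a/s)=\bigl(xa-(-1)^{|x||a/s|}(a/s)(xs)\bigr)/s,
\]
so the $D$-action is completely determined on all fractions.

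For existence, I would define the action of each homogeneous $x\in\g_D$ by the quotient-rule formula above, verify that it descends to a well-defined super-derivation of $S^{-1}A$, and check that the super-bracket relations in $\g_D$ are preserved so as to yield a $U(\g_D)$-action. I would then define the $\Gamma_D$-action by $\gamma(a/s)=\gamma(a)/\gamma(s)$, using the invertibility of $\gamma(s)$ in $S^{-1}A$, and check it is by algebra automorphisms. Compatibility with the semi-direct-product relations of $D$ reduces to the $\Gamma_D$-equivariance of the $\g_D$-action already present on $A$, so these extensions assemble into a well-defined $D$-action on $S^{-1}A$, proving (1).

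For (2), $\widehat f:S^{-1}A\to B$ exists and is unique as an $A$-superalgebra morphism by the ordinary universal property of $S^{-1}A$. To see it is $D$-superlinear, I would pull back the $D$-action on $B$ along $\widehat f$ to obtain a second $A$-algebra structure on $S^{-1}A$ in ${}_D\M$ extending that on $A$; the uniqueness proved in (1) forces it to coincide with the given structure, so $\widehat f$ is $D$-equivariant. The main obstacle is the existence step in (1), specifically the invertibility of $\gamma(s)$ in $S^{-1}A$ for $\gamma\in\Gamma_D$ and $s\in S$; this is the point where the multiplicative set $S$ must interact appropriately with the grouplike automorphisms, and it holds automatically in the situations of interest, for instance when $S$ consists of $A$-regular elements of $A_0$, which is stable under every algebra automorphism of $A$.
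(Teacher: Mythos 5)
Your route is genuinely different from the paper's. You exploit the presentation of $D=U(\g_D)\rtimes\Gamma_D$ by its primitive and grouplike generators, extending each homogeneous primitive as a super-derivation via the quotient rule and each grouplike as an automorphism, and then checking the bracket and semi-direct-product relations (which, as you indicate, follow from the uniqueness of extensions of derivations and automorphisms to $S^{-1}A$). The paper instead works with the convolution superalgebra $\op{Hom}(D,S^{-1}A)$: it forms $\varrho:A\to\op{Hom}(D,S^{-1}A)$, $\varrho(a)(d)=\iota(da)$, shows that $\varrho(s)$ is convolution-invertible for $s\in S$ by Takeuchi's Lemma (using that under (D2) the coalgebra $D$ is cogenerated by $R\Gamma_D$, so a convolution element is invertible as soon as its restriction to the grouplikes is), extends $\varrho$ to $\widehat{\varrho}:S^{-1}A\to\op{Hom}(D,S^{-1}A)$ by the universal property of localization, and reads off the action. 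That packaging buys two things: the associativity $(dd')\alpha=d(d'\alpha)$ and Part (2) are both obtained by comparing pairs of superalgebra morphisms out of $S^{-1}A$ that agree on $A$, with no case-by-case verification of relations. Your version is more elementary and makes the uniqueness in (1) completely transparent. Your closing caveat about the invertibility of $\gamma(s)$ in $S^{-1}A$ is exactly the crux: the paper's proof rests on the same fact (there phrased as the invertibility of $\varrho(s)(\gamma)=\gamma s$ in $S^{-1}A$), and it is in the intended application $S^{-1}A=\Quot(A)$, where $S$ consists of the $A$-regular even elements and hence is preserved by every automorphism, that this is automatic.

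The one step that would fail as written is your argument for Part (2). One cannot ``pull back the $D$-action on $B$ along $\widehat f$'': for $\alpha\in S^{-1}A$ the element $d\,\widehat f(\alpha)$ lies in $B$ and does not determine an element of $S^{-1}A$ unless $\widehat f$ is injective with $D$-stable image, which is not assumed. The fix is immediate and in the spirit of your approach: verify $\widehat f(d\alpha)=d\,\widehat f(\alpha)$ directly for $d=\gamma$ a grouplike, using
\[
\widehat f\bigl(\gamma(a)/\gamma(s)\bigr)=f(\gamma a)\,f(\gamma s)^{-1}=\gamma(f(a))\,\gamma(f(s))^{-1}=\gamma\bigl(f(a)f(s)^{-1}\bigr),
\]
where $f(\gamma s)=\gamma(f(s))$ is invertible because $\gamma$ acts on $B$ by automorphisms, and for $d=x$ a homogeneous primitive by applying the quotient rule on both sides; then conclude for all of $D$, since the set of $d\in D$ for which the identity holds for every $\alpha$ is a subalgebra containing $\g_D$ and $\Gamma_D$. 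Alternatively, the paper's device applies verbatim: $\op{Hom}(D,\widehat f)\circ\widehat{\varrho}$ and $\rho_B\circ\widehat f$ are superalgebra morphisms $S^{-1}A\to\op{Hom}(D,B)$ agreeing on $A$, hence equal.
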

\pf
(1)\
Define a superalgebra morphism, which is close to the $\rho_A$ in \eqref{DSAeq1a}, by
\[
\varrho : A \to \op{Hom}(D,S^{-1}A),\quad \varrho(a)(d)=\iota(da).
\]

Suppose $h \in \op{Hom}(D,S^{-1}A)$.
By (D2), $D$ is, as a coalgebra, \emph{co-generated} by the grouplike subcoalgebra $R\Gamma_D$,
that is, $D=\bigcup_{n>0} \wedge^n(R\Gamma_D)$, where $\wedge^n$ denotes the $n$-th wedge of 
Sweedler \cite[p.179]{Sw}.
It follows that if the restriction $h|_{R\Gamma_D}$ vanishes, then $h$ is locally nilpotent in the sense that it, restricted
to any finite-dimensional subcoalgebra, is nilpotent (with respect to the convolution product); see the argument
proving \eqref{DSAeq2a}. 
This fact essentially proves Takeuchi's Lemma (see \cite[Corollary 5.3.10]{HS}, for example) in the super context, which is now applied to
conclude: $h$ is invertible if and only if 
$h|_{R\Gamma_D}$ is invertible. The latter condition is equivalent to saying that for every $\gamma \in \Gamma_D$,
$h(\gamma)$ is invertible in $S^{-1}A$.

Let $s \in S$. For every $\gamma \in \Gamma_D$, $\varrho(s)(\gamma)\, (=\gamma s)$ is invertible in $S^{-1}A$,
since $\Gamma_D$ acts on $A$ as superalgebra automorphisms. By the result of the preceding \tc{paragraph,} 
$\varrho(s)$ in invertible, so that $\varrho$ uniquely extends to a superalgebra morphism, say,
\[
\widehat{\varrho} : S^{-1}A \to \op{Hom}(D,S^{-1}A).
\]
Obviously, $\widehat{\varrho}(\alpha)(1)=\alpha$ for all $\alpha
\in S^{-1}A$. 
To see that the $D$-action 
$d\alpha =\widehat{\varrho}(\alpha)(d)$ 
associated with $\widehat{\varrho}$
is the desired unique one, it remains to prove 
\[
(dd')\alpha=d(d'\alpha)\ \, \text{for all}\ \, d, d'\in D,\ \alpha \in S^{-1}A. 
\]
For this we need to prove that the composite of superalgebra morphisms
\[
S^{-1}A \overset{\widehat{\varrho}}{\longrightarrow} \op{Hom}(D,S^{-1}A)\longrightarrow
\op{Hom}(D\ot D,S^{-1}A),
\]
where the latter is the morphism induced from the product
$D\ot D\to D$ of $D$, is naturally identified with the one
\[
S^{-1}A \overset{\widehat{\varrho}}{\longrightarrow} \op{Hom}(D\ot D,S^{-1}A) \overset{\op{Hom}(D,\widehat{\varrho})}{\longrightarrow}
\op{Hom}(D, \op{Hom}(D,S^{-1}A)).
\]
Indeed, this holds since the two composites, further composed with
$\iota$, 
coincide with
the morphism 
\[
A \to \op{Hom}(D, \op{Hom}(D,S^{-1}A))=\op{Hom}(D\ot D, S^{-1}A)
\]
which associates to $a \in A$, 
the morphism $D\ot D \to S^{-1}A$ given by
\[
d\ot d'\mapsto \iota((dd')a)=
\iota(d(d'a)))\, (=d(\iota(d'a))).
\]

(2)\ We need to prove that the composite 
\[
S^{-1}A \overset{\widehat{\varrho}}{\longrightarrow} \op{Hom}(D, S^{-1}A)
\overset{\op{Hom}(D,\widehat{f})}{\longrightarrow}\op{Hom}(D,B).
\]
of superalgebra morphisms coincides with the one
\[ 
S^{-1}A \overset{\widehat{f}}{\longrightarrow} B
\overset{\rho_B}{\longrightarrow}\op{Hom}(D,B).
\]
Indeed, this holds since they, composed with $\iota$, coincide
with the morphism $A \to  \op{Hom}(D,B)$ which associates to
$a\in A$, the morphism
\[
D \to B,\quad d\mapsto f(da)=d f(a). 
\]
\epf

In what follows
the proposition above will be always applied when $S^{-1}A$ is $\Quot(A)$, in particular.


\section{SUSY PV extensions}\label{secSPV}

Throughout in this section we assume
\begin{itemize}
\item[(D1)] $\dim (\g_D)_1<\infty$, and
\item[(D2)] $D$ is pointed, or explicitly, $D=U(\g_D)\rtimes \Gamma_D$.
\end{itemize}


\subsection{The intermediate $D$-superalgebra with desirable properties}\label{IND}

Suppose that $L|K$ is an extension of objects (e.g., $D$-superalgebras, $D$-SUSY fields), that is,
an inclusion $L\supset K$ of those objects. 
By saying that $A$ is an \emph{intermediate object} of an extension $L|K$, we mean that 
$L|A$ and $A|K$ are extensions. 

Now, let $L|K$ be a fixed extension of $D$-superalgebras. We assume
\begin{itemize}
\item[(LK1)] $K$ is a $D$-SUSY field, 
\item[(LK2)] $L$ is Noetherian (whence $I_L$ is nilpotent), 
\item[(LK3)] $\overline{L}$ is a field,
\item[(LK4)] $\L|\K$ is finitely generated as a field extension, 
\item[(LK5)] $L$ and $K$ are both $D$-simple, and
\item[(LK6)] $L^D=K^D$. 
\end{itemize}

By (LK1) we may and we do regard $K$ as a $\K$-superalgebra through an 
arbitrarily chosen section of the projection $K \to \K$. By \tc{(LK3)}, $\L$ is a $\D$-\emph{field},
which means a $\D$-algebra that is a field, and so, $\L|\K$ is an extension of $\D$-fields. 

We set
\begin{equation}\label{INDeq1}
k:=L^D\, (=K^D).
\end{equation}
This is a field by the $D$-simplicity (LK5); the assumption together with (LK2--3) ensure
\begin{equation}\label{INDeq2}
k=\L^{\D}=\K^{\D}
\end{equation}
by Proposition \ref{DSAprop2}. 

Suppose that we are given an intermediate $D$-superalgebra $A$ of $L|K$ which satisfies
\begin{itemize}
\item[(A1)] $A\cdot (A\otimes_K A)^D=A\otimes_KA$, and
\item[(A2)] $L=\Quot(A)$. 
\end{itemize}
Notice that the $D$-invariants $(A\otimes_K A)^D$ in $A\otimes_KA$ form a $k$-sub-superalgebra, which maps onto itself 
through the supersymmetry $A\ot_K A\overset{\simeq}{\longrightarrow}A\ot_KA$.
The condition (A1) means that it generates the left $A$-supermodule $A\ot_K A$; this is equivalent 
to saying that $(A\otimes_K A)^D$
generates the right $A$-supermodule $A\ot_KA$, or in notation, 
\begin{itemize}
\item[(A1$'$)] $(A\otimes_K A)^D\cdot A=A\otimes_KA$,
\end{itemize}
as is seen by applying the supersymmetry above. 

\begin{theorem}\label{INDthm1}
$A$ is $D$-simple.
\end{theorem}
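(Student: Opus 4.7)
The plan is to show that every non-zero $D$-stable super-ideal $\mathfrak{a}$ of $A$ equals $A$. Two preliminary reductions: (a)~the inclusion $A\hookrightarrow L$ gives $A^D\hookrightarrow L^D=k$, while $k=K^D\subseteq A^D$ holds trivially, so $A^D=k$; (b)~since $K$ is $D$-simple, $\mathfrak{a}\cap K$ is either $K$ (whence $1\in\mathfrak{a}$ and we are done) or $0$, so we may assume $\mathfrak{a}\cap K=0$.

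Next, extend $\mathfrak{a}$ to $L$. By Proposition~\ref{EXDprop1}, the $D$-action on $A$ extends uniquely to $L=\Quot(A)$, and then $L\cdot\mathfrak{a}$ is a non-zero $D$-stable super-ideal of $L$, hence equals $L$ by the $D$-simplicity (LK5). Writing $1=\sum\ell_i a_i$ with $\ell_i\in L$ and $a_i\in\mathfrak{a}$, and clearing denominators against a common $A$-regular $s\in A_0$ (available since $L=S^{-1}A$, with $S$ the set of $A$-regular elements of $A_0$), gives $s=\sum(s\ell_i)a_i\in\mathfrak{a}$. Hence $\mathfrak{a}$ contains an $A$-regular even element.

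The main obstacle is to exploit (A1) to conclude $1\in\mathfrak{a}$. The intended mechanism is a Takeuchi-style freeness theorem: under (A1), the $A$-linear surjection
\[
\Theta_A:A\otimes_k H\longrightarrow A\otimes_K A,\qquad a\otimes h\mapsto a\cdot h,
\]
is bijective. Injectivity is established by a minimal-relation argument. Given a counterexample $0\neq\sum_{i=1}^n a_i\otimes h_i\in\ker\Theta_A$ with $h_i$ $k$-linearly independent in $H$, $a_i$ homogeneous, and $n$ minimal, one applies $d\in D$: since each $h_i\in H=(A\otimes_K A)^D$ is $D$-invariant, $d\cdot h_i=\varepsilon(d)h_i$ and the super-signs $(-1)^{|d_{(2)}||a_i|}$ collapse through the counit, giving $\sum(da_i)\otimes h_i\in\ker\Theta_A$. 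After arranging $a_1$ to be $A$-regular (by multiplying the relation through by a suitable regular element) and cross-multiplying by $a_1$ and $da_1$ in $L$, one obtains a strictly shorter relation; minimality then forces $a_i/a_1\in L^D=k$, contradicting the $k$-linear independence of the $h_i$.

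Granted the bijectivity of $\Theta_A$, the inverse induces an algebra coaction $\theta_A:A\to A\otimes_k H$ satisfying $(\op{id}_A\otimes\varepsilon_H)\circ\theta_A=\op{id}_A$, where $\varepsilon_H:=m|_H:H\to k$. The $D$-equivariance of $\theta_A$ and the $D$-stability of $\mathfrak{a}$ give $\theta_A(\mathfrak{a})\subseteq\mathfrak{a}\otimes_k H$, so that $B:=A/\mathfrak{a}$ inherits a non-zero $H$-comodule algebra structure over $K$. The $A$-regular $s\in\mathfrak{a}$ maps to $0$ in $B$ but remains a unit in $L$; this, combined with the faithful flatness of $A$ over $K$ coming from the free isomorphism $A\otimes_K A\cong A\otimes_k H$, forces $B=0$, i.e.\ $\mathfrak{a}=A$. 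The principal technical difficulty lies in the super-sign bookkeeping throughout the freeness argument and the careful handling of odd nilpotents when dividing by regular elements in $L$.
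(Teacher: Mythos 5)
Your preliminary reductions, and the observation that a non-zero $D$-stable super-ideal $\mathfrak{a}$ of $A$ must contain an $A$-regular even element, are fine. The central step, however --- that the multiplication map $A\otimes_k H\to A\otimes_K A$, with $H=(A\otimes_KA)^D$, is injective, proved by a minimal-relation argument --- has a genuine gap, and it is exactly the gap the paper flags in the parenthetical remark inside the proof of Lemma \ref{INDlem3}. To run the division-by-$a_1$ step you must either (i) invoke the linear-independence result of Proposition \ref{DSAprop1}~(2) for the algebra $A$ itself, which presupposes the $D$-simplicity of $A$ that you are trying to prove, or (ii) transport the minimal relation into $L\otimes_KA$ or $L\otimes_KL$ so as to exploit the $D$-simplicity of $L$. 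Route (ii) is what works in Takeuchi's classical setting, because there $K$ is a field and $A\otimes_KA\hookrightarrow L\otimes_KA$ automatically; here $K$ is only a SUSY field, $A$ is not yet known to be $K$-flat (cf.\ Example \ref{SUFex0}, where an extension of SUSY fields fails to be flat), so the map $A\otimes_KA\to L\otimes_KL$ may a priori have a kernel and $H$ need not embed into $(L\otimes_KL)^D$. Without that embedding your minimal-relation argument does not close, and your division ``by $a_1$ and $da_1$ in $L$'' is not available.

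There are secondary gaps in the endgame as well: $H$-costability of $\mathfrak{a}$ does not follow from $D$-stability without further argument, and the faithful flatness of $A$ over $K$ does not follow from the single isomorphism $A\otimes_KA\simeq A\otimes_kH$ (in the paper it is obtained only \emph{after} the $D$-simplicity, in Proposition \ref{INDprop3}, via the smoothness results imported from \cite{MOT}). The paper's route is genuinely different and circumvents the circularity: it passes to a maximal $D$-simple quotient $B$ of $A$, where the Hopf superalgebra $(B\otimes_KB)^D$ and the Hopf--Galois structure on $B|K$ can legitimately be built (Lemma \ref{INDlem3}); it then proves, via the splitting $B\simeq B^{\operatorname{co}J}\otimes_{\overline{K}}\overline{A}$ of Lemma \ref{INDlem4}~(2), that $A$-regular elements stay regular in $B$, so that $A\to B$ extends to a $D$-superalgebra morphism $L\to\Quot(B)$; the $D$-simplicity of $L$ then forces this to be injective, hence $A=B$. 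Some version of this detour through the $D$-simple quotient is needed for your argument to go through.
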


The proof of the theorem will complete
after proving Lemma \ref{INDlem4}. 

Let $P$ denote the image of $A$ under the projection
$L \to \L$; it is thus an intermediate $\D$-algebra of the extension $\L|\K$ of $\D$-fields. 

\begin{lemma}\label{INDlem1}
$\L|\K$ is a PV extension with $P$ its principal $\D$-algebra
in the sense of Takeuchi \cite[Definition 2.3]{T}; the term ``principal $\D$-algebra'' here is slightly modified
\tc{from the original definition}, see Remark \ref{INDrem1} below. 
\end{lemma}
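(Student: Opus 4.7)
My plan is to verify directly the three ingredients in Takeuchi's definition of a PV extension applied to $\L | \K$, taking $P$ as the candidate principal $\D$-algebra: namely, $\L^{\D} = \K^{\D}$, $\L = \Quot(P)$, and $(P \otimes_{\K} P)^{\D}$ generates $P \otimes_{\K} P$ as a $P$-module. I first observe that, since $\D$ is purely even, its action respects the $\mathbb{Z}/(2)$-grading and therefore stabilises the super-ideals $I_K$, $I_A$, $I_L$; consequently the projections $L \twoheadrightarrow \L$, $K \twoheadrightarrow \K$, and the induced $A \twoheadrightarrow P$ are all $\D$-equivariant, and $P$ is a genuine intermediate $\D$-algebra of $\L | \K$. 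The equality $\L^{\D} = \K^{\D} = k$ is supplied by \eqref{INDeq2}, whose hypotheses are met for both $K$ and $L$ thanks to (D1), (LK1)--(LK3) and (LK5).

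For $\L = \Quot(P)$, I would combine (A2) with the nilpotency of $I_L$ from (LK2). Any element of $L$ has the form $a/s$ with $a \in A$ and $s \in A_0$ an $A$-regular element, and it projects to $\bar a / \bar s$ in $\L$. If $\bar s = 0$ then $s$ would lie in $A_0 \cap I_L$, which is nilpotent, contradicting regularity; thus $\bar s$ is nonzero and hence invertible in the field $\L$, so $\bar a / \bar s$ lies in $\Quot(P)$. The reverse inclusion $\Quot(P) \subseteq \L$ is immediate since $P$ is a subdomain of the field $\L$.

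The main point is the generation condition. The $\D$-equivariant projections $A \twoheadrightarrow P$ and $K \twoheadrightarrow \K$ induce a surjective $\D$-equivariant algebra morphism
\[
\pi : A \otimes_K A \twoheadrightarrow P \otimes_{\K} P.
\]
Since $\D \subseteq D$ one has $(A \otimes_K A)^D \subseteq (A \otimes_K A)^{\D}$, and $\pi$ carries the latter into $(P \otimes_{\K} P)^{\D}$. Applying $\pi$ to the hypothesis (A1), $A \otimes_K A = A \cdot (A \otimes_K A)^D$, yields
\[
P \otimes_{\K} P \;=\; P \cdot \pi\bigl((A \otimes_K A)^D\bigr) \;\subseteq\; P \cdot (P \otimes_{\K} P)^{\D},
\]
which is the required generation.

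I do not anticipate a serious obstacle: the argument is essentially a pushforward of Takeuchi's data along reduction modulo $I_L$. The subtlest piece of bookkeeping is ensuring that $\pi$ is well defined, surjective, and $\D$-equivariant, but all three follow from the purely even nature of $\D$ together with the factorisation $K \to \K \hookrightarrow P$ of the $K$-algebra structure on $P$.
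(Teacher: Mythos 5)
Your proposal is correct and follows essentially the same route as the paper: the paper's own proof consists of the remark that conditions (P1)--(P2) ``follow easily from the analogous conditions (A1--2)'', together with the identity $\L^{\D}=\K^{\D}=k$ already recorded in \eqref{INDeq2}, and your write-up is precisely the natural elaboration of that (pushing (A1) forward along the surjection $A\ot_KA\twoheadrightarrow P\ot_{\K}P$, and using the nilpotency of $I_L$ to see that $A$-regular elements have nonzero, hence invertible, images in $\L$). No gaps.
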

\begin{rem}\label{INDrem1}
As was touched in Section \ref{INTbackground}, 
Takeuchi \cite{T} defined the notion of $C$-\emph{ferential algebras}, which generalizes differential algebras in the traditional PV theory; those are algebras
on which a cocomutative coalgebra $C=(C,\Delta,\varepsilon)$ equipped with a specific grouplike $1_C$ 
acts in an appropriate way, or in other words, they are
$T(C^+)$-\emph{algebras}. Here, $T(C^+)$ denotes the tensor algebra on
the augmented coideal $C^+:=\op{Ker}(\varepsilon)$  of $C$; it is alternatively expressed so as 
$T(C)/(1_{T(C)}-1_C)$, or namely, as the free bialgebra
$T(C)$ on the coalgebra $C$, \tc{modulo the relation which identifies} the identity element $1_{T(C)}$ with
the specific grouplike $1_C$. 
We should now understand Takeuchi's theory, replacing $T(C^+)$ with our $\D$; so we have modified 
what was originally called ``principal $C$-ferential algebra'' so as ``principal $\D$-algebra''. 
To prove some of the results,
Takeuchi \cite{T} poses the assumption that $C$ is pointed irreducible, which ensures that $T(C^+)$ is a 
pointed cocommutative (and non-commutative, unless $C=R1_C$) Hopf algebra. 
But those results except the ones in \cite[Section 4]{T} remain true, even if $T(C^+)$ is replaced by a 
pointed cocommutative Hopf algebra just as the present $\D$ of ours. 
\end{rem}

According to the cited definition the lemma above means that
\begin{itemize}
\item[(P1)] $P\cdot (P\ot_{\K}P)^{\D}=P\ot_{\K} P$, and
\item[(P2)] $\L$ is the quotient field of $P$.
\end{itemize}
We see that these indeed follow easily from the analogous conditions (A1--2) which we are assuming. 
 It is proved by \cite[Lemma 2.5]{T} that those intermediate 
$\D$-algebras of $\L|\K$ with the properties (P1-2) are unique;
it is called the \emph{principal} $\D$-\emph{algebra} of the PV extension \cite[p.492]{T}.
Set
\[
J:= (P\ot_{\K}P)^{\D} .
\]
Obviously, this is a $k$-subalgebra of $P\ot_{\K}P$.

\begin{prop}\label{INDprop1}
We have the following.
\begin{itemize}
\item[(1)] \cite[Theorem 2.11]{T}\ $P$ is $\D$-simple. 
\item[(2)] \cite[Corollary 3.4]{T}\ $P$ is finitely generated over $\K$, 
and $J$ is finitely generated over $k$.
\item[(3)] $J$ and $P$ uniquely turn into a Hopf algebra over $k$ and
a right $J$-comodule algebra, respectively, so that $P|\K$ is a $J$-Galois extension.
\item[(4)] $P$ is smooth and $J$-smooth over $\K$.
\end{itemize}
\end{prop}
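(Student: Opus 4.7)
The plan is to dispose of parts (1), (2), and (3) by citing Takeuchi's classical results, and to obtain Part (4) as a direct application of Proposition \ref{HGEprop1}. Parts (1) and (2) follow at once from \cite[Theorem 2.11]{T} and \cite[Corollary 3.4]{T}, respectively, via Lemma \ref{INDlem1}; as explained in Remark \ref{INDrem1}, Takeuchi's arguments apply verbatim to our pointed cocommutative Hopf algebra $\D$ in place of his $T(C^+)$. Note for later use that $P^{\D}=k$, since $P\subset \L$ and $k=\K^{\D}\subset P^{\D}\subset \L^{\D}=k$ by \eqref{INDeq2}.

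For Part (3), I follow \cite[Lemma 2.4]{T}. The starting point is the natural morphism
\[
\mu : P\otimes_k J \to P\otimes_{\K} P,\qquad a\otimes j\mapsto a\cdot j,
\]
in ${}_P({}_{\D}\M)$, where $\D$ is supposed to act via the left tensor factor of the target. By (P1), $\mu$ is surjective; by the $\D$-simplicity of $P$ (Part (1)), the identity $P^{\D}=k$ just noted, and Proposition \ref{DSAprop1}(2) applied to the $P$-module $P\otimes_{\K}P$ in ${}_{\D}\M$, $\mu$ is also injective, hence bijective. Declaring $\theta_P$ to be the composite $P\overset{a\mapsto 1\otimes_{\K} a}{\longrightarrow} P\otimes_{\K}P\overset{\mu^{-1}}{\longrightarrow} P\otimes_k J$, one finds that the Galois map $\Theta_P$ of Definition \ref{HGEdef1} coincides with $\mu^{-1}$ and is in particular bijective. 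The Hopf algebra structure on $J$ is then uniquely determined: the comultiplication $\Delta_J$ arises by comparing the two natural $P$-linear identifications $P\otimes_k J\otimes_k J\cong P\otimes_{\K}P\otimes_{\K}P$ obtained by applying $\mu^{-1}$ to either the first two or the last two factors; the counit comes from the multiplication $P\otimes_{\K}P\to P$; and the antipode comes from the symmetry switch on $P\otimes_{\K}P$. Condition (HG3) is automatic since $\K$ is a field and $P\ne 0$, while (HG1) follows from Remark \ref{HGErem1}(2). The only real work lies in verifying the Hopf algebra axioms for these structure maps and the coaction property of $\theta_P$; since $\D$, $\K$, $P$, and $J$ are all purely even, this is exactly Takeuchi's classical diagram chase and introduces no new super-algebraic subtlety.

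Part (4) is then an immediate application of Proposition \ref{HGEprop1}: $J$ is finitely generated over $k$ by Part (2), and $P|\K$ is a $J$-Galois extension by Part (3), so $P$ is smooth and $J$-smooth over $\K$. The genuine obstacle is thus concentrated in Part (3), but as noted it reduces to Takeuchi's original calculation in the non-super setting, which carries over without modification.
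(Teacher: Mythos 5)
Your proof is correct and follows essentially the same route as the paper: parts (1)--(2) are quoted from Takeuchi, part (3) reproduces Takeuchi's construction of the Hopf algebra $J$ via the bijectivity of $\mu\colon P\otimes_kJ\to P\otimes_{\K}P$ (the paper itself defers these details to the super analogue carried out in Lemma \ref{INDlem3}), and part (4) is the same application of Proposition \ref{HGEprop1}. One small correction: in your map $\mu$ the action of $\D$ on the \emph{target} $P\otimes_{\K}P$ must be the diagonal (algebra) action --- this is the action whose invariants are $J$ and for which Proposition \ref{DSAprop1}(2) applies --- whereas it is on the \emph{source} $P\otimes_kJ$ that $\D$ acts through the left tensor factor; with the left-factor action on $P\otimes_{\K}P$ the invariants would be $1\otimes P$, not $J$, and $\mu$ would not be $\D$-linear.
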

\pf  
(3)
Takeuchi \cite[pp.490--493]{T} gives the way how they turn into such. When proving Lemma \ref{INDlem3} below,
we will reproduce it in our super context,
and verify the uniqueness in the assertion above. We remark that the uniqueness will not be used in our argument. 

(4)
This follows by Proposition \ref{HGEprop1} since $P|\K$ is $J$-Galois by Part 3.
\epf 

The $J$ above is called \emph{the Hopf algebra} of the PV extension \cite[p.493]{T}.

\begin{rem}\label{INDrem2}
Here are two remarks.
\begin{itemize}
\item[(1)]
The principal $C$-ferential (or $\D$-)algebra, defined intrinsically as above, corresponds to what is called a
\emph{PV ring}
in traditional PV theory; it is defined as a splitting ring for a given differential equation
with a certain desirable property. 
In Takeuchi's theory, the notion of those algebras above plays a crucial (or \emph{principal}) role, and in fact, they act
as Galois extensions, or as \emph{principal} homogeneous spaces (or torsors), in a geometric 
term; see Section \ref{INTbackground}. 
That is why Takeuchi gave the name to those algebras, the author guesses.
\item[(2)]
Recall that by (LK4), $\L|\K$ is assumed to be finitely generated. 
To define the notion of PV extensions Takeuchi \cite[Definition 2.3]{T} does not assume that the field extension is finitely generated. 
The present $\L|\K$ is what is called in \cite{T} a \emph{finitely generated} PV extension,  
see \cite[Theorem 3.3, Condition (a), p.501]{T}. 
\end{itemize}
\end{rem}

Choose arbitrarily a maximal $D$-stable super-ideal of $A$, and divide
$A$ by it. Let $B$ denote the resulting $D$-superalgebra, which is $D$-simple.  
To prove Theorem \ref{INDthm1}, 
we will finally show $A=B$. 

\begin{lemma}\label{INDlem2}
Let $r : A \to P$ denote the restriction of the natural projection $L \to \L$. 
\begin{itemize}
\item[(1)] We have
\[
P=A\red =B\red.
\]
\item[(2)] $r$ is a surjective $\K$-superalgebra morphism with nilpotent kernel, which splits. 
\item[(3)] Choose arbitrarily a $\K$-superalgebra section $P \to A$ of $r$, and regard $A$ as a $P$-superalgebra
through it. 
Then $L$ is the localization of $A$ by $P \setminus \{0\}$, or
in notation,
\begin{equation}\label{INDeq3}
L= A\otimes_P \overline{L}. 
\end{equation}
\end{itemize}
\end{lemma}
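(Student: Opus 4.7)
The plan is to prove the three parts in order, with Part~(1) carrying the main work.

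For Part~(1), the identification $P = A_{\mathrm{red}}$ follows by a nilpotent/reduced sandwich: $\ker r = A\cap I_L$ is nilpotent since $L$ is Noetherian by (LK2), and $P$ embeds in the field $\L$ and so is reduced, forcing $A\cap I_L = \sqrt{0}_A$. For $P = B_{\mathrm{red}}$, the idea is to produce a $\D$-equivariant surjection $P\twoheadrightarrow B_{\mathrm{red}}$ and use $\D$-simplicity of $P$ to make it an isomorphism. The composite $A\twoheadrightarrow B\twoheadrightarrow B_{\mathrm{red}}$ is $\D$-equivariant: the first arrow is $D$-equivariant by choice of $\mathfrak{m}$, and the second is $\D$-equivariant because $\sqrt{0}_B$ is $\D$-stable by Proposition~\ref{DSAprop3}\,(1) applied to the $D$-simple $B$ (using (D1)). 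Likewise $r$ is $\D$-equivariant since $I_L = L\cdot L_1$ is $\D$-stable ($\D$ being purely even). The kernel $\sqrt{\mathfrak{m}}$ of $A\to B_{\mathrm{red}}$ contains $\sqrt{0}_A$, giving an induced $\D$-equivariant surjection $P\twoheadrightarrow B_{\mathrm{red}}$; since $P$ is $\D$-simple by Proposition~\ref{INDprop1}\,(1) and $B_{\mathrm{red}}\neq 0$, this surjection must be an isomorphism.

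Part~(2) is a direct application of smoothness: $r$ is surjective with nilpotent kernel by Part~(1), while $P$ is smooth over $\K$ by Proposition~\ref{INDprop1}\,(4), so the identity $\mathrm{id}_P$ lifts along $r$ to a $\K$-superalgebra section $\sigma: P\to A$.

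For Part~(3), set $S_P := \sigma(P\setminus\{0\})$, which lies in $A_0$ since $P\subseteq\L$ is purely even; then $A\otimes_P\L = S_P^{-1}A$. The SUSY field $L$ is local with nilpotent maximal super-ideal $I_L$, so each $\sigma(p)$ with $p\neq 0$, lying outside $I_L$, is a unit in $L$, hence $A$-regular, yielding a natural map $S_P^{-1}A \to L = \Quot(A)$. Conversely, any $A$-regular $s\in A_0$ has $r(s)\neq 0$ in $P$ (else $s\in A\cap I_L = \sqrt{0}_A$ is nilpotent), so $s = \sigma(r(s)) + n$ with $n\in\sqrt{0}_A$, and the factorization $s = \sigma(r(s))\bigl(1 + \sigma(r(s))^{-1} n\bigr)$ exhibits $s$ as a product of a unit of $S_P^{-1}A$ and a unit of the form $1+(\text{nilpotent})$, hence invertible in $S_P^{-1}A$. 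The universal property of $\Quot$ then gives $L = S_P^{-1}A = A\otimes_P\L$.

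The main obstacle is Part~(1): $\sqrt{0}_A$ is only $\D$-stable and not a priori $D$-stable, so one cannot directly compare it with the maximal $D$-stable super-ideal $\mathfrak{m}$ by an ideal-theoretic maximality argument. The resolution is to pass to reduced quotients and exploit $\D$-simplicity of the principal $\D$-algebra $P$ from Takeuchi's theory, replacing a comparison of $D$-stable super-ideals of $A$ with a comparison of $\D$-algebra surjections onto a $\D$-simple target.
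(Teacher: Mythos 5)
Your proof is correct and takes essentially the same route as the paper's: Part (1) via the nilpotency of $\op{Ker}(r)$ and reducedness of $P\subset\L$ together with the $\D$-stability of $\sqrt{0}_B$ (Proposition \ref{DSAprop3} (1)) and the $\D$-simplicity of $P$ (Proposition \ref{INDprop1} (1)); Part (2) via smoothness of $P$ over $\K$; Part (3) via the same two-way comparison of $A$-regular elements with nonzero elements of $P$ modulo the nilpotent kernel. Your Part (3) merely spells out the unit factorization $s=\sigma(r(s))\bigl(1+\sigma(r(s))^{-1}n\bigr)$ that the paper leaves implicit.
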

\pf
(1) 
The kernel $\op{Ker}(r)$ of $r$, being included in $\mathrm{Ker}(L \to \L)$, is nilpotent. 
Since $P$ is an integral domain, we have $P=A\red$. 
By Proposition \ref{DSAprop3} (1),
$B\red$ is a quotient $\D$-algebra of $B$. Therefore, the superalgebra morphism $P=A\red \to B\red$ induced from
the projection $A\to B$ is a $\D$-algebra morphism which is surjective.
Since $P$ is $\D$-simple by Proposition \ref{INDprop1} (1),
we have $P=B\red$. 

(2) 
It now suffices to see that $r$ splits. This is indeed the case since $P$ is smooth over $\K$ by Proposition \ref{INDprop1} (4). 

(3) 
Regard $P$ as a $\K$-sub-superalgebra of $A$ as prescribed. 
Then one sees that every non-zero element of $P$
is invertible in $L$ (since it is
so modulo the nilpotent $I_L$ by (LK2--3)), and is,
therefore, $A$-regular. Conversely, every $A$-regular element
of $A_0$ coincides with an non-zero element of $P$
modulo the nilpotent $\op{Ker}(r)$, since 
$A=P\oplus 
\op{Ker}(r)$. These prove the desired result. 
\epf

We will prove $\overline{A}=P$ in Corollary \ref{INDcor1} (1). 

Now, we set
\[
H:=(B\ot_{K}B)^D,
\]
which is seen to be a $k$-sub-superalgebra of $B\ot_K B$.

\begin{lemma}\label{INDlem3}
$H$ and $B$ uniquely turn into a Hopf superalgebra over $k$ and
an algebra in $\M^H$, respectively, so that Conditions (HG1--2)
in Definition \ref{HGEdef1}, with $A|K$ replaced by the present $B|K$, are
satisfied. 
\end{lemma}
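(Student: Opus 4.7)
The plan is to adapt to the super context Takeuchi's construction \cite[pp.~490--493]{T} (reproduced on the reduced level as Proposition \ref{INDprop1} (3) for the PV extension $\L|\K$), transporting the Hopf-superalgebra structure on $H$ and the comodule-algebra structure on $B$ from a bijection
\[
\mu_B:B\otimes_kH\longrightarrow B\otimes_KB,\quad b\otimes h\mapsto b\cdot h,
\]
that we first establish.

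Surjectivity of $\mu_B$ follows by inheritance: the $D$-equivariant surjection $A\otimes_KA\twoheadrightarrow B\otimes_KB$ sends $(A\otimes_KA)^D$ into $H$, so condition (A1) on $A$ descends to $B\cdot H=B\otimes_KB$. For injectivity, I apply Proposition \ref{DSAprop1} (2) to the $D$-simple $D$-superalgebra $B$ and to the $B$-module $V=B\otimes_KB$ (with left $B$-action through the first tensor factor and diagonal $D$-action); since $V^D=H$, this yields injectivity of the canonical map $B\otimes_{B^D}H\to B\otimes_KB$, which combined with surjectivity makes it bijective.

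In order to replace $B^D$ by $k$ in the above, I verify $B^D=k$. The inclusion $k=K^D\subset B^D$ is clear. For the converse, Proposition \ref{DSAprop3} (1) yields that the nil radical $\sqrt{0}\subset B$ is $\D$-stable, so the projection $B\to B\red$ is a surjection of $\D$-algebras. By Lemma \ref{INDlem2} (1), $B\red=P$, and $P^{\D}=k$ follows from $P\subset\Quot(P)=\L$ together with $\L^{\D}=k$ (see \eqref{INDeq2}). The projection $B\to B\red$ therefore sends the field $B^D\subset B^{\D}$ into $P^{\D}=k$; this restriction is injective since a field has no non-zero nilpotent element, and so $B^D\subset k$.

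Once $\mu_B:B\otimes_kH\overset{\simeq}{\longrightarrow}B\otimes_KB$ is in hand, the coaction $\theta_{\!B}:B\to B\otimes_kH$ is forced to be $b\mapsto\mu_B^{-1}(1\otimes_Kb)$, whence $\Theta_B=\mu_B^{-1}$ verifies (HG2); the counit $\varepsilon_H:H\to k$ is the restriction to $H$ of the multiplication $B\otimes_KB\to B$, whose image lies in $B^D=k$; the coproduct $\Delta_H$ is determined by the comodule coassociativity of $\theta_{\!B}$; and the antipode $S_H$ is induced by the supersymmetry $c_{B,B}$ of \eqref{INTeq1}. Condition (HG1) then follows from Remark \ref{HGErem1} (2), since $K$ is self-injective by Proposition \ref{SUFprop2} (1). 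The main obstacle is the careful verification, with correct super-signs throughout, that these transported structures satisfy all the Hopf-superalgebra axioms --- coassociativity of $\Delta_H$, super-commutativity of $H$ (inherited from $B\otimes_KB$), and the antipode identity --- and that the bijectivity of $\mu_B$ together with these axioms forces the construction to be unique.
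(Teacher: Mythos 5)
Your proposal is correct and takes essentially the same route as the paper's proof: establish $B^D=k$ via the $\D$-algebra surjection $B\to B_{\mathrm{red}}=P$ and $P^{\D}=k$, obtain bijectivity of $\mu:B\otimes_kH\to B\otimes_KB$ from Proposition \ref{DSAprop1} (2) (injectivity) together with the descent of (A1) (surjectivity), and then transport the $B$-coring structure of $B\otimes_KB$ and the supersymmetry to define $\Delta$, $\varepsilon$, $\cS$ and $\theta_B$, with (HG1) settled by Remark \ref{HGErem1} (2). The axiom verifications you defer are exactly those the paper also delegates to Takeuchi's argument (via the identification $H\otimes_kH=(B\otimes_KB\otimes_KB)^D$), so no genuine gap remains.
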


To prove the lemma, and also for later use, 
note that since $B$ is an algebra in the monoidal category ${}_D\M$, we have the monoidal category 
\[
{}_B({}_D\M)_B=
({}_B({}_D\M)_B,\ot_B, B)
\]
of $(B,B)$-bimodules in ${}_D\M$.  
By a $B$-\emph{coring} we mean a coalgebra in ${}_B({}_D\M)_B$. 
We regard the $(B,B)$-bimodule $B\ot_KB$ in ${}_D\M$, as a $B$-coring with 
respect to the familiar structure morphisms 
\[
\mathbf{\Delta} : B\ot_KB\to B\ot_KB\ot_KB=(B\ot_KB)\ot_B(B\ot_KB),\ \, 
\mathbf{e} : B\ot_KB\to B
\]
defined by
\eq\label{INDeq3a}
\mathbf{\Delta}(b\ot_K c)=b\ot_K1\ot_Kc,\quad \mathbf{e}(b\ot_Kc)=bc.
\eeq
We remark that the definition makes sense with $K\hookrightarrow B$ replaced by any algebra morphism in ${}_D\M$. 

\begin{proof}[Proof of Lemma \ref{INDlem3}]
First, \tc{notice from Proposition \ref{DSAprop1} (1)} that $B^D$ is a field since $B$ is $D$-simple. We claim
\[ B^D=k. \]
Indeed, the natural morphisms $A \to B \to B_{\op{red}}=P$ induce $A^D\to B^D\to P^{\D}$. The claim follows since
$A^D=P^{\D}=k$ by (LK6) and \eqref{INDeq2}. 

By the claim above, it follows from Proposition \ref{DSAprop1} \tc{(2)} and its opposite-sided analogue that
the left and the right $B$-multiplications 
\begin{equation}\label{INDeq4}
\mu : B\otk H\to B\ot_KB,\quad \mu': H\otk B\to B\ot_KB
\end{equation}
are both injective, which are indeed isomorphic as is easily seen from 
(A1) and (A1$'$). 
An iterated use of the isomorphism $\mu$ 
\begin{equation}\label{INDeq4a}
B\otk H \otk H \overset{\mu \otk \op{id}}{\longrightarrow} 
B\ot_K B\otk H \overset{\op{id}\ot_{\! B}\, \mu}{\longrightarrow}B\ot_KB\ot_KB
\end{equation}
shows
\begin{equation}\label{INDeq5}
H\otk H =(B\ot_KB\ot_K B)^D.
\end{equation}

Recall from \eqref{INDeq3a} the structure morphisms of the $B$-coring $B\ot_KB$. 
Restricted onto $D$-invariants, the morphisms induce superalgebra morphisms
\[
\Delta : H \to H\ot_kH, \quad \varepsilon : H \to k.
\]
We can verify that these satisfy the coalgebra-axioms, whence $(H,\Delta,\varepsilon)$ is a super-bialgebra, by using the result
\[
H\otk H\otk H =(B\ot_KB\ot_K B\ot_K B)^D,
\]
which is analogous to \eqref{INDeq5} and is shown similarly. 
The supersymmetry $B\ot_KB\to B\ot_KB$, restricted onto $D$-invariants, induces through the isomorphisms
in \eqref{INDeq4} a superalgebra morphism 
\[
\cS : H\to H.
\]
We can verify that this is an antipode of $H$, whence $H$ is, in fact, 
a Hopf superalgebra. The verifications above are done just as was 
done in \cite{T}. (One might try, completely imitating the argument of \cite{T},
to make $(A\ot_KA)^D$ into a Hopf superalgebra. But this is
impossible, and we had to take $B$ and to do with it, as above. For 
we cannot obtain, as Takeuchi \cite{T} did, the isomorphism $A\otk H \to A\ot_KA$ which is analogous to the one in \eqref{INDeq4}, by using the $D$-simplicity of $L$, since $K$ is not a field, now, whence
the argument at \cite[Page 493, lines 5--6]{T} which regards $(A\ot_KA)^D$ as to be included in $(L\ot_KA)^D$ cannot apply.)

Notice that 
\begin{equation}\label{INDeq50}
b \mapsto 1\ot_Kb,\quad B \to B\ot_KB=B\ot_B(B\ot_KB)
\end{equation}
is the unique superalgebra-morphism with which $B$ is a right comodule over the $B$-coring $B\ot_KB$.
Let $\theta_B : B\to B\otk H$ denote the composite of this morphism 
with the inverse $\mu^{-1}:B\ot_KB\overset{\simeq}{\longrightarrow}B\otk H$
of the isomorphism $\mu$ in \eqref{INDeq4}. It is easy to see that 
$(B,\theta_B)$ an algebra in $\M^H$, and
$B|K$ satisfies Conditions (HG1--2); to verify (HG1), see Remark \ref{HGErem1} (2). 

As for the uniqueness of construction, we first see that the super-coalgebra structure on $H$ must be as above,
since it is to give rise, through the isomorphism $\mu$ in \eqref{INDeq4} and the one in \eqref{INDeq4a},
to the $B$-coring structure on $B\ot_KB$ defined by \eqref{INDeq3a}. 
Next, we see that the $H$-supercomodule structure on $B$
must be the $\theta_B$ above, since it is to give rise, through $\mu$, to the $B\ot_KB$-comodule structure in 
\eqref{INDeq50}. 
\end{proof}

\begin{lemma}\label{INDlem4}
Recall from Lemma \ref{INDlem2} (1) that $P=B\red$.
\begin{itemize}
\item[(1)] 
The natural projection $B\ot_k B \to P\ot_k P$ induces 
a surjective algebra morphism in ${}_{\D}\M$,
\[
B\ot_K B \to P\ot_{\K}P,
\]
and it restricts to a Hopf superalgebra morphism over $k$
\[
H=(B\ot_K B)^D\to 
(\overline{B}\ot_{\K}\overline{B})^{\D}=J.
\]
These induce a $k$-algebra isomorphism 
\eq\label{INDeq5a}
(B\ot_K B)\red \overset{\simeq}{\longrightarrow} P\ot_{\K}P
\eeq
and a Hopf algebra isomorphism
\begin{equation}\label{INDeq6a}
\overline{H}\overset{\simeq}{\longrightarrow} J,
\end{equation}
respectively. 
\item[(2)] 
We identify so as $\overline{H}=J$ through the last isomorphism. 
The projection $B \to P$, which is seen to be a $\K$-algebra 
morphism in $\M^J$, splits. 
If we choose arbitrarily a section $s : P \to B$ of the last projection, then 
\begin{equation}\label{INDeq7}
B^{\op{co} J}\otimes_{\K}P
\overset{\simeq}{\longrightarrow} B,\quad b\ot x \mapsto b s(x),
\end{equation}
is a $B^{\op{co}J}$-algebra isomorphism in $\M^J$. 
\end{itemize}
\end{lemma}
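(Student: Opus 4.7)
The plan is as follows. For Part (1), the natural projection $B\otk B\to P\otk P$ is obtained by applying the surjection $\pi:B\to P$ to each tensor factor; since $K$ acts on $P$ through $\K$, this factors through a surjective algebra morphism $\pi_2:B\ot_K B\to P\ot_{\K}P$, which is $\D$-linear (but not fully $D$-linear, because $D$ need not act on $P$). Restricting to $D$-invariants, $H=(B\ot_K B)^D$ is carried into $(P\ot_{\K}P)^{\D}=J$; since $P$ is purely even, so is $J$, hence $H_1$ is sent to zero and the map factors through a morphism $\bH\to J$. Compatibility of the Hopf structures follows from the fact that the $B$-coring structure on $B\ot_K B$ of \eqref{INDeq3a} is carried by $\pi_2$ to the corresponding $P$-coring structure on $P\ot_{\K}P$.

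To show $\bH\simeq J$, I would transport $\pi_2$ under the isomorphisms $\mu:B\otk H\simeq B\ot_K B$ from Lemma \ref{INDlem3} and its analogue $\mu_P:P\otk J\simeq P\ot_{\K}P$ from Proposition \ref{INDprop1}(3), identifying $\pi_2$ with $\pi\ot\pi_H:B\otk H\to P\otk J$, where $\pi_H:H\to J$ denotes the restriction of $\pi_2$. Since $\pi$ is surjective and we are over a field $k$, surjectivity of $\pi\ot\pi_H$ forces $\pi_H$ surjective. For the kernel, note that the nil radical of the supercommutative ring $B\ot_K B$ is an ideal, each element of the generating subsets $\sqrt{0}_B\ot 1$ and $1\ot\sqrt{0}_B$ of $\ker\pi_2$ is nilpotent, and hence $\ker\pi_2$ consists entirely of nilpotents; translating back, $\ker\pi_H\subset\sqrt{0}_H$. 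By Cartier's theorem, the commutative Hopf algebra $\bH$ over the characteristic-zero field $k$ is reduced, so $\sqrt{0}_H=(H_1)$, and combined with $\ker\pi_H\supset(H_1)$ (since $J$ is purely even) we conclude $\ker\pi_H=(H_1)$. The same nilpotency of $\ker\pi_2$, together with $P\ot_{\K}P$ being reduced (as $P$ is smooth over $\K$ in characteristic zero), yields $(B\ot_K B)\red\simeq P\ot_{\K}P$.

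For Part (2), I first check that $\pi:B\to P$ is a $\K$-algebra morphism in $\M^J$, i.e., $J$-comodule linear: this follows from the compatibility of the defining coactions, which are built via $\mu$ and $\mu_P$ and hence respect $\pi_2$. A $\K$-superalgebra section $s:P\to B$ is obtained by composing the $\K$-algebra section $P\to A$ from Lemma \ref{INDlem2}(2) with the quotient $A\to B$. For the isomorphism \eqref{INDeq7}, I would apply the Hopf-module theorem (Theorem \ref{HGEthm0}) to the $J$-Galois extension $P|\K$, viewing $B$ as a $P$-module in $\M^J$ via $s$; the theorem then delivers $B^{\op{co}J}\ot_{\K}P\overset{\sim}{\to}B$, $b\ot x\mapsto b\cdot x=bs(x)$.

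The main technical obstacle is the kernel argument in Part (1): one must combine the observation that the nil radical of the supercommutative ring $B\ot_K B$ is an ideal (so every element of $\ker\pi_2$ is nilpotent) with Cartier's theorem giving $\sqrt{0}_H=(H_1)$, in order to pin down $\ker\pi_H$ exactly as $(H_1)$. A comparable subtlety arises in Part (2), where one must verify that the section $s$ is compatible with the $J$-coaction to the extent needed for the Hopf-module theorem to yield an isomorphism genuinely in $\M^J$ and not merely of $B^{\op{co}J}$-algebras.
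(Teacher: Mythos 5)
Your Part (1) is correct and reaches \eqref{INDeq6a} by a route slightly different from the paper's: you pin down $\op{Ker}(\pi_H)$ directly as $(H_1)$ by combining the nilpotency of $\op{Ker}(\pi_2)$ with Cartier's theorem applied to the commutative Hopf algebra $\overline{H}$ over the characteristic-zero field $k$, whereas the paper instead shows that the natural surjection $B\otk H\to P\otk \overline{H}$ induces an isomorphism $(B\otk H)\red\simeq P\otk\overline{H}$ (using that $\K\otk\overline{H}$ is a filtered union of smooth finitely generated Hopf subalgebras, hence stays reduced after $P\ot_{\K}$) and then takes $\D$-invariants of the resulting isomorphism $P\otk\overline{H}\simeq P\ot_{\K}P$. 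Both arguments rest on the same characteristic-zero reducedness of commutative Hopf algebras; yours is somewhat more economical, and your treatment of \eqref{INDeq5a} (kernel nilpotent, target reduced by smoothness of $P\ot_{\K}P$ over $\K$) matches the paper's.

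Part (2), however, has a genuine gap, and it is precisely the subtlety you flag at the end without resolving. The statement requires the section $s:P\to B$ to be a section \emph{in} $\M^J$, i.e.\ a $J$-colinear $\K$-algebra morphism: otherwise the map $b\ot x\mapsto bs(x)$ of \eqref{INDeq7} need not be a morphism in $\M^J$, and $B$ equipped with the $P$-action through $s$ is not an object of ${}_P(\M^J)$, so the Hopf-module theorem for the $J$-Galois extension $P|\K$ cannot be invoked to produce an isomorphism in $\M^J$. The section you construct --- the $\K$-superalgebra section of $A\to P$ from Lemma \ref{INDlem2}(2) composed with $A\to B$ --- is only a $\K$-superalgebra morphism; nothing forces it to be compatible with the $J$-coaction. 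The missing ingredient is Proposition \ref{INDprop1}(4): $P$ is \emph{$J$-smooth} over $\K$ (equivariant smoothness, coming from Proposition \ref{HGEprop1}), so the surjective $\K$-algebra morphism $B\to P$ in $\M^J$, whose kernel is nilpotent as an epimorphic image of $\op{Ker}(r)$, admits a section that is itself a $\K$-algebra morphism in $\M^J$. With such an $s$, the remainder of your argument via the Hopf-module theorem goes through as the paper's does.
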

\pf
(1)\
It is easy to see that we have the first two morphisms. The former induces the isomorphism \eqref{INDeq5a},
since (i) its kernel consists of nilpotents, and (ii)~$P\ot_{\K}P$ is reduced; here, (ii) holds, since we see
from Proposition \ref{INDprop1} (2) and (4) that $P\ot_{\K}P$ is finitely generated and smooth over $\K$. 
The latter is, indeed, a Hopf superalgebra morphism, as is seen from 
the constructions of $H$ and $J$. This is surjective, since it, tensored with the projection $B\to P$,
constitutes the commutative diagram
\[
\begin{xy}
(0,0)   *++{B\otk H}  ="1",
(30,0)  *++{B\ot_K B} ="2",
(0,-16)  *++{P\otk J} ="3",
(30,-16) *++{P\ot_{\K} P,}="4",
{"1" \SelectTips{cm}{} \ar @{->}_{\mu}^{\simeq} "2"},
{"3" \SelectTips{cm}{} \ar @{->}_{\mu}^{\simeq} "4"},
{"1" \SelectTips{cm}{} \ar@{->} "3"},
{"2" \SelectTips{cm}{} \ar @{->>} "4"}
\end{xy}
\]
where the vertical arrow on the RHS indicates a natural surjection. 
We see just as for
\eqref{INDeq5a} that another natural surjection $B\otk H\to P\otk \overline{H}$ induces an isomorphism
\eq\label{INDeq7a}
(B\otk H)\red \overset{\simeq}{\longrightarrow} P\otk \overline{H},
\eeq
in view of the following: (i)~$P\otk \overline{H}=P\ot_{\K}(\K\otk \overline{H})$,
(ii)~$P$ is finitely generated and smooth over $\K$, by Proposition \ref{INDprop1} (2) and (4),
and (iii)~$\K\otk \overline{H}$ is a filtered union of finitely generated Hopf subalgebras over $\K$ (of characteristic zero),
which are necessarily smooth, and hence remain to be reduced, being tensored with $P$ over $\K$. 
The isomorphisms \eqref{INDeq7a} and \eqref{INDeq5a} show that 
$(B\otk H)\red$ and $(B\ot_K B)\red$ and are
$\D$-algebra quotients of $B\otk H$ and of $B\ot_K B$, respectively, and they are mutually isomorphic. 
The isomorphism restricts to the desired isomorphism $\overline{H}=(P\ot_k \overline{H})^{\D}\simeq (P\ot_{\K} P)^{\D}=J$. 

(2)\
Recall from Proposition \ref{INDprop1} (4) that $P$ is $J$-smooth over $\K$. 
The projection $B \to P$ is a surjective algebra-morphism in $\M^{J}$, whose kernel is, 
as an epimorphic image of $\op{Ker}(r)$ (see Lemma \ref{INDlem2} (2)), nilpotent. Hence it splits. 
Through an arbitrarily chosen section, $B$ is regarded as a $P$-algebra in $\M^J$, or an algebra
in the monoidal category $({}_P(\M^J),\ot_P,P)$. 
Since $P|\K$ is $J$-Galois by Proposition \ref{INDprop1} (3),
we have the monoidal equivalence
\[
({}_{\K}\M,\ot_{\K},\K)\approx ({}_P(\M^J),\ot_P,P),
\]
such as in \eqref{HGEeq3a}; \tc{the associated natural isomorphism}
\[
M^{\op{co}J}\ot_{\K}P\simeq M,\quad M \in {}_P(\M^J)
\]
is precisely \eqref{INDeq7} when $M=B$, in particular.
\epf
 
\begin{proof}[Proof of Theorem \ref{INDthm1}]
Let $q:A \to B$ denote the projection. We wish to show that $q$ is injective. 
Let $x$ be a non-zero-element of $P$. 
Regard $P \subset A$ (whence $x \in A)$ through a section of $A \to P$, as in Lemma \ref{INDlem2} (3),
and recall \eqref{INDeq3}. \tc{In the situation of Lemma \ref{INDlem4} (2), $s(x)$ is $B$-regular,
as is seen from \eqref{INDeq7}. Since $q(x)\equiv s(x)$ modulo nilpotents, it follows} 
that $q$ extends to a superalgebra morphism $L \to \Quot(B)$. We claim that
this is injective; this will prove the desired injectivity of $q$. 
Let us apply the two parts of Proposition \ref{EXDprop1}. 
Part 1 (applied to $B$) shows that $\Quot(B)$ uniquely turns into a $B$-algebra 
in ${}_D\M$. 
Part 2 then shows that the extended $L\to \Quot(B)$ is an algebra morphism 
in ${}_D\M$. 
Since $L$ is $D$-simple by (LK5),  the claim is proven. 
\end{proof}

Since we have proved $A=B$,  the 
results on $B$ obtained above hold with $B$ replaced by $A$. 

\begin{prop}\label{INDprop2}
We have the following.
\begin{itemize}
\item[(1)]
$A|K$ satisfies Conditions (HG1--2) in Definition \ref{HGEdef1}.
\item[(2)] 
$A$ is Noetherian.
\item[(3)] 
The Hopf superalgebra $H$ over $k$ is finitely generated.
\end{itemize}
\end{prop}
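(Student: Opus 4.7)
Part~(1) is immediate: having $A=B$ by Theorem~\ref{INDthm1}, Lemma~\ref{INDlem3} (read with $A$ in place of $B$) already provides the Hopf-superalgebra structure on $H=(A\ot_KA)^D$ and the algebra-in-$\M^H$ structure on $A$, together with (HG1) and (HG2).

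For Parts~(2) and~(3), the plan is to exploit the decomposition
\[
A \;\simeq\; A^{\op{co}J}\ot_{\K}P
\]
of Lemma~\ref{INDlem4}(2), in which $J=\overline{H}$ by \eqref{INDeq6a} and $P$ is finitely generated over $\K$ by Proposition~\ref{INDprop1}(2), together with the algebra isomorphism
\[
H\;\simeq\;\overline{H}\ot_k\wedge_k(\mathfrak{w}_H)
\]
of \eqref{HGEeq2}. Since $\overline{H}$ is finitely generated over $k$ (again by Proposition~\ref{INDprop1}(2)), Part~(3) is reduced to showing that $\mathfrak{w}_H=(H^+/(H^+)^2)_1$ is finite-dimensional over $k$.

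The intermediate identification I would establish is that $A^{\op{co}J}$ is isomorphic, as a $K$-superalgebra, to $K\ot_k\wedge_k(\mathfrak{w}_H)$; this is the SUSY analogue of the right-hand side of \eqref{HGEeq5}, but must be obtained here without invoking Theorem~\ref{HGEthm1}, since condition~(HG3) is not yet available. To this end, one transports the isomorphism $\mu:A\ot_kH\simeq A\ot_KA$ of (HG2) to the $J$-coinvariant parts on both sides, using \eqref{HGEeq2} to split off the purely-even $J$-factor of $H$ and using Lemma~\ref{INDlem2} to see that $K$ already sits inside $A^{\op{co}J}$ with the same reduction $\K$. Once this identification is in place, $\mathfrak{w}_H$ is forced to be finite-dimensional, because $A^{\op{co}J}$, reducing to $\K$ modulo its own odd ideal, must inherit the finite-dimensionality of the odd cotangent $I_K/I_K^2$ that is built into the SUSY-field hypothesis (LK1)--(LK2) on $K$. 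Inserting the resulting $A\simeq K\ot_k\wedge_k(\mathfrak{w}_H)\ot_{\K}P$ back into Lemma~\ref{INDlem4}(2) then exhibits $A$ as a finitely generated algebra over the Noetherian $K$, yielding Part~(2).

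The main obstacle is the unavailability of (HG3) at this stage, which forbids a direct appeal to Theorem~\ref{HGEthm1} and its decomposition \eqref{HGEeq5}. All the structural information about $A^{\op{co}J}$ and $\mathfrak{w}_H$ has to be read off from (HG1)--(HG2), the explicit form of the isomorphism $\mu$, the decomposition \eqref{HGEeq2} of $H$, and the Noetherian/SUSY-field properties of $K$, rather than from faithful flatness of $A$ over $K$.
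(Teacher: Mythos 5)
Part (1) of your proposal is exactly the paper's argument and is fine. For Parts (2)--(3), however, there are two genuine gaps. First, the intermediate identification $A^{\op{co}J}\simeq K\otk\wedge_k(\mathfrak{w}_H)$ cannot be obtained the way you describe: taking $J$-coinvariants of $\mu\colon A\otk H\simeq A\ot_KA$ yields (at best) an isomorphism of the form $A\ot_KA^{\op{co}J}\simeq A\otk\wedge_k(\mathfrak{w}_H)$, and descending this along $K\to A$ to a statement about $A^{\op{co}J}$ itself is precisely a faithfully-flat descent, i.e.\ it needs (HG3) --- the very condition you correctly note is unavailable at this stage. You name the obstacle but do not actually get around it. Second, and more fundamentally, even granting that identification, the finite-dimensionality of $\mathfrak{w}_H$ cannot be ``inherited'' from the finite-dimensionality of $I_K/I_K^2$: if $A^{\op{co}J}\simeq K\otk\wedge_k(\mathfrak{w}_H)$ with $\mathfrak{w}_H$ infinite-dimensional, one still has $\overline{A^{\op{co}J}}=\K$, and nothing in the SUSY-field hypothesis on $K$ bounds the odd cotangent of $A^{\op{co}J}$, which now contains $\K\otk\mathfrak{w}_H$. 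The finiteness of $\mathfrak{w}_H$ is a constraint coming from $L$, not from $K$; your argument never invokes (LK2) (Noetherianity of $L$) or (LK4) (finite generation of $\L|\K$), and without them the conclusion is simply false in spirit --- $H=(A\ot_KA)^D$ can only be controlled through $L$.

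The paper's route avoids both problems and is worth internalizing as the fix. For (2) it uses only the already-established $J$-Galois extension $P|\K$: Lemma \ref{INDlem4}(2) gives $A\simeq A^{\op{co}J}\ot_{\K}P$, and \eqref{INDeq3} extends this to $L\simeq A^{\op{co}J}\ot_{\K}\L$; since $L$ is Noetherian by (LK2) and $A^{\op{co}J}\ot_\K\L$ is faithfully flat over $A^{\op{co}J}$, the factor $A^{\op{co}J}$ is Noetherian, and then $A\simeq A^{\op{co}J}\ot_\K P$ with $P$ finitely generated over $\K$ gives Noetherianity of $A$ --- no description of $A^{\op{co}J}$ as $K\otk\wedge_k(\mathfrak{w}_H)$ is needed. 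For (3) it base-changes $\Theta_A$ along $A\to A\red=P\hookrightarrow\L$ to get $\L\ot_{\K}\widetilde{A}\simeq\L\otk H$ where $\widetilde{A}=A/K_1A$; Part (2) makes $\widetilde{A}$ Noetherian, (LK4) then makes $\L\ot_\K\widetilde{A}$ Noetherian, and Noetherianity of $\L\otk H$ forces $\dim_k\mathfrak{w}_H<\infty$, which together with the finite generation of $\overline{H}=J$ and \eqref{HGEeq2} gives Part (3).
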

\pf
(1)\
This holds by Lemma \ref{INDlem3}.

(2)\
We identify so as $\overline{H}=J$ through \eqref{INDeq6a}, as before. 
By \eqref{INDeq7} we have the isomorphism $A^{\op{co}\overline{H}} \ot_{\K}P\simeq A$, which extends to 
\[ A^{\op{co}\overline{H}} \ot_{\K}\L\simeq L \]
by \eqref{INDeq3}.
The latter isomorphism, combined with (LK2), proves that 
$A^{\op{co}\overline{H}}$ is Noetherian. 
Since $P$ is finitely generated over $\K$ by Proposition \ref{INDprop1} (2), 
the former isomorphism shows that $A$ is Noetherian.  

(3)\ 
Since $\overline{H}\, (=J)$ is finitely generated by Proposition \ref{INDprop1} (2), it suffices
to prove $\dim_k(\mathfrak{w}_H)< \infty$, in view of \eqref{HGEeq2}.

To the isomorphism $\Theta_A:A \ot_KA\overset{\simeq}{\longrightarrow} A\otk H$ (see \eqref{HGEeq3}),
apply the base extension $\L\ot_A$ along $A\to A\red=P \hookrightarrow \L$. 
Then we have
\[
\L\ot_{\K}\widetilde{A} \simeq \L\ot_k H, \]
where we have set
\begin{equation}\label{INDeq8}
\widetilde{A}:=A/K_1A.
\end{equation}
Since $\widetilde{A}$ is Noetherian by Part 2, it follows by (LK4) that 
$\L\ot_k H\, (\simeq
\L \ot_{\K}\widetilde{A})$ is Noetherian. 
This implies $\dim_k(\mathfrak{w}_H)< \infty$. 
\epf

We have the functor
\begin{equation}\label{INDeq9}
\K\ot_{K} : ({}_{K}\M, \ot_K, K) \to ({}_{\K}\M, \ot_{\K},\K),
\end{equation}
which is easily seen to be monoidal. 
One sees that the $\widetilde{A}$ in \eqref{INDeq8} is, in fact, $\K\ot_KA$, 
confirming that it is a $\K$-superalgebra. Moreover, we see that $\widetilde{A}$ is a $\K$-algebra in $\M^H$. 

\begin{lemma}\label{INDlem5}
$\widetilde{A}|\K$ is an $H$-Galois extension. 
\end{lemma}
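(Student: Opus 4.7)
The plan is to verify the three conditions (HG1--3) of Definition \ref{HGEdef1} for $\widetilde{A}|\K$ by base-changing the corresponding conditions for $A|K$ (which we have from Proposition \ref{INDprop2}(1)) along the quotient map $K\to\K$, and then exploiting that $\K$ is a field.

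For (HG2) I would apply the functor $\K\ot_K{-}$ to the left $K$-linear isomorphism $\Theta_A:A\ot_KA\overset{\simeq}{\longrightarrow}A\otk H$. Since $K_1$ acts as zero on $\widetilde{A}$, one has the natural identifications
\[
\K\ot_K(A\ot_KA)=\widetilde{A}\ot_KA=\widetilde{A}\ot_K(A/K_1A)=\widetilde{A}\ot_{\K}\widetilde{A},
\]
\[
\K\ot_K(A\otk H)=\widetilde{A}\otk H,
\]
and under these, $\K\ot_K\Theta_A$ is precisely $\Theta_{\widetilde{A}}$. Hence $\Theta_{\widetilde{A}}$ is bijective.

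For (HG3), since $\K$ is a field, every $\K$-supermodule is free and hence flat; in particular $\widetilde{A}$ is flat over $\K$. It is non-zero, for instance because $K_1A=I_KA$ is nilpotent (by (LK1) and Noetherianity, $I_K$ is nilpotent, and $(I_KA)^n=I_K^nA$), so it is a proper ideal of $A$; alternatively, non-vanishing is visible from the isomorphism $\L\ot_{\K}\widetilde{A}\simeq\L\ot_kH$ used in the proof of Proposition \ref{INDprop2}(3). A non-zero flat module over a field is faithfully flat, so $\widetilde{A}$ is faithfully flat over $\K$ (also in the super sense by Lemma \ref{SASlem1}).

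Finally, (HG1) $\widetilde{A}^{\op{co}H}=\K$ follows at once from Remark \ref{HGErem1}(2): we have just established the Galois-map isomorphism (HG2), and $\K$, being a field, is self-injective (equivalently, $\widetilde{A}$, as a free $\K$-supermodule containing $\bar 1$, has $\K$ as a direct summand in ${}_{\K}\M$). No serious obstacle is expected; the substantive work is in Proposition \ref{INDprop2}(1), and the only minor technicality is the identification $\K\ot_K(A\ot_KA)=\widetilde{A}\ot_{\K}\widetilde{A}$, which rests on the right $K_1$-action on $\widetilde{A}$ being zero.
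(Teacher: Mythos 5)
Your proof is correct and follows essentially the same route as the paper: apply $\K\ot_K$ to $\Theta_A$ to get (HG2), then deduce (HG1) and (HG3) from the fact that $\K$ is a field via Remark \ref{HGErem1}(2). The paper's proof is just a terser version of yours; your added details (the identification $\K\ot_K(A\ot_KA)=\widetilde{A}\ot_{\K}\widetilde{A}$ and the non-vanishing of $\widetilde{A}$ via nilpotency of $K_1A$) are accurate.
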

\pf
Apply $\K\ot_K$ to the isomorphism $\Theta_A:A \ot_KA\overset{\simeq}{\longrightarrow} A\otk H$.
Then we have
\[
\widetilde{A}\ot_{\K}\widetilde{A}\overset{\simeq}{\longrightarrow}\widetilde{A}\ot_k H.
\]
This shows that $\widetilde{A}|\K$ is $H$-Galois, due to the fact $\K$ is a field. 
We emphasize that the fact ensures (HG1) and (HG3); see Remark \ref{HGErem1} (2). 
\epf

\begin{prop}\label{INDprop3}
$A|K$ is an $H$-Galois extension, and has, therefore, the properties shown in 
Proposition \ref{HGEprop1} and Theorem \ref{HGEthm1}. 
\end{prop}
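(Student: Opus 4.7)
The verification reduces, by Proposition \ref{INDprop2} (1), to checking condition (HG3): that $A$ is faithfully flat over $K$. Since $K$ is a SUSY field, Proposition \ref{SUFprop2} (2) says a flat $K$-supermodule is automatically free; as $A$ is non-zero, it will suffice to exhibit $A$ as a free $K$-supermodule. My plan is to lift a homogeneous $\K$-basis of $\widetilde{A}=A/I_K A$ to a $K$-basis of $A$, using the isomorphism $\Theta_A$ of (HG2) to control what happens upon tensoring with $A$.

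By Lemma \ref{INDlem5} the extension $\widetilde{A}|\K$ is $H$-Galois, so $\widetilde{A}$ is faithfully flat, and in particular free, over the field $\K$. Choose a homogeneous $\K$-basis $\{\bar e_i\}_{i\in I}$ of $\widetilde{A}$, lift each $\bar e_i$ to a homogeneous element $e_i\in A$ of the same parity, and form the $K$-superlinear morphism
\[
\phi : F:=\bigoplus_{i\in I} K[|\bar e_i|]\longrightarrow A,\qquad 1_i\mapsto e_i.
\]
Its reduction $\bar\phi : F/I_K F\to \widetilde{A}$ is the canonical isomorphism, so a Nakayama argument on $\op{Coker}(\phi)$, using that $I_K$ is nilpotent (since $K$ is Noetherian), yields surjectivity of $\phi$.

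The key step is injectivity, and here I apply $A\otimes_K-$ to $\phi$. The source $A\otimes_K F\simeq\bigoplus_i A[|\bar e_i|]$ is $A$-free, and via $\Theta_A$ the target $A\otimes_K A\simeq A\otimes_k H$ is also $A$-free. A direct check shows that modulo the nilpotent ideal $I_K A$ of $A$, the map $A\otimes_K \phi$ becomes the identity on $\widetilde{A}\otimes_{\K}\widetilde{A}$ (using the identifications $\widetilde{A}\otimes_K F=\widetilde{A}\otimes_{\K}\widetilde{A}=\widetilde{A}\otimes_K A$, since the $K$-action on $\widetilde{A}$ factors through $\K$). Because the target $A\otimes_K A$ is $A$-flat and $I_K A$ is nilpotent, the standard ``flat-target Nakayama'' lemma (which needs no finite-generation hypothesis, because $J^n=0$ kills every module outright) forces $A\otimes_K\phi$ itself to be an isomorphism.

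To descend this to injectivity of $\phi$, I invoke Proposition \ref{SUFprop2} (1): $K$ is self-injective, so the inclusion $K\hookrightarrow A$ splits as a morphism of $K$-supermodules. Tensoring with the $K$-free module $F$ preserves this splitting, so the map $F=K\otimes_K F\hookrightarrow A\otimes_K F$, $f\mapsto 1\otimes f$, is injective. For any $f\in\ker\phi$ we have $(A\otimes_K\phi)(1\otimes f)=1\otimes\phi(f)=0$, whence $1\otimes f=0$ in $A\otimes_K F$ by injectivity of $A\otimes_K\phi$, and hence $f=0$. Thus $\phi$ is an isomorphism, $A$ is $K$-free, and (HG3) is established; the last assertions of the proposition then follow immediately from Proposition \ref{HGEprop1} and Theorem \ref{HGEthm1} applied to the $H$-Galois extension $A|K$. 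The main obstacle was the injectivity step, which I expect to hinge on both the flatness of the target $A\otimes_K A$ coming from (HG2) and the self-injectivity of $K$ coming from the SUSY-field structure.
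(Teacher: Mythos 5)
Your proof is correct, but it takes a genuinely different route from the paper's. The paper likewise reduces to (HG3) via Proposition \ref{INDprop2} (1) and passes through Lemma \ref{INDlem5}, but it then invokes Proposition \ref{HGEprop1} to conclude that $\widetilde{A}$ is $H$-smooth over $\K$, so that the projection $A\to\widetilde{A}$ splits as a $\K$-algebra morphism in $\M^H$; descent along the $H$-Galois extension $\widetilde{A}|\K$ (the Hopf-module theorem, Theorem \ref{HGEthm0}) then yields a $K$-algebra isomorphism $K\ot_{\K}\widetilde{A}\simeq A$ in $\M^H$, from which $K$-freeness, hence faithful flatness, is immediate. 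Your argument replaces this Hopf-Galois descent machinery by a direct basis-lifting computation in the style of the proof of Proposition \ref{SUFprop2} (2): the only real inputs are the bijectivity of $\Theta_A$ (which exhibits $A\ot_KA$ as a free, hence flat, left $A$-supermodule) and the nilpotence of $I_K$, and the ``flat-target Nakayama'' step you describe is exactly the Tor-sequence argument already used in that earlier proof. Two small remarks: (i) Lemma \ref{INDlem5} is not actually needed in your version, since $\widetilde{A}$ is automatically free with a homogeneous basis over the field $\K$; (ii) the appeal to self-injectivity of $K$ at the end is also superfluous, because $F$ is $K$-free, so $F\to A\ot_KF$ is injective simply from the inclusion $K\subset A$ applied summandwise. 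What the paper's route buys is the structural isomorphism $A\simeq K\ot_{\K}\widetilde{A}$ as an $H$-comodule algebra, which is reused afterwards (e.g.\ in Corollary \ref{INDcor1} via \eqref{HGEeq5}); your route delivers only the freeness, which is all that the proposition itself requires.
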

\pf
By Proposition \ref{INDprop2} (1) it remains to prove that $A|K$ satisfies (HG3). 
By the preceding lemma it follows from Proposition \ref{HGEprop1} (applied to $\widetilde{A}|\K$) that
$\widetilde{A}$ is $H$-smooth over $\K$. Therefore, the natural, surjective $\K$-algebra-morphism $A \to \widetilde{A}$
in $\M^H$ splits. Through an arbitrarily chosen section, regard $A$ as an $\widetilde{A}$-algebra in $\M^H$. 
The monoidal equivalence \eqref{HGEeq3a}, now presented as 
\[ ({}_K\M,\ot_K,K)\approx ({}_{\widetilde{A}}(\M^H),\ot_{\widetilde{A}},\widetilde{A}),\]
gives a $K$-algebra isomorphism
$K\ot_{\K}\widetilde{A}\simeq A$ in $\M^H$; see the proof of Lemma \ref{INDlem4} (2). This verifies (HG3), that is, 
the $K$-faithful flatness of $A$. 
\epf

\begin{corollary}\label{INDcor1}
The following result from Proposition \ref{INDprop3}.
\begin{itemize}
\item[(1)] We have $\overline{A}=P$.
\item[(2)] $A$ is finitely generated over $K$. 
\end{itemize}
\end{corollary}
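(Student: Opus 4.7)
The plan is to apply the $H$-Galois structure of $A|K$ established in Proposition \ref{INDprop3}, together with the structural results in Proposition \ref{HGEprop1} and Theorem \ref{HGEthm1}, to deduce both assertions. By Proposition \ref{INDprop2} (3), $H$ is finitely generated, so $\overline{H}$ is finitely generated as well, and the hypotheses of these two results are met for both $A|K$ and its even quotient.

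For (1), Theorem \ref{HGEthm1} (1) yields that $\overline{A}|\overline{K}$ is an $\overline{H}$-Galois extension, and Proposition \ref{HGEprop1} then gives that $\overline{A}$ is smooth over the field $\overline{K}$. Since $A$ is Noetherian by Proposition \ref{INDprop2} (2), so is $\overline{A}$; being Noetherian and formally smooth over a field of characteristic zero forces $\overline{A}$ to be a regular, and in particular reduced, commutative ring. On the other hand, the projection $A \to P$ factors through $\overline{A}$: since $P$ sits inside the purely even field $\overline{L}$, the odd component $A_1$ maps to zero, so $I_A \subseteq \ker(A \to P)$. By Lemma \ref{INDlem2} (1), $P = A_{\mathrm{red}}$, so this kernel equals $\sqrt{0}$, and the induced surjection $\overline{A} \twoheadrightarrow P$ has kernel $\sqrt{0}/I_A$, which is a nilpotent ideal of $\overline{A}$. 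Reducedness of $\overline{A}$ forces this kernel to vanish, giving $\overline{A} = P$.

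For (2), I would invoke the $K$-superalgebra isomorphism
\begin{equation*}
A \simeq K \otimes_{\overline{K}} \overline{A} \otimes_k \wedge_k(\mathfrak{w}_H)
\end{equation*}
provided by \eqref{HGEeq5} in Theorem \ref{HGEthm1} (2). By part (1) together with Proposition \ref{INDprop1} (2), the factor $\overline{A} = P$ is finitely generated over $\overline{K}$; the factor $\wedge_k(\mathfrak{w}_H)$ is finite-dimensional over $k$ because $\dim_k(\mathfrak{w}_H) < \infty$, as established during the proof of Proposition \ref{INDprop2} (3). Consequently $\overline{A} \otimes_k \wedge_k(\mathfrak{w}_H)$ is finitely generated over $\overline{K}$, and tensoring up along $\overline{K} \to K$ presents $A$ as a finitely generated $K$-superalgebra.

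The main technical point, and the only one that does not reduce to bookkeeping, is the passage from formal smoothness of $\overline{A}$ over $\overline{K}$ (in the paper's sense) to reducedness of $\overline{A}$. This rests on the standard commutative-algebra fact that a Noetherian, formally smooth algebra over a field of characteristic zero is regular, hence reduced; once this is granted, both (1) and (2) follow from the isomorphism \eqref{HGEeq5} and the finiteness results already in hand.
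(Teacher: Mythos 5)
Your proof is correct and follows essentially the same route as the paper: part (2) is verbatim the paper's argument, and part (1) rests on the same key point that $\overline{A}$ is a Noetherian algebra smooth over a field of characteristic zero, hence reduced, hence equal to $A_{\mathrm{red}}=P$ by Lemma \ref{INDlem2} (1). The only (harmless) variation is that you obtain the smoothness of $\overline{A}$ from Theorem \ref{HGEthm1} (1) and Proposition \ref{HGEprop1} applied to the $\overline{H}$-Galois extension $\overline{A}|\overline{K}$, whereas the paper deduces it from the smoothness of $A$ over $\overline{K}$ via Proposition \ref{SUFprop1} (i).
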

\pf
(1) 
By Proposition \ref{INDprop2} (2), $A$ is Noetherian. By Proposition \ref{INDprop3} it is smooth over
the $\K$-smooth superalgebra $K$, and is so over $\K$. 
\tc{Therefore, $\overline{A}$ is Noetherian, and is smooth over $\K$ by Proposition \ref{SUFprop1}. 
We then have $\overline{A}=A_{\op{red}}$, 
whence $\overline{A}=P$ by Lemma \ref{INDlem2} (1).} 

(2)
This follows from the isomorphism $A \simeq K\ot_{\overline{K}}\overline{A}\ot_k \wedge_k(\mathfrak{w}_H)$
given in \eqref{HGEeq5}, since $\overline{A}\, (=P)$ is finitely generated over $\K$,
by Proposition \ref{INDprop1} (2), and we have $\dim_k(\mathfrak{w}_H)<\infty$, as was shown in the proof of 
Proposition \ref{INDprop2} (3). 
\epf


\subsection{Definition of SUSY PV extensions}\label{DEF}

Let $L|K$ be as in the preceding subsection. Thus it is an extension of algebras in ${}_D\M$
which satisfies (LK1--6).

\begin{prop}\label{DEFprop1}
We have the following.
\begin{itemize}
\item[(1)] 
An intermediate $D$-superalgebra $A$ of $L|K$ which satisfies (A1--2), if it exists, is unique. 
\item[(2)] 
If there exists an intermediate $D$-superalgebra such as above, then $L$ is
a $D$-SUSY fields, so that $L|K$ is an extension of $D$-SUSY
fields. Moreover, 
$L|K$ is admissible; see Definition \ref{SUFdef1a}.
\end{itemize}
\end{prop}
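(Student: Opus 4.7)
The strategy is to deduce Part (2) directly from the Hopf-Galois structure established in Subsection \ref{IND}, and to prove Part (1) by (i) identifying the Hopf superalgebra attached to any admissible $A$ intrinsically as $(L\ot_K L)^D$, and then (ii) appealing to Hopf-Galois rigidity via the Hopf-module theorem (Theorem \ref{HGEthm0}).

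For Part (2), assume that such an $A$ exists. By Proposition \ref{INDprop3} and Corollary \ref{INDcor1}(1), $A|K$ is $H$-Galois with $\bA=P$, the principal $\D$-algebra of $\L|\K$. The isomorphism \eqref{HGEeq5} therefore reads
\[
A\;\simeq\;K\ot_{\K}P\otk\wedge_k(\mathfrak{w}_H)
\]
as $K$-superalgebras. Passing to $\Quot$ via Example \ref{SUFex1} (noting $\Quot(P)=\L$) and invoking (A2) yields
\[
L\;\simeq\;K\ot_{\K}\L\otk\wedge_k(\mathfrak{w}_H),
\]
and after selecting $K\simeq\wedge_{\K}(I_K/I_K^2)$ as in Proposition \ref{SUFprop1}, this rewrites as
\[
L\;\simeq\;\wedge_{\L}\bigl((\L\ot_{\K}I_K/I_K^2)\oplus(\L\otk\mathfrak{w}_H)\bigr).
\]
The right-hand side is smooth over $R$ (since $\L$ is, in characteristic zero) with even reduction the field $\L$, and $L$ is Noetherian by (LK2); hence $L$ is a SUSY field. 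Moreover the canonical map $\L\ot_{\K}I_K/I_K^2\to I_L/I_L^2$ identifies with the inclusion of the first summand in the above display, so $L|K$ is admissible by Lemma \ref{SUFlem1}.

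For Part (1), let $A_1,A_2$ both satisfy (A1)--(A2), and set $H_i:=(A_i\ot_K A_i)^D$. Base-changing $\Theta_{A_i}$ along $A_i\hookrightarrow L$ gives an $L$-module isomorphism $L\ot_K L\simeq L\otk H_i$ in ${}_D\M$, upon which Proposition \ref{DSAprop1}(2) applied to the $D$-simple $L$ (with $L^D=k$ by (LK6)) yields
\[
H_i\;=\;(L\ot_K L)^D,
\]
so $H_1=H_2=:H$ canonically. Next consider the sub-super-$K$-algebra $A_{12}:=A_1 A_2\subset L$ generated by $A_1\cup A_2$: it is $D$-stable, satisfies (A2) as $A_{12}\supset A_1$, and satisfies (A1) by the super-commutative computation
\[
(a_1\ot_K b_1)(a_2\ot_K b_2)\;=\;\sum_{\alpha,\beta}\pm (p_{1\alpha}p_{2\beta})(h_{1\alpha}h_{2\beta}),
\]
where one decomposes $a_i\ot_K b_i=\sum_\alpha p_{i\alpha}h_{i\alpha}$ using (A1) for $A_i$ and notes $h_{1\alpha}h_{2\beta}\in H_1\cdot H_2\subset (A_{12}\ot_K A_{12})^D$; such elements generate $A_{12}\ot_K A_{12}$. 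Hence $A_{12}$ again satisfies (A1)--(A2), and by the previous step its attached Hopf superalgebra is again $H$. The inclusion $A_1\hookrightarrow A_{12}$ is thus a morphism of $H$-Galois extensions of $K$; viewing $A_{12}$ as an $A_1$-algebra in $\M^H$ and applying Theorem \ref{HGEthm0} to $A_1|K$, one obtains $A_{12}\simeq (A_{12})^{\op{co}H}\ot_K A_1=K\ot_K A_1=A_1$, and the resulting isomorphism is the inclusion itself. Therefore $A_1=A_{12}$, and symmetrically $A_2=A_{12}$, giving $A_1=A_2$.

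The main obstacle is the intrinsic identification $H_i=(L\ot_K L)^D$, which is what permits the two a priori distinct Hopf superalgebras $H_1$ and $H_2$ to be treated as a single canonical object; once this is secured, the Hopf-module theorem supplies the required rigidity of $H$-Galois extensions and the uniqueness argument collapses into a one-line application. A minor but essential subtlety is the verification of (A1) for the compositum $A_{12}$, handled above via super-commutativity together with the fact that $D$-invariants form a subalgebra of the tensor square.
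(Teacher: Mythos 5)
Your Part (2) is essentially the paper's own argument and is fine. The problem is in Part (1), at the very step you yourself flag as the main obstacle: the identification $H_i=(L\ot_KL)^D$. Base-changing $\Theta_{A_i}\colon A_i\ot_KA_i\overset{\simeq}{\longrightarrow}A_i\otk H_i$ along $A_i\hookrightarrow L$ via the left $A_i$-module structure gives $L\ot_KA_i\simeq L\otk H_i$, \emph{not} $L\ot_KL\simeq L\otk H_i$. To put $L$ into the second tensor factor you must also invert the $A_i$-regular even elements there, and on the right-hand side their images form the multiplicative set $T=\theta_{A_i}(S)$, whose elements are not already invertible in $L\otk H_i$; the correct statement is $L\ot_KL\simeq T^{-1}(L\otk H_i)$, exactly as in \eqref{GALeq0d} with $\a=0$. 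Consequently Proposition \ref{DSAprop1}(2) only yields $(L\ot_KA_i)^D=H_i$ together with an injection $L\otk(L\ot_KL)^D\to L\ot_KL$ whose image need not be contained in $L\cdot H_i\,(=L\ot_KA_i)$, so you obtain $H_i\subset(L\ot_KL)^D$ and nothing more. Since $H_1=H_2$ is the pivot of your whole uniqueness argument (it is what makes $A_1\hookrightarrow A_{12}$ a morphism of Galois extensions for one and the same $H$, enabling the Hopf-module theorem), this is a genuine gap, not a cosmetic one. Note that the paper never asserts $(L\ot_KL)^D=H$; it only uses the inclusion $H\subset L\ot_KL$ and the one-sided computations such as $(L\ot_KA)^D=H$.

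The paper's proof of (1) is arranged precisely so as not to need $H_{A_1}=H_{A_{12}}$. After reducing to $A\subset B:=A_1A_2$ as you do, it observes only that $H_A\subset H_B$ is an inclusion of Hopf superalgebras, hence faithfully flat by \cite[Theorem 3.10 (1)]{M}; comparing the base extension $B\ot_A$ of $\Theta_A$ with $\Theta_B$ then shows $A\to B$ is faithfully flat, and for $b=a/s\in B$ one computes $b\ot 1=1\ot b$ in $B\ot_AB\subset L\ot_AL$ and concludes $b\in A$ by faithfully flat descent. If you wish to keep your route, you must supply an independent proof that $(L\ot_KL)^D=H_i$ (or directly that $H_{A_1}=H_{A_{12}}$), which amounts to a nontrivial statement about the $D$-invariants of the localization $T^{-1}(L\otk H_i)$; as written, your one-line justification does not establish it.
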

\begin{proof}
(1)\
(cf. the proof of \cite[Lemma 2.5]{T}.) 
Let $B$ be another one. Since $AB$ ($\subset L$) is seen to
satisfy (A1--2), we may suppose $A \subset B$ by replacing $B$ with $AB$. 

Set $H_A:=(A\ot_KA)^D$,\ $H_B:=(B\ot_KB)^D$. Then, by using the $K$-flatness of $A$ and of $B$, we see $H_A\subset H_B$
and that $H_A$ is a Hopf sub-superalgebra of $H_B$. Moreover, we have the natural commutative diagram
\[
\begin{xy}
(0,0)   *++{B\ot_KA}  ="1",
(30,0)  *++{B\otk H_A} ="2",
(0,-16)  *++{B\ot_KB} ="3",
(30,-16) *++{B\otk H_B}="4",
{"1" \SelectTips{cm}{} \ar @{->}^{\simeq} "2"},
{"3" \SelectTips{cm}{} \ar @{->}_{\Theta_B}^{\simeq} "4"},
{"1" \SelectTips{cm}{} \ar@{->} "3"},
{"2" \SelectTips{cm}{} \ar @{_(->} "4"}
\end{xy}
\]
where top isomorphism is the base extension $B\ot_A$ of $\Theta_A$. 
Since $H_A\to H_B$ is faithfully flat by \cite[Theorem 3.10 (1)]{M}, and $K\to B$ is faithfully flat, it follows that $A\to B$ is faithfully flat.
To prove $A\supset B$, 
let $b \in B$. Then by (A2) for $A$, it is of the form $b=a/s$, where $a \in A$, $s\in A_0$, and $s$ is $A$-regular. \tc{Notice $B\ot_AB \subset L\ot_AL$. Computing in $L\ot_AL$, we have} 
\[
b\ot 1=\frac{a}{s}\ot \frac{s}{s}=\frac{s}{s}\ot \frac{a}{s}=1\ot b
\]
\tc{in $B\ot_AB$.}
The faithful flatness of $A\to B$ shows $b \in A$, as desired. 

(2)\
By \eqref{HGEeq5}, the $K$-superalgebra $A$ is identified with $K\ot_{\K}\overline{A}\otk \wedge_k(\mathfrak{w}_H)$. 
Since $L=A\ot_{\overline{A}}\L$ by \eqref{INDeq3} and
Corollary \ref{INDcor1} (1), 
we have $L=K\ot_{\K}\L\otk \wedge_k(\mathfrak{w}_H)$, which proves
that $L$ is a $D$-SUSY field. 
The argument also shows $\op{gr} L\supset \op{gr} A\supset \op{gr} K$, which proves
the admissibility. 
\end{proof}

\begin{definition}\label{DEFdef1}
Let $L|K$ be an extension of $D$-SUSY fields.
\begin{itemize}
\item[(1)]\
We say that $L|K$ is \emph{of finite type} if
\begin{itemize}
\item[(LK4)] $\L|\K$ is a finitely generated field extension.
\end{itemize}
If this is the case, Conditions (LK1--4) of those given at the beginning of Section \ref{IND} are satisfied. 
\item[(2)]\
Suppose that the remaining 
\begin{itemize}
\item[(LK5)] $L$ and $K$ are both $D$-simple, and
\item[(LK6)] $L^D=K^D$. 
\end{itemize}
of the conditions just mentioned are satisfied. 
We say that $L|K$ is a \emph{SUSY PV extension}, if $L|K$ is of finite type, and if there exists
necessarily uniquely (see Proposition \ref{DEFprop1} (1)), 
an intermediate $D$-superalgebra $A$ of $L|K$ such that 
\begin{itemize}
\item[(A1)] $A\cdot (A\otimes_K A)^D=A\otimes_KA$, and
\item[(A2)] $L=\Quot(A)$. 
\end{itemize}
This $A$ is called the \emph{principal $D$-superalgebra} of the SUSY PV extension. 
\end{itemize}
\end{definition}

Let us summarize \tc{the so far obtained results on the extensions just defined.} 

\begin{theorem}\label{DEFthm1}
Let $L|K$ be a SUSY PV extension with the principal $D$-superalgebra
$A$. Set
\begin{equation}\label{DEFeq1}
k:=L^D\, (=K^D),\quad H:=(A\ot_KA)^D. 
\end{equation}
\begin{itemize}
\item[(1)]
$L|K$ is admissible as an extension of SUSY fields.
\item[(2)]
$H$ is a $k$-sub-superalgebra of $A\ot_KA$, and is finitely generated over $k$.
\item[(3)]
$A$ is finitely generated over $K$. 
\item[(4)]
$H$ and $A$ uniquely turn into a Hopf superalgebra over $k$ and an algebra in $\M^H$, respectively, so that $A|K$ is an $H$-Galois extension.
Moreover, $A$ is  
presented by the isomorphisms
\eqref{HGEeq4}--\eqref{HGEeq5}.
\item[(5)]
With respect to the induced $\D$-actions, $\L|\K$ is a PV extension with $k=\L^{\D}\, (=\K^{\D})$
in the sense of Takeuchi \cite{T} (see also Remark \ref{INDrem1}). It has $\overline{A}$ and $\overline{H}$
as the principal $\D$-algebra and as the Hopf algebra, respectively, and $\overline{A}|\K$ is a purely even $\overline{H}$-Galois 
extension. 
\end{itemize}
\end{theorem}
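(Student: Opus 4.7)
The plan is straightforward: Theorem \ref{DEFthm1} packages results already established throughout Section \ref{IND}, so the proof will reduce to a dictionary between its five assertions and the lemmas and propositions there. First I will observe that whenever $L|K$ is a SUSY PV extension with principal $D$-superalgebra $A$ (Definition \ref{DEFdef1}), the conditions (LK1--6) and (A1--2) of Section \ref{IND} all hold: (LK1--3) are encoded in the fact that $L$, $K$ are $D$-SUSY fields (Noetherianity of $L$ and $\overline{L}$ being a field are built into the notion), (LK4) is the finite-type hypothesis, and (LK5--6), (A1--2) are in the definition. Hence every result of Section \ref{IND} applies verbatim to our $L|K$ and $A$.

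Next, I will match each assertion to an existing result. Part (1) is exactly Proposition \ref{DEFprop1}~(2). Part (3) is Corollary \ref{INDcor1}~(2). For Part (2), the inclusion of $H$ as a $k$-subsuperalgebra of $A\otimes_K A$ is tautological from the definition of $H$ (with $k = A^D$ following from Proposition \ref{DSAprop1}~(1)), while finite generation of $H$ over $k$ is Proposition \ref{INDprop2}~(3). For Part (4), Lemma \ref{INDlem3} produces the (uniquely determined) Hopf-superalgebra structure on $H$ and the (uniquely determined) $H$-supercomodule-algebra structure on $A$, and verifies (HG1)--(HG2); Proposition \ref{INDprop3} verifies (HG3), so that $A|K$ is indeed $H$-Galois; the presentations \eqref{HGEeq4}--\eqref{HGEeq5} then drop out of Theorem \ref{HGEthm1} once the Noetherianity and smoothness of $K$ (a SUSY field) and of $A$ (Proposition \ref{INDprop2}~(2)) are recorded.

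The only part requiring a little stitching is Part (5). I will first note that $k = \L^{\D} = \K^{\D}$ follows from Proposition \ref{DSAprop2} applied separately to the $D$-simple Noetherian $D$-SUSY fields $L$ and $K$, together with (LK6). Corollary \ref{INDcor1}~(1) then identifies the $\D$-algebra $P$ with $\overline{A}$, so Lemma \ref{INDlem1} yields that $\L|\K$ is a PV extension in Takeuchi's sense (as modified in Remark \ref{INDrem1}) with principal $\D$-algebra $\overline{A}$. Its Hopf algebra $J = (\overline{A}\otimes_{\K} \overline{A})^{\D}$ is identified with $\overline{H}$ via the isomorphism \eqref{INDeq6a} of Lemma \ref{INDlem4}~(1), and the $\overline{H}$-Galois property of $\overline{A}|\K$ is then Proposition \ref{INDprop1}~(3) transported through this identification.

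I do not expect any substantive new obstacle here. The real difficulty --- namely, the $D$-simplicity of $A$ established in Theorem \ref{INDthm1} (which lets one dispense with the auxiliary quotient $B$ used throughout Lemmas \ref{INDlem3}--\ref{INDlem5}), together with the reduction-mod-nilpotents arguments identifying $\overline{H}$ with $J$ --- was already absorbed in the development of Section \ref{IND}, and what remains is purely organizational.
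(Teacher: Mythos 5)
Your proposal is correct and matches the paper's intent exactly: Theorem \ref{DEFthm1} is stated there as a summary of the results already established in Section \ref{IND}, and your dictionary (Part (1) $\leftrightarrow$ Proposition \ref{DEFprop1}~(2); Part (2) $\leftrightarrow$ Proposition \ref{INDprop2}~(3); Part (3) $\leftrightarrow$ Corollary \ref{INDcor1}~(2); Part (4) $\leftrightarrow$ Lemma \ref{INDlem3} plus Proposition \ref{INDprop3} and Theorem \ref{HGEthm1}; Part (5) $\leftrightarrow$ \eqref{INDeq2}, Lemma \ref{INDlem1}, Corollary \ref{INDcor1}~(1), Lemma \ref{INDlem4}~(1) and Proposition \ref{INDprop1}~(3)) is the correct and complete assembly. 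No gaps.
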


\begin{definition}\label{DEFdef2}
In the situation of the theorem above, we call $H$ \emph{the Hopf superalgebra} of $L|K$, and let
\begin{equation}\label{DEFeq2}
\G(L|K)=\op{Sp}_k(H)
\end{equation}
denote the affine algebraic supergroup scheme over $k$ which is
represented by $H$, calling it \emph{the Galois supergroup} of the SUSY PV extension. 
We will also say so that $(L|K, A, H)$ \emph{is a SUSY PV extension}, just as we say (see
\cite{AM}, \cite{AMT})
that $(\L|\K, \overline{A}, \overline{H})$ is a PV extension.
\end{definition}

Suppose that $(L|K,A,H)$ is a SUSY PV extension with $k:=L^D\, (=K^D)$. 
Let $\mathsf{Aut}_{D}(A|K)$ denote the group-valued
functor defined on the category $\mathtt{SAlg}_k$ of superalgebras over $k$, which assigns to each $T\in \mathtt{SAlg}_k$,
the group $\op{Aut}_{D}(A\otk T|K\otk T)$ of all $D$-superalgebra automorphisms of $A\otk T$ over $K\otk T$. 
The definition makes sense with $A$ replaced by $L$.

Notice from Definition \ref{DEFdef2} 
that $\G(L|K)$ assigns to each $T$ as above, the group of all superalgebra morphisms 
$H\to T$ over $k$. 

\begin{theorem}[\tu{see \cite[Theorem A.2]{T}}]\label{DEFthm2}
We have the following.
\begin{itemize}
\item[(1)] 
For any $T\in \mathtt{SAlg}_k$, an endomorphism of the $D$-superalgebra $A\otk T$ over $K\otk T$ is necessarily 
an automorphism. 
\item[(2)]
Let $T \in \mathtt{SAlg}_k$ and $g \in \G(L|K)(T)$, and define $\phi_T(g) : A \otk T\to A \otk T$ to be the $T$-superlinear
extension of the composite
\[
A \overset{\theta_{\! A}}{\longrightarrow} A \otk H \overset{\op{id} \ot g}{\longrightarrow} A \otk T.
\]
Then $\phi_T(g)\in \op{Aut}_{D}(A\otk T|K\otk T)$, and the assignment $g \mapsto \phi_T(g)$ gives rise to an isomorphism
of group-valued functors
\[
\phi : \G(L|K)\overset{\simeq}{\longrightarrow}\mathsf{Aut}_{D}(A|K).
\]
When $T=k$, in particular, we have
\eq\label{DEFeq3}
\G(L|K)(k) \simeq \op{Aut}_{D}(A|K)=\op{Aut}_{D}(L|K).
\eeq
\end{itemize}
\end{theorem}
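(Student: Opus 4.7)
\ The plan is to prove both parts simultaneously by constructing a two-sided inverse $\Psi_T$ to $\phi_T$, verifying that $\phi_T$ is a group homomorphism, and using the group structure on $\G(L|K)(T)$ to force every $D$-superalgebra endomorphism of $A\otimes_k T$ over $K\otimes_k T$ to be an automorphism. The argument adapts the classical Hopf--Galois template of \cite[Theorem A.2]{T} to the super $D$-setting, the only real novelty being the bookkeeping of super-signs together with the identifications needed to make the $D$-invariants functorial in $T$.

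First I would check directly that $\phi_T(g)$ is a $D$-superalgebra endomorphism of $A\otimes_k T$ over $K\otimes_k T$. It is an algebra morphism as a composite of such; its restriction to $K\otimes_k T$ is the identity since $A^{\op{co}H}=K$ by (HG1), so that $\theta_{\! A}|_K$ is the inclusion $K\hookrightarrow A\otimes_k k$; and it is $D$-equivariant because $\theta_{\! A}$ is, by the construction in Lemma \ref{INDlem3} (namely $\theta_{\! A}=\mu^{-1}\circ(1\otimes_K -)$ with $\mu$ a $D$-equivariant isomorphism and $H$ carrying the trivial $D$-action).

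To construct $\Psi_T$, given a $D$-superalgebra endomorphism $\sigma$ of $A\otimes_k T$ over $K\otimes_k T$, I would form the $K$-algebra morphism
\[
\rho_\sigma : A\otimes_K A \to A\otimes_k T,\qquad a\otimes_K a'\mapsto (a\otimes 1)\,\sigma(a'\otimes 1),
\]
which is well defined by the $K\otimes_k T$-linearity of $\sigma$ combined with the super-commutativity of $A\otimes_k T$, and is $D$-equivariant. Since $A$ is $D$-simple and the chain $k=K^D\subset A^D\subset L^D=k$ forces $A^D=k$, while $T$ is $k$-flat (being a module over a field), one has $(A\otimes_k T)^D=A^D\otimes_k T=T$; restricting $\rho_\sigma$ to $H=(A\otimes_K A)^D$ thus produces a $k$-algebra morphism $g_\sigma:=\rho_\sigma|_H:H\to T$. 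I would then verify $\phi_T\circ\Psi_T=\op{id}$ and $\Psi_T\circ\phi_T=\op{id}$ by a short calculation exploiting that the isomorphism $\Theta_A$ from \eqref{HGEeq3} coincides with $\mu^{-1}$, so that $\Theta_A(h)=1\otimes h$ for every $h\in H$.

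Finally I would verify the group homomorphism property $\phi_T(g*g')=\phi_T(g)\circ\phi_T(g')$, where $*$ is the convolution product on $\G(L|K)(T)=\mathtt{SAlg}_k(H,T)$; this is a routine application of the coassociativity $(\theta_{\! A}\otimes\op{id})\circ\theta_{\! A}=(\op{id}\otimes\Delta_H)\circ\theta_{\! A}$ of the $H$-comodule structure on $A$. Because $\G(L|K)(T)$ is a group, with inverses given by $g\mapsto g\circ S_H$, the bijection $\phi_T$ forces every endomorphism $\sigma$ to admit $\phi_T(g_\sigma\circ S_H)$ as a two-sided composition inverse, which proves Part~(1); Part~(2) is then immediate. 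The concluding isomorphism in \eqref{DEFeq3} follows by combining this with Proposition \ref{EXDprop1} (which extends any $D$-automorphism of $A$ over $K$ uniquely to $L=\Quot(A)$) together with the uniqueness in Proposition \ref{DEFprop1}~(1), which forces any $D$-automorphism of $L|K$ to preserve $A$ since $A$ is characterized by properties (A1--2). I expect the main obstacle to be the sign-consistent verification that $\rho_\sigma$ is well-defined on $A\otimes_K A$ and the careful matching of super-signs in the coassociativity calculation.
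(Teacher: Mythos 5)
Your proposal is correct and follows essentially the same route as the paper: the paper simply defers the main body of the argument to the super analogue of Takeuchi's \cite[Theorem A.2]{T} (which is exactly the inverse-construction $\sigma\mapsto\rho_\sigma|_H$ you spell out, resting on $A^D=k$, $(A\otimes_k T)^D=T$ and $\Theta_A=\mu^{-1}$), and proves the equality $\op{Aut}_D(A|K)=\op{Aut}_D(L|K)$ exactly as you do, via the uniqueness of the principal $D$-superalgebra (Proposition \ref{DEFprop1}~(1)) and the unique extension of the $D$-action to $\Quot(A)$ (Proposition \ref{EXDprop1}). Your write-up just makes explicit the details the paper leaves to the citation.
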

\pf 
One can prove this just as proving \cite[Theorem A.2]{T} referred to above. 
We only prove the equality in \eqref{DEFeq3}, which is analogous to 
the one \eqref{DEFeq4} below, reproduced from \cite{T}. By the uniqueness of the principal
$D$-superalgebra (see Proposition \ref{DEFprop1} (1)) 
one has the restriction $\op{Aut}_{D}(L|K)\to \op{Aut}_{D}(A|K)$, which is isomorphic,
since the $D$-action on $A$ uniquely extends onto $L$; see 
Proposition \ref{EXDprop1} (1). 

Here is a remark on \cite[Theorem A.2]{T}: 
in order to use an analogous unique extension property (see \cite[Proposition 1.9]{T}), one should add the assumption that $C$ is pointed irreducible to obtain the equation
\eq\label{DEFeq4}
\op{Aut}_{\text{$C$-fer.$K$-alg}}(A)=\op{Aut}_{\text{$C$-fer.$K$-alg}}(L)
\eeq
on Page 508, line --12 of \cite{T}, as far as the author sees.
\epf


\section{Galois correspondence}\label{GAL}

Throughout in this section we assume that $D$ satisfies (D1--2), as in the
preceding section, and we let
$(L|K,A,H)$ be a SUSY PV extension with $k=L^D\, (=K^D)$. 

Notice from the paragraph following Lemma \ref{INDlem3} that we have the $A$-coring $A\ot_KA$ 
and the $L$-coring $L\ot_KL$; \tc{in what follows,} the $L$-coring $L\ot_M L$ will also appear, where $M$ is an 
intermediate $D$-SUSY field of $L|K$.
These are coalgebras in ${}_A({}_D\M)_A$ or in ${}_L({}_D\M)_L$. 

We call a sub-object $\cI$ of $A\ot_KA$ in ${}_A({}_D\M)_A$, a $D$-\emph{coideal}, 
if it satisfies
\[
\mathbf{\Delta}(\cI)\subset \cI\ot_
A(A\ot_KA)+(A\ot_KA)\ot_A\cI,\quad 
\mathbf{e}(\cI)=0.
\]
(We thus add ``$D$-'', to emphasize it is $D$-stable.)  This is precisely a sub-object $\cI$ such that
the quotient $(A \ot_K A)/\cI$ naturally turns into an $A$-coring. Similarly, a $D$-\emph{coideal} of $L\ot_KL$
is defined. 

\begin{prop}\label{GALprop1}
There are natural one-to-one correspondences among the following three sets:
\begin{itemize}
\item the set of all Hopf super-ideals of $H$. 
\item the set of all $D$-coideals of $A\ot_KA$.
\item the set of all $D$-coideals of $L\ot_KL$.
\end{itemize}
\end{prop}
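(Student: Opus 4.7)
The plan is to establish the two correspondences in succession: first between Hopf super-ideals of $H$ and $D$-coideals of $A\ot_KA$, and then between $D$-coideals of $A\ot_KA$ and those of $L\ot_KL$.

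For the first correspondence, I would use the Galois isomorphism $\Theta_A : A\ot_KA \overset{\simeq}{\longrightarrow} A\otk H$ from Theorem \ref{DEFthm1} (4) to transport the $A$-coring structure on $A\ot_KA$ to one on $A\otk H$. The key observation is that since $H=(A\ot_KA)^D$ and $\Theta_A$ is built from $\theta_A$, the transported $D$-action on $A\otk H$ acts only through the $A$-factor; hence $(A\otk H)^D = H$. Under this transport the $A$-coring structure on $A\otk H$ is the one inherited from the coalgebra structure on $H$ (extended by the appropriate $A$-bimodule structure, with the twisted right action induced by $\theta_A$). The correspondence is then $\mathfrak{I} \leftrightarrow \Theta_A^{-1}(A\otk\mathfrak{I})$, with inverse $\cI \mapsto \cI^D$. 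One direction $(A\otk\mathfrak{I})^D = \mathfrak{I}$ is immediate from the $D$-action description; the other, $A\cdot \cI^D\cdot A = \cI$, follows from condition (A1) together with the faithful flatness of $A$ over $K$ (Proposition \ref{INDprop3}). That $\cI^D$ is an ideal, coideal, and stable under the antipode translates directly from the bimodule, coideal, and supersymmetry-stability conditions on $\cI$, using $A^D = k$.

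For the second correspondence, I would use that $L = \Quot(A) = S^{-1}A$ (from condition (A2) and Proposition \ref{EXDprop1}), where $S \subset A_0$ is the multiplicative set of $A$-regular elements, and that the $D$-action extends uniquely. Then $L\ot_KL$ is the bilateral localization of $A\ot_KA$ at $S$. By flatness of localization, extension $\cI \mapsto L\cdot \cI \cdot L$ and contraction $\cJ \mapsto \cJ \cap (A\ot_KA)$ give mutually inverse bijections provided every $D$-coideal $\cI$ of $A\ot_KA$ is $S$-saturated, i.e.\ $s\cdot x \in \cI$ with $s \in S$ forces $x\in \cI$. This saturation follows from the first correspondence: writing $\cI = \Theta_A^{-1}(A\otk \mathfrak{I})$, the quotient $(A\ot_KA)/\cI \simeq A\otk (H/\mathfrak{I})$ is free (hence torsion-free) as a left $A$-module, so no element of $S\subset A_0$ acts as a zero-divisor on it. One must also verify that a $D$-coideal of $L\ot_KL$, intersected with $A\ot_KA$, is automatically $D$-stable, which is clear, and is a coideal, which follows from flatness.

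The main obstacle, in my view, is the compatibility check in Step 1: verifying that the $A$-coring structure on $A\ot_KA$ transported along $\Theta_A$ really coincides with the structure on $A\otk H$ built from $H$'s coalgebra structure, and that sub-$A$-bimodules in ${}_D\M$ that are coideals correspond, via $D$-invariants, to Hopf super-ideals of $H$. This is essentially a Hopf-Galois descent statement (closely related to Schneider's descent theorem, and used implicitly in \cite{T}); in the super setting the supersymmetry and the superimposed $D$-action need to be threaded through carefully, but once the bookkeeping is settled, the core assertions reduce to the Hopf-module theorem (Theorem \ref{HGEthm0}) applied to the $H$-Galois extension $A|K$.
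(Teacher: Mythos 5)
Your overall architecture is the same as the paper's: first match Hopf super-ideals of $H$ with $D$-coideals of $A\ot_KA$ via $\a\mapsto A\cdot\a$ and $\cI\mapsto\cI^D$, then pass to $L\ot_KL$ by localization. But the justification of the pivotal step is wrong. You assert that $A\cdot\cI^D\cdot A=\cI$ for every $D$-stable super-ideal $\cI$ of $A\ot_KA$ ``follows from condition (A1) together with the faithful flatness of $A$ over $K$'', and later that the core assertions reduce to the Hopf-module theorem (Theorem \ref{HGEthm0}) for the $H$-Galois extension $A|K$. Neither ingredient does the job: the claim concerns the $D$-module structure, not the $H$-comodule structure, and for a general $D$-superalgebra it is simply false that a $D$-stable ideal is generated by its $D$-invariants (a nontrivial $D$-stable super-ideal of a non-$D$-simple algebra can have zero invariants). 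What the paper actually uses is the $D$-simplicity of $A$ (Theorem \ref{INDthm1}, itself a substantial result) fed into Proposition \ref{DSAprop1}~(2), applied to the \emph{quotient} $(A\ot_KA)/\cI$: this makes $\mu\colon A\otk\big((A\ot_KA)/\cI\big)^D\to(A\ot_KA)/\cI$ injective, while (A1) supplies surjectivity, whence the quotient is $A\otk(H/\a)$ for a unique $\a$ and $\cI=A\cdot\a$. You need to route the argument through this Schur-type injectivity rather than through descent along $K\to A$.

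A second gap concerns the antipode. A $D$-coideal of $A\ot_KA$ as defined in the paper is only required to be a sub-bimodule ideal compatible with $\mathbf{\Delta}$ and killed by $\mathbf{e}$; it is \emph{not} assumed stable under the supersymmetry flip of $A\ot_KA$. So antipode-stability of the corresponding $\a$ does not ``translate directly'' from a supersymmetry-stability hypothesis on $\cI$ (there is none): a priori the correspondence only produces super-bi-ideals, and one needs Lemma \ref{GALlem1} (every super-bi-ideal of a Hopf superalgebra is a Hopf super-ideal), whose proof is not formal in the super setting (it uses nilpotency of $I_Q$ and Nichols' theorem). Finally, your saturation argument for the passage to $L\ot_KL$ handles the left factor cleanly but glosses the right one: regularity of $1\ot_Kt$ on $(A\ot_KA)/\cI\simeq A\otk(H/\a)$ amounts to regularity of the image of $\theta_{\! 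A}(t)$ there, which does not follow merely from left $A$-freeness; the paper sidesteps this by localizing one side at a time and re-running the bijection with the set $\I(H)$ of super-ideals of $H$ at each stage.
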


We denote these three sets, \tc{from top to bottom}, by
\eq\label{GALeq-3}
\I^{\tu{Hopf}}(H),\quad \I^{\tu{co}}_D(A\ot_K A),\quad \I^{\tu{co}}_D(L\ot_K L).
\eeq

\pf
When $X$ is a superalgebra (resp., an algebra in ${}_D\M$), 
we let $\I(X)$ (resp., $\I_D(X)$) denote the set of all super-ideals (resp., all $D$-stable super-ideals) of $X$. 
Recall $H=(A\ot_K A)^D$. 
We claim that the map
\eq\label{GALeq-2}
\I(H)\to \I_D(A\ot_KA),\ \a \mapsto A\cdot \a
\eeq
is a bijection.
Indeed, given $\cI \in \I_D(A\ot_KA)$, we have the natural isomorphism 
\eq\label{GALeq-1}
\begin{xy}
(0,8)   *++{A\otk H}  ="1",
(40,8)  *++{A\ot_K A} ="2",
(0,-8)  *++{A\ot_k((A\ot_KA)/\cI)^D} ="3",
(40,-8) *++{(A\ot_KA)/\cI,}="4",
{"1" \SelectTips{cm}{} \ar @{->}_{\mu}^{\simeq} "2"},
{"3" \SelectTips{cm}{} \ar@{>->}_{\qquad\mu} "4"},
{"1" \SelectTips{cm}{} \ar@{->} "3"},
{"2" \SelectTips{cm}{} \ar @{->>} "4"}
\end{xy}
\eeq
where $H$ maps into $((A\ot_KA)/\cI)^D$ through the vertical morphism on the LHS. 
It follows that the bottom morphism, being injective by the $D$-simplicity of $A$, is isomorphic, and 
there uniquely exists $\a \in \I(H)$ such that
\eq\label{GALeq0}
H/\a=((A\ot_KA)/\cI)^D.
\eeq
Therefore, $\cI$ uniquely arises from this $\a$, so that $\cI=A\cdot \a$. This proves the claim.

Suppose $Y=L\ot_K A$, $A\ot_KL$ or $L\ot_KL$. Since $Y$ is a localization of $A\ot_KA$ by a central multiplicative subset, we
can regard $\I(A\ot_K A)\supset \I(Y)$. But we have
\eq\label{GALeq0a}
\I_D(A\ot_KA)=\I_D(L\ot_KA)=\I_D(A\ot_KL)=\I_D(L\ot_KL).
\eeq
Indeed, since we have the bijection $\I(H)\to \I_D(L\ot_KA),\ \a\mapsto L\cdot \a$ analogous to \eqref{GALeq-2}, it follows that
$\I_D(A\ot_KA)=\I_D(L\ot_KA)$. The opposite-sided argument shows $\I_D(A\ot_KA)=\I_D(A\ot_KL)$.
The two equalities conclude the remaining. A similar argument, which uses some bijections including the analogue $\I(H\otk H)\overset{\simeq}{\longrightarrow}
\I_D(A\ot_KA\ot_KA)$ of \eqref{GALeq-2}, proves some equations that are analogous to \eqref{GALeq0a}; they include
\eq\label{GALeq0b}
\I_D(A\ot_KA\ot_KA)=\I_D(L\ot_KA\ot_KL)=\I_D(L\ot_KL\ot_KL).
\eeq

Let $\a\in\I(H)$, and suppose $\cI=A\cdot \a$. 
An iterated use of the bottom isomorphism in \eqref{GALeq-1} combined with \eqref{GALeq0} shows
\begin{align*}
A\otk(H/\a)\otk (H/\a)&\overset{\simeq}{\longrightarrow}((A\ot_KA)/\cI)\ot_A((A\ot_KA)/\cI),\\
(H/\a)\otk (H/\a)&=[((A\ot_KA)/\cI)\ot_A((A\ot_KA)/\cI)]^D.
\end{align*}
Therefore, the structure morphisms of $H$ induce
\[
k\leftarrow H/\a \to (H/\a)\otk(H/\a)
\]
if and only if the structure morphisms of $A\ot_KA$ induce
\[
A\leftarrow (A\ot_KA)/\cI \to ((A\ot_KA)/\cI)\ot_A((A\ot_KA)/\cI). 
\]
This, combined with \tc{Lemma \ref{GALlem1} below which proves the general fact} that
a super-bi-ideal of $H$ is necessarily a Hopf super-ideal,
shows that the bijection \eqref{GALeq-2} restricts to a bijection
\[
\I^{\tu{Hopf}}(H)\overset{\simeq}{\longrightarrow}\I^{\tu{co}}_D(A\ot_K A).
\]

Under $\I_D(A\ot_KA)=\I_D(L\ot_KL)$ in \eqref{GALeq0a}, identified with $\cI$ in $\I_D(A\ot_KA)$ is
$\cJ:=L\cdot \cI\cdot L$ in $\I_D(L\ot_KL)$. Under $\I_D(A\ot_KA\ot_KA)=\I_D(L\ot_KL\ot_KL)$ in \eqref{GALeq0b}, 
\[
\cI\ot_KA\quad\text{and}\quad A\ot_K\cI\quad \text{in}\quad \I_D(A\ot_KA\ot_KA)
\]
are identified with
\[
\cJ\ot_KL\quad\text{and}\quad L\ot_K\cJ\quad \text{in}\quad \I_D(L\ot_KL\ot_KL),
\]
respectively, since the two on the same side are identified with the same
\[
(L\cdot \cI)\ot_KL\quad\text{and}\quad L\ot_K(\cI\cdot L) \quad \text{in}\quad \I_D(L\ot_KA\ot_KL),
\]
respectively, under the identifications in \eqref{GALeq0b}. 
From this we see that $\cI$ is a $D$-coideal of $A\ot_KA$ if and only if $\cJ$ is a $D$-coideal of $L\ot_KL$.
This shows
\[
\I_D^{\op{co}}(A\ot_KA)=\I_D^{\op{co}}(L\ot_KL),
\]
completing the proof.
\epf

\begin{rem}\label{GALrem1}
Here are two remarks.
\begin{itemize}
\item[(1)]
Suppose that 
$\a$ in $\I^{\tu{Hopf}}(H)$,
$\cI$ in $\I^{\tu{co}}_D(A\ot_K A)$ 
and $\cJ$ in $\I^{\tu{co}}_D(L\ot_K L)$ 
correspond with each other. As is seen from
from the preceding proof the correspondences are characterized by the isomorphisms \eqref{GALeq0c}--\eqref{GALeq0d} below.
\begin{itemize}
\item[(i)]
The composite $A\ot_KA\overset{\simeq}{\longrightarrow} A\otk H\to A\otk(H/\a)$
of the isomorphism $\Theta_A$ with the natural projection $A\otk H \to A\otk(H/\a)$ induces an algebra
isomorphism
\eq\label{GALeq0c}
(A\ot_KA)/\cI\overset{\simeq}{\longrightarrow} A\otk(H/\a)
\eeq
in ${}_D\M$.
\item[(ii)]
The isomorphism above induces by localization an algebra isomorphism
\eq\label{GALeq0d}
(L\ot_KL)/\cJ\overset{\simeq}{\longrightarrow} T^{-1}(L\otk(H/\a))
\eeq
in ${}_D\M$. 
Here the target is the localization with respect to the multiplicative subset
\eq\label{GALeq1}
T:=\{\,  \theta'(a) : a\ \text{is an}\ A\text{-regular element of}\ A_0\, \}
\eeq
of $A\otk (H/\a)$, where $\theta' : A \to A\otk(H/\a)$ denotes the composite of 
$\theta_{\! A} : A \to A\otk H$
with the natural projection $A \otk H\to A\otk(H/\a)$. 
\end{itemize}
\item[(2)]
In what follows we will use only the correspondence between $\I^{\tu{Hopf}}(H)$ and $\I^{\tu{co}}_D(L\ot_K L)$. 
Indeed, Takeuchi \cite[Corollary 2.1]{T} does not refer to the set corresponding 
to our $\I^{\tu{co}}_D(A\ot_K A)$; 
one reason should be
that the necessary $D$-simplicity of the principal algebra 
(see \cite[Theorem 2.11]{T})
is not yet proven at the stage. 
\tc{We involved 
$\I^{\tu{co}}_D(A\ot_K A)$, in order to thus make}
it clear how the isomorphism \eqref{GALeq0d}, which will play a role in the proofs of Proposition \ref{GALprop2} and Theorem \ref{GALthm1} below, arises. 
\end{itemize}
\end{rem}

The following is the general fact which was used in the second last paragraph of the last proof.  

\begin{lemma}\label{GALlem1}
A super-bi-ideal of a Hopf superalgebra is necessarily stable under the antipode,
or in other words, it is a Hopf super-ideal. 
\end{lemma}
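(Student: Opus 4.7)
The plan is to exploit that every Hopf superalgebra in this paper is super-commutative, so the antipode $S:H\to H$ is a superalgebra morphism with $S^2=\mathrm{id}_H$. Writing $\a$ for the given super-bi-ideal, $B:=H/\a$ for the quotient super-bialgebra, and $\pi:H\to B$ for the projection, the task is to show $S(\a)\subseteq\a$, equivalently that $\pi\circ S:H\to B$ factors through $\pi$.

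The first step is to work in the convolution superalgebra $\mathrm{Hom}(H,B)$: a direct computation from the antipode axiom gives
\[
(\pi\circ S)\ast\pi \,=\, \pi\ast(\pi\circ S) \,=\, u_B\circ\varepsilon_H,
\]
so $\pi\circ S$ is the unique convolution inverse of $\pi$ there, and it is a superalgebra morphism because both $\pi$ and $S$ are (the super-commutativity of $H$ being essential to make $S$ multiplicative rather than merely anti-multiplicative). By uniqueness of convolution inverses, if some superalgebra endomorphism $\bar S:B\to B$ satisfies $\bar S\ast\mathrm{id}_B=u_B\circ\varepsilon_B$ in $\mathrm{Hom}(B,B)$, then $\bar S\circ\pi=\pi\circ S$, which forces $\pi(S(\a))=\bar S(\pi(\a))=0$, i.e., $S(\a)\subseteq\a$.

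The main obstacle is therefore to show that the super-commutative super-bialgebra $B$ admits an antipode. I would establish this by reducing to the classical theorem (essentially due to Sweedler) that every bi-ideal of a commutative Hopf algebra is a Hopf ideal. Passing to the even quotient $\overline H:=H/(H_1)$, which is a commutative Hopf algebra, the image $\overline{\a}$ of $\a$ is a bi-ideal, hence a Hopf ideal by the classical case, so $\overline B:=\overline H/\overline{\a}$ is a commutative Hopf algebra. To propagate the antipode back to $B$, I would use that $S$ preserves the $I_H$-adic filtration of $H$ and acts as $-\mathrm{id}$ on the odd indecomposables $\mathfrak{w}_H$, so the induced map $S:\mathrm{gr}H\to\mathrm{gr}H$ restricts to the exterior factor as the parity-twisted identity; combined with the $S$-stability of $\overline{\a}$ in $\overline H$, this shows $S$ preserves the corresponding bi-ideal of the graded decomposition of $H$, yielding $S(\a)\subseteq\a$ and hence the desired $\bar S$ on $B$.
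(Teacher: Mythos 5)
Your first paragraph is correct and is essentially how the paper also closes the argument: once the quotient super-bialgebra $B=H/\a$ is known to admit an antipode $\bar{\cS}$, the uniqueness of convolution inverses forces $\bar{\cS}\circ\pi=\pi\circ\cS$ and hence $\cS(\a)\subseteq\a$. Likewise, your use of the purely even quotient and the classical (Nichols-type) theorem to get an antipode on $\overline{B}=\overline{H}/\overline{\a}$ is the right external input. The genuine gap is in the ``propagation'' step from $\overline{B}$ back to $B$. Knowing that $\cS$ preserves the $I_H$-adic filtration and that the induced map on $\op{gr}H$ preserves $\op{gr}\a$ does \emph{not} imply $\cS(\a)\subseteq\a$: a filtered endomorphism whose associated graded preserves (the associated graded of) a subspace need not preserve the subspace itself. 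For instance, on $M=k[\epsilon]/(\epsilon^2)$ filtered by $(\epsilon)$, the map $1\mapsto 1+\epsilon$, $\epsilon\mapsto\epsilon$ induces the identity on $\op{gr}M$ yet does not preserve $k\cdot 1$. Moreover, the decomposition $H\simeq\overline{H}\ot\wedge_k(\mathfrak{w}_H)$ you implicitly appeal to is non-canonical and not $\cS$-equivariant, and you never reduce to a situation where $I_H$ (or $I_B$) is nilpotent, without which no filtration argument can terminate.

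The paper circumvents exactly this difficulty. It first reduces to the case where the Hopf superalgebra is finitely generated (writing it as a filtered union of finitely generated Hopf sub-superalgebras), so that $I_B=(B_1)$ is nilpotent. It then does not try to prove $\cS(\a)\subseteq\a$ directly; instead it proves that $B$ has an antipode by showing that the fusion map $B\ot_k B\to B\ot_k B$, $p\ot q\mapsto pq_{(1)}\ot q_{(2)}$, is bijective. Since this map is left $B$-linear between free $B$-modules and $I_B$ is nilpotent, bijectivity may be checked modulo $I_B$, where an explicit inverse $\overline{p}\ot q\mapsto\overline{p}\,\cS(\overline{q}_{(1)})\ot q_{(2)}$ is written down using the antipode of $\overline{B}$ furnished by Nichols' theorem. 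If you want to salvage your write-up, replace the associated-graded step by this reduction-plus-fusion-map argument (or by some other mechanism that genuinely lifts invertibility, rather than invariance of a subspace, across the nilpotent ideal).
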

\pf
We suppose that $J$ is a (super-commutative) Hopf superalgebra, in general, and
$\b$ is a super-bi-ideal of $J$. We may suppose that $J$ is finitely generated,
since $J=\bigcup_{\alpha}J_{\alpha}$ is a filtered union of finitely generated Hopf sub-superalgebras $J_{\alpha}$, and we have
only to show that in each $J_{\alpha}$, the super-bi-ideal $\b \cap J_{\alpha}$ is stable under the antipode. 
Let $Q=J/\b$  be the quotient super-bialgebra of $J$ by $\b$. To see that $Q$ is a Hopf superalgebra we have to prove that 
the left $Q$-superlinear morphism 
\[
Q\ot_k Q \to Q\ot_k Q, \quad p\ot q
\mapsto pq_{(1)}\ot q_{(2)},
\]
where $\Delta(q)=q_{(1)}\ot q_{(2)}$ denotes the coproduct of $Q$, 
is bijective; notice that the condition is translated in terms of the affine supermonoid scheme $\mathsf{M}=\op{Sp}_k(Q)$, 
so that the morphism $\mathsf{M}\times \mathsf{M}\to \mathsf{M}\times \mathsf{M}$ given by $(x,y)\mapsto (x,xy)$ is
isomorphic. Since $Q$ is supposed to be finitely generated, $I_Q\, (=(Q_1))$ is nilpotent. Therefore, 
we have only to prove that the morphism 
$\overline{Q}\otk Q\to \overline{Q}\otk Q$ considered modulo $I_Q$ is bijective. In fact, it has 
\[
\overline{p}\ot q \mapsto \overline{p}\cS(\overline{q}_{(1)})\ot q_{(2)}
\]
as its inverse, where $\overline{p}$ denotes $p\, \op{mod}I_Q$. Here we have let
$\cS$ denote the antipode of $\overline{Q}$, which exists 
since as was proved by Nichols \cite[Theorem 1 (iv)]{N}, any commutative 
quotient bialgebra of a (not necessarily commutative) Hopf algebra over a field is necessarily a Hopf algebra. 
\epf

Set $\G:=\G(L|K)\, (=\op{Sp}_k(H))$, the Galois supergroup of $L|K$. Let
\[
\X:=\op{Sp}_k(A)
\]
denote the affine $k$-superscheme represented by the principal $D$-superalgebra $A$. Note that $\G$ 
acts on $\X$ on the right with the structure morphism $\theta_{\! A} : A \to A\otk H$. The fact that $A|K$ is $H$-Galois
(see Theorem \ref{DEFthm1} (4)) is translated in geometric terms so that the $\G$-action on $\X$ is free, and 
the fppf $k$-sheaf $\X\tilde{/}\G$
of $\G$-orbits in $\X$ is an affine $k$-superscheme, and is represented by $K$. 
Let $\mathsf{F}$ be a closed sub-supergroup scheme of $\G$. Then $\F$ acts freely on $\X$ by the restricted action, so that
the fppf $k$-sheaf $\X\tilde{/}\F$ is constructed. This is investigated by the following. 

\begin{prop}\label{GALprop2}
Retain the notation as above.
\begin{itemize}
\item[(1)] 
$\X\tilde{/}\mathsf{F}$ is a Noetherian and smooth, integral superscheme over $k$.
\item[(2)]
The function SUSY field $k(\X\tilde{/}\mathsf{F})$ of $\X\tilde{/}\mathsf{F}$ (see Definition \ref{SSCdef1}) 
is an intermediate $D$-SUSY field of $L|K$,
such that $L|k(\X\tilde{/}\mathsf{F})$ is admissible as an extension of SUSY fields. 
\end{itemize}
\end{prop}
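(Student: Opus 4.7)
The plan is to leverage the $\G$-torsor structure on $\X \to \op{Sp}_k(K)$ afforded by the $H$-Galois property of $A|K$ from Theorem \ref{DEFthm1} (4). The restricted $\F$-action on $\X$ is automatically free, and the fppf quotient $\X\tilde{/}\F$ fits into a factorization $\X \to \X\tilde{/}\F \to \op{Sp}_k(K)$ in which the first arrow is an $\F$-torsor and the second is a $(\G/\F)$-fiber bundle; pulling back along the fppf cover $\X \to \op{Sp}_k(K)$ trivializes $\X\tilde{/}\F$ into $\X \times_k (\G/\F)$.

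For Part (1), I would first invoke the super-analogue of Chevalley's theorem to recognize $\G/\F$ as a Noetherian, smooth $k$-superscheme of finite type (smoothness being automatic in characteristic zero, since $H/\a$ is finitely generated). Fppf descent then realizes $\X\tilde{/}\F$ as a superscheme inheriting from $\X \times_k (\G/\F)$ both Noetherianness (quasi-compact with Noetherian stalks) and smoothness over $k$. Integrality is checked on the associated ordinary scheme: by Corollary \ref{INDcor1} (1), $\overline{A} = P$ is an integral domain, so $\X_{\op{ev}} = \op{Spec}(P)$ is integral; the free $\F_{\op{ev}}$-quotient $(\X\tilde{/}\F)_{\op{ev}} = \X_{\op{ev}}\tilde{/}\F_{\op{ev}}$ is therefore irreducible, and reduced by the smoothness just established.

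For Part (2), Lemma \ref{SSClem1} applied to $\X\tilde{/}\F$ gives that $M := k(\X\tilde{/}\F)$ is a SUSY field. The two structure morphisms induce inclusions $K \hookrightarrow M \hookrightarrow L$ on stalks at generic points, the second being injective because $\X \to \X\tilde{/}\F$ is faithfully flat and hence dominant. For the $D$-action, the essential observation is that the coaction $\theta_A : A \to A \otk H$ is $D$-linear when $D$ acts on $A \otk H$ through its first tensor factor only, because $H = (A\ot_K A)^D$ is pointwise $D$-fixed. Consequently the induced coaction $A \to A \otk (H/\a)$ is also $D$-linear, the subalgebra $A^{\op{co}(H/\a)}$ is $D$-stable, and the local sections of $\cO_{\X\tilde{/}\F}$, obtained as appropriate localizations of such coinvariant subalgebras, carry $D$-actions uniquely extended via Proposition \ref{EXDprop1}. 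Hence $M$ is an intermediate $D$-SUSY field. Admissibility of $L|M$ follows from the fact that the smooth surjection $\X \to \X\tilde{/}\F$ induces, at generic points, an injection $\L \otimes_{\overline{M}} I_M/I_M^2 \to I_L/I_L^2$, which is precisely condition (a) of Lemma \ref{SUFlem1}.

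The main obstacle I anticipate is supplying two supergeometric inputs not yet established in the excerpt: (i) that $\G/\F$ is a smooth superscheme of finite type over $k$, and (ii) fppf descent for superschemes in the form required. Both are standard in the affine algebraic supergroup literature but require either a citation or a short auxiliary argument. A subsidiary issue is that $\X\tilde{/}\F$ need not be affine when $\F$ is not normal in $\G$, so one must work locally in identifying the structure sheaf with localizations of $A^{\op{co}(H/\a)}$; once this is done, the remaining work is torsor bookkeeping combined with the $D$-equivariance check above.
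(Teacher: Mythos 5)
Your overall strategy coincides with the paper's: establish via super-geometric quotient theory that $\Y:=\X\tilde{/}\F$ is a Noetherian smooth integral superscheme, then identify $k(\Y)$ locally through the $H/\a$-Galois extension $\cA|\cB$, where $\cA=\cO_{\X}(\pi^{-1}(U))$ and $\cB=\cO_{\Y}(U)=\cA^{\op{co}(H/\a)}$ for affine open $U\subset\Y$. The two inputs you flag as missing are exactly what the paper imports: it cites \cite[Theorem 1.8]{MOT} for the existence and smoothness of the quotient superscheme, and verifies its hypothesis that $\X_{\op{ev}}\tilde{/}\F_{\op{ev}}$ is a scheme by exhibiting $\X_{\op{ev}}$ as a torsor under $(\G_{\op{ev}})_{\K}$ and descending from a finite field extension of $\K$; your torsor-trivialization picture is essentially a re-derivation of that theorem. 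Your integrality check (irreducible image plus reducedness from smoothness) is a harmless variant of the paper's observation that $\overline{\cB}$ is a subring of the integral domain $\overline{\cA}$.

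There is, however, a concrete gap in your treatment of the $D$-action. You describe the local sections of $\cO_{\Y}$ as ``localizations of $A^{\op{co}(H/\a)}$''. That has the two operations in the wrong order: $\cO_{\Y}(U)$ is the $(H/\a)$-coinvariant subalgebra of the restriction $\cO_{\X}(\pi^{-1}(U))$, not a localization of the global coinvariants. When $\F$ is not normal these differ drastically --- in the flag-variety situation the global coinvariants can collapse to $K$ while $k(\Y)$ is a nontrivial extension --- so an argument built on localizing $A^{\op{co}(H/\a)}$ cannot produce $M=k(\Y)$, let alone its $D$-stability. Your equalizer observation (the coaction is $D$-linear for the trivial $D$-action on $H/\a$, hence coinvariants are $D$-stable) is sound, but to deploy it you must first show that $\cA$ is $D$-stable in $L$ and that its coaction remains $D$-equivariant, which is not addressed. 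The paper sidesteps this entirely: using the correspondence of Proposition \ref{GALprop1} it identifies $\op{Ker}(L\ot_KL\to L\ot_ML)$ with the $D$-coideal $\cJ$ corresponding to $\a$, and then reads off $M=\{x\in L: x\ot 1\equiv 1\ot x \bmod \cJ\}$ (using that $M$ is a direct summand of $L$ in ${}_M\M$ by Proposition \ref{SUFprop2} (1)); $D$-stability of $M$ is then immediate from that of $\cJ$. Finally, for admissibility you should not appeal to abstract smoothness of $\pi$, which has not been shown to control $I/I^2$ in the super setting: the paper obtains the required injection from the explicit decomposition $\cA\simeq\cB\ot_{\overline{\cB}}\overline{\cA}\otk\wedge_k(\mathfrak{w}_{H/\a})$ supplied by Theorem \ref{HGEthm1} (2), which is the tool actually available here.
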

\pf
Let us set
$\Y:=\X\tilde{/}\mathsf{F}$.

\medskip

\emph{Step 1}.\
We aim to conclude from \cite[Theorem 1.8]{MOT} (for $\G$ and $\X$ applied to the present $\F$ and $\X$) that 
$\Y$ is a Noetherian smooth superscheme over $k$. 

We have to verify the two assumptions (a)--(b) 
of the cited theorem; the Noetherianity, in particular, then follows from the proof of the theorem, in view of the
present situation that  $\X$ is
Noetherian, more strongly than being locally Noetherian as assumed by the theorem. 
It is trivial to verify the assumption (a): 
$\F$ and $\X$ are smooth over $k$. As for $\X$, it is smooth 
over $K$ by Proposition \ref{HGEprop1}, and
is so over $k$ since $K$ is smooth over $k$. 
It remains to verify the assumption (b): 
the fppf $k$-sheaf $\X_{\op{ev}}\tilde{/}\F_{\op{ev}}$ with respect the induced (necessarily, free)
$\F_{\op{ev}}$-action on $\X_{\op{ev}}$ is a $k$-scheme. 
Recall $\X_{\op{ev}}=\op{Sp}_k(\overline{A})$, and that 
$\overline{A}|\K$ is 
$\overline{H}$-Galois; see Theorem \ref{DEFthm1} (5). 
One then sees that $\X_{\op{ev}}$ is a $\K$-torsor under the base change $(\G_{\op{ev}})_{\K}$ of the
affine algebraic $k$-group scheme $\G_{\op{ev}}=\op{Sp}_k(\overline{H})$, or in other words,
the $(\G_{\op{ev}})_{\K}$-equivariant $\K$-schemes $(\G_{\op{ev}})_{\K}$ and $\X_{\op{ev}}$ turn isomorphic after the base change
to some extension field of $\K$ of finite degree. 
Since $\G_{\op{ev}}\tilde{/}\F_{\op{ev}}$ is known to be a $k$-scheme, 
the fppf $\K$-sheaf $\X_{\op{ev}}\tilde{/}\F_{\op{ev}}$ turns into a scheme after base change 
to some finite-degree field extension of $\K$. 
An argument of descent, which is found, for example, in Step 3 (Conclusion) of the proof of \cite[Theorem B.37, pp.605--606]{Milne},
proves that $\X_{\op{ev}}\tilde{/}\F_{\op{ev}}$ is a scheme over $\K$, and hence over $k$. 
We thus have obtained the conclusion we aim at. 

\medskip

\emph{Step 2}.\
We aim to prove: \emph{the Noetherian smooth superscheme $\Y$ is integral, and the function SUSY field $k(\Y)$
is included in $L$, so that $L|k(\Y)$ is an admissible extension of SUSY fields.}

In this paragraph we adapt to the present situation, 
some of the argument of  proving the result \cite[Theorem 1.8]{MOT} which was applied in the last step.
Let $\pi : \X\to \Y$
be the canonical morphism of $k$-superschemes; it is 
affine, faithfully flat and finitely presented; see \cite[Proposition 1.6 (2)]{MOT}.
Recall that the underlying topological spaces of $\Y$ and of 
$\Y_{\op{ev}}$ ($=\X_{\op{ev}}\tilde{/}\F_{\op{ev}}$, see Remark (ii) above) are
the same, and notice from Zubkov's Theorem \cite[Theorem 3.1]{Z} that
an open set is affine, regarded in $\Y$, if and only 
if it is so, regarded in $\Y_{\op{ev}}$. 
As an important consequence, 
$\Y$ is covered by finitely many affine open sets, since $\Y_{\op{ev}}$ is seen to be so. 
Choose arbitrarily a non-empty affine open set $U$ of $\Y$. Then $\pi^{-1}(U)$ 
is an affine open set of $\X$, which is stable under the action by $\mathsf{F}$ so that $\pi^{-1}(U)\tilde{/}\F=U$. 
Set
\[
\cA :=
\pi_*
\cO_{\X}(U)\, 
(=\cO_{\X}(\pi^{-1}(U))),
\quad \cB:=\cO_{\Y}(U). 
\]
Then these are Noetherian smooth $k$-superalgebras. 
Suppose $\mathsf{F}=\op{Sp}_k(H/\mathfrak{a})$, where $\mathfrak{a}$
is a Hopf super-ideal of $H$. 
Let $\theta_{\! \cA} : \cA \to \cA \, \otk (H/\mathfrak{a})$ denote the structure morphism of the
action $\pi^{-1}(U)\times \mathsf{F}\to \pi^{-1}(U)$. Then $(\cA, \theta_{\! \cA})$ is an algebra in $\M^{H/\mathfrak{a}}$, 
and in fact, $\cA|\cB$ is an $H/\mathfrak{a}$-Galois extension. We thus have the isomorphism
\begin{equation*}\label{GALeq2}
\Theta_{\! \cA}:\cA \ot_{\cB}\cA \overset{\simeq}{\longrightarrow}\cA \otk (H/\mathfrak{a}),\ 
\Theta_{\cA}(a \ot b)=a\, \theta_{\! \cA}(b).
\end{equation*}
The isomorphism \eqref{HGEeq5}, applied to the $H/\a$-Galois extension $\cA|\cB$,
gives a $\cB$-superalgebra isomorphiism
\eq\label{GALeq2a}
\cA\simeq \cB\ot_{\overline{\cB}} \overline{\cA}\otk \wedge_k(\mathfrak{w}_{H/\a}),
\eeq
in which $\cB$ is regarded as a $\overline{\cB}$-superalgebra through an arbitrarily chosen section $\overline{\cB}\to \cB$
of the projection 
$\cB\to \overline{\cB}$. 
In addition, $\overline{\cB}$ is naturally included in $\overline{\cA}$, so as
$\overline{\cB}=\overline{\cA}^{\op{co}(\overline{H/\a})}$; see Eq.\! (3.14) 
in \cite[Theorem 3.15]{MOT}. 
Since $\overline{\cA}$ is an integral domain,
$\overline{\cB}$ is as well. This shows that $\Y$ is integral. 

Let us set
$M:=k(\Y)$. 
As in Example \ref{SUFex1} we see
\begin{equation}\label{GALeq2b}
\begin{split}
&L=\Quot(A)=\Quot(\cA),\quad \L= \Quot(\overline{\cA}), \\
&M=\Quot(\cB),\quad \overline{M}=\Quot(\overline{\cB}).
\end{split}
\end{equation}
In view of \eqref{GALeq2a}
we may regard so as
\begin{equation}\label{GALeq3}
\Quot(\cB)\subset \Quot(\cB)\ot_{\Quot(\overline{\cB})}\Quot(\overline{\cA})\otk \wedge_k(\mathfrak{w}_{H/\a})=\Quot(\cA),
\end{equation}
or in other words, so as
\begin{equation}\label{GALeq3a}
M\subset M\ot_{\overline{M}}\overline{L}\otk \wedge_k(\mathfrak{w}_{H/\a})=L.
\end{equation}
Here one should notice that the equality holds since  
$\overline{\cA}$ and $\Quot(\cB)$ (or $\Quot(\overline{\cA})$ and $\cB$)
are $\overline{\cB}$-flat.
We conclude that $M$ is included in $L$, and $L|M$ is admissible. 

\medskip

\emph{Step 3}.\
To complete the proof we aim to prove: \emph{$M\, (=k(\Y))$ is $D$-stable in $L$, and includes $K$.}

Regard $\cA\otk (H/\a)$ as a superalgebra over $\cA\ot_{\cB}\cA$ through
$\Theta_{\! \cA}$, and localize it by 
the multiplicative subset 
\[
\cS:=\{\, s\ot t \in \cA\ot_{\cB}\cA : s\ \text{and}\ t\ \text{are}\ \cA\text{-regular elements of}\ \cA_0\, \}.
\]
of $\cA\ot_{\cB}\cA$. 
Notice from \eqref{GALeq3} that
\[
\cS^{-1}(\cA\ot_{\cB}\cA)=\Quot(\cA)\ot_{\cB}\Quot(\cA)=L\ot_M L. 
\]
To localize $\cA\otk (H/\a)$, define a multiplicative subset $\cT$ of $L\otk(H/\a)$ by 
\[
\cT:=\{\, \theta_{\! \cA}(t) : t\ \text{is an}\ \cA\text{-regular element of}\ \cA_0\, \}.
\]
By \eqref{GALeq3} the isomorphism $\Theta_{\! \cA}$ turns by the localization into the bottom isomorphism
in the diagram:
\begin{equation}\label{GALeq4}
\begin{xy}
(0,8)   *++{L\ot_KL}  ="1",
(36,8)  *++{T^{-1}(L\otk(H/\mathfrak{a}))} ="2",
(0,-8)  *++{L\ot_ML} ="3",
(36,-8) *++{\mathcal{T}^{-1}(L\otk(H/\mathfrak{a})).}="4",
{"1" \SelectTips{cm}{} \ar @{->} "2"},
{"3" \SelectTips{cm}{} \ar@{->}^{\hspace{-8mm}\simeq} "4"},
{"1" \SelectTips{cm}{} \ar@{->} "3"},
{"2" \SelectTips{cm}{} \ar @{->} "4"}
\end{xy}
\end{equation}
Here and in what follows the unlabeled $L\ot_K L\to L\ot_M L$ indicates the natural 
surjection $a\ot_Kb\mapsto a\ot_Mb$, which is a \tc{($D$-superlinear)} $L$-coring morphism.  
The top morphism
is the composite of the projection $L\ot_KL\to (L\ot_K L)/\cJ$ with the isomorphism \eqref{GALeq0d}. 
Here recall that $\cJ$
is the $D$-coideal of $L\ot_KL$ which corresponds to $\a$. 
The restriction morphism $A =\cO_{\X}(\X) \to \cO_{\X}(\pi^{-1}(U))=\cA$ for the structure sheaf $\cO_{\X}$ of $\X$ is compatible with the coaction by $H/\a$, and
it turns by localization into the identity on $L$.
Therefore, the vertical morphism
on the RHS is induced, and it is indeed an isomorphism. 
The diagram above, which is seen to be commutative, shows 
$\cJ=\op{Ker}(L\ot_K L\twoheadrightarrow L\ot_M L)$. This proves
\eq\label{GALeq4a}
M=\{\, x \in L: x\ot 1\equiv 1\ot x\op{mod} \cJ\ \text{in}\ L\ot_KL\, \},
\eeq
since by Proposition \ref{SUFprop2} (1), $M$ is a direct summand of $L$ in ${}_M\M$.
The result implies that $M$ is $D$-stable in $L$. 

The composite $K\hookrightarrow A \to \cA$ of the inclusion $K=A^{\op{co}H}\hookrightarrow A$ with
the restriction morphism $A \to \cA$ (in $\M^{H/\a}$) maps into $\cB=\cA^{\op{co}(H/\a)}$. 
Therefore, the inclusion
$K\hookrightarrow L=\Quot(\cA)$ maps into $M=\Quot(\cB)$.  
\epf

\begin{theorem}[Galois correspondence, I]\label{GALthm1}\
The assignment\
$\mathsf{F}~\mapsto~k(\X\tilde{/}\mathsf{F})$\ 
which is obtained by the preceding proposition gives a bijection from 
\begin{itemize}
\item[$\bullet$] the set of all closed sub-supergroup schemes of $\G(L|K)$
\end{itemize}
onto
\begin{itemize}
\item[$\bullet$] the set of all intermediate $D$-SUSY fields of $L|K$.
\end{itemize}
\end{theorem}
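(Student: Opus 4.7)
The plan is to construct the inverse of $\mathsf{F} \mapsto k(\X\tilde{/}\mathsf{F})$ explicitly. By Proposition~\ref{GALprop1}, closed sub-supergroup schemes $\mathsf{F} = \op{Sp}_k(H/\mathfrak{a})$ of $\G(L|K)$ correspond bijectively to Hopf super-ideals $\mathfrak{a}$ of $H$, and via that proposition further to $D$-coideals $\cJ$ of $L\ot_K L$. Under this identification, equation~\eqref{GALeq4a} from Step~3 of the proof of Proposition~\ref{GALprop2} characterizes the associated intermediate $D$-SUSY field as $k(\X\tilde{/}\mathsf{F}) = \{x\in L : x\ot 1 \equiv 1\ot x\ \tu{mod}\ \cJ\}$.

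For the inverse direction, given an intermediate $D$-SUSY field $M$ of $L|K$, I would set $\cJ_M := \op{Ker}(L\ot_K L \twoheadrightarrow L\ot_M L)$ and verify that $\cJ_M$ is a $D$-coideal. The $D$-stability is inherited from the $D$-linearity of the projection; the counit condition $\mathbf{e}(\cJ_M)=0$ holds because $\cJ_M$ is generated by elements of the form $am\ot_K b - a \ot_K mb$ (for $m\in M$), which multiply to zero; the coproduct condition follows from the identity
\[
\mathbf{\Delta}(am\ot_K b - a\ot_K mb) = (am\ot 1 - a\ot m)\ot_L (1\ot b) + (a\ot 1)\ot_L (m\ot b - 1\ot mb),
\]
in which each summand has one tensor factor lying in $\cJ_M$. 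The corresponding closed sub-supergroup scheme $\mathsf{F}_M := \op{Sp}_k(H/\mathfrak{a}_M)$ is thereby defined.

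Injectivity of $\mathsf{F}\mapsto k(\X\tilde{/}\mathsf{F})$ is then essentially already contained in Step~3 of the proof of Proposition~\ref{GALprop2}: the commutative diagram~\eqref{GALeq4} identifies the $D$-coideal $\cJ$ associated with $\mathsf{F}$ as precisely $\op{Ker}(L\ot_K L\to L\ot_M L) = \cJ_M$ where $M = k(\X\tilde{/}\mathsf{F})$. Surjectivity amounts to the equality $k(\X\tilde{/}\mathsf{F}_M) = M$, i.e., that $M$ coincides with $\{x\in L : x\ot_M 1 = 1\ot_M x\ \tu{in}\ L\ot_M L\}$. The inclusion $M\subset \{\,\cdot\,\}$ is tautological; the reverse is standard faithfully flat descent provided $L$ is faithfully flat over $M$.

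The main obstacle is therefore to show that $L|M$ is admissible (Definition~\ref{SUFdef1a}), from which freeness (and hence faithful flatness) of $L$ over $M$ will follow by Remark~\ref{SUFrem3} combined with Proposition~\ref{SUFprop2}(2). Admissibility of $L|K$ is guaranteed by Theorem~\ref{DEFthm1}(1); the task is to propagate it through $M$. I would choose compatibly sections of the projections $\K\hookrightarrow K$, $\overline{M}\hookrightarrow M$ and $\L\hookrightarrow L$ (possible by smoothness in $\op{char} R=0$), and then use the structural isomorphism~\eqref{HGEeq5} for $A|K$ (which after localization gives $L\simeq K\ot_{\K}\L\ot_k\wedge_k(\mathfrak{w}_H)$) to produce a compatible decomposition through $M$. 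This should force the injectivity of $\op{gr}(M)\hookrightarrow \op{gr}(L)$, equivalently of $\L\ot_{\overline{M}}I_M/I_M^2 \to I_L/I_L^2$, completing the proof of admissibility and hence of the theorem.
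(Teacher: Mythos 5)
Your overall strategy --- constructing the inverse by sending $M$ to the Hopf super-ideal corresponding, via Proposition \ref{GALprop1}, to the $D$-coideal $\cJ_M=\op{Ker}(L\ot_KL\to L\ot_ML)$, and then matching the two descriptions through the diagram \eqref{GALeq4} --- is exactly the paper's. The verification that $\cJ_M$ is a $D$-coideal and the injectivity argument are both fine. The gap is in the surjectivity step, where you need $M=\{\,x\in L: x\ot_M 1=1\ot_M x\ \text{in}\ L\ot_ML\,\}$ and propose to get it by faithfully flat descent. That requires $L$ to be faithfully flat over $M$, which (by your own route) requires admissibility of $L|M$ --- but admissibility of $L|M$ for an \emph{arbitrary} intermediate $D$-SUSY field $M$ is precisely what the paper only obtains \emph{after} the theorem, as Corollary \ref{GALcor1}(2), by first knowing that $M$ is of the form $k(\X\tilde{/}\F)$ and then invoking Proposition \ref{GALprop2}(2). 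Your sketch for proving it directly (compatible sections plus the decomposition $L\simeq K\ot_{\K}\L\otk\wedge_k(\mathfrak{w}_H)$) does not go through: an arbitrary intermediate $M$ has no a priori compatibility with that decomposition, and Example \ref{SUFex0} shows that extensions of SUSY fields can genuinely fail to be admissible, so the argument as written is circular or at best incomplete.

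The fix is cheaper than what you attempt: faithful flatness is not needed here. By Proposition \ref{SUFprop2}(1) the SUSY field $M$ is self-injective, hence the inclusion $M\hookrightarrow L$ splits in ${}_M\M$, i.e.\ $M$ is a direct summand of $L$ as an $M$-supermodule; writing $L=M\oplus N$ and applying $\op{id}\ot_M\pi$ (with $\pi:L\to N$ the projection along $M$) to $x\ot_M1-1\ot_Mx$ shows at once that the equalizer $\{\,x:x\ot_M1=1\ot_Mx\,\}$ is exactly $M$. This is the same mechanism the paper already uses to justify \eqref{GALeq4a}, and it is recorded in Remark \ref{HGErem1}(2) (``(HG3) \emph{or} self-injectivity''). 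With that substitution your proof closes and coincides with the paper's.
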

\pf
Let $M$ be an intermediate $D$-SUSY field $M$ of $L|K$. We aim to prove: 
\emph{there
uniquely exists a Hopf super-ideal $\mathfrak{a}$ such that $M$ is assigned 
to the closed sub-supergroup scheme $\op{Sp}_k(H/\mathfrak{a})$
of $\G(L|K)$.}
To find such a Hopf super-ideal, let $\a$ be the Hopf super-ideal which corresponds, under the correspondence
of Proposition \ref{GALprop1}, to the
$D$-coideal 
\[
\cJ:=\op{Ker}(L\ot_KL\to L\ot_ML)
\]
of $L\ot_KL$. 
For this $\mathfrak{a}$, set $\mathsf{F}:=\op{Sp}_k(H/\mathfrak{a})$,
$Y:=\X\tilde{/}\mathsf{F}$, 
and argue as in the preceding proof. Then the commutative diagram, which modifies \eqref{GALeq4} so that the top arrow 
is replaced by the isomorphism \eqref{GALeq0d}
shows that the natural surjection $L\ot_KL\to L\ot_{k(\Y)}L$ induces an isomorphism 
\[
(L\ot_KL)/\cJ\overset{\simeq}{\longrightarrow} L\ot_{k(\Y)} L.
\]
(Notice that the $L\ot_ML$ in  \eqref{GALeq4} should now read $L\ot_{k(\Y)}L$.)
This proves $M=k(\Y)$. 
The argument shows uniqueness of desired $\a$, as well.
\epf

\begin{rem}\label{GALrem2}
As is seen from the proof above, 
the inverse of the bijection $\mathsf{F}\mapsto k(\X\tilde{/}\mathsf{F})$ is
given by $M \mapsto \op{Sp}_k(H/\mathfrak{a})$, where $\mathfrak{a}$ is the Hopf super-ideal of $H$ given by
\eq\label{GALeq4b}
\mathfrak{a}=H \cap \op{Ker}(L\ot_K L\to L\ot_ML);
\eeq
cf. \cite[Theorem 2.7]{T}.
Here we naturally regard so as $H \subset A\ot_KA\subset L\ot_KL$ by using the $K$-flatness of
$A$ and $L$; as for $L$, see Remark \ref{SUFrem3}
\end{rem}

\begin{corollary}\label{GALcor1}
Let $M$ be an intermediate $D$-SUSY field of $L|K$.
\begin{itemize}
\item[(1)]
$M$ is $D$-simple.
\item[(2)]
The extensions $L|M$, $M|K$ of the SUSY fields
are both admissible and of finite type.
\end{itemize}
\end{corollary}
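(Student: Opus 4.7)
The plan is to invoke Theorem \ref{GALthm1} to write $M=k(\X\tilde{/}\F)$ for a closed sub-supergroup scheme $\F=\op{Sp}_k(H/\mathfrak{a})$ of $\G(L|K)$, and then to exploit the structural information that Proposition \ref{GALprop2} already provides about $\Y:=\X\tilde{/}\F$, most crucially the admissibility of $L|M$ and the identification $\overline{M}=k(\Y_{\op{ev}})$.

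For (1), I would argue via faithful flatness. Proposition \ref{GALprop2}(2) yields that $L|M$ is admissible, so by Remark \ref{SUFrem3} $L$ is free, hence faithfully flat, as an $M$-supermodule; selecting a homogeneous $M$-free basis containing $1$ gives a decomposition $L=M\oplus N$ in $\M_M$. Let $\mathfrak{m}$ be a non-zero $D$-stable super-ideal of $M$. The Leibniz-type formula in \eqref{DSAeq0c1} shows that the extended super-ideal $L\mathfrak{m}\subset L$ is $D$-stable, and it is non-zero since $\mathfrak{m}\subset L\mathfrak{m}$. The $D$-simplicity of $L$ in (LK5) then forces $L\mathfrak{m}=L$; but $L\mathfrak{m}=\mathfrak{m}\oplus N\mathfrak{m}$ inside $L=M\oplus N$, so projection onto the $M$-summand puts $1\in\mathfrak{m}$. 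Hence $M$ is $D$-simple.

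For (2), the extension $L|M$ is admissible by Proposition \ref{GALprop2}(2), and the finite generation of $\overline{L}|\overline{M}$ as a field extension is immediate from that of $\overline{L}|\overline{K}$. For $M|K$, admissibility follows from Lemma \ref{SUFlem1}: the composite $\op{gr}K\to\op{gr}M\to\op{gr}L$ is the graded-algebra morphism associated with the admissible $L|K$ (Theorem \ref{DEFthm1}(1)), hence injective, so its first factor $\op{gr}K\to\op{gr}M$ is injective as well. The remaining, and most delicate, point is to show that $\overline{M}|\overline{K}$ is finitely generated. My plan is to use descent: $\X$ is of finite type over $\op{Sp}_k(K)$ since $A$ is finitely generated over $K$ (Theorem \ref{DEFthm1}(3)), and the canonical quotient morphism $\pi:\X\to\Y$ is affine, faithfully flat and finitely presented (recalled in Step 2 of the proof of Proposition \ref{GALprop2}), so fppf descent of the property ``of finite type'' forces $\Y$ to be of finite type over $\op{Sp}_k(K)$; passing to the even parts makes $\Y_{\op{ev}}$ of finite type over $\op{Spec}\overline{K}$, and hence its function field $\overline{M}=k(\Y_{\op{ev}})$ is finitely generated over $\overline{K}$. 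If carrying out the fppf descent of finite type in the super context proves awkward, the clean substitute is to appeal to Takeuchi's Galois correspondence \cite[Theorem 2.7]{T} for the PV extension $\overline{L}|\overline{K}$, which identifies $\overline{M}$ as the intermediate $\D$-field attached to the closed sub-group scheme $\F_{\op{ev}}$ and directly ensures it is finitely generated over $\overline{K}$.
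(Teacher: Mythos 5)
Your part (1) and the admissibility claims in part (2) follow essentially the paper's own route. The paper likewise deduces (1) from the admissibility of $L|M$ (hence the $M$-faithful flatness of $L$, via Remark \ref{SUFrem3}) together with the $D$-simplicity of $L$; your explicit argument with the decomposition $L=M\oplus N$ and the extended super-ideal $L\mathfrak{m}$ is a correct way of filling in the one line the paper leaves implicit. Likewise, factoring $\op{gr}K\to\op{gr}L$ through $\op{gr}M$ is exactly the paper's argument for the admissibility of $M|K$, and the finite generation of $\L|\overline{M}$ is indeed immediate.

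The divergence, and the only real weakness, is in the finite generation of $\overline{M}|\K$. The paper disposes of this in one line: $\overline{M}$ is an intermediate field of the finitely generated field extension $\L|\K$, and any intermediate field of a finitely generated field extension is itself finitely generated over the base field (the paper cites this as Nagata's Theorem). Your primary route, fppf descent of the property ``of finite type'' along $\pi:\X\to\Y$, would require a descent lemma of the shape ``$R\to B\to A$ with $B\to A$ faithfully flat and $R\to A$ of finite type implies $R\to B$ of finite type,'' which you neither state nor verify, and which you would then still have to transport to the super context; this is much heavier machinery than is needed. Your fallback is not a safe substitute either: Takeuchi's \cite[Theorem 2.7]{T} establishes the bijection between Hopf ideals and intermediate $\D$-fields, but it does not by itself assert that those intermediate fields are finitely generated over $\K$ --- Takeuchi's PV extensions are not assumed finitely generated in his Definition 2.3, so finite generation is not built into the correspondence. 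The missing ingredient is precisely the classical field-theoretic fact above; once you invoke it, your proof closes.
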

\pf We prove first Part 2, and then Part 1.

(2)\
\emph{``Admissiblity''.}\
Since by Theorem \ref{GALthm1}, $M$ is of the form $k(\X\tilde{/}\F)$, it follows by
Proposition \ref{GALprop2} (2) that $L|M$ is admissible. 
One then sees that $M|K$ is admissible, or in other words, the associated graded-algebra morphism 
$\op{gr} K \to \op{gr} M$ is injective, since $\op{gr}  K\to \op{gr} L$ is so by Theorem \ref{DEFthm1} (1).

\emph{``Finite type''.}\
\tc{Notice that $\overline{M}$ is an intermediate field of the finitely generated field extension
$\L|\K$. One then sees that $\L|\overline{M}$ is obviously finitely generated, and $\overline{M}|\K$ is as well by Nagata's Theorem.}

(1)\
By the admissibility of $L|M$ just proven, $L$ is $M$-faithfully flat; see Remark \ref{SUFrem3}. 
This,
together with the $D$-simplicity of $L$, proves Part 1.
\epf

\begin{prop}\label{GALprop3}
Let $\mathsf{F}=\op{Sp}_k(H/\mathfrak{a})$ be a closed sub-supergroup scheme of $\mathsf{G}(L|K)$, which is given
by a Hopf super-ideal $\mathfrak{a}$ of $H$, as presented. Let $M=k(\X\tilde{/}\mathsf{F})$ be the
corresponding intermediate SUSY field of $L|K$. Then $(L|M,AM,H/\mathfrak{a})$ is a SUSY PV extension
with the Galois supergroup $\mathsf{F}$. 
\end{prop}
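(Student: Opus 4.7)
The plan is to verify that $(L|M, A', H/\a)$, with $A' := AM$, fulfills the conditions of Definition \ref{DEFdef1} and then to identify its Galois supergroup as $\F$. Conditions (LK1--5) for the extension $L|M$ follow from Corollary \ref{GALcor1} combined with $L$ being itself a $D$-simple $D$-SUSY field, while (LK6) is the chain $k = K^D \subset M^D \subset L^D = k$. The subalgebra $A' = AM \subset L$ is evidently $D$-stable (as both $A$ and $M$ are), so what remains is to check the principal-algebra axioms (A1--2) and to compute $(A' \otimes_M A')^D$.

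Condition (A2), namely $L = \Quot(A')$, follows from a sandwich: any $A$-regular element of $A_0$ is invertible in $L = \Quot(A)$, hence $A'$-regular inside $L$, so that Lemma \ref{SUFlem2} applied to $A' \subset L$ forces $\Quot(A') = L$. The core of the proof is then to establish, as an algebra isomorphism in ${}_D\M$ with $D$ acting trivially on $H/\a$, a Galois isomorphism
\[
\Theta_{A'} : A' \otimes_M A' \xrightarrow{\ \simeq\ } A' \otimes_k (H/\a).
\]
My strategy is to start from the isomorphism \eqref{GALeq0c}, $(A \otimes_K A)/\cI \simeq A \otimes_k (H/\a)$, and push it through an $M$-localization. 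Concretely, I would exploit the affine Hopf-Galois extension $\cA|\cB$ constructed in Step 2 of the proof of Proposition \ref{GALprop2}, for which $\cA \otimes_{\cB} \cA \simeq \cA \otimes_k (H/\a)$, together with the identifications $\Quot(\cA) = L$ and $\Quot(\cB) = M$ from \eqref{GALeq2b}; inside $L \otimes_M L \simeq \cT^{-1}(L \otimes_k (H/\a))$ from \eqref{GALeq4}, one then identifies $A' \otimes_M A'$ with $A' \otimes_k (H/\a)$ via a chase through the inclusions $A \subset \cA$ and $\cB \subset M$.

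Once $\Theta_{A'}$ is in hand, condition (A1) and the identification of the Hopf superalgebra fall out together: $A' \subset L$ forces $(A')^D \subset L^D = k$, so $(A')^D = k$, and since $D$ acts trivially on $H/\a$ (because $H \subset (A \otimes_K A)^D$), taking $D$-invariants of $\Theta_{A'}$ gives $(A' \otimes_M A')^D \simeq (A')^D \otimes_k (H/\a) = H/\a$; this $k$-subalgebra manifestly generates $A' \otimes_M A' \simeq A' \otimes_k (H/\a)$ over $A'$, verifying (A1). Therefore $(L|M, A', H/\a)$ is a SUSY PV extension whose Galois supergroup is $\op{Sp}_k(H/\a) = \F$. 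The most delicate step is the construction of $\Theta_{A'}$: the principal subtlety is that $A'$ is not a priori $M$-flat, so neither the injectivity of the natural map $A' \otimes_M A' \to L \otimes_M L$ nor an equality between $A'$ and $\cA \cdot M$ as subrings of $L$ is obvious. I expect the resolution is to reduce to a purely module-theoretic calculation using the finite generation of $H/\a$ (inherited from that of $H$, Theorem \ref{DEFthm1}~(2)) and the pre-canonical decomposition \eqref{HGEeq5} supplied by Theorem \ref{HGEthm1}.
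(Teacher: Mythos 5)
Your reduction of (A2) to Lemma \ref{SUFlem2} is correct and matches the paper, and your identification of the target Hopf superalgebra is right. But the centre of your argument --- the direct construction of an isomorphism $\Theta_{A'}:AM\ot_M AM\overset{\simeq}{\longrightarrow}AM\otk(H/\a)$ by localizing \eqref{GALeq0c} and chasing through $A\subset\cA$ and $\cB\subset M$ --- is exactly the step you flag as delicate, and it is not actually carried out. The obstructions you name are real: nothing at this stage gives you $M$-flatness of $AM$, injectivity of $AM\ot_MAM\to L\ot_ML$, or control of $AM$ against $\cA$ inside $L$, and the suggested appeal to the finite generation of $H/\a$ and to \eqref{HGEeq5} does not supply any of these. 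Worse, the logical order is inverted: in this paper the Galois isomorphism for $AM|M$ is a \emph{consequence} of the general machinery of Section \ref{secSPV} (Theorem \ref{INDthm1} through Proposition \ref{INDprop3}), which only becomes available \emph{after} (A1--2) have been verified for the pair $(L|M,AM)$; so deriving (A1) from $\Theta_{A'}$ puts the conclusion before the hypothesis it rests on.

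The paper's proof shows that no such construction is needed, because (A1) is a purely surjective statement. The natural map $A\ot_KA\to AM\ot_MAM$ is onto, it carries $A$ into $AM$ and $H=(A\ot_KA)^D$ into $(AM\ot_MAM)^D$, and $A\cdot H=A\ot_KA$ is already known; hence $AM\cdot(AM\ot_MAM)^D=AM\ot_MAM$, with no flatness or injectivity input. Conditions (LK1--6) for $L|M$ hold by Corollary \ref{GALcor1}, so Section \ref{secSPV} now yields for free that $(AM\ot_MAM)^D$ is a Hopf superalgebra and $AM|M$ is Galois over it. The only remaining point is the identification of this Hopf superalgebra with $H/\a$, which the paper obtains by base-extending $\mu:A\ot_KA\simeq A\otk H$ along $A\to L$, composing with $L\ot_KA\twoheadrightarrow L\ot_MAM$, and using $\a=H\cap\op{Ker}(L\ot_KL\to L\ot_ML)$ from \eqref{GALeq4b} to get $L\otk(H/\a)\simeq L\ot_MAM$, which restricts to $H/\a\simeq(AM\ot_MAM)^D$. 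If you want to keep your outline, you should reorganize it in this order: surjectivity first, general theory second, identification of the Hopf superalgebra last.
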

\pf
Recall Definition \ref{DEFdef1}, the definition of SUSY PV extensions. 
First, notice that $L|M$ satisfies (LK4--6). 
Next, we wish to prove that
$L|M$ is a SUSY PV extension with the principal $D$-superalgebra $AM$. 
One sees from Lemma \ref{SUFlem2}
that $\Quot(A)\subset \Quot(AM) \subset L=\Quot(A)$, whence $\Quot(AM)=L$. 
We have the natural commutative diagram 
\[
\begin{xy}
(0,0)   *++{A\ot_k H}  ="1",
(43,0)  *++{A\ot_KA} ="2",
(0,-16)  *++{AM\otk(AM\ot_M AM)^D} ="3",
(43,-16) *++{AM\ot_MAM,}="4",
{"1" \SelectTips{cm}{} \ar @{->}_{\mu}^{\simeq} "2"},
{"3" \SelectTips{cm}{} \ar @{->} "4"},
{"1" \SelectTips{cm}{} \ar@{->} "3"},
{"2" \SelectTips{cm}{} \ar @{->>} "4"}
\end{xy}
\]
where $H$ maps into $(AM\ot_MAM)^D$ through the vertical morphism on the LHS.
The surjectivity of the vertical natural morphism on the RHS ensures the condition (A1) for $AM$, and proves
the desired result. We remark that $AM$ is $M$-flat, in particular. 

Finally, we wish to show that the Hopf superalgebra of $L|M$ is $H/\mathfrak{a}$. 
As is seen from \eqref{GALeq4b}, 
the base extension $L\ot_A$ of $\mu: A\ot_K A \overset{\simeq}{\longrightarrow} A \otk H$
\[
L\otk H \overset{\simeq}{\longrightarrow}L\ot_K A \twoheadrightarrow L\ot_M AM,
\]
thus composed with the last natural surjection,
induces an isomorphism $L\otk (H/\mathfrak{a})\simeq L\ot_M AM$, which restricts to 
\[
H/\mathfrak{a}\simeq (L\ot_M AM)^D \cap AM\ot_M AM=(AM\ot_M AM)^D.
\]
This proves the desired result. 
\epf

\begin{theorem}[Galois correspondence, II]\label{GALthm2}
The bijection obtained by Theorem \ref{GALthm1} restricts to a bijection from 
\begin{itemize}
\item[$\bullet$] the set of all closed normal sub-supergroup schemes of $\G(L|K)$
\end{itemize}
onto
\begin{itemize}
\item[$\bullet$] the set of those intermediate $D$-SUSY fields $M$ of $L|K$ such that $M|K$ is a SUSY PV extension.
\end{itemize}
\end{theorem}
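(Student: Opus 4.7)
The plan is to prove the restricted bijection in both directions via the Hopf-algebraic characterization of normality: for a Hopf super-ideal $\a \subset H$ with corresponding closed sub-supergroup $\F := \op{Sp}_k(H/\a)$, one has $\F$ normal in $\G := \G(L|K)$ if and only if the left coideal sub-superalgebra $H' := H^{\op{co}(H/\a)}$ is itself a Hopf sub-superalgebra of $H$, in which case $\op{Sp}_k(H')$ represents $\G/\F$. This super-version of a classical result will be the main bridge between the group-scheme and the field-theoretic sides.

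For the direction ``$M|K$ SUSY PV $\Rightarrow$ $\F$ normal'', let $(A_M, H_M)$ be the principal $D$-superalgebra and Hopf superalgebra of $M|K$. The strategy is to construct an injective Hopf superalgebra morphism $\iota : H_M \hookrightarrow H$ whose image coincides with $H^{\op{co}(H/\a)}$. Since the image of a Hopf morphism is automatically a Hopf sub-superalgebra, this forces $H^{\op{co}(H/\a)}$ to be a Hopf sub-superalgebra of $H$, hence $\F$ normal. To produce $\iota$, view $A_M \subset M \subset L$ and $A \subset L$; the inclusion $A_M \ot_K A_M \hookrightarrow L \ot_K L$ is $D$-equivariant, and the $H$-Galois isomorphism $\Theta_A$ (suitably localized, using Proposition \ref{EXDprop1}) relates the $D$-invariants on both sides, yielding by restriction the desired injective Hopf morphism $H_M = (A_M \ot_K A_M)^D \to H = (A \ot_K A)^D$. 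The identification of $\iota(H_M)$ with $H^{\op{co}(H/\a)}$ will come from the Galois correspondence Theorem \ref{GALthm1} together with the characterization \eqref{GALeq4a} of $M$ in terms of the coideal $\cJ$.

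For the reverse direction ``$\F$ normal $\Rightarrow$ $M|K$ SUSY PV'', assume $H' = H^{\op{co}(H/\a)}$ is a Hopf sub-superalgebra of $H$. The plan is to construct the principal $D$-superalgebra $A_M \subset M$ from the local picture in the proof of Proposition \ref{GALprop2}: for an affine open $U \subset \Y := \X\tilde{/}\F$ one has $\cB := \cO_\Y(U) = \cA^{\op{co}(H/\a)}$ with $\cA := \cO_\X(\pi^{-1}(U))$, and normality of $\F$ equips $\cB|K$ with an $H'$-Galois structure coming from the $\G/\F$-torsor structure on $\Y \to \op{Sp}_k(K)$. Passing to fractions gives $\Quot(\cB) = M$. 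Taking $A_M$ to be this $\cB$ for an appropriate $U$ (or a coherent gluing if $\Y$ is not affine), one then verifies conditions (A1)--(A2) of Definition \ref{DEFdef1} and identifies $(A_M \ot_K A_M)^D = H'$, concluding that $M|K$ is SUSY PV with Galois supergroup $\G/\F$.

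The main obstacle will be the sufficient direction: globalizing the affine-local construction into a single principal $D$-superalgebra $A_M$ satisfying the global version of (A1), namely $A_M \cdot (A_M \ot_K A_M)^D = A_M \ot_K A_M$. When $\Y$ is non-affine, the local pieces $\cB$ obtained from different affine opens $U$ must be reconciled inside $M$, and (A1) must be verified not merely locally but globally. The classical argument of Takeuchi \cite{T} handles the purely field-theoretic (all-affine) case; the superscheme-theoretic setup here requires additional descent-theoretic work, which will likely be handled via the admissibility and smoothness established in Proposition \ref{GALprop2} together with faithfully flat descent of the $H'$-Galois structure along an affine cover of $\Y$.
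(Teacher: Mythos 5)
Your central bridge lemma is false in the direction you need it. You assert that $\F=\op{Sp}_k(H/\a)$ is normal in $\G$ \emph{if and only if} $H':=H^{\op{co}(H/\a)}$ is a Hopf sub-superalgebra of $H$, and your first direction rests entirely on the implication ``$H'$ is a Hopf sub-superalgebra $\Rightarrow\F$ is normal''. That implication fails already for ordinary affine algebraic groups: take $\G=\mathsf{SL}_2$ and $\F$ a Borel subgroup; then $\G\tilde{/}\F\simeq\mathbb{P}^1$ has only constant global functions, so $H^{\op{co}(H/\a)}=k$ is trivially a Hopf subalgebra, yet $\F$ is not normal. Hence even after producing the injective Hopf superalgebra morphism $H_M\hookrightarrow H$ with image $H'$, you cannot conclude normality of $\F$. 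The paper circumvents this as follows: having embedded $J:=H_M$ as a Hopf sub-superalgebra of $H$ (via $B\subset A$ and flatness), it sets $\a':=J^+H$, which is \emph{automatically} a normal Hopf super-ideal, so $\F':=\op{Sp}_k(H/\a')$ is normal with $\G(L|K)\tilde{/}\F'=\op{Sp}_k(J)$; it then proves $\F'=\F$ by showing that $\F'$ corresponds to the same intermediate field $M$ under the already-established injective bijection of Theorem \ref{GALthm1}. That identification (via $B=A^{\op{co}(H/\a')}$, the $H/\a'$-Galois property of $A|B$, and the localized isomorphism feeding into \eqref{GALeq4a}) — not a general normality criterion — is what closes the argument, and it is absent from your sketch.

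In the converse direction your globalization worry is a red herring, but it masks the step that is actually missing. There is no need to glue local pieces over $\Y$: the candidate principal algebra is the single global object $B:=A^{\op{co}(H/\a)}$ inside the affine $\X=\op{Sp}_k(A)$, and $M=\Quot(B)$ is then obtained from \eqref{GALeq4a} via a localization of the Galois isomorphism for $A|B$ (the identification \eqref{GALeq7}--\eqref{GALeq8} in the paper). What genuinely requires normality — and what your sketch does not supply — is the proof that $A|B$ is an $H/\a$-Galois extension, from which one deduces that $\Theta_A$ restricts to $B\ot_KB\simeq B\otk J$ and hence that $(B\ot_KB)^D=J$ and $B\cdot J=B\ot_KB$, i.e.\ condition (A1) for $B$. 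The paper obtains this by an injectivity argument: $A$ is injective in $\M^H$ by \cite[Theorem 7.1]{MZ1}, normality makes $H$ injective in $\M^{H/\a}$ by \cite[Theorem 5.9 (2)]{M}, hence $A$ is injective in $\M^{H/\a}$ and the cited theorem applies again. Your appeal to ``the $\G/\F$-torsor structure on $\Y$'' presupposes that the quotient of a super-torsor by a normal closed sub-supergroup is a torsor under the quotient supergroup, which in this setting is essentially the assertion to be proved rather than an available tool.
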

\pf
Suppose $\mathsf{F}=\op{Sp}_k(H/\mathfrak{a})\mapsto M$ under the bijection, or namely, $M=k(\X\tilde{/}\mathsf{F})$.
We wish to prove that $\F$ is normal if and only if $M|K$ is SUSY PV.

\emph{``Only if''}.\quad 
Assume that $\F$ is normal in $\G(L|K)$. Regard the algebras $H$ and $A$ in $\M^H$ as
algebras in $\M^{H/\a}$ with respect to the $(H/\a)$-coaction induced by the projection $H \to H/\a$, 
and set 
\[
J:=H^{\op{co}(H/\a)},\quad B:=A^{\op{co}(H/\a)}.
\]
By \cite[Lemma 5.8]{M}, $J$ is 
a Hopf sub-superalgebra of $H$. Moreover, $M \mapsto M^{\op{co}(H/\a)}$ gives rise to 
a weak monoidal functor
\[
(\ )^{\op{co}(H/\a)}: \M^H\to \M^J.
\]
Therefore, $B$ is a $K$-algebra in $\M^J$, \tc{which is obviously $D$-stable.}
Our aim is to show that 
$(M|K, B, J)$ is SUSY PV. By \cite[Theorem 7.1]{MZ1}, $A$ is injective in $\M^H$. 
Since $\F$ is normal, $H$ is injective in $\M^{H/\a}$ by \cite[Theorem 5.9 (2)]{M}, whence $A$ is injective in $\M^{H/\a}$. 
Again by \cite[Theorem 7.1]{MZ1}, 
$A|B$ is an $H/\a$-Galois extension, with which associated is now an algebra isomorphism in $\M^{H/\a}$,
\eq\label{GALeq5}
A\ot_B A\overset{\simeq}{\longrightarrow} A\otk (H/\a). 
\eeq
Identifying so as
\begin{equation}\label{GALeq6}
A=K\ot_{\K}\overline{A}\, \square_{\overline{H}}H=K\ot_{\K}\overline{A}\ot_k\wedge_k(\mathfrak{w}_H)
\end{equation}
as in \eqref{HGEeq4}--\eqref{HGEeq5}, we have
\begin{equation}\label{GALeq7}
B=K\ot_{\K}\overline{A}\, \square_{\overline{H}}J=K\ot_{\K}\overline{A}^{\op{co}(\overline{H/\a})}\ot_k\wedge_k(\mathfrak{w}_J). 
\end{equation}
Here, the last equality holds by the following two reasons:
(i)~an isomorphism such $H \simeq \overline{H}\otk \wedge_k(\mathfrak{w}_H)$ as in \eqref{HGEeq2} and an analogous one, $J\simeq \overline{J}\ot_k\wedge_k(\mathfrak{w}_J)$,
can be chosen so that they are compatible with the inclusions $J\hookrightarrow H$, $\overline{J}\hookrightarrow \overline{H}$
and the natural morphism $\wedge_k(\mathfrak{w}_J)\to \wedge_k(\mathfrak{w}_H)$ (see \cite[Remark 4.8]{M}), and 
(ii)~one has the isomorphism 
\[
\overline{A}^{\op{co}(\overline{H/\a})}\simeq \overline{A}\, \square_{\overline{H}}\overline{J}
\]
as a restriction of the natural isomorphism $\overline{A}\simeq \overline{A}\, \square_{\overline{H}}\overline{H}$.
We see from \eqref{GALeq6} and \eqref{GALeq7} the following: 
(I)~$\Quot(B)$ is an intermediate SUSY field of $(\Quot(A)=)\, L|K$, \tc{which 
$D$-stable by Proposition \ref{EXDprop1} (1), and is $D$-simple by Corollary \ref{GALcor1} (1)}; 
(II)~the isomorphism \eqref{GALeq5} induces, by localization, an isomorphism 
\eq\label{GALeq7a}
L\ot_{\Quot(B)}L\simeq T^{-1}(L\otk (H/\a)),
\eeq
where $T$ is defined by \eqref{GALeq1}. 
It follows from \eqref{GALeq4a} that 
\eq\label{GALeq8}
M=\Quot(B).
\eeq

The isomorphism $\Theta_{\! A} : A\ot_KA\overset{\simeq}{\longrightarrow} A\otk H$ such as in \eqref{HGEeq3}, 
with $(\ )^{\op{co}(H/\a)}$ applied, restricts to $A\ot_KB\simeq A\otk J$,
which in turn restricts to $B\ot_KB\simeq B\otk J$, as is seen by using the $B$-faithful flatness of $A$.  
It follows that $J=(B\ot_KB)^D$ and $B\cdot J=B\ot_KB$. 

One sees easily $M^D=k$, and $M|K$ is of finite type by Corollary \ref{GALcor1} (2).
Combining all together, we conclude that $(M|K,B,J)$ is SUSY PV.

\emph{``If''}.\quad Suppose that $(M|K,B,J)$ is SUSY PV. 
Since one sees that $AB$ satisfies (A1--2), the uniquess of $A$ shows $B\subset A$; see the poof of 
Proposition \ref{GALprop1} (1).
Since $A$ and $B$ are $K$-flat, we see $B\ot_KB\subset A\ot_KA$, and that $J$ is a Hopf sub-superalgebras of $H$. Let $\a':=J^+H$,
where $J^+=\op{Ker}(\varepsilon :J\to k)$ denotes the augmentation super-ideal of $J$. Set $\F':=\op{Sp}_k(H/\a')$. Then $\F'$
is a normal closed sub-supergroup scheme of $\G(L|K)$ such that $\G(L|K)\tilde{/}\F'=\op{Sp}_k(J)$. 

Our aim is to prove $\F'=\F$, or equivalently, $(\Quot(B)=)\, M=k(\X\tilde{/}\F')$. 
We have $J=H^{\op{co}(H/\a')}$. Compare $\Theta_A: A\ot_K A \overset{\simeq}{\longrightarrow} A \otk H$ with the base extension $A\ot_B$ of
$\Theta_B: B\ot_K B \overset{\simeq}{\longrightarrow} B \otk J$. Then one sees that $A\ot_KB=(A\ot_K A)^{\op{co}(H/\a')}$; this implies
$B=A^{\op{co}(H/\a')}$ since $A$ is $K$-faithfully flat. 
The same argument as in the \tc{first} paragraph of the ``only if" part above shows that $A|B$ is $H/\a'$-Galois. 
Associated is an isomorphism $A\ot_BA\overset{\simeq}{\longrightarrow} A\otk (H/\a')$ analogous to \eqref{GALeq5}, which
induces an analogous one to \eqref{GALeq7a}. We conclude
$M=k(\X\tilde{/}\F')$, just as we did \eqref{GALeq8} . 
\epf


\section{Characterizations of SUSY PV extensions}\label{secCHA} 

The title above refers to the main result of this section, Theorem \ref{CHAthm1}, 
on which and after we assume that $D$ satisfies (D1--2).
But before that, $D$ may be a super-cocommutative Hopf superalgebra in general.

\subsection{The bozonized Hopf algebra $\Db$}\label{BSH}
Present $\mathbb{Z}/(2)$ as a multiplicative group so that $\mathbb{Z}/(2)=\{ \sigma^i : i =0,1\}$ 
with the generator $\sigma$. We let $\Db$ denote the bozonization of $D$; it is alternatively denoted by $\widehat{D}$
in \cite{M}. 
Recall that $\Db$ is an ordinary 
Hopf algebra which is not cocommutative unless $D$ is purely even; it is, as an algebra, 
the semi-direct product $\Db =D\rtimes \mathbb{Z}/(2)$ with respect to the action
\[
\sigma \triangleright d=(-1)^{|d|} d, \quad d \in D,
\]
and contains $\sigma$ as a grouplike element. The counit $\eb: \Db \to R$ extends the one
$\varepsilon$ of $D$ so that $\eb (d)=\varepsilon(d)$ for $d \in D$, while 
the coproduct $\Delb : \Db \to \Db \ot \Db$ is given by
\[
\Delb (d)= d_{(1)} \sigma^{|d_{(2)}|} \ot d_{(2)},\quad d\in D.
\]
Notice $\eb(\sigma)=1$ and $\Delb(\sigma)=\sigma \ot \sigma$, since $\sigma$ is grouplike.
To distinguish the $\Delb$ above from
the coproduct $\Delta(d)=d_{(1)}\ot d_{(2)}$ of $D$, let us write so as
\eq\label{BSHeq1}
\Delb (d)=d_{[1]}\ot d_{[2]},\quad d \in \Db.
\eeq
Notice that $D$ is a left coideal of $\Db$, that is, 
\eq\label{BSHeq2}
\Delb (D) \subset \Db \ot D.
\eeq
The antipode $\Sb : \Db \to \Db$ is given by
\[
\Sb(d)=\sigma^{|d|}\mathcal{S}(d)\, (=(-1)^{|d|}\mathcal{S}(d)\sigma^{|d|}),\quad d \in D,
\]
where $\mathcal{S}$ is the antipode of $D$. Notice $\Sb(\sigma)=\sigma$. 
It is easy to see that $\Sb$ is a bijection, and its inverse, which we denote by
$\Sbi : \Db \to \Db$, is given by
\[
\Sbi (d)=\mathcal{S}(d)\sigma^{|d|},\quad d \in D.
\]
Given a super-vector space $V$, the $D$-supermodule structures on $V$ are in one-to-one
correspondence with 
\tc{the left $\Db$-module structures on  $V$ such that the action by $\sigma$ affords the given party of 
$V$ in the sense that} 
\begin{equation*}
\sigma \triangleright v=(-1)^{|v|} v,\quad v \in V.
\end{equation*}
\tc{To each $D$-supermodule structure, corresponding is the unique extension satisfying the formula above.}
We thus have the identification
\eq\label{BSHeq3}
{}_D\M={}_{\Db}\mathtt{Mod}, 
\eeq
where ${}_{\Db}\mathtt{Mod}$ denotes the category of left $\Db$-modules. In fact, this is an identification 
of the monoidal categories; they both have the tensor product $\ot\, (=\ot_R)$, and the unit object $R$
which is regarded as a trivial (super-) module through the counit. 

Let $K$ be a $D$-superalgebra. Since 
this may be regarded as a \emph{$\Db$-algebra}, that is, an algebra in ${}_{\Db}\mathtt{Mod}$, we can construct
the algebra $K \# \Db$ of smash product \cite[p.155]{Sw}, which is not necessarily commutative. It includes
\[
K\# D:=K\ot D\, (\subset K\ot \Db=K\# \Db)
\]
as a subalgebra. This is naturally as a (not necessarily super-commutative) superalgebra whose bosonization
$(K\# D)^{\mathrm{b}}:=(K\# D)\rtimes \mathbb{Z}/(2)$ coincides with $K\# \Db$. Therefore, the category ${}_{K\# D}\M$ 
of left $K\# D$-supermodules is identified with the category ${}_{K\# \Db}\mathtt{Mod}$ of left $K\# \Db$-modules.

Notice that the identification \eqref{SASeq00} of symmetric monoidal categories is generalized to 
\begin{equation*}\label{BSHeq4}
({}_K({}_D\M), \ot_K, K)= (({}_D\M)_K, \ot_K, K).
\end{equation*}
These are further identified with ${}_{K\# D}\M$ and with ${}_{K\# \Db}\mathtt{Mod}$, as categories. 

Suppose that $V$ is an object in ${}_K({}_D\M)\, (=({}_D\M)_K)$, and 
$L|K$ is an extension of $D$-superalgebras.
Let
\begin{equation}\label{BSHeq5}
\op{Hom}_{K-}(V,L) \quad \tu{(resp.,} \op{Hom}_{-K}(V,L)\tu{)}
\end{equation}
denote the set which consists of all left (resp., right, with $V$ supposed to be in
$({}_D\M)_K$) $K$-linear morphisms $V \to L$ that may not preserve the parity.
In fact, this is a right (resp., left) $L$-supermodule; the parity is inherited from 
$\op{Hom}(V, L)$ (see \eqref{SASeq0}), and $L$ acts so that
\[
(f a)(v):=f(v)a\quad (\tu{resp.,}~~(a f)(v):=af(v)),
\]
where $a\in L$, $f$ is in the set \eqref{BSHeq5}, and $v\in V$. 

\begin{prop}\label{BSHprop1}
We have the following.
\begin{itemize}
\item[(1)] $\op{Hom}_{K-}(V, L)$ turns into an object in $({}_D\M)_L$ with respect to the $D$-action
defined by
\eq\label{BSHeq5a0}
df(v):= d_{[2]}f(\Sbi (d_{[1]})v),
\eeq
where $d \in D$, $f \in \op{Hom}_{K-}(V, L)$ and $v \in V$. 
\item[(2)] $\op{Hom}_{-K}(V,L)$ turns into an object in $({}_D\M)_L$ with respect to the $D$-action
defined by
\eq\label{BSHeq5a}
df(v):= d_{[1]}f(\Sb (d_{[2]})v),
\eeq
where $d \in D$, $f \in \op{Hom}_{-K}(V, L)$ and $v \in V$. 
\item[(3)]
Given a homogeneous, left (resp., right) $K$-linear morphism $f : V \to L$, 
\[
f\circ \sigma^{|f|}: V \to L,\quad v \mapsto f(\sigma^{|f|}v)=(-1)^{|f||v|}f(v)
\]
is a right (resp., left) $K$-linear morphism. Moreover, $f \mapsto f\circ \sigma^{|f|}$ gives 
$R$-superlinear isomorphisms
\[
\op{Hom}_{K-}(V,L)\rightleftarrows\op{Hom}_{-K}(V,L),
\]
which are mutual inverses. 
\end{itemize}
\end{prop}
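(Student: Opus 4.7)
The cleanest approach is to exploit the bosonization and the category identification ${}_D\M = {}_{\Db}\mathtt{Mod}$ from \eqref{BSHeq3}, under which the proposition reduces to standard Hopf-algebraic facts about internal Homs for an ordinary Hopf algebra. Specifically, the formula in (1) is the standard left $H$-action
\[
(hf)(v) = h_{(2)} f(S^{-1}(h_{(1)}) v)
\]
for $H = \Db$, and that in (2) is the companion formula $(hf)(v) = h_{(1)} f(S(h_{(2)}) v)$. Both are known to make $\op{Hom}_K(V, L)$ into a left $H$-module compatibly with the $L$-action, so much of the super aspect disappears and the real work is bialgebra bookkeeping.

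For part (1) I would verify three things in sequence: (i) $df$ is still left $K$-linear when $f$ is; (ii) the $D$-module axioms $(dd')f = d(d'f)$ and $1f = f$ hold; and (iii) the $D$-action is compatible with the right $L$-multiplication $(fa)(v) = f(v) a$. Step (i) rests on the three-fold Sweedler identity
\[
\sum d_{[1]} \otimes d_{[3]} \Sbi(d_{[2]}) \otimes d_{[4]} = \sum d_{[1]} \otimes 1 \otimes d_{[2]},
\]
coming from $a_{(2)} S^{-1}(a_{(1)}) = \varepsilon(a)$, combined with the $\Db$-module-algebra property $d(xv) = d_{[1]}(x) \cdot d_{[2]}(v)$. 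Steps (ii) and (iii) follow formally from coassociativity of $\Delb$ and anti-multiplicativity of $\Sbi$. Part (2) is proved by the entirely symmetric argument, with the companion identity $\sum d_{[1]} \otimes d_{[2]} \Sb(d_{[3]}) \otimes d_{[4]} = \sum d_{[1]} \otimes 1 \otimes d_{[2]}$ (coming from $a_{(1)} S(a_{(2)}) = \varepsilon(a)$) playing the analogous role. Super-commutativity of $L$ then allows both Hom spaces to be viewed as objects of $({}_D\M)_L$ through the standard left--right sign conversion.

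Part (3) reduces to direct sign bookkeeping. For left $K$-linear homogeneous $f$, using $\sigma v = (-1)^{|v|}v$, $\sigma x = (-1)^{|x|}x$ and $vx = (-1)^{|v||x|}xv$, one computes
\[
(f \circ \sigma^{|f|})(vx) = (-1)^{|v||x| + |f|(|x|+|v|)}\, x f(v) = (f \circ \sigma^{|f|})(v)\, x
\]
after applying super-commutativity of $L$; so the twist lands in $\op{Hom}_{-K}(V,L)$, and since $\sigma^2 = 1$ the two twists are mutually inverse. The main obstacle I expect is the remaining compatibility: that these isomorphisms intertwine the $D$-actions of (1) and (2). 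The key inputs here are the relations $\Sbi(d) = \cS(d) \sigma^{|d|}$ and $\Sb(d) = \sigma^{|d|} \cS(d)$ between the antipodes of $\Db$ and $D$, together with the twisted coproduct $\Delb(d) = d_{(1)} \sigma^{|d_{(2)}|} \otimes d_{(2)}$. Conjugating the formula of (1) by $\sigma^{|f|}$ converts $\Sbi$ into $\Sb$, swaps the Sweedler positions correctly, and produces exactly the signs needed to match the formula of (2) once the $\sigma^{|d_{(2)}|}$'s are absorbed. Tracking this interaction is the one place where real attention to signs is required; everything else is routine.
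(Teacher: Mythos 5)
Your proposal is correct and follows essentially the same route as the paper, which likewise treats the statement as a routine verification hinging on the identities $d_{[2]}\Sbi(d_{[1]})=\varepsilon(d)1=d_{[1]}\Sb(d_{[2]})$ together with the anti-(co)multiplicativity of $\Sb$ and $\Sbi$ (the paper's \eqref{BSHeq6}--\eqref{BSHeq6a}); your three-fold Sweedler identities are precisely these relations applied inside the iterated coproduct, placed correctly for the left- and right-linearity checks. The only caveat is that part (3) as stated claims merely $R$-superlinear mutually inverse bijections, so the $D$-equivariance of $f\mapsto f\circ\sigma^{|f|}$ that you single out as the main obstacle is strictly extra here, though it is what Section \ref{DUA} later relies on, so verifying it is worthwhile.
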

\pf This is straightforward to see. We only remark: in order to see the $K$-linearity of $df$ in Parts 1-2, 
one should use
\eq\label{BSHeq6}
d_{[2]}\Sbi(d_{[1]})=\varepsilon(d)1=
d_{[1]}\Sb(d_{[2]}),\quad d\in \Db,
\eeq
as well as
\eq\label{BSHeq6a}
\Delb(\Sb(d))=\Sb(d_{[2]})\ot\Sb(d_{[1]}),\quad d \in \Db
\eeq
and the analogous one for $\Sbi$.
\epf

We let 
\[
\op{Hom}_{K\# D}(V, L)
\]
denote the set of all left $K\# D$-linear (or equivalently, left $K$-linear and $D$-linear) 
morphisms $V\to L$ that may not preserve the parity. 
Notice that this is $L^D$-stable in the right $L$-module $\op{Hom}_{K-}(V,L)$ in ${}_D\M$, whence it is
a right $L^D$-supermodule.

\begin{prop}\label{BSHprop2}
We have
\eq\label{BSHeq7}
\op{Hom}_{K-}(V,L)^D=\op{Hom}_{K\# D}(V, L).
\eeq
\end{prop}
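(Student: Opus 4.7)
The plan is to establish the two inclusions of \eqref{BSHeq7} by direct computation using the explicit $D$-action formula \eqref{BSHeq5a0}, the structure of the bosonization $\Db$ from \eqref{BSHeq1}--\eqref{BSHeq2}, and the antipode/cocommutativity axioms of $D$. Throughout I may assume $f$ is homogeneous; the non-trivial content is that $f$ satisfies $df = \varepsilon(d)f$ for all $d \in D$ iff $f(dv) = df(v)$ for all $d \in D, v \in V$.

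For the inclusion $\supseteq$, suppose $f \in \op{Hom}_{K\# D}(V,L)$. I would expand $(df)(v) = d_{[2]} f(\Sbi(d_{[1]}) v)$ using $\Delb(d) = d_{(1)} \sigma^{|d_{(2)}|} \otimes d_{(2)}$ together with the anti-multiplicativity of $\Sbi$ and the fact that $\sigma$ acts on $V$ as the parity, which yields $\Sbi(d_{[1]}) v = (-1)^{|d_{(1)}||d_{(2)}| + |d||v|} \mathcal{S}(d_{(1)}) v$. Moving $\mathcal{S}(d_{(1)}) \in D$ past $f$ by ordinary $D$-linearity of $f$ and summing, super-cocommutativity of $D$ yields the antipode identity $\sum (-1)^{|d_{(1)}||d_{(2)}|} d_{(2)} \mathcal{S}(d_{(1)}) = \varepsilon(d)$, whence $(df)(v) = (-1)^{|d||v|}\varepsilon(d)f(v) = \varepsilon(d) f(v)$ (the extra sign is trivial because $\varepsilon$ vanishes on odd elements).

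For the inclusion $\subseteq$, suppose $f$ is $D$-invariant. Reading the above computation backwards gives $\sum d_{(1)} f(\mathcal{S}(d_{(2)}) v) = \varepsilon(d) f(v)$ for all $d \in D, v \in V$. Substituting $d \mapsto \mathcal{S}(d)$, using $\mathcal{S}^2 = \mathrm{id}$ in the super-cocommutative $D$, together with the super-anti-cocommutativity $\Delta(\mathcal{S}(d)) = \sum (-1)^{|d_{(1)}||d_{(2)}|} \mathcal{S}(d_{(2)}) \otimes \mathcal{S}(d_{(1)})$ and one further application of cocommutativity, one derives the dual identity $\sum \mathcal{S}(d_{(1)}) f(d_{(2)} v) = \varepsilon(d) f(v)$, call it $(\star)$. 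Applying $(\star)$ with $d$ replaced by $g_{(2)}$ in the three-fold Sweedler expansion $g_{(1)} \otimes g_{(2)} \otimes g_{(3)}$ of $g \in D$, then multiplying on the left by $g_{(1)}$ and summing, the right side collapses to $\sum g_{(1)} \varepsilon(g_{(2)}) f(v) = g f(v)$, while the left side collapses to $f(gv)$ via the antipode identity $\sum g_{(1)} \mathcal{S}(g_{(2)}) \otimes g_{(3)} = 1 \otimes g$ in $D \otimes D$; hence $f(gv) = gf(v)$ for all $g \in D$, $v \in V$, which combined with the assumed left $K$-linearity places $f$ in $\op{Hom}_{K\# D}(V,L)$.

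The main obstacle is careful sign-bookkeeping between the $\Db$-coproduct (which contains factors of $\sigma$), the super-antipode, and the supersymmetric swap; once these signs are correctly tracked, the argument is the super-cocommutative analogue of the standard Hopf-algebraic identification of invariants with equivariant morphisms, the non-classical feature being that $D$-invariance (rather than $\Db$-invariance) already suffices precisely because the parity-twisting effects of $\sigma$ cancel against those introduced by $\Sbi$ in the action formula.
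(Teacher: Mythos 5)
Your proof is correct and is essentially the paper's argument in expanded form: the paper's own proof consists only of the remark that the statement ``follows easily'' from the inverse-antipode identities $d_{[2]}\Sbi(d_{[1]})=\varepsilon(d)1$ and $\Sbi(d_{[2]})d_{[1]}=\varepsilon(d)1$ in $\Db$, and your two-way computation (unpacking $\Delb$, $\Sbi$ and the $\sigma$-signs into the coproduct and antipode of $D$, then using super-cocommutativity and the antipode axioms) is precisely the detailed verification those identities encode. Your sign bookkeeping checks out, including the observation that the residual factor $(-1)^{|d||v|}$ is harmless because $\varepsilon$ kills odd elements, and the unsigned commutation $f(gv)=gf(v)$ you arrive at is indeed the condition defining $\op{Hom}_{K\#D}(V,L)$ here.
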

\pf
This follows easily by using the first equality in \eqref{BSHeq6} and the analogous
\[
\Sbi(d_{[2]})d_{[1]}=\varepsilon(d)1,\quad d\in \Db.
\]
\epf

\subsection{Splitting $D$-SUSY fields}\label{SPD}

Let $K$ be a $D$-SUSY field. 
Choose arbitrarily a $K$-finite free $K$-module $V$ in ${}_D\M$, and suppose that its rank is $m|n$; see Definition \tc{\ref{SASdef1}}. 

In what follows we let $L|K$ be an extension of $D$-SUSY fields, and assume that $L$ is $D$-simple, whence $L^D$ is
necessarily a field. 

\begin{lemma}\label{SPDlem1}
We have the following.
\begin{itemize}
\item[(1)]
The super-vector space $\op{Hom}_{K\# D}(V,L)$ over the field $L^D$ is of dimension
at most $m|n$, or in notation,
\eq\label{SPDeq0}
\dim_{L^D}\op{Hom}_{K\# D}(V,L)\le m|n.
\eeq
\item[(2)]
The right $L$-multiplication
\eq\label{SPDeq1}
\mu' : \op{Hom}_{K\# D}(V,L)\ot_{L^D} L \to \op{Hom}_{K-}(V, L),\quad \mu'(f\ot a)=fa
\eeq
defines a morphism in $({}_D\M)_L$, which is injective and splits $L$-superlinearly. 
\end{itemize}
\end{lemma}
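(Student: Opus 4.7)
Let $V' := \op{Hom}_{K-}(V, L)$. My plan is to first establish Part 2, then deduce Part 1 by reducing modulo $I_L$. The key tools are Proposition \ref{DSAprop1} (2), which supplies injectivity, and Remark \ref{SUFrem2} (1), which yields the splitting via self-injectivity of the SUSY field $L$.

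For Part 2, the map $\mu'$ is a morphism in $({}_D\M)_L$ essentially by construction: it is the composite of the inclusion $\op{Hom}_{K\#D}(V,L) \otimes_{L^D} L \hookrightarrow V' \otimes_{L^D} L$ (using $\op{Hom}_{K\#D}(V,L) = (V')^D$ from Proposition \ref{BSHprop2}) with the right $L$-multiplication $V' \otimes_{L^D} L \to V'$, both of which are right $L$-linear and $D$-equivariant. For injectivity, I would regard $V'$ as a left $L$-module in ${}_D\M$ via the identification \eqref{SASeq00}; then since $L$ is $D$-simple, Proposition \ref{DSAprop1} (2) gives injectivity of $\mu_{V'} : L \otimes_{L^D} (V')^D \to V'$, and under the same identification this coincides with $\mu'$. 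For the splitting, by Proposition \ref{DSAprop1} (1) the ring $L^D$ is a field, so $\op{Hom}_{K\#D}(V,L)$ admits a homogeneous $L^D$-basis, which realizes the source $\op{Hom}_{K\#D}(V,L) \otimes_{L^D} L$ as a direct sum of copies of $L$ and $L[1]$. By Remark \ref{SUFrem2} (1) such a direct sum is injective in ${}_L\M$, so the injection $\mu'$ splits as an $L$-supermodule morphism.

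For Part 1, the isomorphism $V \simeq K^{m|n}$ induces $V' \simeq L^{m|n}$ of right $L$-supermodules by evaluation at a homogeneous $K$-basis. The splitting just established exhibits $\op{Hom}_{K\#D}(V,L) \otimes_{L^D} L$ as a direct summand of $L^{m|n}$ in ${}_L\M$; tensoring over $L$ with the (purely even) residue field $\overline{L} = L/I_L$ preserves the splitting, giving a direct summand inclusion $\op{Hom}_{K\#D}(V,L) \otimes_{L^D} \overline{L} \hookrightarrow \overline{L}^{m|n}$. The canonical composite $L^D \hookrightarrow L_0 \twoheadrightarrow \overline{L}$ is injective, since its kernel is an ideal of the field $L^D$ consisting of nilpotent elements, hence zero. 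Comparing super-dimensions over $\overline{L}$ yields $\dim_{L^D}\op{Hom}_{K\#D}(V,L) \le m|n$, which is \eqref{SPDeq0}. I do not foresee a substantive obstacle in this argument; the main care needed is in correctly handling the left/right side conventions when invoking Proposition \ref{DSAprop1} (2), and in checking that \eqref{BSHeq7} lets us identify $\op{Hom}_{K\#D}(V,L)$ with $(V')^D$ inside the right $L$-module $V'$—both addressed by the material of Section \ref{BSH}.
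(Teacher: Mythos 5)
Your proposal is correct and follows essentially the same route as the paper: Part 2 is obtained by recognizing $\mu'$ as the map $\mu$ of \eqref{DSAeq1} for the object $\op{Hom}_{K-}(V,L)$ of $({}_D\M)_L$ (via the identification \eqref{BSHeq7}), with injectivity from Proposition \ref{DSAprop1}~(2) and the splitting from Remark \ref{SUFrem2}~(1), and Part 1 follows by applying $\ot_L\L$ to the split injection and comparing dimensions over $\L$. Your added observations (that the source is free over $L$ since $L^D$ is a field, and that $L^D\hookrightarrow\L$ is injective so super-dimensions are preserved) are exactly the details the paper leaves implicit.
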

\pf We prove first Part 2 and then Part 1.

(2)\ Consider the morphism $\mu$ given in \eqref{DSAeq1}, replacing the $V$ in ${}_A({}_D\M)$ there with
the present $\op{Hom}_{K-}(V,L)$ in $({}_D\M)_L$. 
In view of \eqref{BSHeq7} one then has the $\mu'$ above, which is a morphism in $({}_D\M)_L$.
It is injective by Proposition \ref{DSAprop1} (2). 
Moreover, it splits $L$-superlinearly by Remark \ref{SUFrem2} (1). 

(1)\ By the result just proven, the morphism $\mu'$ with $\ot_L\L$ applied
\[
\mu'\ot_L\L : \op{Hom}_{K\# D}(V,L)\ot_{L^D}\L \to \op{Hom}_{K-}(V,\L)
\]
is a (split) $\L$-superlinear injection. This proves Part 1 since one sees
\[\dim_{\L}\op{Hom}_{K-}(V,\L)=m|n. \]
\epf

\begin{prop}\label{SPDprop1}
For $L$ such as above the following are equivalent:
\begin{itemize}
\item[(a)] The base extension $L\ot_K V$, regarded as an $L$-module in ${}_D\M$ with respect to the $D$-action
\[ d(a\ot v)=d_{[1]}a\ot d_{[2]}v,\quad d\in D,\ a\ot v\in L\ot_K V,\]
is isomorphic to $L^{m|n}\, (=L[0]^m\oplus L[1]^n)$;
\item[(b)] $\dim_{L^D}\op{Hom}_{K\# D}(V,L)= m|n$;
\item[(c)] The morphism $\mu'$ in \eqref{SPDeq1} is an isomorphism.
\end{itemize}
\end{prop}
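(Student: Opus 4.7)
I will prove $(b)\Leftrightarrow(c)$ directly, and then $(a)\Leftrightarrow(c)$ via the adjunction / duality between $V$ and $L\otimes_KV$. Throughout, set $W:=\op{Hom}_{K\# D}(V,L)$; by Lemma \ref{SPDlem1} this is a finite-dimensional $L^D$-super-vector space of super-dimension at most $m|n$, and by the same lemma the right $L$-multiplication
\[
\mu'\colon W\ot_{L^D}L\longrightarrow \op{Hom}_{K-}(V,L)
\]
is an injection in $({}_D\M)_L$ that splits $L$-superlinearly.

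\textbf{$(b)\Leftrightarrow(c)$.} Since $\mu'$ is a split $L$-supermodule injection, its cokernel $C$ is a direct summand of $\op{Hom}_{K-}(V,L)$, which is $L$-finite free of rank $m|n$. Hence $C$ is $L$-projective, thus $L$-flat, thus $L$-free by Proposition \ref{SUFprop2}(2). Comparing $L$-ranks,
\[
\dim_{L^D}W + \op{rank}_L C = m\mid n,
\]
so $C=0$ if and only if $\dim_{L^D}W = m\mid n$. This is precisely $(c)\Leftrightarrow(b)$.

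\textbf{$(a)\Leftrightarrow(c)$.} The standard Hom-tensor adjunction yields a natural $L$-superlinear isomorphism
\[
\Phi\colon \op{Hom}_{K-}(V,L)\;\overset{\simeq}{\longrightarrow}\;\op{Hom}_{L-}(L\ot_KV,\,L),
\]
and one checks directly from the definitions \eqref{BSHeq5a0} and the $D$-action $d(a\ot v)=d_{[1]}a\ot d_{[2]}v$ on $L\ot_KV$ that $\Phi$ is $D$-superlinear, using the antipode identities \eqref{BSHeq6}--\eqref{BSHeq6a}. Since $V$ is $K$-finite free and $L\ot_KV$ is $L$-finite free, both of rank $m|n$, a linear-algebra argument (taking $L$-duals is an involution on $L$-finite free supermodules in $({}_D\M)_L$) shows that (a) is equivalent to the assertion
\[
(\mathrm{a}')\qquad \op{Hom}_{K-}(V,L)\;\simeq\; L^{m|n}\quad\text{as $L$-modules in }({}_D\M)_L.
\]
Now if (c) holds, then $\mu'$ identifies $\op{Hom}_{K-}(V,L)$ with $W\ot_{L^D}L$, on which $D$ acts solely through its action on $L$ (as $W$ consists of $D$-invariants). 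Assuming additionally (b), which is equivalent to (c), an $L^D$-basis of $W$ consisting of homogeneous elements with $m$ even and $n$ odd members produces an isomorphism $W\ot_{L^D}L\simeq L^{m|n}$ in $({}_D\M)_L$, giving $(\mathrm{a}')$ and hence (a). Conversely, assuming (a), hence $(\mathrm{a}')$, take $D$-invariants and use Proposition \ref{BSHprop2} to obtain $W\simeq (L^D)^{m|n}$, which yields (b), and by the already-established equivalence, (c).

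\textbf{Expected main obstacle.} The only non-routine point is the compatibility of the $D$-action under the adjunction $\Phi$: one must verify that the dualized $D$-action on $\op{Hom}_{L-}(L\ot_KV,L)$, obtained from the diagonal $\Delb$-action on $L\ot_KV$ by the analogue of \eqref{BSHeq5a0}, is transported by $\Phi$ to the $D$-action defined in \eqref{BSHeq5a0} on $\op{Hom}_{K-}(V,L)$. This is a direct but slightly fiddly computation with the bosonized coproduct and the antipode identities; once it is in hand, the remainder of the argument is essentially a dimension count facilitated by the splitness of $\mu'$ and by $L$ being a SUSY field.
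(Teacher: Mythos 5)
Your proposal is correct and follows essentially the same route as the paper: (b)$\Leftrightarrow$(c) by the split injectivity of $\mu'$ from Lemma \ref{SPDlem1} together with a rank count, and (a)$\Leftrightarrow$(c) via the identification $\op{Hom}_{K-}(V,L)=\op{Hom}_{L-}(L\ot_KV,L)$ and the canonical double-dual isomorphism in $({}_D\M)_L$, whose $D$-linearity is checked with \eqref{BSHeq6}--\eqref{BSHeq6a}. The ``fiddly computation'' you defer is precisely the displayed antipode calculation the paper carries out at the end of its proof of (c)$\Rightarrow$(a), so nothing essential is missing.
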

\pf
From the preceding lemma one sees (b)$\Leftrightarrow$(c). Let us set $k:=L^D$. 

(a)$\Rightarrow$(b).\ This follows since we see, assuming (a), that
\[
\begin{split}
\op{Hom}_{K\# D}(V,L)&=\op{Hom}_{L\# D}(L\ot_KV,L)\\
&\simeq \op{Hom}_{L\# D}(L^{m|n},L)=k^{m|n}.
\end{split}
\]

(c)$\Rightarrow$(a).\ Assume (c), or equivalently, (b). Then the $\mu'$ gives an isomorphism
\[
L^{m|n}\overset{\simeq}{\longrightarrow} \op{Hom}_{K-}(V,L)=\op{Hom}_{L-}(L\ot_KV, L)
\]
in $({}_D\M)_L$. 
Applying $\op{Hom}_{-L}(\ , L)$, we have an isomorphism ${}_L({}_D\M)$,
\[
\op{Hom}_{-L}(\op{Hom}_{L-}(L\ot_K V,L),L) \overset{\simeq}{\longrightarrow}\op{Hom}_{-L}(L^{m|n},L).
\]
Here the source and the target are supposed to have the $D$-actions given by the formula \eqref{BSHeq5a} in which
$f$ is now supposed to be $L$-linear.
Moreover, we can verify the fact that they are canonically isomorphic to $L\ot_KV$ and $L^{m|n}$, respectively, in ${}_L({}_D\M)$;
this shows (a). A point of verifying the fact especially for the target is to see, setting $M:=L\ot_KV$, that the canonical
bijection
\[
M\overset{\simeq}{\longrightarrow}\op{Hom}_{-L}(\op{Hom}_{L-}(M,L),L),\quad m\mapsto(f \mapsto f(m))
\]
is $D$-linear. This is verified so as
\[
\begin{split}
d_{[1]}\{(\Sb(d_{[2]})f)(m)\}&=d_{[1]}\{ \Sb(d_{[2]}) f (\Sbi(\Sb(d_{[3]}))m)\}\\
&=d_{[1]} \Sb(d_{[2]}) f (\Sbi(\Sb(d_{[3]})) m)\\
&=f(dm),
\end{split}
\]
where $d \in D,\ f\in \op{Hom}_{L-}(M,L)$ and $m\in M$. Here, for the first equality we have used
\eqref{BSHeq5a0} and \eqref{BSHeq6a}. 
\epf

\begin{definition}\label{SPDdef1}
We say that $L$ is a \emph{splitting $D$-SUSY field} for $V$, if the equivalent conditions (a)--(c) above
are satisfied. Such an $L$ is said to be \emph{minimal} if in addition, we have
\[
L=\Quot(K[V;L]),
\]
where we let 
\eq\label{SPDeq2}
K[V; L]=K[f(V): f\in \op{Hom}_{K\# D}(V,L)]
\eeq
denote the $K$-subalgebra of $L$ generated by all $f(V)$, where $f$ runs through
$\op{Hom}_{K\# D}(V,L)$. Notice that each $f(V)$ is a $K\# D$-sub-supermodule of $L$, whence
$K[V;L]$ is a $D$-stable $K$-sub-superalgebra of $L$. 
\end{definition}


\subsection{Interlude---the associated $\K\# \D$-module}\label{ASS}
As in the preceding subsection, let $K$ be a $D$-SUSY field, and let $V$ be a $K$-finite free 
$K$-module in ${}_D\M$. Just as in \eqref{INDeq8}, we define
\eq\label{ASSeq1}
\widetilde{V}:=V/K_1V. 
\eeq

The following is easy to see. 

\begin{lemma}\label{ASSlem1}
$\widetilde{V}$ naturally turns into a $\K$-module in ${}_{\D}\M$, whence
it is a $\K\# \D$-module with the parity forgotten. 
\end{lemma}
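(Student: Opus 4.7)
The plan is to check that $K_1 V$ is a $\D$-stable $K$-sub-supermodule of $V$, so that both the $K$-module structure and the $\D$-action descend cleanly to the quotient.

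First I would verify the equality $K_1 V = I_K V$; this makes $K_1 V$ a super-submodule and lets the quotient inherit an honest $K$-structure. The inclusion $K_1 V \subseteq I_K V$ is immediate from $K_1 \subseteq I_K$. For the reverse, note that $I_K = K_1 + K_1^2$ as a super-vector subspace of $K$ (since $1\in K_0$), and any generator $k_1 k_2 v$ of $K_1^2 V$ with $k_i \in K_1$, $v \in V$ can be rewritten as $k_1(k_2 v) \in K_1 V$. Hence $K_1^2 V \subseteq K_1 V$ and $I_K V = K_1 V$. Since $I_K$ is a super-ideal, this is a $K$-sub-supermodule; the quotient $\widetilde V$ thereby inherits a $K$-supermodule structure on which $I_K$ acts trivially, and so it descends to a $\K$-supermodule.

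Next I would show that the $\D$-action on $V$ restricts to $K_1 V$. By the definition \eqref{DASeq0c}, for $d \in \D$ we have $\Delta(d) = d_{(1)} \ot d_{(2)} \in D_0 \ot D_0$, so both $d_{(1)}$ and $d_{(2)}$ are even. The module-version of the compatibility \eqref{DSAeq0c1} gives $d(av) = (-1)^{|a||d_{(2)}|}(d_{(1)} a)(d_{(2)} v)$ for $a\in K$, $v\in V$; when $d\in\D$ the sign is $+1$, and since an even element of $D$ preserves the parity of $K$ we have $d_{(1)} K_1 \subseteq K_1$. Therefore $d(K_1 V) \subseteq K_1 V$ and the $\D$-action descends to $\widetilde V$. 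The axioms making $\widetilde V$ a $\K$-module in ${}_\D\M$ are simply the reductions modulo $K_1 V$ of the corresponding axioms holding for $V$ as a $K$-module in ${}_D\M$, so nothing further need be checked.

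The final clause is then immediate from the identification ${}_{K\#D}\M = {}_{K\#\Db}\mathtt{Mod}$ discussed after \eqref{BSHeq3}, applied with $K$ and $D$ replaced by $\K$ and $\D$: the datum of a $\K$-module in ${}_\D\M$ is the same, after forgetting parity, as that of a $\K\#\D$-module via restriction along $\K\#\D \hookrightarrow \K\#\Db$. No substantive obstacle is expected; the whole lemma is a bookkeeping check that reduction modulo $I_K$ is compatible with the action of the purely even Hopf sub-superalgebra $\D$.
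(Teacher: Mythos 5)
Your proof is correct, and since the paper dismisses this lemma with ``The following is easy to see'' and supplies no argument, your write-up is exactly the routine verification the author intends: the identity $K_1V=I_KV$ makes the quotient a $\K$-supermodule, the coproduct condition $\Delta(\D)\subset D_0\ot D_0$ kills the sign in \eqref{DSAeq0c1} and gives $\D$-stability of $K_1V$, and the last clause is pure bookkeeping. No gaps.
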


\begin{definition}\label{ASSdef1}
We call $\widetilde{V}$ the $\K \# \D$-module \emph{associated with} $V$.
\end{definition}

\begin{prop}\label{ASSprop1}
Let $L|K$ be an extension of $D$-SUSY field such that $L$ is $D$-simple. 
If $L$ is a splitting $D$-SUSY field for $V$, then $\L$ is a 
splitting $\D$-field for $\widetilde{V}$ (see \cite[Definition 3.0]{T}), or in other words, a purely even 
\tc{splitting}
$\D$-SUSY field for it. Moreover, if this is the case and $L$ is minimal, then $\L$ is minimal.
\end{prop}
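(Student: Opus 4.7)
The plan is to base-change the splitting isomorphism for $V$ to $\overline L$, to identify the resulting datum as the analogous splitting for $\widetilde V$, and then to transfer the generation statement for minimality through a natural bijection of Hom spaces. For the splitting part, Proposition \ref{SPDprop1}(a) gives an isomorphism $L\ot_K V\simeq L^{m|n}$ in ${}_L({}_D\M)$, where $m|n$ is the $K$-rank of $V$. I would apply $\overline L\ot_L(-)$, which makes sense on $\D$-modules since $I_L$ is $\D$-stable. The LHS becomes $\overline L\ot_{\overline K}\widetilde V$, via the identification $\overline K\ot_K V=V/I_KV=V/K_1V=\widetilde V$ (using $K\cdot K_1=I_K$). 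The RHS becomes $\overline L^{m|n}$, a free $\overline L$-module of total rank $m+n$ on which $\D$ acts only via $\overline L$ (since $\overline L$ is purely even). This is exactly Takeuchi's triviality condition for a splitting $\D$-field of the $\overline K$-module $\widetilde V$.

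To prepare for minimality, I would construct a canonical $k$-linear map
\[ \phi\colon \op{Hom}_{K\# D}(V,L)\longrightarrow \op{Hom}_{\overline K\# \D}(\widetilde V,\overline L),\qquad \phi(f)(v+K_1V):=f(v)+I_L, \]
and prove $\phi$ is bijective. Bijectivity follows by base-changing the isomorphism $\mu'$ of Proposition \ref{SPDprop1}(c): tensoring $\mu'$ over $L$ with $\overline L$ and using the canonical identification $\op{Hom}_{K-}(V,L)\ot_L\overline L\simeq \op{Hom}_{\overline K-}(\widetilde V,\overline L)$ (coming from applying $\op{Hom}_{-L}$ to $L\ot_KV\simeq L^{m|n}$ and identifying with $\op{Hom}_{\overline L-}(\overline L\ot_{\overline K}\widetilde V,\overline L)$) yields a $\D$-equivariant $\overline L$-linear isomorphism
\[ \op{Hom}_{K\# D}(V,L)\ot_k\overline L\;\simeq\;\op{Hom}_{\overline K-}(\widetilde V,\overline L). \]
Taking $\D$-invariants on both sides, using $\overline L^{\D}=L^D=k$ (Proposition \ref{DSAprop2}) on the left and the analogue of \eqref{BSHeq7} on the right, recovers $\phi$, which is therefore bijective.

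For minimality itself, assume $L=\Quot(K[V;L])$ and set $A':=K[V;L]\subseteq L$. The image $\overline{A'}$ of $A'$ in $\overline L$ is generated over $\overline K$ by $\{f(V)+I_L\mid f\in\op{Hom}_{K\# D}(V,L)\}$, which by surjectivity of $\phi$ equals $\{g(\widetilde V)\mid g\in\op{Hom}_{\overline K\# \D}(\widetilde V,\overline L)\}$; hence $\overline{A'}=\overline K[\widetilde V;\overline L]$. It remains to check $\overline L=\Quot(\overline{A'})$: any $\overline x\in\overline L$ lifts to $x=a/s\in L$ with $s\in A'_0$ an $A'$-regular element; if $\overline s=0$ then $s\in I_L$, which is nilpotent (as $L$ is Noetherian), so $s$ is nilpotent in $A'$, and regularity of $s$ forces $s=0$, a contradiction. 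Thus $\overline x=\overline a/\overline s\in\Quot(\overline{A'})=\Quot(\overline K[\widetilde V;\overline L])$, which is the minimality condition for $\overline L$. The main obstacle is the careful verification that the base-changed $\mu'$ is $\D$-equivariant and that its invariants reproduce $\phi$ rather than a twist by the supersymmetry $\Sb$, which is delicate because the $D$-action on $\op{Hom}_{K-}(V,L)$ is defined via the bosonization.
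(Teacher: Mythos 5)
Your proof follows essentially the same route as the paper's: reduce the splitting isomorphism $L\ot_KV\simeq L^{m|n}$ modulo $I_L$ for the first half, and push generators through the induced map $f\mapsto \phi(f)$ (the paper's $\overline f$) for minimality. The one caveat is your bijectivity argument for $\phi$: it invokes $\L^{\D}=L^D$ via Proposition \ref{DSAprop2}, which requires (D1), an assumption not in force in the subsection containing this proposition, where $D$ is still general. Fortunately that step is dispensable: the mere existence of $\phi$ already gives the inclusion $\overline{K[V;L]}\subseteq\K[\widetilde V;\L]$, and combined with your (correct) observation that an $A'$-regular even element cannot become zero in $\L$, this yields $\L=\Quot(\K[\widetilde V;\L])$ without needing the equality $\overline{K[V;L]}=\K[\widetilde V;\L]$; the paper uses only this inclusion.
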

\pf
Suppose that $V$ is of rank $m|n$. The condition (a) in Proposition \ref{SPDprop1}
\[
L\ot_K V\simeq L^{m|n}\ \, \text{in}\ \, {}_L({}_D\M)
\]
implies, with $\L\ot_L$ applied,
\[
\L\ot_{\K}\widetilde{V}\simeq \L^{m|n}\ \, \text{in}\ \, {}_{\L}({}_{\D}\M).
\]
The cited proposition applied to $\widetilde{V}$ proves the first half of the proposition.

Given an $f$ in $\op{Hom}_{K\# D}(V,L)$, let $\overline{f}$ be the composite 
\[
\widetilde{V}\to \widetilde{L}=L/K_1L\to \L
\]
of $\K\ot_{K} f$ (see \eqref{INDeq9}) with the natural projection $\widetilde{L}\to \L$. Then one sees
$\overline{f} \in \op{Hom}_{\K\# \D}(\widetilde{V},\L)$, and that for every $v \in V$, the element
$f(v)\, \op{mod}I_L$ in $\L$ coincides with $\overline{f}(v\, \op{mod}(K_1V))$. 
Therefore, 
$K[V;L]$ maps into $\K[\widetilde{V};\L]$ under the projection $L\to \L$. 
It follows that if $L=\Quot(K[V;L])$, then $\L=\Quot(\K[\widetilde{V};L])$. 
This proves the second half. 
\epf

In the continuation \cite{MasuokaII} of the present paper, the
solvability of $V$ is discussed in relation with the solvability
of $\widetilde{V}$, depending on the proposition above. 


\subsection{The characterizations}\label{CHA}

By convention, when we say that 
\[
a_1,\dots,a_m\mid a_{m+1},\dots,a_{m+n}
\]
are homogeneous elements in some object constructed on an underlying super-vector space,
$a_1,\dots,a_m$ are supposed to be even, and $a_m,\dots,a_{m+n}$ are odd. 

\begin{prop}\label{CHAprop1}
Suppose that $L$ is a $D$-SUSY field which is $D$-simple, and let $k:=L^D$. Then
homogeneous elements $a_1,\dots,a_m~|~a_{m+1},\dots,a_{m+n}$ of $L$ are $k$-linearly independent 
if and only if there exist homogeneous elements $d_1,\dots,d_m~|~d_{m+1},\dots,d_{m+n}$ of $D$ such that
$\big(d_ia_j\big)_{i,j} \in \mathsf{GL}_{m|n}(L)$. 
\end{prop}
\pf[Proof\, \tu{(cf. the proof of \cite[Proposition 1.5]{T})}]
``\emph{If}''.\ This is easy to see. In fact, if we have $\sum_{i=1}^{m+n} c_ia_i=0$, where $c_i\in k$, 
then the equation
with $d_i$, $1\le i \le m+n$, applied turns into
$\big(d_ia_j\big)_{i,j}{}^t(c_1,\dots,c_{m+n})=0$. If the matrix is invertible, then the vector is zero, so that all $c_i$ are zero. 

``\emph{Only if}''.\
Suppose that $k$-linearly independent homogeneous elements 
$a_1,\dots,a_m~|~a_{m+1},\dots,a_{m+n}$ of $L$ are given.

\tc{Recall from the paragraph following Proposition \ref{DSAprop3}
that $\op{Hom}(D,L)$ is an $L$-algebra in ${}_D\M$, with respect
to the convolution product; see \eqref{DSAeq00}. 
Through the morphism induced by the projection $L\to \L$, 
$\op{Hom}(D,\L)$ is a quotient $L$-algebra in the category,
where the morphism from $L$ is given by}
\[
L \to \op{Hom}(D, \L),\quad a \mapsto (d \mapsto \overline{da});
\]
cf. \eqref{DSAeq1a}. 
Here and in what follows, we let $\overline{b}:=b\, \op{mod} I_L$ for $b\in L$. 
Since we see that $\op{Hom}(D, \L)^D$ is naturally isomorphic to $\op{Hom}(D/D^+, \L)=\L$, 
the morphism $\mu$ for
$\op{Hom}(D,\L)$ (see \eqref{DSAeq1}) may be supposed to be
\[
\mu : L\otk \L \to \op{Hom}(D, \L),\quad \mu(a \ot x)(d) =\overline{da}\cdot x.
\]
Since $L$ is $D$-simple, it follows by Proposition \ref{DSAprop1} (2) that this $\mu$ 
is an injection, which is naturally regarded to be $\L$-superlinear; see the proof of Proposition \ref{DSAprop1a}.

Let $U:=\bigoplus_{i=1}^{m+n}ka_i$. By restriction we have an injection $U\otk \L \to \op{Hom}(D, \L)$.
As its $\L$-predual we have the $\L$-superlinear surjection
\[
D \ot_k \L \to \op{Hom}_k(U, \L),\quad d \ot x \mapsto (u\mapsto \overline{du}\cdot x). 
\]
We can choose as an $R$-basis of $D$, homogeneous elements
$d_1,\dots,d_m~|~d_{m+1}$, $\dots,d_{m+n}$
such that the last surjection, restricted to $C\ot L$, turns into an isomorohism, where we have set $C:=\bigoplus_{i=1}^{m+n}Rd_i$. 
The condition means that the morphism in $\M_L$
\[
C\ot_k L\to \op{Hom}_k(U,L),\quad d_{j} \ot a\mapsto (a_i \mapsto (d_ja_i)a)
\]
is isomorphic modulo the nilpotent $I_L$, whence it is isomorphic, indeed. 
The source and the target are identified with $L^{m|n}$ through
the basis $d_1\ot 1,\dots, d_{m+n}\ot 1$, and the dual basis of $a_1,\dots a_{m+n}$, respectively. 
Supposing that elements of $L^{m|n}$ are presented as column vectors, we see that the isomorphism 
is identified with the left multiplication by the matrix $\big( d_ja_i \big)_{i,j}$,
which is, therefore, in $\mathsf{GL}_{m|n}(L)$; hence its transpose $\big( d_ia_j \big)_{i,j}$ is as well. 
\epf

In the remaining of this section we assume that $D$ satisfies (D1--2), 
as in the preceding two sections.

\begin{theorem}\label{CHAthm1}
Suppose that $L|K$ is an extension of $D$-SUSY fields which satisfies the conditions 
\begin{itemize}
\item[(LK5)] $L$ and $K$ are both $D$-simple, and
\item[(LK6)] $L^D=K^D$.
\end{itemize}
Then the following are equivalent: 
\begin{itemize}
\item[(a)]
$L|K$ is a SUSY PV extension in the sense of Definition \ref{DEFdef1} (2);
\item[(b)]
There exists a $K$-finite free $K$-module $V$ in ${}_D\M$, which is cyclic as a $K\# D$-supermodule 
(or namely, generated by a single homogeneous 
element), 
such that $L$ is a minimal splitting $D$-SUSY field for $V$;
\item[(c)]
There exists a $K$-finite free $K$-module $V$ in ${}_D\M$, 
such that $L$ is a minimal splitting $D$-SUSY field for $V$;
\item[(d)]
There exists an invertible matrix with entries in $L$, say, $X=(x_{ij}) \in \mathsf{GL}_{m|n}(L)$, such that
\begin{itemize}
\item[(i)] $(dX)X^{-1}\in \mathsf{M}_{m|n}(K)$ for every $d \in D$, where $dX:=\big(dx_{ij}\big)_{i,j}$, and
\item[(ii)] $L=\Quot(K[x_{ij}])$, where $K[x_{ij}]$ denotes the $K$-sub-superalgebra of $L$ generated by the entries $x_{ij}$
\tc{of $X$.} 
\end{itemize}
\end{itemize}
\end{theorem}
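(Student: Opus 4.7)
The strategy is to prove the cycle $\mathrm{(b)}\Rightarrow\mathrm{(c)}\Rightarrow\mathrm{(d)}\Rightarrow\mathrm{(a)}\Rightarrow\mathrm{(b)}$, in which $\mathrm{(b)}\Rightarrow\mathrm{(c)}$ is trivial (forget the cyclicity hypothesis). The three remaining implications follow Takeuchi's classical pattern adapted to the super context: matrix extraction from the splitting data, construction of the principal superalgebra from the matrix, and reconstruction of a cyclic splitting module from the Hopf-superalgebraic structure.

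For $\mathrm{(c)}\Rightarrow\mathrm{(d)}$, fix a homogeneous $K$-basis $\boldsymbol{y}=(y_1,\dots,y_{m+n})$ of $V$ with $y_1,\dots,y_m$ even and $y_{m+1},\dots,y_{m+n}$ odd, writing $d\boldsymbol{y}=F(d)\boldsymbol{y}$ for the associated matrix $F(d)\in\mathsf{M}_{m|n}(K)$. By Proposition \ref{SPDprop1} (b), the splitting hypothesis supplies an $L^D$-basis $f_1,\dots,f_{m+n}$ of $\op{Hom}_{K\# D}(V,L)$ with matching parity pattern; set $x_{ij}:=f_i(y_j)$ and $X:=(x_{ij})$. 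The isomorphism $\mu'$ of Lemma \ref{SPDlem1} (2) forces $X\in\mathsf{GL}_{m|n}(L)$. Translating the $D$-linearity $f_i(dy_j)=df_i(y_j)$ into matrix form, after reconciling with the bozonization conventions of Subsection \ref{BSH} to track the super-signs, yields the identity $dX=F(d)X$, hence $(dX)X^{-1}=F(d)\in\mathsf{M}_{m|n}(K)$, verifying (d)(i). Since $K[V;L]=K[x_{ij}]$ by construction (the $x_{ij}$ exhaust the values $f(V)$), minimality becomes $L=\Quot(K[x_{ij}])$, giving (d)(ii).

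For $\mathrm{(d)}\Rightarrow\mathrm{(a)}$, set $A:=K[x_{ij},\det_0(X)^{-1}]\subset L$. The equation $dX=F(d)X$ with $F(d)\in\mathsf{M}_{m|n}(K)$ makes $A$ a $D$-stable $K$-sub-superalgebra of $L$, and the entries of $X^{-1}$ also lie in $A$ via the super-determinant formulas. In $A\otimes_K A$ consider $P:=(X\otimes 1)^{-1}(1\otimes X)\in\mathsf{GL}_{m|n}(A\otimes_K A)$. A super-Leibniz computation shows $d(X\otimes 1)=(F(d)\otimes 1)(X\otimes 1)$ and $d(1\otimes X)=(F(d)\otimes 1)(1\otimes X)$---both factors transformed by the same left multiplication because the $K$-entries of $F(d)$ make $F(d)\otimes 1=1\otimes F(d)$ in $A\otimes_K A$---so $P$ has $D$-invariant entries $p_{ij}$. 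The rewrite $1\otimes X=(X\otimes 1)\,P$ shows $1\otimes A\subset A\cdot k[p_{ij},\det_0(P)^{-1}]$, establishing (A1): $A\cdot(A\otimes_K A)^D=A\otimes_K A$. Condition (A2), $L=\Quot(A)$, is exactly (d)(ii). Hence $(L|K,A,(A\otimes_K A)^D)$ is a SUSY PV extension in the sense of Definition \ref{DEFdef1} (2).

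The implication $\mathrm{(a)}\Rightarrow\mathrm{(b)}$ houses the main obstacle. Matrix extraction from $(L|K, A, H)$ proceeds by Hopf-Galois descent: since $H$ is finitely generated over $k$ by Theorem \ref{DEFthm1} (2), a finite-dimensional $k$-sub-supercoalgebra $W\subset H$ generates $H$ as a $k$-algebra, and via the coaction $\theta_{\! A}: A\to A\otimes_k H$ combined with the isomorphism $\mu: A\otimes_k H\simeq A\otimes_K A$, one extracts a fundamental matrix $X\in\mathsf{GL}_{m|n}(A)$ whose entries generate $A$ as a $K$-superalgebra and satisfy $dX=F(d)X$ for some $F(d)\in\mathsf{M}_{m|n}(K)$. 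Setting $V:=K^{m|n}$ with $D$-action $d\boldsymbol{y}=F(d)\boldsymbol{y}$ yields a $K$-finite free module in ${}_D\M$ for which $L$ is a minimal splitting $D$-SUSY field, by the argument reversing the second paragraph above. The hard part is to upgrade this $V$ to be cyclic as $K\# D$-supermodule. The plan is to reduce to the classical setting via the associated $\K\# \D$-module $\widetilde{V}=V/K_1V$ of Definition \ref{ASSdef1}: by Theorem \ref{DEFthm1} (5), $\L|\K$ is a classical Takeuchi PV extension to which a Kolchin--Cope-style cyclic vector theorem applies, furnishing a cyclic vector $\overline{v}_0\in\widetilde{V}$. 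Lifting $\overline{v}_0$ by a Nakayama-type argument, valid since $I_K$ is nilpotent and $V$ is $K$-finite free (cf.\ Proposition \ref{SUFprop2} (2)), produces a homogeneous $v_0\in V$; verifying that $K\# D\cdot v_0=V$ and that $L$ remains the minimal splitting for this cyclic refinement of $V$ uses Proposition \ref{ASSprop1} together with a dimension count over $\K$ and the flatness of $K$ over $\K$.
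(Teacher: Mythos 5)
Your implications (b)$\Rightarrow$(c), (c)$\Rightarrow$(d) and (d)$\Rightarrow$(a) follow the paper's route: the matrix $X=(f_j(v_i))$ built from a homogeneous $k$-basis of $\op{Hom}_{K\# D}(V,L)$, and your $P=(X\ot 1)^{-1}(1\ot X)$ is exactly the paper's matrix $Z=(Y\ot_K1)(1\ot_KX)$ whose entries, together with those of its inverse $W$, generate $H$ inside $(A\ot_KA)^D$. Be aware that the $D$-invariance of $P$ is not a consequence of ``both factors being transformed by the same left multiplication'': since $d(ab)=(d_{[1]}a)(d_{[2]}b)$, one must first derive the dual transformation law $(d_{[1]}Y)F(d_{[2]})=\varepsilon(d)Y$ by applying $d$ to $YX=I$, and then run the coproduct computation; this is routine but not optional. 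You should also record that $L=\Quot(K[x_{ij}])$ makes $L|K$ of finite type, which Definition \ref{DEFdef1}~(2) requires.

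The implication (a)$\Rightarrow$(b) is where your proposal breaks down. First, extracting an invertible matrix from $(L|K,A,H)$ is not mere ``Hopf--Galois descent'': from homogeneous generators $u_1,\dots,u_{m+n}$ of $A$ spanning an $H$-subcomodule one only gets the comatrix $C\in\mathsf{GL}_{m|n}(H)$, not an invertible matrix over $L$; the paper needs the Wronskian-type Proposition \ref{CHAprop1} to produce elements $d_1,\dots,d_{m+n}$ of $D$ with $P=(d_iu_j)\in\mathsf{GL}_{m|n}(L)$, and this step is absent from your argument. Second, and more seriously, your route to cyclicity cannot work. In the paper cyclicity is free: $V:=\bigoplus_iK(d_i\u)$ is generated over $K\# D$ by the single homogeneous element $\u$ precisely because $(d\u)P^{-1}$ has entries in $K$ for all $d$. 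Your plan instead invokes a cyclic vector theorem for the associated $\K\# \D$-module $\widetilde{V}$ and lifts by Nakayama. No such theorem is available (or true) in Takeuchi's generality, where $\D=U((\g_D)_0)\rtimes\Gamma_D$ is arbitrary; and even granting one, the reduction is structurally impossible when $m>0$ and $n>0$: $\K\# \D$ is purely even, so for a homogeneous $\overline{v}_0$ the submodule $\K\# \D\cdot\overline{v}_0$ lies in a single parity component of $\widetilde{V}\simeq\K^{m|n}$ and can never be all of it. (Note also that $K_1V$ is not $D$-stable, only $\D$-stable, so the image of $K\# D\cdot v_0$ in $\widetilde{V}$ is not $\K\# \D\cdot\overline{v}_0$ in the first place.) You should replace this entire step by the paper's direct construction.
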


A matrix in $X$ in $\mathsf{GL}_{m|n}(L)$ is said to be $\mathsf{GL}_{m|n}$-\emph{primitive} if it satisfies Condition (i) of (d). 

We let $k:=L^D\, (=K^D)$, as before. We will use the presentation, such as $dX$ as above, which
indicates the relevant operation applied to each entry of matrices.

\pf[Proof\, \tu{(cf. the proof of \cite[Theorem 3.3]{T})}]\ 
(a)$\Rightarrow$(b).\ 
Suppose that $(L|K, A, H)$ is a SUSY PV extension. 
We can choose $k$-linearly independent homogeneous elements
$u_1,\dots,u_m~|~u_{m+1},\dots,u_{m+n}$ of $A$, 
which span
an $H$-sub-supercomodule $A$, and which generate the $K$-superalgebra $A$. 
Let
\[
\u:=(u_1,\dots,u_m,u_{m+1},\dots,u_{m+n}).
\]
Suppose that elements of $A^{m|n}$ (or $L^{m|n}$) are
presented as row vectors, whence $\u\in A^{m|n}$, in particular. 
There uniquely exist homogeneous elements $c_{ij}$, $1\le i,j\le m+n$, of $H\, (=A\ot_KA)^D$ such that
\[
\theta_{\! A}(u_j)=\sum_{j}u_i\ot_k c_{ij}.
\]
This equation is rewritten so as
\[
1\ot_K u_j=\sum_i(u_i\ot_K1)c_{ij}\ \, \text{in}\ \, A\ot_KA,
\]
or by matrix presentation, so as
\eq\label{CHAeq1}
1\ot_K \u =(\u\ot_K 1)C,
\eeq
where
\[ 
C:=\big(c_{ij}\big)_{i,j}.
\]
In addition, we have let $1\ot_K\u:=(1\ot_K u_1,\dots,1\ot_Ku_{m+n})$, meaning to insert $1\ot_K$ to each entry;
see the sentence preceding the proof. 
One sees that $C$ is in $\mathsf{GL}_{m|n}(H)$ with inverse $\cS(C)$, 
where $\cS$ denotes the antipode of $H$. 
By Proposition \ref{CHAprop1} we have homogeneous elements
$d_1,\dots,d_m~|~d_{m+1},\dots,d_{m+n}$ of $D$
such that the matrix $P:=\big(d_iu_j\big)_{i,j}$ satisfies
\eq\label{CHAeq2}
P \in \mathsf{GL}_{m|n}(L). 
\eeq

We claim that for every $d\in D$, $(d\u)P^{-1}$ has entries in $K$, or equivalently, 
\[
1\ot_K(d\u)P^{-1}=(d\u)P^{-1}\ot_K 1.
\]
Indeed, we see from the equation \eqref{CHAeq1} with $d$ and $d_i$ applied that 
\[
1\ot_K d\u =(d\u\ot_K 1)C,\quad 1\ot_K P=(P\ot_K 1)C. 
\]
Therefore, 
\begin{align*}
1\ot_K(d\u)P^{-1}&=(d\u\ot_K 1)C(1\ot_K P)^{-1}\\
&=(d\u\ot_K 1)CC^{-1}(P\ot_K 1)^{-1}\\
&=(d\u)P^{-1}\ot_K 1. 
\end{align*}

By \eqref{CHAeq2} we can define a $K$-sub-supermodule of $L^{m|n}$ by
\[
V:=\bigoplus_{i=1}^{m+n}K(d_i\u),
\]
which is $K$-finite free of rank $m|n$ with basis $d_i\u$, $1\le i\le m+n$. 
By the claim above, $V$ is a $K\# D$-supermodule generated by $\u$. 
Again by \eqref{CHAeq2} one has $L\ot_KV=L^{m|n}$, whence $L$ is a splitting $D$-SUSY field for $V$.
It is minimal since the $K$-sub-superalgebra generated by the images $K(d_iu_j)$ of the $m+n$ projections
$V \subset L^{m|n}\to L$ includes 
\tc{all $u_i$, and so $A$, as well. }  

(b)$\Rightarrow$(c).\ Obvious.

(c)$\Rightarrow$(d).\
Suppose that $L$ is a minimal splitting $D$-SUSY field for a $K$-module in ${}_D\M$,
\eq\label{CHAeq2a1}
V=Kv_1\oplus \dots \oplus Kv_m\oplus Kv_{m+1}\oplus \dots \oplus Kv_{m+n},
\eeq
with homogeneous $K$-free basis elements
$v_1,\dots,v_m~|~v_{m+1},\dots,v_{m+n}$.  
By Proposition \ref{SPDprop1}, $\op{Hom}_{K\# D}(V,L)$ has $k$-dimension $m|n$. Choose as 
homogeneous $k$-basis elements 
\eq\label{CHAeq2a2}
f_1,\dots,f_m~|~f_{m+1},\dots,f_{m+n}. 
\eeq
The canonical morphism
\eq\label{CHAeq2a3}
\mu':\op{Hom}_{K\# D}(V,L)\ot_k L \to 
\op{Hom}_{K-}\! \Big(\bigoplus_{i=1}^{m+n}Kv_i,\ L\Big)=L^{m|n}
\eeq
is now an isomorphism. We regard this $\mu'$ as 
\eq\label{CHAeq2a4}
L^{m|n}\overset
{\simeq}{\longrightarrow}L^{m|n},
\eeq
identifying the source with $L^{m|n}$ through the free basis $f_1\ot 1,\dots, f_{m+n}\ot 1$.  
It is seen to be the left multiplication by the matrix
\eq\label{CHAeq2a5}
X:=\big(f_j(v_i)\big)_{i,j},
\eeq
where we now suppose that elements of $L^{m|n}$ are presented as column vectors. 
Hence, $X \in \mathsf{GL}_{m|n}(L)$. Let
\[
\tc{\v:={}^t(v_1,\dots,v_m,v_{m+1},\dots,v_{m+n}).}
\]
For every $d \in D$, there exists a matrix $E(d)\in \mathsf{M}_{m|n}(K)$ such that
$d\v=E(d)\v$. Since we have
\[
d(f_j(\v))= f_j(d\v)=f_j(E(d)\v)=E(d)f_j(\v),\ \, \text{whence}\ \, dX=E(d)X,
\]
it follows that $X$ is $\mathsf{GL}_{m|n}(L)$-primitive, or namely,
$X$ satisfies Condition (i) of (d). It satisfies Condition (ii) as well, since 
$L=\Quot(K[f_j(v_i)])$ by the minimality of $L$, 
and the generators $f_j(v_i)$ of $K[f_j(v_i)]$ are precisely the entries of $X$. 

(d)$\Rightarrow$(a).\
Assume (d). Suppose that $X=\big(x_{ij}\big)_{i,j}$ is a $\mathsf{GL}_{m|n}$-primitive matrix such that $L=\Quot(K[x_{ij}])$,
and set $Y:=X^{-1}$. Let $d \in D$. Define
\[
F(d):=(dX)Y\, (\in \mathsf{M}_{m|n}(K)). 
\]
Then we have
\eq \label{CHAeq3}
dX=F(d)X. 
\eeq
Applying $d$ to the both sides of $YX=I$, we have 
$(d_{[1]}Y)F(d_{[2]})X=\varepsilon(d)I$, whence
\eq\label{CHAeq4}
(d_{[1]}Y)F(d_{[2]})=\varepsilon(d)Y. 
\eeq

We have to prove that there exists a desired $A$, and $L|K$ is of finite type (i.e., $\L|\K$ is a finitely generated
field extension, see Definition \ref{DEFdef1} (1)). 

Define $A$ to be the $K$-sub-superalgebra of $L$ generated by all entries of $X$ and of $Y$;
it is seen, in view of \eqref{ASGeq2}, to be a localization of
the $K$-sub-superalgebra generated by all entries of $X$. 
Since that $K$-sub-superalgebra 
is $D$-stable in $L$ by \eqref{CHAeq3}, 
\tc{so is $A$ by Proposition \ref{EXDprop1} (1).} 
We wish to show that $A$ satisfies (A1--2).
As for (A2), i.e., $L=\Quot(A)$, we are done since $L=\Quot(K[x_{ij}])$ and $A \supset K[x_{ij}]$; 
the result implies that $L|K$ is of finite type. Set
\[
H:=(A\ot_KA)^D. 
\]
\tc{As for (A1), }
we wish to prove $A\ot_K A\subset A\cdot H$; the result will complete the proof. 
Define two matrices in $\mathsf{M}_{m|n}(A\ot_KA)$ by
\eq\label{CHAeq5}
Z:=(Y\ot_K1)(1\ot_KX),\quad W:=(1\ot_KY)(X\ot_K1). 
\eeq
These are mutual inverses.
We claim that $Z$ and $W$ both have entries in $H$. This will imply the desired inclusion,
since we then have
that 
\eq\label{CHAeq6}
1 \ot_K X=(X\ot_K1)Z,\quad 1\ot_KY=W(Y\ot_K1).
\eeq
To prove the claim we let $d \in D$, and compute, using \eqref{CHAeq3} and \eqref{CHAeq4}, so as
\begin{align*}
dZ&=(d_{[1]}Y \ot_K 1)(1 \ot_K d_{[2]}X)\\
&=(d_{[1]}Y \ot_K 1)(1\ot_K F(d_{[2]}))(1 \ot_K X)\\
&=(d_{[1]}Y \ot_K 1)(F(d_{[2]})\ot_K 1)(1 \ot_K X)\\
&=\varepsilon(d)(Y\ot_K1)(1\ot_K X)=\varepsilon(d)Z.
\end{align*}
Similarly, we see that $dW=\varepsilon(d)W$, which concludes the claim. 
\epf

\begin{rem}\label{CHArem1}
For later use we record here some facts that are seen from the last proof.  
Suppose that $(L|K,A,H)$ is a SUSY PV extension with $k=L^D\, (=K^D)$, and $L$ is a minimal
splitting $D$-SUSY field of a $K$-finite free $K\# D$-suermodule $V$.
\begin{itemize}
\item[(1)]
Suppose $V=\bigoplus_{i=1}^{m+n}Kv_i$ as in \eqref{CHAeq2a1}, and $\op{Hom}_{K\# D}(V,L)$ has the
$k$-basis $f_1,\dots,f_{m+n}$ as in \eqref{CHAeq2a2}. 
Set 
\[
\boldsymbol{x}_j:={}^t(f_j(v_1),\dots,f_j(v_{m+n})),\quad 1\le j \le m+n, 
\]
and let
\begin{equation*}
X:=\big( f_j (v_i) \big)_{i,j}\, (=\big(\boldsymbol{x}_1,\dots, \boldsymbol{x}_{m+n}\big))
\end{equation*}
denote the matrix with the $j$-th column $\boldsymbol{x}_j$, 
as in \eqref{CHAeq2a5}.
We may and we do suppose that this $X$ is the $\mathsf{GL}_{m|n}$-primitive which appeared in 
the last part, (d) $\Rightarrow$ (a), of the proof, 
whence $X\in \mathsf{GL}_{m|n}(A)$.
It follows that the canonical isomorphism $\mu'$ in \eqref{CHAeq2a3} restricts to an isomorphism
\eq\label{CHAeq7}
\op{Hom}_{K\# D}(V,A)\ot_k A \overset{\simeq}{\longrightarrow} \op{Hom}_{K-}(V,A)=A^{m|n},
\eeq
under which 
\eq\label{CHAeq8}
f_i \mapsto \boldsymbol{x}_i,\quad 1\le i\le m+n.
\eeq
\item[(2)]
Let us be in situation of the last part of the proof. 
Suppose that $\boldsymbol{x}_j$ and $X$ are as in Part 1 above, and define
$Z$ and $W$ as in \eqref{CHAeq5}. 
Then we have the following.
\begin{itemize}
\item[(i)] $H$ is generated by all entries of $Z$ and of $W$. 
\item[(ii)] We have
\[
\cS(Z)=W,\quad Z=\cS(W),
\]
where $\cS$ denotes the antipode of $H$. 
\item[(iii)] The structure morphism $\theta_{\! A} : A \to A\ot_k H$ of $A$ satisfies
\[
\tc{\theta_{\! A}(\boldsymbol{x}_j)=\sum_{i=1}^{m+n}\boldsymbol{x}_i\ot_kz_{ij},\quad 1\le j\le m+n,}
\]
\tc{where $Z=\big( z_{ij} \big)_{i,j}$.}
\end{itemize}
In fact, the argument of 
\tc{verifying Condition (A1)} 
proves (i);
indeed, since the $k$-subalgebra of $H$ generated by all entries of $W$ and of $Z$
is shown to generate the left $A$-module $A\ot_KA$, 
\tc{it must coincide with $H$, as is seen in view of 
the isomorphism $\mu : A\ot_k H\overset{\simeq}{\longrightarrow} A\ot_KA$.}
The first equation of \eqref{CHAeq6} shows (iii).
This (iii), combined with the fact that
$Z$ and $W$ are mutual inverses, implies (ii). 
\end{itemize}
\end{rem}


\section{Tannaka-type theorem}\label{secTTT}

This section consists of two subsection.
In the first subsection we do not pose any additional assumption to $D$, while in
the second, we assume that $D$ satisfies (D1--2). 
 

\subsection{The dual of a $K\# D$-supermodule}\label{DUA}
Let us be in the situation of Section \ref{SPD}, and suppose thus that
$K$ is a $D$-SUSY field. 
Recall the identification
\begin{equation}\label{DUAeq1}
({}_K({}_D\M),\ot_K,K)=(({}_D\M)_K,\ot_K,K)
\end{equation}
of symmetric monoidal categories.
One may choose an appropriate one from the these two categories, according to the situation. 

Let $V$ be a $K$-finite free object in the category, and let
\[
{}^*V:=\op{Hom}_{K-}(V,K),\quad V^*:=\op{Hom}_{-K}(V,K)
\]
denote the mutually isomorphic objects in the category which are
defined in Proposition \ref{BSHprop1}, where the $L$ there
is now specified to $K$. 

\begin{lemma}\label{DUAlem1}
We have the following.
\begin{itemize}
\item[(1)]\
${}^*V$, together with the obvious morphisms
\[
\mathrm{eval} : V\ot_K {}^*V\to K,\quad \mathrm{coeval} : K\to {}^*V \ot_K V,
\]
form a left dual object of $V$, or explicitly, we have
\[
(\mathrm{eval}\ot \mathrm{id}_V)\circ(\mathrm{id}_V\ot\mathrm{coeval})=\mathrm{id}_V. 
\]
\item[(2)]\
$V^*$, together with the obvious morphisms
\[
\mathrm{eval}' : {}^*V\ot_K V\to K,\quad \mathrm{coeval}' : K\to V \ot_K V^*,
\]
form a right dual object of $V$, or explicitly, we have
\[
( \mathrm{id}_V \ot \mathrm{eval}')\circ(\mathrm{coeval}' \ot\mathrm{id}_V)=\mathrm{id}_V. 
\]
\end{itemize}
\end{lemma}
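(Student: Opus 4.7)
The plan is to exhibit the duality data explicitly on a homogeneous $K$-free basis of $V$, then verify that the constructions are well-defined morphisms in the symmetric monoidal category $({}_K({}_D\M),\ot_K,K)$ and satisfy the triangle identity.

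First I would fix a homogeneous $K$-free basis $v_1,\dots,v_{m+n}$ of $V$, and let $f_1,\dots,f_{m+n}\in {}^*V$ denote the corresponding dual basis with $f_i(v_j)=\delta_{ij}$; each $f_i$ is homogeneous with $|f_i|=|v_i|$. For Part (1), the ``obvious'' maps are $\mathrm{eval}(v\ot f):=(-1)^{|v||f|}f(v)$ and $\mathrm{coeval}(1):=\sum_i f_i\ot v_i$. Basis independence and $K$-bilinearity are immediate (for $\mathrm{coeval}$, a change-of-basis computation). The triangle identity, evaluated at a basis element $v_j$, reduces to $\sum_i(-1)^{|v_j||f_i|}f_i(v_j)v_i=v_j$, which is $\delta$-function bookkeeping that works by $|v_j|=|f_j|$. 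Part (2) is then deduced formally by transporting the structure along the $R$-superlinear isomorphism ${}^*V\simeq V^*$ of Proposition \ref{BSHprop1}(3), combined with the supersymmetry $c_{V,V^*}$; alternatively, one writes down $\mathrm{coeval}'(1):=\sum_i v_i\ot v_i^*$ and $\mathrm{eval}'(f\ot v):=f(v)$ (with a parallel sign) directly on the dual basis $v_i^*\in V^*$.

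The main obstacle is verifying $D$-linearity of $\mathrm{coeval}$, since the $D$-action on ${}^*V$ is the twisted one $df(v)=d_{[2]}f(\Sbi(d_{[1]})v)$ of Proposition \ref{BSHprop1}(1), and ${}^*V\ot_K V$ is an object of ${}_D\M$ via $d(f\ot v)=(-1)^{|d_{[2]}||f|}d_{[1]}f\ot d_{[2]}v$. Expanding
\[
 d\cdot\sum_i f_i\ot v_i=\sum_i(-1)^{|d_{[2]}||f_i|}(d_{[1]}f_i)\ot d_{[2]}v_i,
\]
then substituting the definition of $d_{[1]}f_i$ using \eqref{BSHeq6a} and iterating $\Delb$, the sum collapses via the first antipode identity $d_{[2]}\Sbi(d_{[1]})=\varepsilon(d)1$ in \eqref{BSHeq6} to $\varepsilon(d)\sum_i f_i\ot v_i$, as required; it is here that the sign convention in the definition of $\mathrm{eval}$ must match the twist $\Sbi$ in the $D$-action. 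The verification of $D$-linearity of $\mathrm{eval}$ is easier, requiring only the coassociativity of $\Delb$ and the counit axiom. Part (2) then follows by the analogous computation using $\Sb$ in place of $\Sbi$, with the sign isomorphism of Proposition \ref{BSHprop1}(3) tracking the transfer.

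In short, the proof is mechanical once one commits to a basis and to the sign in $\mathrm{eval}$; the only non-routine step is the super-sign accounting needed to reconcile the twisted $D$-action on ${}^*V$ (involving $\Sbi$) with the tensor-product action on ${}^*V\ot_K V$, which is precisely what the bosonization identities in \eqref{BSHeq6}--\eqref{BSHeq6a} are engineered to handle.
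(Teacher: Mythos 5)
Your proposal takes the same route as the paper: the paper's proof likewise writes $\mathrm{eval}(v\ot f)=f(v)$ and $\mathrm{coeval}(1)=\sum_i{}^*\!e_i\ot e_i$ on mutually dual homogeneous $K$-free bases, declares the triangle identity a direct check, isolates the $D$-linearity of the two structure maps as the one point needing care (this is exactly what the identities \eqref{BSHeq6}--\eqref{BSHeq6a} are engineered for, as you say), and gets Part (2) from Part (1) via the symmetry of the category together with the explicit isomorphism of Proposition \ref{BSHprop1} (3). One sign needs fixing, though: the factor $(-1)^{|v||f|}$ you insert into $\mathrm{eval}$ is inconsistent with your normalization $f_i(v_j)=\delta_{ij}$. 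With that convention your displayed triangle identity evaluates to $\sum_i(-1)^{|v_j||f_i|}\delta_{ij}\,v_i=(-1)^{|v_j|}v_j$, which is $-v_j$ for odd $v_j$; the relation $|f_j|=|v_j|$ makes the sign equal to $(-1)^{|v_j|}$ rather than cancelling it, so the bookkeeping does not ``work'' as claimed. The paper's convention is $\mathrm{eval}(v\ot f)=f(v)$ with no sign (correspondingly, left $K$-linearity of $f$ is understood so that $f(xv)=xf(v)$ on the nose, which also makes $\mathrm{eval}$ balanced over $K$), and then the triangle identity returns $v_j$ exactly. Apart from harmonizing that convention, your outline --- in particular the reduction of the $D$-linearity of $\mathrm{coeval}$ to $d_{[2]}\Sbi(d_{[1]})=\varepsilon(d)1$ --- matches the intended argument.
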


This is directly verified. As for Part 1, for example,
essential is to see that $\mathrm{eval}$ and $\mathrm{coeval}$ are $D$-linear;
they are defined explicitly by
\[
\mathrm{eval}(v\ot f)=f(v),\quad \mathrm{coeval}(1)=\sum_i {}^*\! e_i\ot e_i,
\]
where $(e_i)$ in $V$ and $({}^*\! e_i)$ in ${}^*V$ are mutually dual $K$-free bases.

Since the monoidal category in question is symmetric, the left and the right duals above are necessarily 
isomorphic with each other. 
We emphasize that an explicit isomorphism is already given by Proposition \ref{BSHprop1} (3). 

\subsection{The theorem; Hopf-algebraic formulation and familiar version}\label{TNK}
In this subsection we assume that $D$ satisfies (D1--2). 

Suppose that $(L|K, A, H)$ is a SUSY PV extension, and set $k:=L^D\, (=K^D)$. 
Recall that $A$ is $D$-simple, and $A^D=k$. 

We will say that supermodules are $K$-\emph{finite} or $A$-\emph{finite}, if they are
finitely generated over $K$ or over $A$. 

\begin{definition}\label{TNKdef1}
Here are some definitions.
\begin{itemize}
\item[(1)]
An object $M$ in ${}_A({}_D\M)$ is said to be \emph{splitting} if the canonical morphism
$\mu_M:A\otk M^D \to M$, $\mu_M(a\ot m)=am$ (see \eqref{DSAeq1}), which is injective by 
\tc{Proposition \ref{DSAprop1} (2),}
is isomorphic.
\item[(2)]
An object $V$ in ${}_K({}_D\M)$ is said to be \emph{split by} $A$, if the base extension 
$A\ot_K V$, which is naturally an object in ${}_A({}_D\M)$, is splitting. We let 
\[
\MAK
\]
denote the full subcategory of ${}_K({}_D\M)$ consisting of those objects which are $K$-finite, and 
split by $A$.
\end{itemize}
\end{definition}

By using the $K$-faithful flatness of $A$, one sees that
if $V$ in ${}_K({}_D\M)$ is split by $A$, then it is $K$-flat, or equivalently, $K$-free; 
\tc{see Proposition \ref{SUFprop2} (1).}
Therefore, every object $V$ of $\MAK$ is $K$-finite free, and hence has a dual
$V^*\, (\simeq {}^*V)$ in ${}_K({}_D\M)$. 

Notice that the symmetric monoidal category ${}_K({}_D\M)=({}_K({}_D\M),$ $\ot_K,K)$
is $k$-linear abelian. 

\begin{prop}\label{TNKprop1}
$\MAK$ is closed under constructing tensor products, direct sums, sub-quotients and duals
in ${}_K({}_D\M)$.\ Therefore, 
$\MAK$ forms a $k$-linear abelian, rigid symmetric monoidal
category with respect to the tensor product $\ot_K$, the unit object $K$ and the supersymmetry. 
\end{prop}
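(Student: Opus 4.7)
The plan is to verify closure of $\MAK$ in ${}_K({}_D\M)$ under the four named operations; from this the asserted $k$-linear abelian rigid symmetric monoidal structure on $\MAK$ is inherited from that on ${}_K({}_D\M)$. The unit $K$ lies in $\MAK$ since $A\otimes_K K=A=A\otimes_k A^D$, using $A^D=k$. For $V,W\in\MAK$ with splittings $A\otimes_K V\simeq A\otimes_k U_V$ and $A\otimes_K W\simeq A\otimes_k U_W$ (where $U_V=(A\otimes_K V)^D$, $U_W=(A\otimes_K W)^D$), the chain
\[
A\otimes_K(V\otimes_K W)\simeq (A\otimes_KV)\otimes_A(A\otimes_KW)\simeq A\otimes_k(U_V\otimes_k U_W)
\]
exhibits $V\otimes_KW$ as an object of $\MAK$, with $D$-invariants $U_V\otimes_k U_W$. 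Direct sums are immediate since $A\otimes_K(-)$ commutes with $\oplus$. For duals, $V$ is $K$-finite free so $V^*$ is as well, and the natural isomorphism $A\otimes_K V^*\simeq(A\otimes_KV)^*\simeq A\otimes_k U_V^*$ in ${}_A({}_D\M)$—valid by finite-dimensionality of $U_V$—confirms $V^*\in\MAK$.

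The main obstacle is closure under sub-quotients. Given $V\in\MAK$ with splitting $A\otimes_K V\simeq A\otimes_k U$ and a sub-object $W\hookrightarrow V$ in ${}_K({}_D\M)$, $K$-faithful flatness of $A$ (Theorem \ref{DEFthm1}) places $N:=A\otimes_K W$ as a $D$-stable $A$-sub-supermodule of $M:=A\otimes_K V\simeq A\otimes_k U$. The strategy is to ascend to $L$, where $D$-simplicity acts decisively, and then descend. The key sub-lemma to establish is: \emph{every $D$-stable $L$-sub-supermodule $N''$ of $L\otimes_k U$, with $D$ acting on $L$ alone, equals $L\otimes_k (N''\cap U)$.}

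For the sub-lemma, take $n=\sum_{j=1}^r l_j\otimes u_j\in N''$ with minimal $r$, so that the homogeneous $l_j$ and $u_j$ are both $k$-linearly independent. Proposition \ref{CHAprop1}, applied to the $D$-simple $D$-SUSY field $L$, produces homogeneous $d_1,\dots,d_r\in D$ for which $\big(d_il_j\big)_{i,j}\in\mathsf{GL}_{m|n}(L)$ for the appropriate parity type $m|n$. Each $d_in=\sum_j(d_il_j)\otimes u_j$ lies in $N''$, and inverting this matrix expresses every $1\otimes u_j$ as an $L$-linear combination of $\{d_in\}_i$, whence $1\otimes u_j\in N''$ and $u_j\in N''\cap U$. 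Applying the sub-lemma to $L\otimes_A N=L\otimes_K W\subseteq L\otimes_K V=L\otimes_k U$ yields $L\otimes_K W=L\otimes_k U''$ for some $U''\subseteq U$; since $A\otimes_K W=(A\otimes_K V)\cap(L\otimes_K W)$ inside $L\otimes_K V$—which follows from flatness of $L$ over $A$ applied to the short exact sequence for $W\subseteq V$—this descends to $A\otimes_K W=A\otimes_k U''$, a splitting, so $W\in\MAK$. The quotient case follows from the induced isomorphism $A\otimes_K(V/W)\simeq A\otimes_k(U/U'')$. The hardest step is the sub-lemma, which is precisely where $D$-simplicity of $L$—not merely of the principal algebra $A$—becomes essential, via the matrix criterion of Proposition \ref{CHAprop1}.
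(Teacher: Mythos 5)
Your treatment of the unit, tensor products, direct sums and duals is correct and agrees with the paper's reduction (via the monoidal, dual-preserving base-extension functor $A\ot_K$) to statements about splitting objects in ${}_A({}_D\M)$. Your sub-lemma over $L$ is also correct: the minimal-length representation combined with Proposition \ref{CHAprop1} does show that every $D$-stable $L$-sub-supermodule of $L\ot_k U$ has the form $L\ot_k U''$. This Wronskian-style route is genuinely different from the paper's, which never leaves $A$: it runs a diagram chase of the row $0\to A\ot_k N^D\to A\ot_k M^D\to A\ot_k(M/N)^D$ against $0\to N\to M\to M/N\to 0$, first deducing that $\mu_{M/N}$ is surjective (lift through the isomorphism $\mu_M$ and project) and then that $\mu_N$ is surjective, using only the injectivity of the $\mu$'s supplied by the $D$-simplicity of $A$.

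The gap is in your descent from $L$ back to $A$. The identity $A\ot_KW=(A\ot_KV)\cap(L\ot_KW)$ inside $L\ot_KV$ does \emph{not} follow from flatness of $L$ over $A$: flatness gives exactness of $0\to L\ot_KW\to L\ot_KV\to L\ot_K(V/W)\to 0$, but the intersection formula is equivalent to the injectivity of $A\ot_K(V/W)\to L\ot_K(V/W)=S^{-1}\bigl(A\ot_K(V/W)\bigr)$, i.e., to the absence of $S$-torsion in $A\ot_K(V/W)$, where $S$ is the set of even $A$-regular elements. Since $L=\Quot(A)$ is a localization of $A$ and is not faithfully flat over it, a quotient of the free module $A\ot_KV$ can a priori have such torsion; ruling it out amounts to knowing that $V/W$ is split by $A$ (so that $A\ot_K(V/W)$ is $A$-free), which is part of what you are trying to prove, so the step as written is circular. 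The fix is to establish first that $\mu_{M/N}$ is an isomorphism and then deduce the statement for $N$ by the exactness argument directly over $A$, as the paper does --- at which point the excursion through $L$ becomes unnecessary.
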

\pf
We have only to prove the claim: 
\emph{all $A$-finite splitting objects in ${}_A({}_D\M)$ 
are closed under the prescribed constructions.} 
Indeed, the claim implies the first half of the
proposition, from which the second half will easily follow. 
To see this implication for constructing (i)~tensor products and (ii)~duals, one should notice
that (i)~the base-extension functor
\eq\label{TNKeq1}
A\ot_K : ({}_K({}_D\M), \ot_K, K)\to ({}_A({}_D\M),\ot_A, A)
\eeq
is monoidal, and (ii)~it preserves the dual, or explicitly, we have a canonical isomorphism
\[
\op{Hom}_{K-}(V, K)\ot_K A \overset{\simeq}{\longrightarrow} \op{Hom}_{K-}(V, A)=\op{Hom}_{A-}(A\ot_K V, A)
\] 
in $({}_D\M)_A$ for every $K$-finite free $V$ in ${}_K({}_D\M)$. 

The claim for tensor products and direct sums are easy to prove. 
To prove the remaining, let $M$ be an $A$-finite splitting object in ${}_A({}_D\M)$. 
Obviously, one has $\dim_kM^D<\infty$. 
If $N$ is a sub-object of $M$, we have the commutative diagram with exact rows
\begin{equation*}
\begin{xy}
(0,7)   *++{0}  ="1",
(20,7) *++{A\ot_k N^D}  ="2",
(46,7) *++{A\ot_k M^D}  ="3",
(76,7) *++{A\ot_k (M/N)^D} = "4",
(0,-8)  *++{0} ="5",
(20,-8) *++{N} ="6",
(46,-8)  *++{M} ="7",
(76,-8)  *++{M/N} ="8",
(96,-8)  *++{0.} ="9",
{"1" \SelectTips{cm}{} \ar @{->} "2"},
{"2" \SelectTips{cm}{} \ar @{->} "3"},
{"3" \SelectTips{cm}{} \ar @{->} "4"},
{"5" \SelectTips{cm}{} \ar@{->} "6"},
{"6" \SelectTips{cm}{} \ar@{->} "7"},
{"7" \SelectTips{cm}{} \ar @{->} "8"},
{"8" \SelectTips{cm}{} \ar @{->} "9"},
{"2" \SelectTips{cm}{} \ar @{>->}^{\mu_N} "6"},
{"3" \SelectTips{cm}{} \ar @{->}^{\mu_M}_{\simeq} "7"},
{"4" \SelectTips{cm}{} \ar @{>->}^{\mu_{M/N}} "8"},
\end{xy}
\end{equation*}
Since $\mu_M$ is isomorphic, 
we see first, $\mu_{M/N}$, and then $\mu_N$ are isomorphic. This proves the claim for sub-quotients. 

Identify so as $M=A\ot_k M^D$ through $\mu_M$. Then we have a natural isomorphism
\[
{}^*\! M=\op{Hom}_{A-}(A\ot_k M^D, A)\overset{\simeq}{\longrightarrow} 
\op{Hom}_{k}(M^D, A)=\op{Hom}_k(M^D,k)\ot_k A
\]
in $({}_D\M)_A$. It is then obvious that $\mu_{{}^*\! M}$ is isomorphic. This proves the claim for duals.
\epf 

Let 
\[
\mathtt{Smod}^H=(\mathtt{Smod}^H, \ot_k, k)
\]
denote the monoidal full subcategory of $\M^H$ which consists of all $k$-finite-dimensional $H$-supercomodules.
This is a $k$-linear abelian, rigid symmetric monoidal category, and it may be identified with
\[
{}_{\G(L|K)}\mathtt{Smod},
\]
which presents the category of finite-dimensional supermodules over the Galois supergroup $\G(L|K)$ of $L|K$; see \eqref{ASGeq1}. 

\begin{theorem}\label{TNKthm1}
For every object $V$ in ${}_D\mathtt{Smod}_{(L|K)}$, $(V\ot_KA)^D$ is an object in ${}_{\G(L|K)}\mathtt{Smod}$,
where $H$ is supposed to coact through the tensor factor $A$ of $V\ot_KA$. 
Moreover, the assignment $V \mapsto (V\ot_KA)^D$
gives rise to a $k$-linear, symmetric monoidal equivalence
\eq\label{TNKeq2}
{}_D\mathtt{Smod}_{(A|K)}\approx {}_{\G(L|K)}\mathtt{Smod}
\eeq
between the $k$-linear abelian, rigid symmetric monoidal categories. 
\end{theorem}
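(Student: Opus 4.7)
The strategy is to realize $T(V) := (V\otimes_K A)^D$ as an enrichment of the Hopf-module equivalence (Theorem \ref{HGEthm0}) that additionally tracks the $D$-action, and to use the splitting hypothesis to collapse everything on $H$-comodules to a finite-dimensional situation.

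First I would verify that $T$ lands in $\mathtt{Smod}^H = {}_{\G(L|K)}\mathtt{Smod}$. Since $H = (A\otimes_K A)^D$ and $\Theta_A$ is $D$-equivariant, the coaction $\theta_{\! A} : A \to A\otimes_k H$ is $D$-linear with respect to the trivial $D$-action on $H$. Hence on $V\otimes_K A$, equipped with the diagonal $D$-action and the $H$-coaction through the second tensorand, the two structures commute, and $T(V)$ inherits the structure of an $H$-subcomodule of $V\otimes_K A$. To see $\dim_k T(V)<\infty$, identify $V\otimes_K A\simeq A\otimes_K V$ in ${}_A({}_D\M)$ via the supersymmetry; since $V\in\MAK$ this is a splitting object, so $\mu\colon A\otimes_k T(V) \overset{\simeq}{\longrightarrow} A\otimes_K V$, and the RHS is $A$-finite free (cf.\ Proposition \ref{SUFprop2}(2)), whence $T(V)$ is finite-dimensional over $k$.

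Second I would construct a quasi-inverse $S\colon {}_{\G(L|K)}\mathtt{Smod} \to \MAK$ by $S(N) := (N\otimes_k A)^{\mathrm{co}H}$, with $H$ coacting diagonally (the given coaction on $N$ together with $\theta_{\! A}$ on $A$) and $D$ acting only through the $A$-factor. The $D$-linearity of $\theta_{\! A}$ again ensures the two structures commute, so $S(N)$ is $D$-stable; because $A^{\mathrm{co}H}=K$, it is a $K$-submodule of $N\otimes_k A$, hence an object of ${}_K({}_D\M)$. Applying the Hopf-module theorem to the $H$-Galois extension $A|K$ gives a canonical isomorphism $S(N)\otimes_K A \overset{\simeq}{\longrightarrow} N\otimes_k A$ in ${}_A(\M^H)$, which shows that $S(N)$ is $K$-finite free of rank $\dim_k N$ and that $A\otimes_K S(N)$ is splitting in ${}_A({}_D\M)$; thus $S(N)\in\MAK$. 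The natural transformations $V\to ST(V)$ and $TS(N)\to N$ are isomorphisms, because on the underlying module/comodule level they are exactly the unit and counit of the Hopf-module equivalence, and one checks they intertwine the $D$-action and $H$-coaction from the construction.

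Finally, monoidality. For $A$-finite splitting objects $M, M'\in {}_A({}_D\M)$ the multiplications $\mu_M\otimes\mu_{M'}$ induce a natural $H$-comodule isomorphism $M^D\otimes_k M'^D\overset{\simeq}{\longrightarrow}(M\otimes_A M')^D$, using that $M\otimes_A M'$ is again splitting by Proposition \ref{TNKprop1}; combining this with the monoidality of the base extension $A\otimes_K-$ in \eqref{TNKeq1} yields a canonical monoidal structure on $T$, together with compatibility with the symmetric braidings from naturality of the supersymmetry. The unit constraint reads $T(K) = A^D = k$, which holds by the $D$-simplicity of $A$ (Proposition \ref{DSAprop1}(1) combined with Theorem \ref{INDthm1}). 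The main obstacle I anticipate is bookkeeping: ensuring that all compatibility squares (commutation of $D$-action with $H$-coaction, $D$-equivariance of the Hopf-module unit and counit, monoidal coherence with the supersymmetry-induced signs) hold simultaneously. The substantive content, namely the reduction to Theorem \ref{HGEthm0} via the splitting hypothesis, is otherwise straightforward.
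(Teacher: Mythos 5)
Your proposal is correct and follows essentially the same route as the paper: the paper factors the equivalence through the intermediate category ${}_A({}_D\mathtt{Smod}^H)_{\op{split}}$ of $A$-finite splitting objects, composing the $D$-equivariant Hopf-module equivalence $V\mapsto V\ot_KA$ with the invariants functor $M\mapsto M^D$ (quasi-inverse $A\otk-$), and your $T$ and $S$ are exactly these composites written out directly. The ingredients you invoke (the Hopf-module theorem for the $H$-Galois extension $A|K$, $D$-linearity of $\theta_{\! A}$, the splitting isomorphism $\mu_M$, and Proposition \ref{TNKprop1} for monoidality) are the same ones the paper uses.
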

\pf
Let ${}_D\M^H=({}_D\M^H,\ot_k, k)$ denote the symmetric monoidal category which consists of all $D$-supermodule 
$M$ equipped with $D$-superlinear $H$-supercomodule structure $M \to M \ot_k H$, where
$D$ is supposed to act on  $M\ot_k H$ through the tensor factor $M$. Since $A$ is an algebra in this category,
we have the category ${}_A({}_D\M^H)$ of $A$-modules in the category, which, in fact, forms a symmetric monoidal 
category, $({}_A({}_D\M^H),\ot_A, A)$. As an obvious variant of the Hopf-module theorem (see Theorem \ref{HGEthm0}), 
a $k$-linear, symmetric monoidal equivalence
\[
{}_K({}_D\M)\approx {}_A({}_D\M^H)
\]
is given by 
$V \mapsto V\ot_K A$  (cf. \eqref{TNKeq1}) and
$M^{\op{co} H}\mapsfrom M$.
We see that this restricts to the equivalence of the same kind
\eq\label{TNKeq3}
{}_D\mathtt{Smod}_{(A|K)}\approx {}_A({}_D\mathtt{Smod}^H)_{\op{split}},
\eeq
where the latter denotes the monoidal full subcategory of  ${}_A({}_D\M^H)$ which consists of all 
$A$-finite splitting objects. 

Notice that for $M$ in ${}_A({}_D\mathtt{Smod}^H)_{\op{split}}$, the isomorphism 
$\mu_M : A\ot_k M^D\overset{\simeq}{\longrightarrow}M$ is
$H$-super-colinear, as well, where $A\ot_k M^D$ is supposed to be the tensor product
of two $H$-supercomodules. Then one sees that a $k$-linear, symmetric monoidal equivalence
\eq\label{TNKeq4}
{}_A({}_D\mathtt{Smod}^H)_{\op{split}}\approx {}_{\G(L|K)}\mathtt{Smod}
\eeq
is given by the symmetric monoidal functors 
\[
M\mapsto M^D \quad\text{and}\quad  A \ot_k U \mapsfrom U
\]
equipped with the obvious monoidal structures, which are mutually quasi-inverses. 

The composite of the two equivalences above proves the theorem.
\epf

\begin{rem}\label{TNKrem1}
\tc{As is seen from the proof above,}
the category equivalences \eqref{TNKeq3} and \eqref{TNKeq4} extend, in an obvious manner, to those among the 
larger categories that are enlarged by removing the finiteness restrictions. Consequently, \eqref{TNKeq2} 
extends to a $k$-linear, symmetric monoidal equivalence
\[
{}_D\M_{(A|K)}\approx {}_{\G(L|K)}\M\, (=\M^H) 
\]
between the $k$-linear abelian, symmetric monoidal categories, 
where ${}_D\M_{(A|K)}$ denotes the monoidal full subcategory 
of ${}_K({}_D\M)$ consisting of all objects that are split by $A$.
\end{rem}

Notice from Theorem \ref{TNKthm1} that $L$ is a minimal splitting $D$-SUSY field of some
$K$-finite free object in ${}_K({}_D\M)$. 

\begin{corollary}\label{TNKcor1}
If $L$ is a minimal splitting $D$-SUSY field of a $K$-finite free object $V$ in ${}_K({}_D\M)$, then $V$ is contained
in ${}_D\mathtt{Smod}_{(A|K)}$, 
and ${}_D\mathtt{Smod}_{(A|K)}$ is, as an abelian, rigid symmetric monoidal category, generated by $V$. 
This means that the category consists of all sub-quotients 
of the finite direct sums
\[
W_1\oplus W_2\oplus \dots \oplus W_r,\quad r>0
\]
where each $W_i$ is isomorphic to the tensor product of some finitely many objects which are
isomorphic to either $V$ or its dual $V^*\, (\simeq {}^*V)$; we remark that the relevant constructions 
are the same as the ones done in ${}_K({}_D\M)$, as is seen from Proposition \ref{TNKprop1}. 
\end{corollary}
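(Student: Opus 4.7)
First, I verify that $V\in\MAK$. Using Proposition \ref{SPDprop1}, choose a homogeneous $k$-basis $f_1,\dots,f_{m+n}$ of $\op{Hom}_{K\#D}(V,L)$ and a homogeneous $K$-free basis $v_1,\dots,v_{m+n}$ of $V$. The argument proving (c)$\Rightarrow$(d) of Theorem \ref{CHAthm1} yields the $\mathsf{GL}_{m|n}$-primitive matrix $X=\big(f_j(v_i)\big)_{i,j}\in\mathsf{GL}_{m|n}(L)$, and then the argument (d)$\Rightarrow$(a) constructs $K[x_{ij},y_{ij}]$ as the principal $D$-superalgebra of the SUSY PV extension $L|K$. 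By Proposition \ref{DEFprop1}(1) this sub-superalgebra coincides with the given $A$, so $f_j\in\op{Hom}_{K\#D}(V,A)$ for every $j$. The isomorphism \eqref{CHAeq7} of Remark \ref{CHArem1}(1), combined with Proposition \ref{BSHprop2}, is then precisely the canonical morphism $\mu_{A\otimes_K{}^*V}:A\otimes_k(A\otimes_K{}^*V)^D\to A\otimes_K{}^*V$, which is thereby an isomorphism. Hence ${}^*V\in\MAK$, and by closure of $\MAK$ under duals (Proposition \ref{TNKprop1}) we conclude $V\simeq{}^*({}^*V)\in\MAK$.

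Second, I translate the generation statement through the Tannakian equivalence $\Phi:\MAK\overset{\approx}{\to}\mathtt{Smod}^H$ of Theorem \ref{TNKthm1}: since $\Phi$ is $k$-linear symmetric monoidal, it preserves duals, sub-objects, quotients, finite direct sums, and tensor products. Setting $\widehat{V}:=\Phi(V)=(V\otimes_KA)^D$, it suffices to show that every object of $\mathtt{Smod}^H$ is a sub-quotient of a finite direct sum of tensor products of copies of $\widehat{V}$ and of $\widehat{V}^*\simeq\Phi({}^*V)$. The standard Tannakian criterion, which continues to hold in the super context, asserts this conclusion whenever $\widehat{V}$ generates $H$ as a Hopf superalgebra---equivalently, whenever the matrix-coefficient subcoalgebra $C(\widehat{V})\subseteq H$ together with its antipode image generates $H$ as a $k$-algebra.

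Third, I verify the criterion using Remark \ref{CHArem1}(2). The $k$-basis of $\widehat{V}^*\simeq(A\otimes_K{}^*V)^D$ consists of the $\mathbf{x}_j=f_j$, and by item (iii) of that remark the $H$-coaction is $\theta_{\!A}(\mathbf{x}_j)=\sum_i\mathbf{x}_i\otimes z_{ij}$, so $C(\widehat{V}^*)$ is the $k$-span of the entries $z_{ij}$ of $Z$. By items (i) and (ii) we have $H=k[z_{ij},w_{ij}]$ with $w_{ij}=\cS(z_{ij})$, whence $\widehat{V}^*$, and hence $\widehat{V}\simeq(\widehat{V}^*)^*$, generates $H$ as a Hopf superalgebra. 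The main obstacle is the super version of the Tannakian criterion invoked in the previous paragraph: the classical (Deligne--Milne type) argument adapts, but requires careful tracking of the supersymmetry signs $c_{-,-}$ when one expands matrix coefficients of tensor powers $\widehat{V}^{\otimes a}\otimes(\widehat{V}^*)^{\otimes b}$ in terms of those of $\widehat{V}$ and $\widehat{V}^*$; this verification is routine given the smoothness of $H$ (guaranteed by $\op{char}R=0$), but it is the unique point where one uses more than the results already established in the paper.
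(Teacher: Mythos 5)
Your proposal is correct and follows essentially the same route as the paper: both reduce to showing that ${}^*V$ is split by $A$ via the isomorphism \eqref{CHAeq7}, then pass through the equivalence of Theorem \ref{TNKthm1} and reduce generation to the fact (Remark \ref{CHArem1} (2)) that the coefficient space of $U=\op{Hom}_{K\#D}(V,A)$ is spanned by the entries of $Z$, which together with their antipode images $W=\cS(Z)$ generate $H$. The super Tannakian criterion you flag at the end is likewise invoked without proof in the paper (as the ``or equivalently'' step), so your treatment is no less complete than the original.
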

\pf
We may suppose that we are in the situation of Remark \ref{CHArem1}. In what follows,
(i), (ii) and (iii) refer to those of Part 2 of the Remark. 

To prove $V \in {}_D\mathtt{Smod}_{(A|K)}$, we may prove that ${}^*V=\op{Hom}_{K-}(V,K)$ instead is in the category.
This fact for ${}^*V$ 
follows, since the isomorphism \eqref{CHAeq7} shows that ${}^*V\ot_KA\, (=\op{Hom}_{A-}(V,A))$
is splitting. To prove that $V$ generates $\mathtt{Smod}_{(A|K)}$, we may replace $V$ by ${}^*V$, again. To prove the desired result for ${}^*V$, 
it suffices by Theorem \ref{TNKthm1} that the corresponding 
$H$-supercomodule
\[
U:=({}^*V\ot_K A)^D\, (=\op{Hom}_{K\# D}(V,A))
\]
generates $\mathtt{Smod}^H$, or equivalently, the coefficient super-vector space $\op{cf}(U)$ of $U$, together with the image $\cS(\op{cf}(U))$ under the antipode $\cS$, generate $H$. Here, 
$\op{cf}(U)$ is the smallest super-vector subspace $C$ of $H$
such that $\theta(U) \subset U\ot_k C$, where $\theta : U \to U \ot_k H$ denotes the structure morphism of $U$. We see from \eqref{CHAeq8} and  (iii) that $\op{cf}(U)$ is spanned by the
entries 
\tc{$z_{ij}$}
of the matrix $Z$, the desired result follows from (i)--(ii). 
\epf

\begin{rem}\label{TNKrem2}
Let us be the situation of Corollary \ref{TNKcor1} above. 
In view of the result there proved, we may denote ${}_D\mathtt{Smod}_{(A|K)}$
alternatively by $\langle\langle V\rangle\rangle_{\ot}$, using the generator $V$, and reformulate the $k$-linear, 
symmetric monoidal equivalence \eqref{TNKeq2}
as
\[
\langle\langle V\rangle\rangle_{\ot}\approx {}_{\G(L|K)}\mathtt{Smod}, 
\]
\tc{which is consistent with}
the standard formulation of the Tannaka-type Theorem found in \cite{Del} and other literature. 
\end{rem}

\section{Unique existence of minimal splitting $D$-SUSY fields}\label{secUEX}

In this section we assume
\begin{itemize}
\item[(D1)] $\dim (\g_D)_1<\infty$, and
\item[(D3)] $D$ is irreducible, or explicitly, $D=U(\g_D)$.
\end{itemize}

\begin{theorem}\label{UEXthm1}
Let $K$ be a $D$-simple $D$-SUSY field such that the field $K^D$ of $D$-invariants in $K$ is algebraically
closed. Then, given a $K$-finite free $K$-module $V$ in ${}_D\M$, there exists a $D$-simple minimal-splitting
$D$-SUSY field $L$ for $V$ such that $L^D=K^D$. Such an $L$ is unique up to $K$-algebra isomorphism in ${}_D\M$. 
\end{theorem}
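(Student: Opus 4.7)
\emph{Proof proposal.}\ The overall strategy is to build on Theorem \ref{CHAthm1} for existence, on a tensor-product and comparison-matrix argument for uniqueness, and to reduce the delicate identification of $D$-invariants in both cases to Takeuchi's unique-existence theorem for ordinary PV extensions via Propositions \ref{DSAprop2} and \ref{DSAprop3}.

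For \textbf{existence}, fix a homogeneous $K$-basis of $V$ of type $m|n$ to produce the representing map $F\colon D\to\mathsf{M}_{m|n}(K)$, and form the $K$-base extension of \eqref{ASGeq2},
\[
B:=K[t_{ij}:1\le i,j\le m+n]_{\det_0}.
\]
I would equip $B$ with the unique $D$-action extending the given one on $K$, obeying the super-Leibniz rule \eqref{DSAeq0c1}, and sending the generic matrix $T=(t_{ij})$ to $F(d)T$; identity \eqref{CHAeq4} ensures consistency on $T^{-1}$. By Zorn's lemma on the Noetherian superalgebra $B$, pick a maximal $D$-stable super-ideal $\mathfrak{m}\subset B$ (arranged so that $\overline{B/\mathfrak{m}}$ is an integral domain), and set $A:=B/\mathfrak{m}$. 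Then $A$ is $D$-simple; the image $X$ of $T$ in $A$ is $\mathsf{GL}_{m|n}$-primitive; $A=K[x_{ij},\det_0(X)^{-1}]$; and by Proposition \ref{EXDprop1} the $D$-action extends to $L:=\Quot(A)$, which is a SUSY field by Example \ref{SUFex1}.

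The \textbf{core technical step} is $L^D=K^D$. By Proposition \ref{DSAprop2} applied to $L$, this reduces to $\overline{L}^{\D}=\K^{\D}$. Under (D3) one has $\D=U((\g_D)_0)$, and by Proposition \ref{DSAprop3} the $\D$-algebra $A_{\op{red}}$ is $\D$-simple and an integral domain with fraction field $\overline{L}$. The image of $X$ in $A_{\op{red}}$ is $\mathsf{GL}_{m|n}(A_{\op{red}})$-primitive for the induced $\D$-action, and $\overline{L}$ is generated over $\K$ by its entries; hence $\overline{L}$ is a minimal splitting $\D$-field for the associated $\K\#\D$-module $\widetilde{V}$ of Definition \ref{ASSdef1}. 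Takeuchi's unique-existence theorem \cite{T}, whose hypothesis is precisely the algebraic closedness of $\K^{\D}\, (=K^D)$, then yields $\overline{L}^{\D}=\K^{\D}$, whence $L^D=K^D$. Theorem \ref{CHAthm1}(d)$\Rightarrow$(a) now makes $L|K$ a SUSY PV extension with principal $D$-superalgebra $A$, so $L$ is by construction a minimal splitting $D$-SUSY field for $V$.

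For \textbf{uniqueness}, let $L_1,L_2$ be two such fields, with principal $D$-superalgebras $A_1,A_2$ and $\mathsf{GL}_{m|n}$-primitive matrices $X_1,X_2$ realizing the same $F$. Because $D=U(\g_D)$ is generated by super-primitive elements, a direct super-Leibniz computation yields
\[
Y:=(X_1\otimes_K 1)^{-1}(1\otimes_K X_2)\in\mathsf{GL}_{m|n}\bigl((A_1\otimes_K A_2)^D\bigr).
\]
Pick a maximal $D$-stable super-ideal of $A_1\otimes_K A_2$ with integral even reduction, and let $B$ be the resulting $D$-simple quotient. The identical reduction used in existence gives $B^D=K^D$, so the image of $Y$ in $B$ has entries in $K$; hence inside $B$ the matrices $X_1$ and $X_2$ differ by an element of $\mathsf{GL}_{m|n}(K)$, forcing the $K$-subalgebras $\iota_1(A_1),\iota_2(A_2)\subset B$ to coincide. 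Injectivity of $\iota_i$ (from $D$-simplicity of $A_i$) then yields a $K$-superalgebra isomorphism $A_1\simeq A_2$ in ${}_D\M$, extending via Proposition \ref{EXDprop1} to $L_1\simeq L_2$.

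The \textbf{main obstacle} is the verification of $A^D=K^D$ (and analogously $B^D=K^D$). Though the reduction-to-Takeuchi strategy via Propositions \ref{DSAprop2} and \ref{DSAprop3} is conceptually transparent, executing it requires careful control over how the maximal $D$-stable super-ideal upstairs interacts with the $\D$-stable prime ideals on the reduced even quotient, so as to certify that $A_{\op{red}}$ genuinely realizes a minimal splitting $\D$-field for $\widetilde{V}$; only then does the algebraic-closure hypothesis on $K^D$ deliver the invariant identification we need.
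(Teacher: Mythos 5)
Your overall blueprint for existence matches the paper's: build the universal solution algebra $\cO_K=K\otk\cO_k(\mathsf{GL}_{m|n})$ with the $D$-action determined by $F$, pass to a $D$-simple quotient $A$, use the algebraic closedness of $k$ (via reduction to the purely even $\D$-situation) to control invariants, and set $L:=\Quot(A)$. Your uniqueness argument is a genuinely different and legitimate route: you compare the two fundamental matrices inside a $D$-simple quotient of $A_1\ot_KA_2$ and show they differ by a matrix over $k$, whereas the paper instead compares $\dim_k\op{Hom}_{K\#D}(V,L^{(i)})$ with $\dim_k\op{Hom}_{K\#D}(V,L)$ and concludes $K[V;L^{(i)}]=K[V;L]$. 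Your version is closer to the classical PV argument and is arguably more concrete; the paper's avoids having to normalize the two matrices against the same $F$.

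There is, however, a genuine gap in the existence part: you never establish that $A$ is smooth, and hence never justify that $L=\Quot(A)$ is a SUSY field. Example \ref{SUFex1}, which you invoke, requires $A$ to be Noetherian \emph{and smooth} with $\overline{A}$ a domain; a $D$-simple quotient of a localized polynomial superalgebra is not smooth for free. The paper obtains smoothness only after proving $A^D=k$, constructing $H:=(A\ot_KA)^D$ as a Hopf superalgebra, showing $A|K$ is $H$-Galois (via the auxiliary $J$-Galois extension $\cO_K|K$ and the analysis of $\widetilde{A}=A/K_1A$), and then applying Proposition \ref{HGEprop1} and Theorem \ref{HGEthm1}; this is the technical core of Step 2 and is entirely absent from your sketch. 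The omission also creates a circularity in your ``core technical step'': you apply Proposition \ref{DSAprop2} to $L$, which presupposes that $\overline{L}$ is a field and $I_L$ is nilpotent, i.e.\ that $L$ is already known to be a SUSY field. The paper avoids this by proving $A^D=k$ \emph{first}, via the chain $k=K^D\subset A^D\hookrightarrow\overline{A}^{\D}\to Q^{\D}=k$ for a $\D$-simple quotient $Q$ of $\overline{A}$. Finally, the result you should cite there is not the unique-existence theorem of \cite{T} but \cite[Theorem 4.4]{T} (invariants of a finitely generated $\D$-simple algebra are algebraic over $\K^{\D}$); unique existence by itself does not tell you that the particular minimal splitting $\D$-field $\L$ you produced has no new constants. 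The same smoothness/invariants machinery is also silently needed in your uniqueness step to get $B^D=k$.
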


To prove the theorem
let $k:=K^D$, as before. Suppose that $V=\bigoplus_{i=1}^{m+n}Kv_i$ is of rank $m|n$, 
having a $K$-free basis consisting of homogeneous elements
$v_1,\dots,v_m~|~v_{m+1},$ $\dots,v_{m+n}$. Excluding the trivial case we may suppose $m>0$ or $n>0$. We divide
the proof into two parts, for the existence and for the uniqueness. 

\pf[Proof of the existence]\ 
We will use matrix presentation such as used in Theorem \ref{CHAthm1} and its proof. 

\medskip

\emph{Step 1}.\
Set $\v:={}^t(v_1,\dots,v_m,v_{m+1},\dots,v_{m+n})$.  
There uniquely exists 
\tc{an $R$-superlinear morphism $F:D\to\mathsf{M}_{m|n}(K)$}
such that
\eq\label{UEXeq1}
d\v =F(d)\v,\quad d \in D.
\eeq
Since $V$ is a $D$-supermodule, we have
\eq\label{UEXeq2}
F(1)=I; \quad F(dd')=(d_{[1]}F(d'))F(d_{[2]}),\ d, d'\in D. 
\eeq

Let $\cO_k:=\cO_k(\mathsf{GL}_{m|n})$; recall from Example \ref{ASGex1} that
this is the polynomial $k$-superalgebra in the indeterminates $t_{ij}$, $1\le i,j \le m+n$, 
localized at $\op{det}_0$; see \eqref{ASGeq3}. 
Set $T:=\big( t_{ij}\big)_{i,j}$. 
This step aims to prove: 

\medskip

\noindent
{\bf Claim.}\ \emph{The base extension $\cO_K =K\otk \cO_k$ of $\cO_k$ is made 
into a $K$-algebra in ${}_D\M$, so that}
\eq\label{UEXeq3}
d(1\otk T)=(F(d)\otk 1)(1\otk T),\quad d \in D. 
\eeq

\medskip

Recall from the paragraph preceding Remark \ref{DSArem1} that given an algebra  
in ${}_D\M$, there is naturally associated a superalgebra morphism such as $\rho$ in \eqref{DSAeq1a}.
Notice that the algebra structure recovers from the $\rho$ by the formula 
\begin{equation}\label{UEXeq3a}
da=\rho(a)(d)
\end{equation}
found in \eqref{DSAeq1a}.
To prove the claim, we wish to construct a superalgebra morphism, $\cO_K\to\op{Hom}(D,\cO_K)$,
which is to be associated with the desired $K$-algebra structure on $\cO_K$ in ${}_D\M$. 

Notice that we have a $k$-superalgebra morphism,
\[
\beta : \cO_k \to \op{Hom}(D, K),\ \beta(T)=F.
\]
This is seen to be 
well-defined at $\op{det}_0^{-1}$, in view of the assumption (D3) and
the fact that $\beta(\op{det}_0)(1)\, (=1)$ is invertible in $K$; see the proof of Proposition \ref{EXDprop1} (1). 
Recall that $\op{Hom}(D, K)$ is regarded as a $K$-superalgebra (in fact, as a $K$-algebra in ${}_D\M$)
through the associated $\rho_K$;
\tc{see the paragraph following Proposition \ref{DSAprop1}. Let}
\[
\tc{\widehat{\beta} : \cO_K=K\otk \cO_k \to \op{Hom}(D,K)} 
\]
\tc{denote the $K$-superalgebra morphism which associates
to an element $x\ot_k u\in K\otk \cO_k$, the convolution
product $\rho_K(x)*\beta(u)$.
This is explicitly presented so as}
\[
\tc{\widehat{\beta}(x\ot_k T)(d)=(d_{[1]}x)F(d_{[2]}),\quad x \in K.}
\]

We regard $\cO_K$ naturally as an algebra in $\M^{\cO_k}$ by the coproduct 
\[
\op{id}_K\ot \Delta : 
\cO_K=K\otk \cO_k\to K\otk \cO_k\otk \cO_k=\cO_K\otk \cO_k 
\]
of $\cO_k$. 
Make the composite $(\widehat{\beta} \otk \op{id}_{\cO_k})\circ(\op{id}_K\ot \Delta):\cO_K\to
\op{Hom}(D,K)\otk \cO_k$, and further compose this
with the natural superalgebra morphism
\[
\op{Hom}(D,K)\otk \cO_k\to \op{Hom}(D,K\otk \cO_k),\quad f \ot a\mapsto (d\mapsto f(d)\ot a). 
\]
Let $\rho : \cO_K\to\op{Hom}(D,\cO_K)$ denote the result. 
This is what we wished to construct.
Indeed, in view of the formula $\Delta(T)=(T\otk 1)(1\otk T)$ of the coproduct, 
we see that the $D$-action which arises by the formula \eqref{UEXeq3a} satisfies \eqref{UEXeq3}. 

The $\rho$ above is seen to be a $K$-superalgebra morphism, where $\op{Hom}(D,\cO_K)$ is regarded
as a $K$-superalgebra through the composite $K\overset{\rho_K}{\longrightarrow} \op{Hom}(D,K)\to \op{Hom}(D,\cO_K)$,
where the second arrow indicates the superalgebra morphism arising from the inclusion 
$K\hookrightarrow\cO_K$.
In order to see that the $D$-action defines a $K$-algebra structure in ${}_D\M$, 
it remains to show that the following holds:
given $v\in \cO_K$, we have $1v=v$, and $(dd')v = d(d'v)$ for all $d,d'\in D$; see \cite[Section 7.0]{Sw}, modifying 
into the super situation. 
Since the $D$-action arises from $\rho$,
we may suppose that $v$ is one of  
the entries $t_{ij}$ of $T$. The desired result is then easily verified 
by using \eqref{UEXeq2}. This completes the proof of the claim. 

\medskip

\emph{Step 2}.\
This step aims to construct an extension $L|K$ of $D$-SUSY fields such that $L^D=k$, and $L$ is $D$-simple.

Let us change the notations $\cO_K$, $\cO_k$ and $\op{id}_K\ot \Delta$ above into 
$B$, $J$ and $\theta_B$, respectively, to make their roles clearer. Confirm that 
$B=(B, \theta_B)$
is an algebra in $\M^J$. 
Moreover, $B|K$ is a $J$-Galois extension, whence we have the
isomorphism $\Theta_B :B\ot_KB \overset{\simeq}{\longrightarrow}B\otk J$. Suppose that $D$ acts trivially on $J$. 
One then sees that $\theta_B: B\to B\ot_k J$
is $D$-superlinear, whence $\Theta_B$ is an isomorphism
in ${}_D\M$. Therefore, identifying so as 
$B\ot_KB =B\otk J$ through $\Theta_B$, we have
\eq\label{UEXeq8}
J \subset (B\ot_KB)^D.
\eeq

In ${}_D\M$, choose arbitrarily a $D$-simple quotient algebra $A$ of $B$. 
Notice that $A$ includes $K$, since $K$ is $D$-simple. In addition, $A^D$ is a field
\tc{by Proposition \ref{DSAprop1} (1),} 
since $A$ is $D$-simple. 
We claim 
\eq\label{UEXeq9}
A^D=k. 
\eeq
To see this, notice from Proposition \ref{DSAprop2} that $(k=)\, K^D=\K^{\D}$.  
We have an embedding $k=K^D \subset A^D$ of fields, which then induces $k=\K^{\D}\hookrightarrow \overline{A}^{\D}$.
This prolongs to $k=\K^{\D}\to Q^{\D}$, where $Q$ is an arbitrarily chosen, $\D$-simple 
quotient $\D$-algebra of $\overline{A}$.
Since $Q$ is finitely generated over $\K$, and $\D$ is irreducible, one sees from \cite[Theorem 4.4]{T} that the
field extension $Q^{\D}|\K^{\D}\, (=k)$ is algebraic, whence $Q^{\D}=k$ by the assumption on $k$. Thus the first embedding
prolongs so that
\eq\label{UEXeq10}
k=K^D\subset A^D \hookrightarrow \overline{A}^{\D} \to Q^{\D}=k.
\eeq
This implies \eqref{UEXeq9}.

Set $H:=(A\ot_KA)^D$. 
Since $A$ is $D$-simple and satisfies \eqref{UEXeq9}, we have 
the injection $\mu :A \otk H \to A \ot_KA$ by Proposition \ref{DSAprop1} (2). 
As is seen from \eqref{UEXeq8}, the composite 
\[
B\otk J \overset{\Theta_B^{-1}}{\longrightarrow}B\ot_KB\to A\ot_K A
\]
of $\Theta_B^{-1}$ with the natural projection is seen to factor through the $A\otk H$ in $A\ot_K A$, so that 
$J$ maps into $H$, whence we have the commutative diagram
\begin{equation*}
\begin{xy}
(0,7)   *++{B\otk J}  ="1",
(26,7)  *++{B\ot_K B} ="2",
(0,-8)  *++{A\otk H} ="3",
(26,-8) *++{A \ot_KA.}="4",
{"1" \SelectTips{cm}{} \ar @{->}_{\Theta_B^{-1}}^{\simeq} "2"},
{"3" \SelectTips{cm}{} \ar@{>->}_{\mu} "4"},
{"1" \SelectTips{cm}{} \ar@{->} "3"},
{"2" \SelectTips{cm}{} \ar @{->>} "4"}
\end{xy}
\end{equation*}
It follows that $\mu$ is an isomorphism. 
Just as when we proved Lemma \ref{INDlem3},
we see that $H$ and $A$ uniquely turn into Hopf superalgebra over $k$ and an algebra in $\M^H$, respectively, 
so that Conditions
(HG1--2) in Definition \ref{HGEdef1} are satisfied. 
We see also that $H$ is a quotient Hopf superalgebra of $J$, and is, therefore,
finitely generated over $k$.

Just as when we proved Proposition \ref{INDprop3},
discussing $\widetilde{A}=A/K_1A$, 
we see that $A|K$ is an $H$-Galois extension, and obtain from Theorem \ref{HGEthm1} (2) 
a $K$-superalgebra isomorphism 
$A \simeq K\ot_{\K}\overline{A}\otk\wedge_k(\mathfrak{w}_H)$ such as in \eqref{HGEeq5}. 
Notice from Theorem \ref{HGEthm1} (1) that $\overline{A}|\K$ is $\overline{H}$-Galois, 
whence $\overline{A}$
is smooth over $\K$ by Proposition \ref{HGEprop1}. It follows that $A_{\op{red}}=\overline{A}$, 
which is an integral domain by Proposition \ref{DSAprop3} (2), so that $\Quot(\overline{A})$ is a field. 
Let $L:=\Quot(A)$; it uniquely turns into an $A$-algebra in ${}_D\M$ by Proposition \ref{EXDprop1} (1).
We see from Example \ref{SUFex1} that the last isomorphism extends to 
\[
L\simeq K\ot_{\K} \Quot(\overline{A})\otk \wedge_k(\mathfrak{w}_{H}),
\]
whence $L|K$ is an
extension of $D$-SUSY fields; it is of finite type, since one sees $\L=\Quot(\overline{A})$, and
$\overline{A}$ is finitely generated over $\K$.  
Notice that $L$ is $D$-simple since $A$ is. It remains to prove $L^D=k$. But this follows from
\[
L^D=A^D\, (=k).
\]
Indeed, $L^D=\L^{\D}$ by Proposition \ref{DSAprop2}, $\L^{\D}=\overline{A}^{\D}$ 
by \cite[Theorem 4.4]{T}, again, 
and $A^D=\overline{A}^{\D}$, 
as is then seen from \eqref{UEXeq10}. 

\medskip

\emph{Step 3.}\ 
We wish to conclude that the constructed $L$ is a minimal splitting $D$-SUSY field for $V$.

Recall that $A$ is a quotient algebra of $\cO_K$ in ${}_D\M$. 
Suppose that elements of $A^{m|n}$ are presented as column vectors, and let 
$\boldsymbol{x}_i \in A^{m|n}$ 
denote the natural image of the $i$-th column vector $\boldsymbol{t}_i={}^t(t_{1i},\dots, t_{m+n,i})$ in $T$.
Let $f : V \to A^{m|n}$ be the 
$K$-supermodule morphism defined by $f(v_i)=\boldsymbol{x}_i$, where $1\le i\le m+n$. This is seen from \eqref{UEXeq3} to
be a $K$-module morphism in ${}_D\M$. 
By base extension, $f$ extends to an $A$-module morphism $A\ot_K V \to A^{m|n}$ in ${}_D\M$,
which is, in fact, an isomorphism since the matrix $X:=\big(\boldsymbol{x}_1,\dots, \boldsymbol{x}_{m+n}\big)$ is in 
$\mathsf{GL}_{m|n}(A)$. Therefore, $L$ is a splitting $D$-SUSY field for $V$. It is minimal, 
since $A$ is, as a $K$-superalgebra, generated by the entries of $X$ together with $(\det_0X)^{-1}$.
\epf

\pf[Proof of the uniqueness]
Suppose that $L^{(1)}$ and $L^{(2)}$ are $D$-simple minimal-splitting $D$-SUSY fields such that $(L^{(i)})^D=k$, $i=1,2$. 
By Theorem \ref{CHAthm1} we have SUSY PV extensions
$(L^{(i)}|K, A^{(i)}, H^{(i)})$, $i=1,2$. Now, set 
\[
B:=A^{(1)}\ot_KA^{(2)},\quad J:=H^{(1)}\otk H^{(2)}.
\] 
Then $J$ is a finitely generated Hopf superalgebra over $k$, and $B|K$ is naturally a $J$-Galois extension such that
we have $J \subset (B\ot_K B)^D$, identifying so as $B\ot_KB=B\otk J$ through the isomorphism $\Theta_B$. 
Choose a $D$-simple quotient $D$-superalgebra $A$ of $B$, and let $L:=\Quot(A)$. 
Then we see just as in Step 2 of the preceding proof 
that $H$ is a quotient Hopf superalgebra of $J$, 
$A|K$ an $H$-Galois extension, and 
$L$ is a $D$-simple $D$-SUSY field such that $L^D=k$. 
In particular, we have the isomorphism $\Theta_A : A \ot_K A\overset{\simeq}{\longrightarrow}A\otk H$. 
For $i=1,2$, the embeddings $A^{(i)}\to A^{(1)}\ot_K A^{(2)}$, $H^{(i)}\to H^{(1)}\otk H^{(2)}$, composed with
the natural projections, give algebra morphisms $A^{(i)}\to A$ in ${}_D\M$, and Hopf superalgebra morphisms $H^{(i)}\to H$. Notice that the former $A^{(i)}\to A$ are injective 
by the $D$-simplicity of $A^{(i)}$. The morphisms, together with 
$\Theta_A$ and the base extension $A\ot_{A^{(i)}}$ of $\Theta_{A^{(i)}}$,  
constitute the commutative diagram
\begin{equation*}
\begin{xy}
(0,7)   *++{A\ot_K A^{(i)}}  ="1",
(26,7)  *++{A \otk H^{(i)}} ="2",
(0,-8)  *++{A\ot_K A} ="3",
(26,-8) *++{A \otk H.}="4",
{"1" \SelectTips{cm}{} \ar @{->}^{\simeq} "2"},
{"3" \SelectTips{cm}{} \ar@{->}^{\simeq} "4"},
{"1" \SelectTips{cm}{} \ar@{->} "3"},
{"2" \SelectTips{cm}{} \ar @{->} "4"}
\end{xy}
\end{equation*}
Recall that $A$ is $K$-faithfully flat.
One then sees that $H^{(i)}\to H$ is injective, whence it is faithfully flat by \cite[Theorem 5.9 (1)]{M}. 
It follows that the injective morphism $A^{(i)}\to A$ is (faithfully) flat. 
Hence every $A^{(i)}$-regular element of $A^{(i)}_0$, regarded as an element of $A_0$, is $A$-regular.
Therefore, $A^{(i)}\to A$ uniquely extends to a superalgebra morphism $L^{(i)}\to L$;
this is necessarily an injective algebra-morphism in ${}_D\M$, by Proposition \ref{EXDprop1} (2)
and the $D$-simplicity of $L^{(i)}$; cf. the proof of Theorem \ref{INDthm1}.
Through this we regard so as $L^{(i)}\subset L$. 

We claim that $L^{(1)}=L^{(2)}$ in $L$; this will complete the proof. For $i=1,2$ we have
\[
\op{Hom}_{K\# D}(V, L^{(i)})=\op{Hom}_{K\# D}(V, L),
\]
since the both sides have the same $k$-dimension $m|n$ by Proposition \ref{SPDprop1}. 
Therefore, using the notation \eqref{SPDeq2}, we have
\[
K[V;L^{(i)}]=K[V;L].
\]
This implies
\[
L^{(i)}=\Quot(K[V;L^{(i)}])=\Quot(K[V;L]),
\]
proving the claim.
\epf

\section*{Acknowledgments}
The author was supported by JSPS~KAKENHI, Grant Numbers 20K03552 and 23K03027.

\end{document}